\renewcommand{\@todonotes@drawMarginNoteWithLine}{%
\begin{tikzpicture}[remember picture, overlay, baseline=-0.75ex]%
    \node [coordinate] (inText) {};%
\end{tikzpicture}%
\marginnote[{
    \@todonotes@drawMarginNote%
    \@todonotes@drawLineToLeftMargin%
}]{
    \@todonotes@drawMarginNote%
    \@todonotes@drawLineToRightMargin%
}%
}
\let\oldaddcontentsline\addcontentsline
\newcommand{\stoptocentries}{\renewcommand{\addcontentsline}[3]{}}
\newcommand{\starttocentries}{\let\addcontentsline\oldaddcontentsline}
\title[Type I and boundary representations of hyperbolic groups]{A type~I conjecture and boundary representations of hyperbolic groups}
\author[P.-E. Caprace]{Pierre-Emmanuel Caprace}
\thanks{P.-E.C. is a F.R.S.-FNRS senior research associate}
\address{UCLouvain, IRMP, Chemin du Cyclotron 2, bte L7.01.02, 1348 Louvain-la-Neuve, Belgique}
\email{pe.caprace@uclouvain.be}
\author[M. Kalantar]{Mehrdad Kalantar}
\thanks{MK is supported by a Simons Foundation Collaboration Grant (\# 713667)}
\address{University of Houston, USA}
\email{mkalantar@uh.edu}
\author[N. Monod]{Nicolas Monod}
\thanks{}
\address{EPFL, Switzerland}
\email{nicolas.monod@epfl.ch}
\newtheorem{thm}{Theorem}[section]
\newtheorem{prop}[thm]{Proposition}
\newtheorem{lem}[thm]{Lemma}
\newtheorem{cor}[thm]{Corollary}
\newtheorem{conj}[thm]{Conjecture}
\newtheorem{thmintro}{Theorem}
\newtheorem{conjintro}[thmintro]{Conjecture}
\newtheorem{corintro}[thmintro]{Corollary}
\theoremstyle{definition}
\newtheorem{rmk}[thm]{Remark}
\newcommand{\Zb}{\mathbf{Z}}
\newcommand{\Nb}{\mathbf{N}}
\newcommand{\Rb}{\mathbf{R}}
\newcommand{\Qb}{\mathbf{Q}}
\newcommand{\Aut}{\mathrm{Aut}}
\DeclareMathOperator{\Ker}{Ker}
\newcommand{\Sub}{\mathbf{Sub}}
\newcommand{\Isom}{\mathrm{Isom}}
\newcommand{\one}{\boldsymbol{1}}
\begin{document}

\begin{abstract}
We establish new results on the weak containment of quasi-regular and Koopman representations of a second countable locally compact group $G$ associated with non-singular  $G$-spaces. We deduce   that any two boundary representations of a hyperbolic locally compact group are weakly equivalent. We also show that non-amenable hyperbolic locally compact groups with a cocompact amenable subgroup are characterized by the property that any two proper length functions are homothetic up to an additive constant. Combining those results with the work of \L.~Garncarek on the irreducibility of boundary representations of discrete hyperbolic groups, we deduce that a type~I hyperbolic group with a cocompact lattice contains a cocompact amenable subgroup. Specializing to groups acting on trees, we answer  a question of C.~Houdayer and S.~Raum.  
\end{abstract}

\maketitle

\section{Introduction}

\stoptocentries

\subsection{Type~I groups}
According to L.~Auslander and C.~Moore~\cite[p.~1]{AuslanderMoore},

\begin{flushright}
\begin{minipage}[t]{0.90\linewidth}
``\itshape one of the fundamental questions one can raise about any locally compact group or C*-algebra is that of determining when it is type~I\upshape.''
\end{minipage}
\end{flushright}

\noindent
The notion of type I, hailing from the very origins of operator algebras and representation theory~\cite{Murray-vonNeumannI}, can be seen as a rigorous way to define the class of groups for which unitary representations can be classified in any meaningful manner. That notion is of fundamental importance, although the definition may seem technical at first sight; we refer to the recent book~\cite{BekkaHarpe} for a detailed account and a review of known result. Let us merely mention here that, by a celebrated result of E.~Thoma~\cite{Thoma64,Thoma}, \emph{a discrete group is type~I if and only if it is virtually abelian}.

In the non-discrete case, the current state of the art is not nearly as complete, despite numerous results ensuring that various important families of groups are type~I and thus showing that the type~I class is much richer than in the discrete case. What is completely lacking, in contrast to Thoma's theorem, is a definite structural consequence of type~I. We venture the following.

\begin{conjintro}\label{conj:main}
Every second countable locally compact group of type~I admits a cocompact amenable subgroup. 
\end{conjintro}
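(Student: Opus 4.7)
The strategy is to proceed by contraposition: I would aim to show that a second countable locally compact group $G$ lacking a cocompact amenable subgroup cannot be of type~I. The plan generalizes the template developed in this paper for the hyperbolic case and rests on the classical fact that the unitary dual of a type~I group is $T_0$, so that any two irreducible unitary representations of $G$ are weakly equivalent if and only if they are unitarily equivalent. The goal is thus to exhibit, under the absence of a cocompact amenable subgroup, a family of irreducible, pairwise weakly equivalent, pairwise non-equivalent unitary representations of $G$.

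The natural source of such representations is the family of Koopman (quasi-regular) representations $\pi_{Y,\mu}$ on $L^2(Y,\mu)$, where $(Y,\mu)$ is a non-singular $G$-space modeled on a boundary of $G$. The weak-containment results of this paper for quasi-regular representations should force many such $\pi_{Y,\mu}$ to lie in a single weak-equivalence class. One then produces many non-equivalent representatives by varying $(Y,\mu)$, for example via the harmonic measures associated with distinct proper length functions on $G$, which by the length-function theorem announced in the abstract exist in abundance---at least in the hyperbolic case---precisely when $G$ admits no cocompact amenable subgroup. A preliminary reduction via the structure theory of locally compact groups (Levi--Mal'cev in the connected case, the Caprace--Reid--Willis decomposition theory in the \tdlc case) should bring one to an ``irreducibly acting'' setting where a distinguished boundary theory is available; the connected case in particular reduces to the known Lie-theoretic results via a minimal parabolic.

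The irreducibility of the resulting boundary representations is the main obstacle. For discrete hyperbolic groups this is Garncarek's theorem, whose invocation in this paper forces the added hypothesis of a cocompact lattice; outside the hyperbolic framework (and a few related settings such as certain closed subgroups of $\Aut(T_d)$ and of building automorphism groups) no general criterion is available. What seems to be needed is a dynamical or geometric irreducibility criterion---perhaps a Mautner-type phenomenon on $(Y,\mu)$, or a contraction property at infinity---that both applies whenever a cocompact amenable subgroup fails to exist and yields irreducibility via a Mackey-type analysis of the commutant. Establishing such a criterion, together with an extension of Garncarek's irreducibility theorem to the non-discrete setting so as to dispose of the cocompact lattice hypothesis in the hyperbolic case, appears to be the crux of \Cref{conj:main}.
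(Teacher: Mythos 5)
The statement you are addressing is \Cref{conj:main}, which the paper itself leaves open: no proof of it appears anywhere in the text, and your proposal does not close that gap. What you have written is a research programme rather than a proof, and you say so yourself --- the ``crux'' you identify at the end (an irreducibility criterion for boundary Koopman representations valid beyond discrete hyperbolic groups, or even just a non-discrete extension of Garncarek's theorem removing the uniform-lattice hypothesis) is precisely what is missing and is acknowledged as open in the paper (Remark~\ref{rem:Garn}). Moreover, even granting irreducibility, your contrapositive scheme needs a second ingredient that you only gesture at: a proof that the \emph{absence} of a cocompact amenable subgroup forces the existence of two proper length functions that are \emph{not} roughly similar, together with the rigidity statement that non-roughly-similar metrics yield \emph{inequivalent} boundary representations. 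In the paper these are, respectively, \Cref{thm:coctamen} and condition (G2) extracted from Garncarek's work, and both are established only in the hyperbolic setting (the latter only in the presence of a uniform lattice). Outside hyperbolicity there is no candidate for either statement, so the argument does not get off the ground in the generality of \Cref{conj:main}.

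For comparison: where your sketch tracks the paper, it tracks it faithfully --- Glimm's theorem, weak equivalence of boundary representations (\Cref{thm:weak-equiv-bd-rep}), Garncarek's irreducibility and rigidity theorems, and the geometric characterisation of cocompact amenable subgroups are exactly the ingredients of the proof of \Cref{thm:main}, which is the hyperbolic-with-uniform-lattice case of the conjecture. What your proposal omits is the paper's other, conceptually different conditional approach to the general statement: \Cref{prop:conj=>conj} shows that the conjecture follows (for second countable tdlc groups) from the simplicity of $C^*_{\lambda_{G/G_z}}(G)$ for $z$ a generic point of the Furstenberg boundary (\Cref{conj:boundaryrep-C*-simple}), via an argument that requires no irreducibility of boundary representations at all --- one combines simplicity with the type~I hypothesis to get CCR-ness, deduces that the image of the Hecke algebra $C_c(U\backslash G/U)$ is finite-dimensional, and concludes transitivity on the Furstenberg boundary directly from \Cref{prop:transitive-boundary}. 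Together with the reduction to unimodular tdlc groups (\Cref{prop:tdlc-reduction}), this is the route the paper actually proposes for the general case, and it sidesteps the irreducibility obstacle that you correctly flag as the main difficulty of your approach.
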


The structural property contemplated in this conjecture is a strong one. For instance, hyperbolic unimodular groups with a cocompact amenable subgroup are described in detail in~\cite{CCMT}. In the wide setting of non-positively curved (CAT(0)) groups, a classification is obtained in~\cite{CM-bib} using also~\cite{CM-structure, CM-fixed}. These descriptions are in terms of semi-simple groups (Lie or non-Archimedean) and tree automorphism groups, and it turns out that those classes include most known examples of type~I groups with a trivial amenable radical.

(The study of type~I among amenable groups is also interesting and very incomplete, but~\Cref{conj:main} is obviously irrelevant there.)

A more partial motivation for \Cref{conj:main} is that \emph{Gelfand pairs} $(G,K)$, which are characterised by a very tame $K$-spherical dual for a compact subgroup $K<G$, admit a cocompact amenable subgroup $P<G$, indeed even an ``Iwasawa decomposition'' $G=K P$~\cite{Monod_Gelfand}.

\Cref{conj:main}  is further discussed in Section~\ref{sec:C*-simple}. We show that this conjecture would follow from the simplicity of the $C^*$-algebra $C^*_{\lambda_{G/H}}(G)$, where $H$ is the stabilizer of a generic point in the Furstenberg boundary of $G$ (see \Cref{prop:conj=>conj} and \Cref{rem:tdlc-reduction}).

The bulk of this article is devoted to a completely different line of attack towards \Cref{conj:main}, in a geometric set-up that fosters a rich interplay between the dynamical and geometrical aspects of boundary theory. Namely, one of the conclusions of this paper is that \Cref{conj:main} holds for all  {hyperbolic} locally compact groups which admit a uniform lattice. 

\begin{thmintro}\label{thm:main}
Let $G$ be a hyperbolic locally compact group admitting a uniform lattice.

If $G$ is type~I, then $G$ has a cocompact amenable subgroup. 
\end{thmintro}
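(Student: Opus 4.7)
The plan is to argue by contradiction and combine the three results announced in the abstract with Garncarek's irreducibility theorem, using the uniform lattice $\Gamma$ as a pivot. First, dismiss the trivial case: if $G$ were amenable then $G$ itself is the required cocompact amenable subgroup, so assume $G$ is non-amenable. By the third announced result, the absence of a cocompact amenable subgroup would then yield two proper continuous length functions $\ell_1, \ell_2$ on $G$ that are \emph{not} homothetic up to an additive constant. The goal is to derive a contradiction from this.

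To each $\ell_i$ one attaches a Patterson--Sullivan-type quasi-conformal measure $\mu_i$ on the Gromov boundary $\partial G$ and forms the associated boundary (Koopman) representation $\pi_i$ of $G$ on $L^2(\partial G, \mu_i)$. The second announced result then says that $\pi_1$ and $\pi_2$ are weakly equivalent as unitary representations of $G$. Restricting each $\pi_i$ to $\Gamma$, one observes that $\ell_i$ restricted to $\Gamma$ is quasi-isometric to the word metric on $\Gamma$, by cocompactness of the lattice; hence $\mu_i$ lies in the quasi-conformal measure class that Garncarek's construction uses for $\Gamma$. His irreducibility theorem then applies and gives that $\pi_i|_\Gamma$ is irreducible; \emph{a fortiori} $\pi_i$ is irreducible as a $G$-representation.

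The type I hypothesis closes the argument. For a type I second countable locally compact group, the reduced dual is $T_0$ in the Fell topology, so weakly equivalent irreducibles are unitarily equivalent; hence $\pi_1 \cong \pi_2$. A Radon--Nikodym analysis of the Koopman construction translates such a unitary equivalence into equality (up to coboundary) of the underlying quasi-conformal cocycles, which via the Busemann function forces $\ell_1$ and $\ell_2$ to be homothetic modulo an additive constant---the desired contradiction.

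The main obstacle I foresee is the transfer of the boundary representation from $G$ to $\Gamma$: one must verify that the $G$-quasi-conformal class of $\mu_i$ on $\partial G$ genuinely coincides with the $\Gamma$-quasi-conformal class that Garncarek considers, so that his irreducibility result applies verbatim. Cocompactness of $\Gamma$ in $G$ should furnish this identification, but the precise comparison of cocycles---each indexed by a distinct group and a distinct metric structure---is the step I expect to require the most care. A secondary obstacle is the reconstruction of the length function from the unitary equivalence class of its boundary representation, which hinges on the uniqueness, up to coboundary, of the Radon--Nikodym cocycle realizing the Koopman formula.
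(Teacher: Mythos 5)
Your proposal is correct and follows essentially the same route as the paper: all boundary representations are weakly equivalent (Theorem~\ref{thm:weak-equiv-bd-rep}), the PS-representations attached to Cayley--Abels graphs are irreducible by restricting to the uniform lattice and quoting Garncarek, the type~I hypothesis upgrades weak equivalence of irreducibles to unitary equivalence, Garncarek's rigidity then forces rough similarity of the underlying metrics, and Theorem~\ref{thm:coctamen} converts that into a cocompact amenable subgroup. The only differences are presentational: the paper phrases the Glimm step via the GCR property and a CCR ideal consisting of compact operators (Theorems~\ref{thm:GCR} and~\ref{thm:main-tech}), first reduces to the totally disconnected case by Proposition~\ref{prop:non-amen-hyper}, and carries out the lattice-to-$G$ transfer you correctly flag as the delicate point in Theorem~\ref{thm:Garncarek}.
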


As hinted to above, this result leads to a rather precise description of $G$ thanks to~\cite[Theorem~D]{CCMT}:

\begin{corintro}\label{cor:hyper-coco-amen}
Let $G$ be a hyperbolic locally compact group admitting a uniform lattice. Recall that $G$ admits a unique maximal compact normal subgroup $W$.

If $G$ is type~I, then the quotient group $G/W$ satisfies exactly one of the following descriptions:

\begin{enumerate}[label=(\roman*)]
\item $G/W$ is the group of isometries of a rank one symmetric space of non-compact type, or its identity component, which has index at most~$2$.\\ In particular $G/W$ is a simple Lie group of rank one.\label{it:hyper-coco-amen:Lie}
\item $G/W$ is a closed subgroup of the automorphism group of a locally finite non-elementary tree $T$, acting without inversions and with exactly two orbits of vertices, and acting $2$-transitively on the set $\partial T$ of ends.\label{it:hyper-coco-amen:tree}
\item $G/W$ is trivial or isomorphic to $\Zb$, $\Rb$, $\Zb \rtimes \{\pm 1\}$ or $\Rb \rtimes \{\pm 1\}$. \label{it:hyper-coco-amen:elem}
\end{enumerate}
In all cases, it follows that $G$ has an Iwasawa decomposition $G = K P$, where $K$ is a compact subgroup and $P$ an amenable closed subgroup. 
\end{corintro}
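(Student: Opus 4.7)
The plan is to chain \Cref{thm:main} with Theorem~D of~\cite{CCMT}, and then verify the Iwasawa decomposition case by case. First, applying \Cref{thm:main}, the hypotheses on $G$ (type~I, hyperbolic, admitting a uniform lattice) yield a cocompact amenable closed subgroup. The existence and uniqueness of the maximal compact normal subgroup $W$ is a general feature of hyperbolic locally compact groups and may be cited directly. Next, the classification of hyperbolic locally compact groups possessing a cocompact amenable subgroup, established in~\cite[Theorem~D]{CCMT}, asserts precisely that $G/W$ falls into one of the three families \ref{it:hyper-coco-amen:Lie}--\ref{it:hyper-coco-amen:elem}. So the main structural content of the corollary is inherited from that external reference.

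It remains to exhibit the Iwasawa decomposition $G = KP$. I would treat this first at the level of $\ol{G} := G/W$, then lift. In case~\ref{it:hyper-coco-amen:Lie}, the classical Iwasawa decomposition of a rank one real Lie group gives $\ol{G} = \ol{K}\,\ol{P}$ with $\ol{K}$ a maximal compact subgroup and $\ol{P}$ a minimal parabolic (amenable as an extension of a compact group by a solvable one). In case~\ref{it:hyper-coco-amen:tree}, pick any end $\xi \in \partial T$ and any vertex $v$ of $T$ in one of the two orbits; let $\ol{P} = \ol{G}_\xi$ and $\ol{K} = \ol{G}_v$. The stabilizer of a vertex is compact by local finiteness, while the stabilizer of an end fixes a point at infinity of the hyperbolic space $T$ and is therefore amenable (its Levi structure is compact-by-solvable). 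Finally, $2$-transitivity on $\partial T$ implies transitivity on $\partial T$, and a standard ping-pong/quasi-geodesic argument shows $\ol{K}\, \ol{P} = \ol{G}$: any element of $\ol{G}$ can be adjusted on the left by a vertex stabilizer to send $\xi$ to itself. In case~\ref{it:hyper-coco-amen:elem}, the group $\ol{G}$ is itself amenable, so $\ol{K} = \{1\}$ and $\ol{P} = \ol{G}$ trivially work.

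To lift to $G$, set $K := \pi^{-1}(\ol{K})$ and $P := \pi^{-1}(\ol{P})$ where $\pi \colon G \to G/W$ is the quotient map. Compactness of $K$ follows from compactness of both $\ol{K}$ and $W$, and amenability of $P$ follows from amenability of $\ol{P}$ together with amenability of the compact group $W$, as extensions of amenable groups by amenable groups are amenable. The equality $G = KP$ is immediate from $\pi(K)\pi(P) = \ol{G}$ since $W \subseteq K$.

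The genuinely substantive input is of course~\cite[Theorem~D]{CCMT}, without which there is no classification; aside from that import, the argument is essentially a bookkeeping exercise, and the only mildly delicate point is verifying the Iwasawa decomposition in case~\ref{it:hyper-coco-amen:tree}, where one must combine the two-orbit structure on vertices with the $2$-transitive action on $\partial T$ to ensure $\ol{K}\,\ol{P}$ fills out $\ol{G}$.
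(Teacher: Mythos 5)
Your overall skeleton matches the paper's: apply Theorem~\ref{thm:main} to produce a cocompact amenable subgroup, invoke the classification from~\cite{CCMT}, then read off the Iwasawa decomposition case by case. But there is a genuine gap in the middle step. Theorem~D of~\cite{CCMT} does \emph{not} classify all hyperbolic locally compact groups with a cocompact amenable subgroup into the families \ref{it:hyper-coco-amen:Lie}--\ref{it:hyper-coco-amen:elem}: it applies only to \emph{non-elementary, non-amenable} ones. An amenable non-elementary hyperbolic group (a group of focal type, e.g.\ $\Qp \rtimes \Zb$, or the group of Proposition~\ref{prop:non-type-I}) has a cocompact amenable subgroup --- namely itself --- yet its quotient by the maximal compact normal subgroup belongs to none of the three families. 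So your argument, as written, would ``classify'' those groups too, which is false; the cocompact amenable subgroup delivered by Theorem~\ref{thm:main} does not by itself exclude them. The paper closes this hole by using the uniform lattice a second time: the lattice forces $G$ to be unimodular, and by \cite[Theorem~7.3]{CCMT} a non-elementary amenable hyperbolic group is never unimodular; only then does Theorem~D apply. Relatedly, the elementary alternative~\ref{it:hyper-coco-amen:elem} is not part of Theorem~D either: the paper obtains it separately from \cite[Proposition~5.6]{CCMT} when $G$ is compact or $2$-ended.

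The Iwasawa discussion is broadly in order (the paper leaves this part implicit), except that in case~\ref{it:hyper-coco-amen:tree} the equality $\ol{K}\,\ol{P}=\ol{G}$ is precisely the claim that the compact open vertex stabilizer $\ol{G}_v$ acts transitively on $\partial T$. Deducing this from $2$-transitivity of $\ol{G}$ on $\partial T$ is true but not immediate (one must control the Busemann value of the element moving one end to another while fixing a third), so it deserves a precise argument or a reference to the literature on boundary-$2$-transitive tree groups rather than an appeal to ``a standard ping-pong argument''. The lifting of $K$ and $P$ through the compact normal subgroup $W$ is fine.
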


It is tempting to believe that \Cref{cor:hyper-coco-amen} could lead to a necessary and sufficient structural characterisation of type~I among hyperbolic locally compact group admitting a uniform lattice. Indeed, every group as in~\ref{it:hyper-coco-amen:Lie} or~\ref{it:hyper-coco-amen:elem} is known to be type~I; as for case~\ref{it:hyper-coco-amen:tree}, these groups are conjectured to be type~I since~\cite{Nebbia} (C.~Nebbia's conjecture in loc.~cit. is formulated for regular trees, and predicts the formally stronger statement that a group as in (ii) is CCR).

As we shall discuss below (see Remark~\ref{rem:Garn}), the assumption on the existence of a uniform lattice can probably be relaxed with further work. Two points should be emphasized in that relation:

On the one hand, this assumption incidentally ensures that $G$ is unimodular,  
which rules out \emph{amenable} non-elementary  hyperbolic groups~\cite[Theorem~7.3]{CCMT}. Those  can indeed fail to be type~I and rule out the most naive converse to \Cref{conj:main}, see \Cref{prop:non-type-I} for an example.

On the other hand, if as we suspect we can replace the assumption on uniform lattices by the weaker unimodularity assumption, then this stronger version of \Cref{cor:hyper-coco-amen} would imply a posteriori that $G$ \emph{does} contain a uniform lattice. Indeed, this follows from the Borel--Harish-Chandra theorem~\cite{Borel-Harish-Chandra} in case~\ref{it:hyper-coco-amen:Lie}, from the Bass--Kulkarni theorem~\cite{BaKu} in case~\ref{it:hyper-coco-amen:tree}, and is clear in case~\ref{it:hyper-coco-amen:elem}.


One of the upshots of \Cref{cor:hyper-coco-amen} is that the totally disconnected case is reduced to the setting of tree automorphism groups. Taking a step back, we note that the class of compactly generated closed subgroups $G$ of the automorphism group of any locally finite tree $T$ provides a broad natural source of examples of hyperbolic locally compact groups. Indeed, the group $G$ then acts cocompactly on the smallest $G$-invariant subtree $T'$ of $T$ (see~\cite[Lemma~2.4]{CDM}), so that $G$ is quasi-isometric to $T'$. In particular $G$ is hyperbolic. By specializing  Theorem~\ref{thm:main} and Corollary~\ref{cor:hyper-coco-amen} to this setting, we obtain the following.  

\begin{corintro}\label{cor:HR}
Let $T$ be a locally finite tree and $G \leq \Aut(T)$ be a closed non-amenable subgroup acting minimally on $T$. If $G$ is type~I, then the $G$-action on the set of ends $\partial T$ is $2$-transitive. 
\end{corintro}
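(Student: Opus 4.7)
The strategy is to derive the statement directly from \Cref{thm:main} and \Cref{cor:hyper-coco-amen}. Since $G$ acts minimally on the locally finite tree $T$, the action is cocompact (by \cite[Lemma~2.4]{CDM}, as recalled in the excerpt preceding the statement). Hence $G$ is a compactly generated locally compact group, quasi-isometric to $T$, and therefore word hyperbolic, with Gromov boundary $G$-equivariantly homeomorphic to $\partial T$.

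In order to invoke \Cref{thm:main}, I first need a uniform lattice in $G$. By the Bass--Kulkarni theorem (cited in the excerpt for case \ref{it:hyper-coco-amen:tree}), this reduces to the unimodularity of $G$. I expect this to follow in the present tree setting from the combination of the type~I and non-amenability hypotheses; verifying it is the main technical obstacle of the plan. Granting a uniform lattice, \Cref{thm:main} yields a cocompact amenable subgroup of $G$, and \Cref{cor:hyper-coco-amen} then classifies the quotient $G/W$, where $W$ denotes the maximal compact normal subgroup. Case~\ref{it:hyper-coco-amen:Lie} is ruled out because $G$ is totally disconnected as a closed subgroup of $\Aut(T)$, and quotients of t.d.l.c.\ groups by closed normal subgroups remain t.d.l.c., which is incompatible with being a non-trivial connected rank one Lie group. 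Case~\ref{it:hyper-coco-amen:elem} is ruled out because its groups are amenable while $G$ is not. Hence $G/W$ falls in case~\ref{it:hyper-coco-amen:tree}: it is a closed subgroup of $\Aut(T_0)$ acting without inversions and $2$-transitively on the set of ends $\partial T_0$ of some locally finite non-elementary tree $T_0$.

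It remains to transfer this $2$-transitivity from $\partial T_0$ back to $\partial T$. Since $G/W$ acts cocompactly on $T_0$, so does $G$ (via the quotient map $G \to G/W$, whose kernel $W$ is compact); in particular the $G$-action on $T_0$ is proper and cocompact, with compact vertex stabilizers. Consequently $T$ and $T_0$ are both $G$-equivariantly quasi-isometric to $G$ endowed with any proper length function, and the standard functoriality of the Gromov boundary under equivariant quasi-isometries of hyperbolic spaces yields a $G$-equivariant homeomorphism $\partial T \cong \partial G \cong \partial T_0$. Since $W$ lies in the kernel of the action on $\partial T_0$, the $G$-orbits on $\partial T_0$ coincide with the $G/W$-orbits, so the action on $\partial T_0$ is $2$-transitive as a $G$-action. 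Transporting this across the identification gives the desired $2$-transitivity of $G$ on $\partial T$.
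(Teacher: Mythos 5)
There is a genuine gap at the very first step, and it propagates through the whole plan. You assert that minimality of the $G$-action on $T$ implies cocompactness ``by \cite[Lemma~2.4]{CDM}'', but that lemma goes in the opposite direction: it says that a \emph{compactly generated} closed subgroup of $\Aut(T)$ acts cocompactly on its minimal invariant subtree. Minimality alone does not give cocompactness: for instance $\mathrm{SL}_2(\mathbf F_q[t])$ acting on the Bruhat--Tits tree of $\mathrm{SL}_2(\mathbf F_q(\!(t^{-1})\!))$ is closed, discrete, non-amenable and minimal, yet not compactly generated. (Such groups are of course not type~I, but that is exactly the point: compact generation must be \emph{deduced} from the type~I hypothesis, not read off from minimality.) So compact generation of $G$ is not free; it is one of the two properties, the other being unimodularity, that must be established before \Cref{thm:main} and \Cref{cor:hyper-coco-amen} become applicable. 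You do flag unimodularity as ``the main technical obstacle'' and leave it unverified, but compact generation is an equally serious unproved step that your proposal treats as automatic.

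The paper closes both gaps using results of Houdayer--Raum. One route quotes their Theorem~A (local $2$-transitivity of vertex stabilizers), from which edge-transitivity, compact generation, and unimodularity (the group is generated by compact subgroups) all follow. The alternative route passes to the index-$\leq 2$ subgroup $G^+$ preserving the bipartition, uses \cite[Propositions~4.1 and~4.2]{HoudayerRaum} to show that $G^+\backslash T$ is a tree --- so that $G^+$ is the fundamental group of a tree of profinite groups, hence generated by compact subgroups, hence unimodular --- and then bootstraps compact generation by writing $G^+$ as an ascending union of compactly generated open subgroups $H_n$, applying \Cref{cor:hyper-coco-amen} to each $H_n$ to obtain a Cartan-like decomposition $H_n = K\langle a_n\rangle K$ with $K$ a vertex stabilizer, and concluding that the union stabilizes. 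Once compact generation and unimodularity are in hand, the remaining steps of your proposal --- Bass--Kulkarni to produce a uniform lattice, \Cref{thm:main} and \Cref{cor:hyper-coco-amen} to land in case~\ref{it:hyper-coco-amen:tree}, and the transfer of $2$-transitivity from the boundary of the auxiliary tree back to $\partial T$ via the canonical $G$-equivariant identification of boundaries --- do match the paper's argument.
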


This strengthens  Theorem~A from~\cite{HoudayerRaum}, establishing the weaker property that the $G$-action on $T$ is \emph{locally $2$-transitive}, i.e. the stabilizer of every vertex of valency~$\geq 3$ is $2$-transitive on the set of incident edges. 
Corollary~\ref{cor:HR} contributes to the characterization of the type~I property among closed subgroups of the automorphism group of a tree by solving the second part of Problem~1 in~\cite{HoudayerRaum}. It should also be noted that C.~Houdayer and S.~Raum establish a stronger result than their Theorem~A mentionned above (see~\cite[Theorem~C]{HoudayerRaum}), which establishes the same conclusion under the  hypothesis that the group von Neumann algebra $L(G)$ is amenable, which is formally weaker than the type~I condition. 

\subsection{Boundary representations}
Glimm's theorem~\cite{Glimm} characterizes the type~I property for $G$ by the fact that any two weakly equivalent irreducible unitary representations of $G$ are equivalent. For this reason, a substantial part of this paper is a contribution to the unitary representation theory of hyperbolic locally compact groups. An important source of unitary representations of such a group $G$ is provided by the so-called \textbf{boundary representations}, which are the Koopman representations $\kappa_\nu$ associated with a quasi-invariant probability measure $\nu$ supported on the Gromov boundary  $\partial G$. We shall prove the following. 

\begin{thmintro}\label{thm:weak-equiv-bd-rep}
Any two boundary representations of a non-amenable hyperbolic locally compact group are weakly equivalent.
\end{thmintro}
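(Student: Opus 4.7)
The plan is to deduce the theorem from the general weak-containment results for Koopman representations of non-singular $G$-spaces announced at the outset of the abstract. Since weak equivalence is symmetric, it will suffice to prove that $\kappa_{\nu_1}$ is weakly contained in $\kappa_{\nu_2}$ for any two quasi-invariant probability measures $\nu_1,\nu_2$ on $\partial G$; interchanging their roles then yields weak equivalence.

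First I would establish a dynamical ingredient. Any non-amenable hyperbolic locally compact group $G$ is automatically non-elementary, so loxodromic elements exist and exhibit the familiar North--South dynamics on $\partial G$. It follows by a standard argument that the action $G \curvearrowright \partial G$ is minimal and strongly proximal in the sense of Furstenberg. Combining strong proximality with minimality, the weak-$*$ orbit closure of $\nu_2$ in the simplex $\mathcal{P}(\partial G)$ contains every Dirac measure $\delta_\xi$; passing to the closed convex hull it coincides with the whole of $\mathcal{P}(\partial G)$. In particular $\nu_1$ lies in the weak-$*$ closed convex hull of the $G$-orbit of $\nu_2$ (and vice versa).

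Next I would plug this approximation property into the general weak-containment theorem for Koopman representations established earlier in the paper. One expects that theorem to assert, roughly, that an approximation of this form (suitably strengthened to account for Radon--Nikodym cocycles) implies $\kappa_{\nu_1} \prec \kappa_{\nu_2}$. Applying the theorem in both directions then yields the desired weak equivalence.

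The hard part will be matching the representation-theoretic data and not merely the underlying measures: the Koopman representation is not detected by $\nu$ alone, since its matrix coefficients $\langle \kappa_\nu(g)f,f\rangle$ carry the square root of the Radon--Nikodym cocycle $dg_*\nu/d\nu$. A bare weak-$*$ approximation at the level of measures is therefore insufficient, and one must simultaneously control these cocycles along the approximating sequences. For boundary measures on a hyperbolic group the cocycle admits an essentially geometric description in terms of horofunctions of Patterson--Sullivan type, and turning this regularity into a usable weak-containment criterion is, I expect, where the real work lies and where the general theorem invoked above is supposed to do its job.
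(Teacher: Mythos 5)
There is a genuine gap at the heart of your plan. The step you defer to ``the general weak-containment theorem'' --- namely, deducing $\kappa_{\nu_1}\prec\kappa_{\nu_2}$ from the fact that $\nu_1$ lies in the weak-$*$ closed convex hull of the $G$-orbit of $\nu_2$ --- is not what the paper's general results provide, and no such principle is available. Theorems~\ref{thm:weak-containment} and~\ref{thm:Ananth} never compare two Koopman representations with each other; each of them compares a Koopman representation with a \emph{quasi-regular} representation $\lambda_{G/G_x}$ attached to a point stabilizer. You correctly identify the obstruction yourself (weak-$*$ approximation of measures says nothing about the square roots of the Radon--Nikodym cocycles that the matrix coefficients of $\kappa_\nu$ actually see), but you then hand that obstruction off to a theorem that does not address it. As written, the argument has no mechanism for producing any weak containment between $\kappa_{\nu_1}$ and $\kappa_{\nu_2}$.

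The paper's route is to use $\lambda_{G/G_\xi}$, for $\xi$ a boundary point with amenable stabilizer, as a pivot: one shows $\lambda_{G/G_\xi}\prec\kappa_\nu$ and $\kappa_\nu\prec\lambda_{G/G_\xi}$ for \emph{every} quasi-invariant $\nu$, so that all boundary representations are weakly equivalent to one and the same quasi-regular representation. Both containments need inputs absent from your sketch. The containment $\kappa_\nu\prec\lambda_{G/G_\xi}$ uses Zimmer amenability of the boundary action and Anantharaman-Delaroche's results (Theorem~\ref{thm:Ananth}); the reverse containment $\lambda_{G/G_\xi}\prec\kappa_\nu$ (Theorem~\ref{thm:weak-cont-topol}) requires $G$ to be totally disconnected and the stabilizer $G_\xi$ to be \emph{regionally elliptic}. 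Securing that hypothesis is where the geometric work of the paper enters: one first reduces to the tdlc case via Proposition~\ref{prop:non-amen-hyper} (disposing of rank-one Lie groups, and more generally of groups with a cocompact amenable subgroup, by transitivity of the boundary action and uniqueness of the invariant measure class), and then, in the remaining case, invokes Corollary~\ref{cor:reg-ell-URS} --- itself resting on Theorem~\ref{thm:Ballistic} --- to find boundary points whose stabilizers are regionally elliptic. None of this is replaceable by the minimality and strong proximality of $G\acts\partial G$ alone, which is the only dynamical input your proposal actually establishes.
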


For a detailed account of notion of \emph{weak containment} and  \emph{weak equivalence} of unitary representations, we refer to~\cite[Appendix~F]{BHV}. Let us simply recall here that two unitary representations $\pi_1, \pi_2$ of a locally compact group $G$ are weakly equivalent if their $C^*$-kernel coincide; in other words the respective representations of $C^*(G)$ corresponding to $\pi_1, \pi_2$ have the same kernel. For a discrete non-elementary hyperbolic group $G$, it is well-known that every boundary representation of $G$ is weakly contained in the regular representation (see~\cite[Theorem~5.1]{Adams_bd} and~\cite{Kuhn}). Using that   the kernel of the $G$-action on $\partial G$ is the amenable radical $R(G)$, which is finite, the assertion of Theorem~\ref{thm:weak-equiv-bd-rep} for a \emph{discrete} hyperbolic group directly follows from the fact that $G/R(G)$ is \textbf{$C^*$-simple}, i.e. its the reduced $C^*$-algebra  is simple, see~\cite{Harpe88} and~\cite[Theorem~6.5]{BKKO}. For \emph{non-discrete} hyperbolic groups, the latter property fails: indeed, every  rank~one connected simple Lie group with finite center is hyperbolic, but no such group is $C^*$-simple  (see~\cite[App.~G]{Harpe-C*}).

The proof of \Cref{thm:main} combines \Cref{thm:weak-equiv-bd-rep} with several other ingredients that we now proceed to describe. In trying to apply Glimm's characterization of the type~I condition, it is obviously useful to have a large supply of irreducible representations of $G$ at one's disposal, together with a good understanding of their classification up to equivalence. In the setting of \Cref{thm:main}, these representations will be the boundary representations associated with \emph{Patterson--Sullivan measures}.

Specifically, consider a locally compact hyperbolic group $G$. For the purpose of \Cref{thm:main}, we can readily reduce ourselves to the case where $G$ is totally disconnected. In particular, it admits a Cayley--Abels graph $X$ on the vertex set $G/U$, where $U$ is any given compact open subgroup. More precisely, the edge structure of the graph $X$ depends on the choice of a compact generating set for $G$, or equivalently of a suitable word metric on $G$; we will crucially use this freedom of choice.

Since $X$ is a locally finite hyperbolic graph, its Gromov boundary is a compact $G$-space and supports a canonical family of Patterson--Sullivan measures. For the time being, recall simply that any choice of a base-point in $X$ determines a measure $\nu$ on $\partial X \cong \partial G$ and that any other point gives an equivalent measure; in particular, $\nu$ is quasi-invariant under $G$ and defines a Koopman representation $\kappa_\nu$ of $G$ on $L^2(\partial G, \nu)$, that we call a \textbf{PS-representation}.

Our strategy consists in first using \Cref{thm:weak-equiv-bd-rep} to apply Glimm's criterion to these representations and then establishing a number of geometric consequences culminating in the fact that $G$ acts transitively on its boundary $\partial G$. To this end, we rely on the groundbreaking work of \L.~Garncarek~\cite{Garncarek}, extending earlier results of  Bader--Muchnik~\cite{BadMuch}, and solving an important case of the conjecture that they formulated in loc.~cit. The fact that Garncarek's results are stated and proved for \emph{discrete} hyperbolic groups explains our auxiliary hypothesis that $G$ admits a uniform lattice. On the one hand, Garncarek proves that PS-representations are all irreducible. On the other hand, he shows that two such representations are equivalent if and only if the underlying word metrics are \textbf{roughly similar}, which means that they are homothetic up to an additive constant. Applying these results to the lattice yields them a fortiori for $G$, since the representations are already defined on $G$.

In summary, relying on Glimm's and Garncarek's theorems, our strategy consists of the following two steps.

The first one, of analytic flavour, is to prove \Cref{thm:weak-equiv-bd-rep}.  It implies that all PS-representations (associated to any word metric) are weakly equivalent. We shall establish such a weak equivalence in a more general context for rather more general groups $G$, as stated in \Cref{cor:amenable-actions} below.

The second step, of geometric flavour, is to establish that $G$ admits indeed a cocompact amenable subgroup when any two word metrics on $G$ are roughly similar. This requires new results on hyperbolic groups, entering \Cref{thm:coctamen} below.

\subsection{Weak containment of Koopman and quasi-regular representations}
The first remaining step for the proof of Theorem~\ref{thm:weak-equiv-bd-rep} relies on new results on weak containment of unitary representations in a broad context of locally compact transformation groups, regardless of any hyperbolicity assumption. In order to present their statements, we recall that a locally compact group $H$ is called \textbf{regionally elliptic} if every compact subset of $H$ is contained in a compact subgroup. If $H$ is $\sigma$-compact, this is equivalent to requiring that $H$ is a countable ascending union of compact subgroups. A \textbf{tdlc group} is a locally compact group which is totally disconnected.

\begin{thmintro}\label{thm:weak-containment}
Let $G$ be a  second countable tdlc group and $(X, \nu)$ be a standard probability space endowed with a measurable $G$-action, such that $\nu$ is quasi-invariant under $G$. Let $\kappa$ be the Koopman unitary representation of $G$ on $L^2(X, \nu)$. 

Then there is a co-null set $Y\subset X$ such that for every $x\in Y$ with regionally elliptic stabilizer $G_x$, the quasi-regular representation $\lambda_{G/G_x}$ is weakly contained in the Koopman representation $\kappa$.
\end{thmintro}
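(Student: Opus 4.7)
The plan is to verify weak containment of $\lambda_{G/G_x}$ in $\kappa$ via Fell's criterion, by constructing, for each $x$ in a suitable co-null set $Y\subset X$, a sequence of diagonal matrix coefficients of $\kappa$ that converge, uniformly on compacta, to a dense family of diagonal matrix coefficients of $\lambda_{G/G_x}$. Since $G$ is second countable and tdlc, fix a decreasing basis of compact open subgroups $(U_n)$ with $\bigcap_n U_n=\{e\}$; the matrix coefficients $\psi_{U_n}(g) = \mu_{U_n}(U_n\cap gU_n G_x)/\mu_{U_n}(U_n)$ of $\lambda_{G/G_x}$ on the $U_n$-invariant cyclic vectors $\mathbf{1}_{U_n G_x/G_x}$ then span a dense subspace, so it suffices to approximate each $\psi_{U_n}$.

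The co-null set $Y$ is obtained by intersecting standard measure-theoretic regularity conditions: by Effros' theorem, outside a $\nu$-null set the orbit map $G/G_x \to Gx$ is a Borel isomorphism; and for each $n$, by standard disintegration for the compact group $U_n$, the conditional measures $\nu^{(n)}_{[y]}$ of the disintegration along $\pi_n\colon X\to X/U_n$ coincide with Haar measure on $U_n y \cong U_n/(U_n\cap G_y)$ for $\bar\nu_n$-a.e.\ $[y]$. A countable intersection yields $Y$.

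For $x\in Y$ with $G_x$ regionally elliptic and $U=U_n$, construct $\xi_k := \mathbf{1}_{\pi_U^{-1}(B_k)}/\sqrt{\nu(\pi_U^{-1}(B_k))}$, where $(B_k)$ is a decreasing base of Borel neighborhoods of $[x]_U\in X/U$ with $\bar\nu_U(B_k)\to 0$. Replacing $\nu$ by its $U$-average inside its equivalence class (which leaves the unitary equivalence class of $\kappa$ unchanged), one may assume $\nu$ is $U$-invariant, so each $\xi_k$ is genuinely $U$-fixed. Using the disintegration and the Haar-fiber identification, the Koopman matrix coefficient unfolds as
\[
\varphi_k(g) = \langle \kappa(g)\xi_k,\xi_k\rangle = \frac{1}{\bar\nu_U(B_k)}\int_{B_k}\left(\int_{U}\rho_g(uy)^{1/2}\mathbf{1}_{g^{-1}uy\in\pi_U^{-1}(B_k)}\, d\mu_U(u)\right)d\bar\nu_U([y]),
\]
where $\rho_g$ is the Radon--Nikodym cocycle. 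The pointwise limit as $k\to\infty$ can then be attacked by martingale convergence / Lebesgue differentiation applied to the refining $\sigma$-algebras generated by the $B_k$.

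The principal obstacle is the cocycle contribution $\rho_g$. The key input is that regional ellipticity of $G_x$ forces every measurable homomorphism $G_x\to\mathbf{R}_+$ to be trivial, so $\rho_g(x)=1$ for $g\in G_x$; combining this with a double-coset analysis identifying $U x\cong U/(U\cap G_x)$ inside $Gx\cong G/G_x$, and, if needed, averaging over $G$-translates of $\xi_k$ to neutralize the residual $\rho_g$-twist, identifies the pointwise limit with $\psi_{U_n}(g)$. Equicontinuity of $(\varphi_k)$, via the standard bound $|\varphi_k(g)-\varphi_k(g')|\leq 2\|\kappa(g)\xi_k-\kappa(g')\xi_k\|$ together with strong continuity of $\kappa$, then upgrades pointwise to uniform convergence on compact subsets of $G$, thereby verifying Fell's criterion.
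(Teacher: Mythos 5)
Your overall frame (Fell's criterion, bi-$U_n$-invariant positive definite functions, testing against the cyclic vectors $\mathbf 1_{U_nG_x/G_x}$, and the automatic equicontinuity coming from $U_n$-invariance) is sound, and the reduction to a compact model and the use of the support of $\nu$ match the paper's setup. But the central step --- identifying $\lim_k\langle\kappa(g)\xi_k,\xi_k\rangle$ with $\psi_{U_n}(g)$ --- has a genuine gap, and it is precisely the step where all the work lies. Two concrete problems. First, ``martingale convergence / Lebesgue differentiation'' does not apply as invoked: the quantity $\nu(A_k)^{-1}\int_{A_k}\rho_g(y)^{1/2}\mathbf 1_{A_k}(g^{-1}y)\,d\nu(y)$ is an average over a shrinking set of an integrand that itself depends on $k$ (through $\mathbf 1_{gA_k}$), so it is a joint limit, not a differentiation of a fixed function along refining $\sigma$-algebras; the fiber-at-$x$ computation gives the right candidate limit $\mu_U(U\cap gUG_x)/\mu_U(U)$, but nothing forces the averages over $[y]\in B_k$ with $[y]\neq[x]$ to track the value at $[x]$, since for $h\in G_x$ the overlap $hA_k\cap A_k$ is controlled by how $h$ distorts a neighbourhood of $x$, not by the fiber measure at $x$. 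Second, your use of regional ellipticity (killing homomorphisms $G_x\to\mathbf R_+$, hence $\rho_h(x)=1$) is a red herring: knowing the cocycle at the single point $x$ says nothing about $\int_{A_k}\sqrt{\rho_h}\,d\nu$ over the shrinking sets, and the phrase ``averaging over $G$-translates of $\xi_k$ to neutralize the residual $\rho_g$-twist, if needed'' is exactly the unproved part. Since $G_x$ is typically non-compact, $\psi_{U_n}$ is supported on the non-compact set $U_nG_xU_n$ and must be approximated there too; your $\xi_k$ carries no invariance under $G_x$, so there is no mechanism producing the required positivity of the limit along all of $G_x$.

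The paper's proof is built to avoid exactly these issues, and the comparison shows what is missing. Its test vectors are $\xi_n=\kappa(\mathbf 1_{Q_n})\mathbf 1_{\beta_n}$, where $Q_n$ is an exhausting sequence of \emph{compact subgroups} of $G_x$ (this is where regional ellipticity is really used) and $\beta_n$ shrinks to $x$ compatibly with $Q_n$; the $Q_n$-invariance of $\xi_n$ forces the limit object to be right-$G_x$-invariant, which is what pins it down along the non-compact group $G_x$. Moreover the paper does not attempt a pointwise limit of normalized matrix coefficients at all: it treats $g\mapsto(\kappa(g)\xi_n,\xi_n)$ as the density of a positive definite \emph{measure}, normalized by its mass on a fixed identity neighbourhood, proves vague relative compactness (Lemma~\ref{lem:VagueConv}), shows any vague limit is a Haar measure on $G_x$, and then converts back to $\lambda_{G/G_x}$ via Blattner's theorem~\ref{thm:Blattner} and the continuity Lemma~\ref{lem:weak-limit-posdef}. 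Vague convergence of measures is much more robust than the pointwise convergence your argument requires (the limit measure is singular when $G_x$ is not open, so the densities cannot converge pointwise to anything useful under your $L^2$-normalization). To repair your proof you would need, at minimum, to replace $\xi_k$ by vectors averaged over compact subgroups exhausting $G_x$ and either carry out the joint-limit analysis honestly or switch to the vague-convergence-of-positive-definite-measures formalism.
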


Let us point out that for a discrete group, every Koopman representation weakly contains the quasi-regular representation associated with almost every point stabilizer  (see~\cite[Proposition~7]{DuGr}). We do not know whether this result holds in the non-discrete case; Theorem~\ref{thm:weak-containment} provides a  special case where this is indeed true.

The relevant weak containment in the opposite direction is established by the following statement, which crucially relies on the work of C.~Anantharaman-Delaroche \cite{AD03}.

\begin{thmintro}\label{thm:Ananth}
Let $G$ be a second countable locally compact group, and let $X$ be a 
minimal compact $G$-space. 
Let $\nu$ be a $G$-quasi-invariant Radon probability measure on $X$ such that $L^2(X, \nu)$ is separable. Assume that the $G$-action on $(X, \nu)$ is amenable in the sense of Zimmer, and let $X_1 \subseteq X$ denotes the conull subset consisting of those $x \in X$ such that $G_x$ is amenable. Then the following assertions hold. 
\begin{enumerate}[label=(\roman*)]
\item For  all $x, y \in X_1$, the quasi-regular representations $\lambda_{G/G_x}$ and $\lambda_{G/G_y}$ are weakly equivalent. 

\item For  all $x \in X_1$, the quasi-regular representation $\lambda_{G/G_x}$  weakly contains the Koopman representation $\kappa_{\nu}$.
\end{enumerate}
\end{thmintro}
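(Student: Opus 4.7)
The argument rests on C.~Anantharaman--Delaroche's characterization \cite{AD03} of Zimmer-amenability through \emph{approximate invariant means}: there exists a sequence of measurable maps $m_n \colon X \to \mathrm{Prob}(G)$ satisfying
\[
\sup_{g \in K} \int_X \bigl\|g \cdot m_n(x) - m_n(gx)\bigr\|_1 \, d\nu(x) \xrightarrow[n\to\infty]{} 0
\]
for every compact $K \subseteq G$. The same work also shows that $\nu$-a.e.\ stabilizer $G_x$ is amenable, so $X_1$ is conull.

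I would establish (ii) first. Fix $x \in X_1$ and a unit vector $\xi \in L^2(X,\nu)$, giving the matrix coefficient $\phi_\xi(g) = \langle \kappa_\nu(g)\xi, \xi \rangle$. The candidate approximant in $L^2(G/G_x)$ is built by taking the pointwise square root $\sqrt{m_n(x)} \in L^2(G)_+$, weighting it against a measurable lift of $\xi$ along the orbit map $G \to Gx$, $g \mapsto gx$, and averaging over $G_x$ to obtain a unit vector $\eta_n \in L^2(G/G_x)$. Applying the approximate invariance of $m_n$, Fubini, and the Radon--Nikodym cocycle of $\nu$ under $G$-translations, one checks that $\langle \lambda_{G/G_x}(g)\eta_n, \eta_n \rangle \to \phi_\xi(g)$ uniformly on compact sets, giving $\kappa_\nu \prec \lambda_{G/G_x}$.

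For (i), fix $x, y \in X_1$. Minimality yields a sequence $g_n \in G$ with $g_n x \to y$, and conjugation produces unitary equivalences $\lambda_{G/G_x} \cong \lambda_{G/(g_n G_x g_n^{-1})}$. Passing to a Chabauty-convergent subsequence produces an amenable closed subgroup $H \leq G_y$ as the limit of $\{g_n G_x g_n^{-1}\}$. A continuity-of-induction argument for amenable closed subgroups in the Chabauty topology, combined with (ii) applied symmetrically to $x$ and $y$, then yields the weak containment $\lambda_{G/G_x} \prec \lambda_{G/G_y}$. Swapping the roles of $x$ and $y$ completes the weak equivalence.

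The main technical obstacle I anticipate lies in (ii): coordinating the orbit map $g \mapsto gx$ with the averaging over $G_x$ requires careful accounting of Radon--Nikodym cocycles as one transports vectors between $L^2(X,\nu)$, $L^2(G)$, and $L^2(G/G_x)$, and one must exploit the fact that $L^1$-approximate invariance of $m_n$ transfers (via $\sqrt{\cdot}$) to $L^2$-approximate invariance with controlled rates. Minimality guarantees that $\nu$ has full topological support, a property that underlies the regularity of these constructions. The continuity of induction over amenable subgroups in the Chabauty topology, needed for (i), is a secondary delicate point, which I expect to verify by an approximation argument parallel in spirit to Step~1.
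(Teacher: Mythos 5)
Your treatment of part (i) is essentially the paper's own argument: minimality gives $g_n$ with $g_n x\to y$, the conjugated stabilizers Chabauty-subconverge to some $H\leq G_y$, Fell's continuity theorem gives $\lambda_{G/H}\prec\lambda_{G/G_x}$, and amenability of $G_y$ gives $\mathbf{1}_{G_y}\prec\lambda_{G_y/H}$, whence $\lambda_{G/G_y}\prec\lambda_{G/H}\prec\lambda_{G/G_x}$ after inducing; symmetry finishes. (The appeal to (ii) inside this step is superfluous and should be removed.)

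Part (ii), however, has a genuine gap, and it is not the ``technical obstacle'' you flag but a structural one. Your approximating vector $\eta_n\in L^2(G/G_x)$ is built by restricting $\xi\in L^2(X,\nu)$ to the single orbit $Gx$ via the orbit map $g\mapsto gx$. But $\xi$ is only an equivalence class modulo $\nu$-null sets, and the orbit $Gx$ is in general $\nu$-null (the interesting case is precisely when the action is not essentially transitive), so this restriction is not defined. More fundamentally, the coefficient $\langle\kappa_\nu(g)\xi,\xi\rangle$ is an integral over \emph{all} of $X$, i.e.\ over all orbits; the natural object your construction produces, once regularized, is a field of vectors $\eta_{n,y}\in L^2(G/G_y)$ indexed by $y\in X$, whose coefficients assemble into a matrix coefficient of the direct integral $\int_X\lambda_{G/G_y}\,d\nu(y)$ — not of the single representation $\lambda_{G/G_x}$. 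Collapsing that direct integral to one fibre is exactly where part (i) is needed, so the logical order must be (i) first, then (ii); your plan inverts it. The paper realizes this scheme by passing to the orbit equivalence relation $Y\subseteq X\times X$ with the measure $\nu_Y=\int_X(\delta_y\times\mu_y)\,d\nu(y)$, identifying $\kappa_{\nu_Y}$ with $\int_X\lambda_{G/G_y}\,d\nu(y)$, invoking Anantharaman-Delaroche (Proposition~4.3.2 and Theorem~3.2.1 of~\cite{AD03}) to get $\kappa_{\mu*\nu}\prec\kappa_{\nu_Y}$, and then using (i) to replace the direct integral by $\lambda_{G/G_z}$. If you want to avoid citing those results, your approximate-invariant-means computation would in effect have to reprove them at the level of the equivalence relation; as written, the claim that $\langle\lambda_{G/G_x}(g)\eta_n,\eta_n\rangle\to\langle\kappa_\nu(g)\xi,\xi\rangle$ for a fixed $x$ is not justified and is not something one should expect without (i) already in hand.
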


We emphasize that, if the $G$-action on $X$ is topologically amenable, then $X_1 = X$.
%

By combining a topological version of Theorem~\ref{thm:weak-containment}, recorded as Theorem~\ref{thm:weak-cont-topol} below, with Theorem~\ref{thm:Ananth}, we obtain the following  consequence. 

%

\begin{corintro}\label{cor:amenable-actions}
Let $G$ be a  second countable tdlc group and $X$ be a minimal compact  $G$-space equipped with a $G$-quasi-invariant Radon probability measure $\nu$ such that $L^2(X, \nu)$  is separable. Suppose that for some $x \in X$, the stabilizer $G_x$ is regionally elliptic. 

If the $G$-action on $(X, \nu)$ is amenable in Zimmer's sense, then the Koopman representation $\kappa$ of $G$ on $L^2(X, \nu)$ is weakly equivalent to the quasi-regular representation $\lambda_{G/G_y}$ for any $y \in X$ such that $G_y$ is amenable.
\end{corintro}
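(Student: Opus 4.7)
The plan is to decompose the claimed weak equivalence into the two weak containments $\kappa \prec \lambda_{G/G_y}$ and $\lambda_{G/G_y} \prec \kappa$, each to be handled by a different ingredient from what precedes the statement.

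First, I would note that the regionally elliptic stabilizer $G_x$ furnished by the hypothesis is automatically amenable, being the directed union of its compact (and thus amenable) subgroups. Consequently $x$ lies in the conull subset $X_1$ appearing in \Cref{thm:Ananth}. By part~(i) of that theorem, $\lambda_{G/G_x}$ is weakly equivalent to $\lambda_{G/G_y}$ for every $y$ with amenable stabilizer. It therefore suffices to prove that $\kappa$ and $\lambda_{G/G_x}$ are weakly equivalent for this particular point $x$.

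The easy direction $\kappa \prec \lambda_{G/G_x}$ follows at once from \Cref{thm:Ananth}(ii) applied to $x \in X_1$. For the reverse direction $\lambda_{G/G_x} \prec \kappa$, the measurable statement \Cref{thm:weak-containment} is not quite sufficient on its own: it only yields the containment when $x$ ranges over some conull subset of $X$, whereas the $x$ supplied by our hypothesis is an arbitrary specific point that need not belong to that set. This is precisely the reason the authors announce a topological strengthening (\Cref{thm:weak-cont-topol}): the minimality of the $G$-action on $X$ should allow one to upgrade the conclusion of \Cref{thm:weak-containment} from "almost every $x$" to "every $x$ whose stabilizer is regionally elliptic", which is exactly the statement needed here. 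Combining the two containments with the identifications from \Cref{thm:Ananth}(i) then completes the argument.

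The main obstacle in this chain of reasoning is not the corollary itself, which is essentially an assembly of \Cref{thm:weak-containment,thm:Ananth,thm:weak-cont-topol}, but rather the underlying topological upgrade \Cref{thm:weak-cont-topol}: one must extract a pointwise conclusion valid at a prescribed $x$ from a statement that is a priori only measure-theoretic, and minimality of the $G$-action on $X$ is the natural topological hypothesis permitting such a transfer. Once that technical point is granted, the plan above slots together cleanly to give the stated weak equivalence.
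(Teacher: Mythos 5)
Your proposal is correct and follows exactly the route the paper intends: the corollary is obtained by combining Theorem~\ref{thm:weak-cont-topol} (giving $\lambda_{G/G_x}\prec\kappa$) with Theorem~\ref{thm:Ananth} (giving $\kappa\prec\lambda_{G/G_x}$ and the weak equivalence of the various $\lambda_{G/G_y}$), after observing that a regionally elliptic stabilizer is amenable. The only point worth making explicit is that the ``topological upgrade'' works here because minimality forces $\operatorname{supp}(\nu)=X$, so the prescribed point $x$ satisfies the support hypothesis of Theorem~\ref{thm:weak-cont-topol}.
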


Thus in particular, in the setting of Corollary~\ref{cor:amenable-actions}, the weak equivalence class of the Koopman representation $\kappa$ is independent of $\nu$. 

The regionally elliptic hypothesis appearing in Theorem~\ref{thm:weak-containment} and Corollary~\ref{cor:amenable-actions} may seem rather restrictive (although the special case where $G_x = \langle e \rangle$ is interesting in its own). 
Nonetheless,  it turns out that the results of this section are general enough to be applied to the action of a hyperbolic locally compact group $G$ on its Gromov boundary $X = \partial G$: that action is indeed topologically amenable (see~\cite[Theorem~6.8]{Adams} and~\cite{Kaimanovich}). Moreover, we establish new results on the algebraic structure of amenable subgroups of $G$ which  ensure that  the regionally elliptic hypothesis of Corollary~\ref{cor:amenable-actions} is automatically satisfied in the context of Theorem~\ref{thm:weak-equiv-bd-rep} (see Section~\ref{sec:hyp-rel-amen} below), using a reduction  to the tdlc case that relies on~\cite{CCMT}.

\subsection{Characterizing non-amenable hyperbolic groups with a cocompact amenable subgroup}

As mentioned above, the proof of \Cref{thm:main} requires identifying the non-amenable hyperbolic groups with a cocompact amenable subgroup with those hyperbolic groups  for which any two word metrics are roughly similar. This is ensured by \Cref{thm:coctamen},   which supplements~\cite[Theorem~D and Theorem~8.1]{CCMT}. Given a point $\xi$ in the Gromov boundary of a hyperbolic locally compact group $G$, we denote by $G_\xi^0$ the kernel of the Busemann homomorphism $\beta_\xi \colon G_\xi \to \mathbf R$ (see Section~\ref{sec:hyp-rel-amen} below). 

\addtocounter{thmintro}{1}
\begin{thmintro}\label{thm:coctamen}
Let $G$ be a non-amenable hyperbolic locally compact group and $(X, d)$ be a proper geodesic metric space on which $G$ acts continuously, properly and cocompactly by isometries.

The following conditions are equivalent.

\begin{enumerate}[label=(\roman*)]
\item\label{it:coco} 
$G$ has a cocompact amenable subgroup. 

\item\label{it:bd-2-tr} The $G$-action on the Gromov boundary $\partial X$ is $2$-transitive. 

\item\label{it:all-ballistic} 
For all $\xi \in \partial X$, we have $G_\xi^0 \neq G_\xi$. 

\item\label{it:generic-ballistic} 
For some continuity point $\eta \in \partial X$ of the stabilizer map, we have $G_\eta^0 \neq G_\eta$. 

\item\label{it:bdd-tsl} 
There is a constant $K$ such that, for every hyperbolic element $\gamma \in G$, there exists a hyperbolic element $\gamma' \in G$ with asymptotic displacement length~$|\gamma'|_\infty \leq K$ such that $\gamma$ and $\gamma'$ share the same pair of fixed points in $\partial X$.

\item\label{it:metric} 
For any word metric $d'$ on $G$ with respect to a compact generating set, each orbit map $G\to X$ is a rough similarity.
\end{enumerate}
\end{thmintro}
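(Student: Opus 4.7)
I would close a cyclic chain of implications
(i)$\Rightarrow$(ii)$\Rightarrow$(iii)$\Rightarrow$(iv)$\Rightarrow$(v)$\Rightarrow$(vi)$\Rightarrow$(i).
The equivalence (i)$\Leftrightarrow$(ii) is essentially contained in \cite[Theorem~D]{CCMT}, so the real substance is to interleave (iii)--(vi). First, (iii)$\Rightarrow$(iv) is immediate: the continuity points of the stabilizer map $\partial X \to \Sub(G)$, $\xi \mapsto G_\xi$, form a dense $G_\delta$ by upper semicontinuity, so one may pick a continuity point $\eta$ at which (iii) applies.

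For (ii)$\Rightarrow$(iii), fix $\xi \in \partial X$: 2-transitivity yields that $G_\xi$ acts transitively on $\partial X \setminus \{\xi\}$, and a standard north-south dynamics argument (using non-amenability of $G$ and the existence of hyperbolic elements, guaranteed by cocompactness of the $G$-action on $X$) should produce $g \in G_\xi$ hyperbolic with $\xi$ as attracting fixed point, so that $\beta_\xi(g) > 0$. For (iv)$\Rightarrow$(v), I would take $g \in G_\eta$ with $\beta_\eta(g) \neq 0$: this $g$ is hyperbolic with one axial endpoint at $\eta$ and translation length $\tau = |\beta_\eta(g)|$. Continuity of the stabilizer map at $\eta$ furnishes, for every $\xi$ sufficiently close to $\eta$, an element of $G_\xi$ close to $g$, still hyperbolic of translation length near $\tau$. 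Conjugation by $G$, combined with cocompactness of the $G$-action on the space of distinct pairs in $\partial X$ (which follows from cocompactness on $X$ together with hyperbolicity), should spread this to every pair of boundary points, producing for any hyperbolic $\gamma$ a partner $\gamma'$ sharing its axis endpoints with translation length uniformly bounded by some $K$.

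For (v)$\Rightarrow$(vi), the Milnor--\v{S}varc lemma gives, for any compact generating set $S$, a quasi-isometry $(G, d'_S) \to (X, d)$ via an orbit map; rough similarity amounts to pinning down the multiplicative constant up to additive error. Given an arbitrary hyperbolic $\gamma \in G$, (v) supplies $\gamma'$ of bounded translation with the same axis endpoints; the Busemann-homomorphism structure on $G_{\xi_1} \cap G_{\xi_2}$ then relates $\gamma$ and $\gamma'$ in a way that fixes the ratio between $d'_S$-length and $d$-displacement on a dense set of group elements, yielding the rough similarity. Finally, (vi)$\Rightarrow$(i) would appeal to the structural input of \cite[Theorem~8.1]{CCMT}: rough similarity of every word metric with $d$ places strong algebraic constraints on $G$ from which a cocompact amenable subgroup can be extracted, closing the cycle. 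The hardest step will be (iv)$\Rightarrow$(v), where the challenge is to carefully spread a nontrivial Busemann character from a single continuity point to the whole boundary with uniform control on translation lengths.
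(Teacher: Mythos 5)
Your skeleton for the easy implications is fine ((i)$\Leftrightarrow$(ii) from \cite[Theorem~8.1]{CCMT}, (ii)$\Rightarrow$(iii) by conjugating a hyperbolic element into $G_\xi$ using transitivity on $\partial X$, (iii)$\Rightarrow$(iv) by the Baire-type density of continuity points), but each of the three hard implications in your cycle has a real gap. For (iv)$\Rightarrow$(v), the mechanism you propose does not produce the second fixed point: continuity of the stabilizer map at $\eta$, plus minimality and semicontinuity of $|\cdot|_\infty$, yields a hyperbolic element of uniformly bounded asymptotic length in the \emph{single-point} stabilizer $G_\xi$ for every $\xi$ (this is the paper's Lemma~\ref{lem:hyp-uniform}), but cocompactness on distinct pairs does not convert this into an element of $G_\xi\cap G_{\xi'}$ for the fixed pair of a given hyperbolic $\gamma$ --- moving the pair into a compact set tells you nothing about bounded elements fixing the pairs in that compact set. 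The missing idea (Lemma~\ref{lem:hyp-bi-uniform}) is to take $\xi$ the repelling point of $\gamma$, pick the bounded hyperbolic $t\in G_\xi$, and pass to a limit of $\gamma^n t\gamma^{-n}$: boundedness comes from $t$ fixing $\xi$, the second fixed point is gained from the north--south dynamics of $\gamma$, and the limit stays hyperbolic of the same length because $\beta_\xi$ is a continuous homomorphism computing $|\cdot|_\infty$ (Proposition~\ref{prop:length-Busemann}).

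More seriously, your cycle reaches (i) only through (vi)$\Rightarrow$(i), which is the hardest implication of the theorem and for which you offer no argument: \cite[Theorem~8.1]{CCMT} characterizes (i) by $2$-transitivity on $\partial X$ and gives no route from a metric rigidity statement to a cocompact amenable subgroup. The paper instead closes the loop at (v)$\Rightarrow$(i), showing directly that $G_\xi$ acts cocompactly on $X$; this needs Proposition~\ref{prop:hyp-two-uniform} (a uniformly bounded hyperbolic element fixing an \emph{arbitrary} pair of boundary points, obtained via duality and projections onto axes), which is strictly stronger than (v). Condition (vi) is then handled separately: (i)$\Rightarrow$(vi) uses the Cartan-like decomposition $G=K\{a^n\}K$ with a single hyperbolic $a$ --- available only \emph{after} (i) is known --- to pin down one global homothety constant, while (vi)$\Rightarrow$(i) is the long ``expressway'' argument of Proposition~\ref{prop:RoughSimilarity}, which builds new compact generating sets by adjoining long jumps along a geodesic and derives a contradiction from boundedness of projections. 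Your sketch of (v)$\Rightarrow$(vi) does not substitute for this: proportionality of the two Busemann characters on $G_{\xi}\cap G_{\xi'}$ only gives a ratio $c_{\xi,\xi'}$ depending on the pair, and (v) bounds it above and below without forcing it to be a single constant --- which is exactly the difference between a quasi-isometry (already given by Milnor--\v{S}varc) and a rough similarity.
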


The equivalence between~\ref{it:coco} and~\ref{it:bd-2-tr} is taken from~\cite{CCMT}. A special instance of the equivalence between~\ref{it:bd-2-tr} and~\ref{it:all-ballistic} has been observed for specific families of groups of tree automorphisms by C.~Ciobotaru in her PhD thesis~\cite[Proposition~2.2.11]{Cio}. The proof of Theorem~\ref{thm:main} uses   the implications~\ref{it:generic-ballistic} $\Rightarrow$~\ref{it:coco},~\ref{it:bdd-tsl} $\Rightarrow$~\ref{it:coco} and~\ref{it:metric} $\Rightarrow$~\ref{it:coco}, which are all new.

\subsection{Epilogue on boundary representations}
We finish by recording another result, established along the way, that has its own interest. We recall that a $C^*$-algebra $A$ is called \textbf{CCR} if $\pi(A)$ consists of compact operators for every irreducible representation $\pi$ of $A$ (\cite[Definition~4.2.1]{Dixmier}). $A$ is called \textbf{GCR} if every non-zero quotient of $A$ contains a non-zero CCR closed two-sided ideal (\cite[Definition~4.3.1]{Dixmier}).

We say a unitary representation $\pi$ of a locally compact group $G$ is {CCR} (resp. {GCR}) if the C*-algebra $C^*_\pi(G) := \pi(C^*(G))$ is {CCR} (resp. {GCR}). 
In particular, if $\pi$ is an irreducible  CCR representation,  then  $C^*_\pi(G)$ consists of  compact operators, but this need not be the case if $\pi$ is not irreducible (indeed, a normal operator need not be compact).  It is thus important to underline  that the boundary representation $\kappa$ in the following theorem  is arbitrary, and need not be irreducible a priori.

\begin{thmintro}\label{thm:GCR}
Let $G$ be a non-amenable hyperbolic locally compact group admitting a uniform lattice. For any boundary representation $\kappa$ of $G$, the following assertions are equivalent. 

\begin{enumerate}[label=(\roman*)]
\item\label{it:CCR-ideal}  $C^*_\kappa(G)$ contains a non-zero CCR closed two-sided  ideal. 

\item\label{it:GCR} $\kappa$ is GCR. 

\item\label{it:CCR}   $\kappa$ is CCR.

\item\label{it:all-compact-op} $C^*_\kappa(G)$ entirely consists of  compact operators.

\item\label{it:amen-coco} $G$ has a cocompact amenable subgroup. 
\end{enumerate}
\end{thmintro}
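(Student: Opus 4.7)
The strategy is to close the cycle (iv)$\Rightarrow$(iii)$\Rightarrow$(ii)$\Rightarrow$(i)$\Rightarrow$(v)$\Rightarrow$(iv), of which the first three implications are formal. The classical structure theorem for C*-algebras of compact operators writes any such algebra as a $c_0$-direct sum $\bigoplus_i K(H_i)$, which is manifestly CCR, giving (iv)$\Rightarrow$(iii); the fact that CCR is preserved under quotients yields (iii)$\Rightarrow$(ii); and (ii)$\Rightarrow$(i) is immediate from the definition of GCR applied to $C^*_\kappa(G)$ itself.

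For (v)$\Rightarrow$(iv), I would invoke \Cref{cor:amenable-actions}. After the standard reduction to the totally disconnected case, \Cref{cor:hyper-coco-amen} supplies an Iwasawa decomposition $G = KP$ with $K$ compact and $P$ a closed amenable subgroup that may be taken to be the stabilizer $G_\xi$ of some $\xi \in \partial G$. The $G$-action on $\partial G$ is topologically amenable by Adams and Kaimanovich, and the regionally elliptic hypothesis is supplied by the results of \Cref{sec:hyp-rel-amen}, so \Cref{cor:amenable-actions} applies and yields that $\kappa$ is weakly equivalent to $\lambda_{G/P}$; in particular $C^*_\kappa(G) = C^*_{\lambda_{G/P}}(G)$. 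Since $G/P$ is compact, every operator $\lambda_{G/P}(f)$ for $f \in C_c(G)$ is given by a continuous integral kernel on $G/P \times G/P$ and is therefore Hilbert--Schmidt; taking norm closures gives $C^*_{\lambda_{G/P}}(G) \subseteq K(L^2(G/P))$, which is (iv).

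For the remaining implication (i)$\Rightarrow$(v), the essential new input is Garncarek's rigidity theorem for PS-representations of the uniform lattice $\Gamma \leq G$. For each word metric $d$ on $\Gamma$ arising from a Cayley--Abels graph of $G$, the associated PS-measure on $\partial G = \partial \Gamma$ is $G$-quasi-invariant and yields a boundary representation $\pi_d$ of $G$ whose restriction to $\Gamma$ is the irreducible PS-representation produced by Garncarek; hence $\pi_d$ itself is irreducible as a $G$-representation, and the $\pi_d$ remain pairwise inequivalent for distinct rough-similarity classes of word metrics on $\Gamma$. By \Cref{thm:weak-equiv-bd-rep} every $\pi_d$ has the same C*-kernel as $\kappa$, so each $\pi_d$ descends to a \emph{faithful} irreducible representation of $C^*_\kappa(G)$. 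Assuming (i), the restrictions $\pi_d|_I$ to the non-zero CCR ideal $I \triangleleft C^*_\kappa(G)$ are then faithful irreducible representations of $I$; but $I$ is of type~I, so its spectrum coincides with its primitive ideal space and any two faithful irreducible representations of $I$ are unitarily equivalent. Uniqueness of the extension of an irreducible representation from an ideal to the containing algebra then forces $\pi_d \cong \pi_{d'}$ as representations of $C^*_\kappa(G)$ for all $d,d'$. Garncarek's rigidity thus leaves only a single rough-similarity class of word metric on $\Gamma$, and via the quasi-isometry $\Gamma \hookrightarrow G$ the same conclusion on $G$, so condition (vi) of \Cref{thm:coctamen} is satisfied and the implication (vi)$\Rightarrow$(i) of that theorem delivers (v). The most delicate point of this plan is the transfer of Garncarek's results from $\Gamma$ to $G$: one must verify that the PS-measures associated with the various Cayley--Abels graphs of $G$ are genuinely $G$-quasi-invariant and that the assignment $d \mapsto \pi_d$ captures enough rough-similarity classes on $\Gamma$ to force condition (vi) of \Cref{thm:coctamen} on $G$.
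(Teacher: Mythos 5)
Your cycle (iv)$\Rightarrow$(iii)$\Rightarrow$(ii)$\Rightarrow$(i)$\Rightarrow$(v)$\Rightarrow$(iv) and your treatment of the key implication (i)$\Rightarrow$(v) essentially reproduce the paper's argument (Theorem~\ref{thm:main-tech}): irreducibility of the PS-representations via Garncarek, weak equivalence of all boundary representations via Theorem~\ref{thm:weak-equiv-bd-rep}, the nonzero CCR ideal forcing unitary equivalence of all PS-representations, then Garncarek's rigidity and Proposition~\ref{prop:RoughSimilarity}. Your route through ``$I$ is type~I, so spectrum $=$ Prim'' plus unique extension of an irreducible representation from an ideal is interchangeable with the paper's citation of Dixmier's Corollary~4.1.10, and the ``delicate point'' you flag about transferring Garncarek's results from $\Gamma$ to $G$ is exactly what Theorem~\ref{thm:Garncarek} and Proposition~\ref{prop:RoughSimilarity} are set up to handle.

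The implication (v)$\Rightarrow$(iv) as you set it up does not work, for two reasons. First, Corollary~\ref{cor:amenable-actions} requires some boundary stabilizer to be regionally elliptic, and that hypothesis fails precisely when (v) holds: by Theorem~\ref{thm:Ballistic}, the existence of a cocompact amenable subgroup forces $G_\xi^0 \neq G_\xi$ for \emph{every} $\xi \in \partial G$, so every stabilizer contains a hyperbolic element and hence is not regionally elliptic (Corollary~\ref{cor:reg-ell-URS} supplies the regionally elliptic hypothesis only in the complementary case where $G$ has \emph{no} cocompact amenable subgroup). Second, even granting that $\kappa$ is weakly equivalent to $\lambda_{G/P}$, weak equivalence only identifies $C^*_\kappa(G)$ and $C^*_{\lambda_{G/P}}(G)$ as abstract $C^*$-algebras; ``consists of compact operators'' is a property of the concrete representation, not an isomorphism invariant (an infinite multiple of the identity representation of the compacts is faithful, yet its image contains no nonzero compact operator), so your Hilbert--Schmidt kernel computation for $\lambda_{G/P}$ yields at best (iii) for $\kappa$, leaving (iv) unproved and the cycle broken at that vertex. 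The repair is the one the paper uses: under (v) the $G$-action on $\partial G$ is transitive, hence $\partial G$ carries a unique $G$-quasi-invariant measure class, so $\kappa$ is \emph{unitarily} (not merely weakly) equivalent to $\lambda_{G/P}$ and compactness of operators transfers. Note also that the ``reduction to the totally disconnected case'' does not dispose of the possibility that $G$ modulo a compact normal subgroup is a rank one simple Lie group; your integral-kernel argument would in fact cover that case as well, whereas the paper treats it separately via admissibility of the representation induced from a parabolic, and handles the tdlc case through the approximate unit of finite-rank projections of Proposition~\ref{prop:CCR-coco}.
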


The implications~\ref{it:all-compact-op}~$\Rightarrow$~\ref{it:CCR}~$\Rightarrow$~\ref{it:GCR}~$\Rightarrow$~\ref{it:CCR-ideal} in Theorem~\ref{thm:GCR} are tautological, while the implication~\ref{it:amen-coco}~$\Rightarrow$~\ref{it:all-compact-op} follows from the fact that if $G$ has a cocompact amenable subgroup $P$, then $\partial G$ can naturally be identified with $G/P$ so that $\kappa$ becomes equivalent to the quasi-regular representation $\lambda_{G/P}$. That the quasi-regular representation defined by a cocompact subgroup satisfies the condition~\ref{it:all-compact-op} follows by general principles (compare Proposition~\ref{prop:CCR-coco}).

The key implication is~\ref{it:CCR-ideal}~$\Rightarrow$~\ref{it:amen-coco}.
The formally weaker implication~\ref{it:all-compact-op}~$\Rightarrow$~\ref{it:amen-coco} is much more straightforward, and   can be established without requiring that $G$ has a cocompact lattice by invoking  Propositions~\ref{prop:RelAmen-action} and~\ref{prop:transitive-boundary} below (see also~\cite{Nebbia} in the special case of groups acting on trees).

\tableofcontents

\starttocentries

\section{Preliminaries}\label{sec:prem}
In this section we gather some general facts which we will use in proofs of our results.

\subsection{Koopman unitary representations}
Throughout the paper, by a representation of a locally compact group $G$ we always mean a continuous unitary representation. The most commonly used representations in this work are the Koopman unitary representations $\kappa_\nu$ associated to measurable actions of locally compact groups $G$ on $G$-quasi-invariant $\sigma$-finite measure spaces $(X, \nu)$. Recall that $\kappa_\nu$ is defined by 
\[
(\kappa_\nu(g) \xi) (x) := \sqrt{\frac{dg\nu}{d\nu}(x)}\, \xi(g^{-1}x)
\]
for all $g\in G$, $\xi\in L^2(X, \nu)$ and $\nu$-a.e. $x\in X$. 

Some care should be taken with regards to the continuity of the action, which is not guaranteed in this generality (see~\cite[Remark A.6.3]{BHV}).
The representation $\kappa_\nu$ is a continuous, for instance, when $G$ is $\sigma$-compact and $L^2(X, \nu)$ is separable (\cite[Proposition A.6.1]{BHV} and~\cite[Theorem 2]{SegVN50}).
Except in some places in Section~\ref{sec:C*-simple}, we always work with continuous actions of $G$ on metrizable locally compact spaces $X$. When $\nu$ is a Radon measure on $X$, then $L^2(X, \nu)$ is separable. Also, note that all locally compact hyperbolic groups are $\sigma$-compact. In particular, the continuity issue will only be relevant in some places in the last section, where we consider general boundary actions. However, in that case, considering those measures $\nu$ whose $L^2$-spaces are separable will suffice for our purposes, and therefore again we will not have any continuity issue for the Koopman representations.

The following basic facts are well-known. We record them for easy reference in few places later in the paper. 

\begin{lem} \label{lem:equiv-meas->equiv-koop}
Let $G$ be a $\sigma$-compact locally compact group and $X$ be a locally compact $G$-space. If $\nu$ and $\nu'$ are two equivalent $\sigma$-finite Radon measures on $X$ which are quasi-invariant under $G$, then the Koopman representations of $G$ on $L^2(X, \nu)$ and $L^2(X, \nu')$ are unitary equivalent.
\end{lem}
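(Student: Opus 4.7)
The plan is to exhibit an explicit intertwining unitary operator given by multiplication by a square-root Radon--Nikodym derivative. Since $\nu$ and $\nu'$ are equivalent $\sigma$-finite measures, the Radon--Nikodym derivative $\varphi := \frac{d\nu}{d\nu'}$ exists and is strictly positive $\nu'$-a.e. (with reciprocal $\frac{d\nu'}{d\nu}$ strictly positive $\nu$-a.e.). I would define
\[
U \colon L^2(X, \nu) \longrightarrow L^2(X, \nu'), \qquad (U\xi)(x) := \sqrt{\varphi(x)}\, \xi(x).
\]
A direct change-of-variables computation $\int_X |U\xi|^2 \, d\nu' = \int_X |\xi|^2 \varphi \, d\nu' = \int_X |\xi|^2 \, d\nu$ shows that $U$ is an isometry, and surjectivity is immediate with inverse given by multiplication by $\sqrt{\varphi^{-1}}$.

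The heart of the argument is to check that $U$ intertwines $\kappa_\nu$ and $\kappa_{\nu'}$. Fix $g \in G$ and $\xi \in L^2(X,\nu)$. Using the Koopman formula, one computes pointwise (a.e.)
\[
(\kappa_{\nu'}(g) U\xi)(x) = \sqrt{\tfrac{dg\nu'}{d\nu'}(x)}\,\sqrt{\varphi(g^{-1}x)}\,\xi(g^{-1}x),
\]
\[
(U \kappa_\nu(g)\xi)(x) = \sqrt{\varphi(x)}\,\sqrt{\tfrac{dg\nu}{d\nu}(x)}\,\xi(g^{-1}x).
\]
The equality of the two square-root factors reduces, after squaring, to the identity
\[
\tfrac{dg\nu'}{d\nu'}(x)\cdot \varphi(g^{-1}x) \;=\; \varphi(x) \cdot \tfrac{dg\nu}{d\nu}(x),
\]
which follows from the chain rule for Radon--Nikodym derivatives together with the transformation rule $\frac{d\nu}{d\nu'}(g^{-1}x) = \frac{dg\nu}{dg\nu'}(x)$; indeed both sides equal $\frac{dg\nu}{d\nu'}(x)$.

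There is essentially no obstacle here: the only points requiring a bit of care are the measurability of $\varphi$, the handling of the $\nu$-null set where $\varphi$ could vanish (irrelevant by equivalence), and making sure the chain rule manipulation is carried out modulo the appropriate null sets simultaneously for all $g$ in a countable dense subset of $G$, which suffices by continuity of both representations. This last point is the only mildly subtle step, but is standard for $\sigma$-compact $G$ with separable $L^2$-spaces.
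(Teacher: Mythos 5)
Your proposal is correct and is exactly the argument the paper gives: the paper's proof consists of the single observation that $T\xi = \sqrt{\tfrac{d\nu}{d\nu'}}\,\xi$ is a unitary intertwining $\kappa_\nu$ and $\kappa_{\nu'}$, which is precisely your operator $U$. Your verification of the intertwining identity via the chain rule and the transformation rule for Radon--Nikodym derivatives is the standard computation the paper leaves as ``straightforward.''
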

\begin{proof}
It is straightforward to see that the map $T: L^2(X, \nu)\to L^2(X, \nu')$ defined by $T\xi = \sqrt{\frac{d\nu}{d\nu'}}\, \xi$ is a unitary that intertwines $\kappa_\nu$ and $\kappa_{\nu'}$.
\end{proof}

\begin{lem} \label{lem:conv->equiv-meas}
Let $G$ be a locally compact group and $X$ be a locally compact $G$-space. Let $\nu$ be a Borel probability measure on $X$ which is quasi-invariant under $G$. Then for any Borel measure $\mu$ on $G$ we have $\mu*\nu\sim \nu$.
\end{lem}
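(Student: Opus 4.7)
The plan is to unwind the definition of the convolution $\mu * \nu$ as a $\mu$-average of the push-forward measures $g_*\nu$, and then invoke the quasi-invariance of $\nu$ to compare null sets in both directions.

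First, I would rewrite, for any Borel set $A \subseteq X$,
\[
(\mu * \nu)(A) \;=\; \int_G \int_X \mathbf{1}_A(gx)\, d\nu(x)\, d\mu(g) \;=\; \int_G (g_*\nu)(A)\, d\mu(g),
\]
after noting that the map $g \mapsto (g_*\nu)(A)$ is Borel measurable (this follows from the continuity of the $G$-action on $X$ together with standard facts about Radon measures and regularity of $\nu$, together with a monotone class argument to pass from open sets to arbitrary Borel sets).

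Once this identity is in place, both comparisons are one-line observations. For the direction $\mu * \nu \ll \nu$: if $\nu(A) = 0$, then by quasi-invariance every measure $g_*\nu$ also annihilates $A$, so the integral on the right-hand side vanishes. For the reverse direction $\nu \ll \mu * \nu$: if $(\mu*\nu)(A) = 0$, then since the integrand is non-negative we must have $(g_*\nu)(A) = 0$ for $\mu$-a.e.\ $g \in G$; provided $\mu$ is non-zero there exists at least one such $g$, and quasi-invariance of $\nu$ then forces $\nu(A) = 0$.

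The only substantive technical points I foresee are the measurability step needed to apply Fubini, and the implicit non-triviality assumption on $\mu$ (without which the statement fails, since $0 * \nu = 0$); neither is expected to be a real obstacle. If one wishes to cover the case of an infinite Borel measure $\mu$, one should also check $\sigma$-finiteness on a compact exhaustion of $G$ to legitimize the use of Fubini--Tonelli, but this is routine for Radon measures on a second countable locally compact group.
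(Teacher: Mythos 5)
Your proposal is correct and follows essentially the same route as the paper's own proof: express $(\mu*\nu)(A)$ as $\int_G \nu(g^{-1}A)\,d\mu(g)$ and apply quasi-invariance in each direction. The measurability and non-triviality caveats you flag are real but routine, and the paper handles them implicitly.
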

\begin{proof}
Let $Y$ be a Borel subset of $X$. If $\nu(Y) = 0$, then $\nu(gY) = 0$ for all $g\in G$ by quasi-invariance, and therefore $\mu*\nu(Y) = \int_G \nu(g^{-1}Y)\, d\mu(g) = 0$.

Conversely, assume $\mu*\nu(Y) = 0$. Then $\nu(g^{-1}Y) = 0$ for $\mu$-a.e. $g\in G$. Hence, $\nu(Y) = 0$ by quasi-invariance.
\end{proof}

We recall from~\cite[\S13.7]{Dixmier} that  a \textbf{positive definite measure} on a locally compact group $G$ is a measure $\mu$ on $G$ such that $\int_G f * \tilde f \, d\mu \geq 0$ for all $f \in C_c(G)$, where $C_c(G)$ denotes the set of continuous compactly supported complex valued functions on $G$ and $\tilde f(g) = \overline{f(g^{-1})}$. (The convolution is with respect to a left Haar measure.) Every such measure $\mu$ defines a unitary representation $\pi_\mu$ of $G$ constructed as follows. The formula  $\langle f, g\rangle_\mu =  \int_G   \tilde g * f\,d\mu $ turns $C_c(G)$ into a pre-Hilbert space; we define the Hilbert space $\mathscr H_\mu$ to be its separated completion. The representation $\pi_\mu$ is induced by the representation $s$ on $C_c(G)$ defined by $(s(g)f)(x) = f(g^{-1}x) \Delta_G(g)^{1/2}$, where $\Delta_G$ is the modular function of $G$.

For example, for the Dirac mass $\delta_e$ at the neutral element, we have $\pi_{\delta_e} \cong \lambda_G$, whereas the representation associated with a right Haar measure on $G$ is the trivial representation. More generally, we have the following result of Blattner. 

\begin{thm}\label{thm:Blattner}
Let $G$ be locally compact group and $H \leq  G$ be a closed subgroup. Let $\alpha_H$ be a left Haar measure on $H$, viewed as a measure on $G$, and define a mesure $\mu$ on $G$ be setting $d\mu = \sqrt{\frac{\Delta_G}{\Delta_H}} d \alpha_H$, where $\Delta_G$ and $\Delta_H$ are the modular functions of $G$ and $H$ respectively. Then $\mu$ is positive definite and $\pi_\mu$ is equivalent to  the quasi-regular representation $\lambda_{G/H}:=\mathrm{Ind}_H^G(\mathbf 1)$. 
\end{thm}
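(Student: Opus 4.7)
This result is classical and due to Blattner; I plan to prove both assertions---positive definiteness of $\mu$ and the equivalence $\pi_\mu \simeq \lambda_{G/H}$---simultaneously by exhibiting an explicit isometric intertwiner. The first step is to realize $\lambda_{G/H}$ concretely via a rho-function $\rho \colon G \to (0, \infty)$ satisfying $\rho(gh) = \rho(g) \Delta_H(h)/\Delta_G(h)$ for all $g \in G$ and $h \in H$; such a function exists and yields a Radon measure $\bar\mu_\rho$ on $G/H$, quasi-invariant under $G$, with Radon--Nikodym cocycle built from $\rho$, for which Weil's integration formula applies. The representation $\lambda_{G/H}$ is then the Koopman representation on $L^2(G/H, \bar\mu_\rho)$.

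The candidate intertwiner is the map $V \colon C_c(G) \to C_c(G/H)$ defined by
\[
(Vf)(xH) = \rho(x)^{-1/2} \int_H f(xh)\, \rho(xh)^{1/2}\, d\alpha_H(h).
\]
One verifies that $Vf$ is a well-defined compactly supported continuous function on $G/H$ (using the covariance of $\rho$ and the left-invariance of $\alpha_H$), and that the range of $V$ is dense in $L^2(G/H, \bar\mu_\rho)$. The key computation is the identity $\|Vf\|^2_{L^2(G/H)} = \int_G \tilde f * f\, d\mu$: expanding the square, applying Weil's formula to reassemble one of the integrations, and then changing variables on $G$, the norm collapses to a single integral over $H$ of the ``folded'' convolution $\tilde f * f$ weighted by $\sqrt{\Delta_G/\Delta_H}$. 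The modular factors conspire precisely because the defining weight $\sqrt{\Delta_G/\Delta_H}$ is exactly the square root of the Radon--Nikodym-type factor appearing in Weil's formula. This identity simultaneously shows that $\mu$ is positive definite (the left-hand side being manifestly non-negative) and that $V$ extends to an isometry $\mathscr{H}_\mu \to L^2(G/H, \bar\mu_\rho)$, surjective by the density just noted.

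The final step is to verify that $V$ intertwines the representation $s$ on $C_c(G)$ (which induces $\pi_\mu$) with the Koopman/left-translation representation on $L^2(G/H, \bar\mu_\rho)$: a direct change of variable shows that the factor $\Delta_G(g)^{1/2}$ in the formula defining $s(g)$ matches the Radon--Nikodym factor of $\bar\mu_\rho$ under left translation by $g$, once combined with the covariance of $\rho$. By density this yields the unitary equivalence $\pi_\mu \simeq \lambda_{G/H}$.

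The principal obstacle is bookkeeping: the modular functions $\Delta_G, \Delta_H$ and the rho-function $\rho$ must be tracked consistently throughout. The crucial observation that makes everything work is that the weight $\sqrt{\Delta_G/\Delta_H}$ on $H$ is exactly what is needed to reconcile the covariance built into the induced representation with the symmetrization implicit in $\langle f_1, f_2\rangle_\mu = \int \tilde f_2 * f_1\, d\mu$.
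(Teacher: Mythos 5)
The paper does not actually prove this statement: its ``proof'' is a one-line citation of Blattner's original article together with a warning that Blattner and Dixmier use opposite conventions for convolution. Your sketch is therefore not competing with an argument in the paper; it is essentially a reconstruction of Blattner's own proof --- the rho-function realization of $\mathrm{Ind}_H^G\mathbf 1$ on $L^2(G/H,\bar\mu_\rho)$, Weil's quotient integration formula, and an averaging map $C_c(G)\to C_c(G/H)$ serving simultaneously as a witness of positive definiteness and of the unitary equivalence. That is the right strategy, and your outline of the norm computation and of the intertwining verification is the standard one.

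There is, however, a concrete defect in the displayed formula for $V$. With the covariance you impose, $\rho(xh)=\rho(x)\Delta_H(h)/\Delta_G(h)$, the integral $\int_H f(xh)\rho(xh)^{1/2}\,d\alpha_H(h)$ is already invariant under $x\mapsto xk$ for $k\in H$ (substitute $h\mapsto k^{-1}h$ and use left invariance of $\alpha_H$: the integrand depends only on the product $xh$), whereas the prefactor $\rho(x)^{-1/2}$ is not. Hence your $(Vf)(xH)$ picks up the factor $\bigl(\Delta_G(k)/\Delta_H(k)\bigr)^{1/2}$ when the representative $x$ is replaced by $xk$, so it is not a function on $G/H$ and the map fails the very well-definedness check you promise to perform. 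The correct map is
\[
(Vf)(xH)=\int_H f(xh)\,\rho(xh)^{-1/2}\,d\alpha_H(h)
\]
(no prefactor, exponent $-1/2$), possibly with an additional factor $\Delta_G(xh)^{-1/2}$ in the integrand depending on which convolution convention one adopts in $\langle f,g\rangle_\mu=\int\tilde g*f\,d\mu$ --- this is exactly the pitfall the paper's citation warns about. With that correction, expanding $\|Vf\|^2$ and applying Weil's formula does produce a single integral over $H$ against the weight $\sqrt{\Delta_G/\Delta_H}$ as you describe, and the intertwining with $s$ then goes through. In short: correct approach, but the central formula as written is wrong, and the bookkeeping you flag as the main obstacle is precisely where the slip occurred.
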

\begin{proof}
We refer to~\cite[Theorem~1]{Blattner1963}; note that Blattner uses the opposite convention from Dixmier regarding convolution. 
\end{proof}

We endow the set of measures on $G$ with the \textbf{vague topology}, which is the topology of pointwise convergence on $C_c(G)$ (see~\cite[Chapter~III, \S1, no.~9]{Bourbaki_Int_Ch1-4}). In particular,  a sequence $(\mu_n)_n$ of measures on $G$ \textbf{vaguely converges} to a measure  $\mu$ if for every function $f  \in C_c(G)$, the sequence $(\int_G f d\mu_n)_n$ converges to $\int_G f d\mu$.

\begin{lem} \label{lem:VagueConv}
Let $G$ be a lcsc group and $\mathscr M$ be a set of measures on $G$. Then $\mathscr M$ is relatively compact for the vague topology if and only if for each compact subset $K \subseteq G$, there is a constant $M_K$ such that $|\mu|(K) \leq M_K$ for all $\mu \in \mathscr M$. 
\end{lem}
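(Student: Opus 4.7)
The plan is to prove the two directions by standard functional-analytic means: Banach--Steinhaus for the necessary direction and a Tychonoff embedding for the sufficient direction. I would set up, for each compact $K' \subseteq G$, the Banach space $B_{K'} := \{f \in C(G) : \operatorname{supp}(f) \subseteq K'\}$ with the sup norm; any Radon measure $\mu$ on $G$ restricts to a continuous linear functional on $B_{K'}$, and by the Riesz representation theorem applied on the open set $\operatorname{int}(K')$ one has
\[
|\mu|(K) \;\leq\; |\mu|(\operatorname{int}(K')) \;=\; \sup\bigl\{|\mu(f)| : f \in C_c(\operatorname{int}(K')),\ \|f\|_\infty \leq 1\bigr\} \;\leq\; \bigl\|\mu|_{B_{K'}}\bigr\|_{B_{K'}^*}
\]
whenever $K \subseteq \operatorname{int}(K')$.

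For the forward direction, suppose $\mathscr M$ is relatively compact in the vague topology. For every $f \in C_c(G)$ the evaluation $\mu \mapsto \mu(f)$ is continuous, hence $\{\mu(f) : \mu \in \mathscr M\}$ is relatively compact in $\mathbf C$, in particular bounded. Given a compact $K \subseteq G$, pick a compact $K'$ with $K \subseteq \operatorname{int}(K')$ and apply the Banach--Steinhaus theorem to the family $\{\mu|_{B_{K'}} : \mu \in \mathscr M\}$ of linear functionals on the Banach space $B_{K'}$: pointwise boundedness on $B_{K'} \subseteq C_c(G)$ yields a uniform operator-norm bound, which together with the displayed inequality above produces the required $M_K$.

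For the reverse direction, assume $|\mu|(K) \leq M_K$ for all $\mu \in \mathscr M$ and every compact $K$. Then $|\mu(f)| \leq \|f\|_\infty M_{\operatorname{supp}(f)}$ for each $f \in C_c(G)$, so the map $\mu \mapsto (\mu(f))_{f \in C_c(G)}$ sends $\mathscr M$ into the compact product $\prod_{f} D_f$, where $D_f \subseteq \mathbf C$ is the closed disk of radius $\|f\|_\infty M_{\operatorname{supp}(f)}$. This map is a topological embedding of $\mathscr M$ (with the vague topology) into that product (with its product topology, which is also pointwise convergence on $C_c(G)$), so by Tychonoff the closure of the image is compact. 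It remains to check that every limit point $\varphi$ of this closure is actually induced by a Radon measure: linearity on $C_c(G)$ passes to pointwise limits, and the bound $|\varphi(f)| \leq \|f\|_\infty M_K$ for $f$ supported in $K$ shows $\varphi$ is continuous on each $B_K$, hence on $C_c(G)$ with its LF (inductive-limit) topology; the Riesz representation theorem then identifies $\varphi$ with a Radon measure on $G$.

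Neither step presents a serious obstacle; the only place a little care is required is the Riesz-type identification of the operator norm $\|\mu|_{B_{K'}}\|_{B_{K'}^*}$ with $|\mu|$ on a set sandwiched between $\operatorname{int}(K')$ and $K'$, but this is standard. Second countability of $G$ does not intervene in the argument as outlined, although it could be used to replace the Tychonoff argument by a diagonal extraction over a countable dense subset of $C_c(G)$ if a sequential formulation were preferred.
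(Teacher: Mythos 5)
Your argument is correct. For this lemma the paper offers no proof of its own: it simply cites Proposition~15 in Bourbaki, Chapter~III, \S1, no.~9, so there is no ``paper approach'' to compare against beyond that reference. What you have written is essentially the standard proof underlying the Bourbaki statement: the vague topology is the topology of pointwise convergence on $C_c(G)$, so the necessary direction is Banach--Steinhaus applied on each Banach space $B_{K'}$ together with the Riesz-type identity $|\mu|(U)=\sup\{|\mu(f)|: f\in C_c(U),\ \|f\|_\infty\leq 1\}$, and the sufficient direction is Tychonoff plus the observation that any pointwise limit of the evaluation maps is again a linear functional that is bounded on each $B_K$, hence continuous for the inductive-limit topology on $C_c(G)$ and therefore a Radon measure. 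You correctly isolate the two points where care is needed (the identification of the operator norm on $B_{K'}$ with the total variation of a slightly larger set, and the verification that limit points in the product are genuinely measures), and both are handled adequately; the bound $|\varphi(f)|\leq \|f\|_\infty M_K$ for $f$ supported in $K$ indeed survives the pointwise limit because it already holds for every $\mu\in\mathscr M$. Your closing remark is also accurate: local compactness is what the argument uses (to find $K'$ with $K\subseteq \operatorname{int}(K')$ compact), and second countability is only relevant if one wants a sequential version via diagonal extraction.
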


\begin{proof}
We refer to Proposition~15 in~\cite[Chapter~III, \S1, no.~9]{Bourbaki_Int_Ch1-4}.
\end{proof}

The following continuity principle is of fundamental importance for Theorem~\ref{thm:weak-containment}. It is related to Fell's continuity theorem recalled in Section~\ref{sec:Fell} below.

\begin{lem}\label{lem:weak-limit-posdef}
Let $G$ be a lcsc group and $(\mu_n)$ a sequence of positive definite measures on $G$. If $\mu_n \to \mu$ vaguely, then $\mu$ is also positive definite and $\pi_{\mu_n}\to \pi_\mu$ in Fell's topology.
\end{lem}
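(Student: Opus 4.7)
The plan consists of two parts. First, positive-definiteness of the limit measure $\mu$ is immediate: for any $f \in C_c(G)$ the function $f * \tilde f$ is continuous with compact support, so vague convergence gives
$$
\int_G f * \tilde f \, d\mu \;=\; \lim_{n \to \infty} \int_G f * \tilde f \, d\mu_n \;\geq\; 0,
$$
each integral on the right being non-negative by positive-definiteness of $\mu_n$. This produces a well-defined representation $\pi_\mu$ on the separated completion $\mathscr H_\mu$ of $C_c(G)$.

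For the Fell-topology convergence $\pi_{\mu_n} \to \pi_\mu$, the strategy is to compare diagonal matrix coefficients issued from $C_c(G)$. For any $f \in C_c(G)$ and any of the measures $\nu \in \{\mu,\, \mu_n : n \geq 1\}$, the class $[f]_\nu \in \mathscr H_\nu$ yields the diagonal matrix coefficient
$$
\phi^f_\nu(g) \;:=\; \langle \pi_\nu(g)[f]_\nu,\, [f]_\nu\rangle_\nu \;=\; \int_G \tilde f * s(g) f \, d\nu.
$$
Since the vectors $\{[f]_\mu : f \in C_c(G)\}$ are dense in $\mathscr H_\mu$ by construction, a triangle-inequality argument shows that uniform-on-compacts convergence $\phi^f_{\mu_n} \to \phi^f_\mu$ for each fixed $f$ implies that every diagonal matrix coefficient of $\pi_\mu$ can be approximated, uniformly on compact subsets of $G$, by diagonal matrix coefficients of $\pi_{\mu_n}$; this is exactly the criterion for Fell convergence $\pi_{\mu_n} \to \pi_\mu$.

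It remains to establish $\phi^f_{\mu_n} \to \phi^f_\mu$ uniformly on each compact $K \subseteq G$. Set $L := \mathrm{supp}(\tilde f) \cdot K \cdot \mathrm{supp}(f)$, a compact subset of $G$. Standard continuity of left translation on $C_c(G)$ combined with continuity of $\Delta_G^{1/2}$ implies that $g \mapsto \tilde f * s(g) f$ is continuous from $G$ into $C(L)$ equipped with the uniform norm, so its image $\mathcal F := \{\tilde f * s(g) f : g \in K\}$ is a norm-compact subset of $C(L)$. By Lemma~\ref{lem:VagueConv}, the vaguely convergent sequence $(\mu_n)$ satisfies $\sup_n |\mu_n|(L) < \infty$, so the linear functionals $T_n(h) := \int_G h \, d\mu_n$ on $C(L)$ are uniformly bounded, hence equi-Lipschitz with a common constant, and converge pointwise to $T(h) := \int_G h \, d\mu$. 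An equi-Lipschitz pointwise-convergent family of functionals converges uniformly on any norm-compact subset of the domain; applied to $\mathcal F$, this yields uniform convergence $\phi^f_{\mu_n} \to \phi^f_\mu$ on $K$. The main conceptual step is thus the reduction to diagonal coefficients along the dense family $\{[f]_\mu\}$; thereafter the argument is a routine combination of Lemma~\ref{lem:VagueConv} and equicontinuity.
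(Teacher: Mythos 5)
Your proof is correct and follows essentially the same route as the paper's: reduce Fell convergence to the dense family of vectors coming from $C_c(G)$, then verify uniform convergence of the matrix coefficients $g\mapsto \int_G \tilde f * s(g)f\,d\mu_n$ on compacta by combining the norm-compactness of $\{\tilde f * s(g)f : g\in K\}$ with the vague convergence. The only difference is that where the paper cites Bourbaki (Proposition~17(ii) in~\cite[Chapter~III, \S1, no.~10]{Bourbaki_Int_Ch1-4}) for the fact that vague convergence of a sequence is uniform on compact subsets of $C_c(G)$, you reprove that fact by hand via the mass bounds of Lemma~\ref{lem:VagueConv} and an equi-Lipschitz argument.
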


\begin{proof}
The fact that $\mu$ is also positive definite is obvious. The definition of Fell convergence needs only to be checked on a dense subset of vectors (see e.g.~\cite[Lemma~F.1.3]{BHV}), so we check it using the canonical image of $C_c(G)$ in the Hilbert spaces ${\mathscr H}_{\pi_{\mu_n}}$ and ${\mathscr H}_{\pi_{\mu}}$ respectively. Explicitly, given $f\in C_c(G)$ and a compact subset $Q\subset G$, it suffices to show that the matrix coefficient function
\[
g \longmapsto \left\langle f, \pi_{\mu_n}(g) f \right\rangle_{\mu_n} =  \mu_n(  (s(g) f)^{\sim} * f)
\]
converges to $\mu(  (s(g) f)^{\sim} *  f)$ uniformly over $g\in Q$. The vague convergence assumption implies that this holds pointwise for each $g$. However, the set $\{ (s(g) f) *  f : g\in Q\}$ is compact in $C_c(G)$ for the topology of uniform convergence on compact subsets because the orbital map $g\mapsto s(g) f$ is continuous. So we only need to recall that the vague convergence of a \emph{sequence} is actually uniform on compact subsets of $C_c(G)$, see Proposition~17(ii) in~\cite[Chapter~III, \S1, no.~10]{Bourbaki_Int_Ch1-4}.
\end{proof}

We shall also need the following basic fact. 

\begin{lem}\label{lem:stabilizer}
Let $G$ be a locally compact group and $X$ be a locally compact $G$-space. For each $x \in X$ and every compact subset $K \subset G$ with $K \cap G_x = \varnothing$, there exists an open neighbourhood $\alpha$ of $x$ such that $g\alpha \cap \alpha = \varnothing$ for all $g \in K$. 
\end{lem}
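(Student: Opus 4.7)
The plan is to produce $\alpha$ by first constructing a local neighbourhood at each single $g \in K$ via the joint continuity of the action together with the Hausdorff property of $X$, and then exploiting the compactness of $K$ to merge finitely many such neighbourhoods into a single one.

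Fix $g \in K$. Since $g \notin G_x$ we have $gx \neq x$, and as $X$ is Hausdorff we may choose disjoint open subsets $U_g \ni x$ and $V_g \ni gx$. By the joint continuity of the action map $G \times X \to X$ at the point $(g,x)$, there exist an open neighbourhood $W_g$ of $g$ in $G$ and an open neighbourhood $\alpha_g$ of $x$ in $X$ such that $W_g \alpha_g \subseteq V_g$. Shrinking $\alpha_g$ if necessary, we may additionally assume $\alpha_g \subseteq U_g$. Then for every $h \in W_g$ we have $h\alpha_g \subseteq V_g$ and $\alpha_g \subseteq U_g$, hence $h\alpha_g \cap \alpha_g = \varnothing$.

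Now, the collection $\{W_g : g \in K\}$ is an open cover of the compact set $K$, so a finite subcollection $W_{g_1}, \dots, W_{g_n}$ already covers $K$. Define
\[
\alpha \;:=\; \alpha_{g_1} \cap \cdots \cap \alpha_{g_n},
\]
which is an open neighbourhood of $x$. Given any $g \in K$, pick $i$ with $g \in W_{g_i}$. By the previous paragraph, $g\alpha_{g_i} \cap \alpha_{g_i} = \varnothing$. Since $\alpha \subseteq \alpha_{g_i}$, we get $g\alpha \cap \alpha \subseteq g\alpha_{g_i} \cap \alpha_{g_i} = \varnothing$, as required.

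There is no serious obstacle here: the only point worth flagging is the use of joint continuity (rather than merely separate continuity) of the $G$-action, which is part of the standing definition of a $G$-space and is what allows the neighbourhood $\alpha_g$ of $x$ to be made uniform over a whole neighbourhood $W_g$ of $g$. This uniformity is what makes the compactness argument at the end work.
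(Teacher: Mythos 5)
Your proof is correct, but it takes a genuinely different route from the paper. The paper argues by contradiction: assuming no such $\alpha$ exists, it builds a net $(g_\alpha)$ in $K$ indexed by the neighbourhood filter of $x$, with $g_\alpha x_\alpha \in \alpha$ for some $x_\alpha \in \alpha$, and extracts a convergent subnet in the compact set $K$ whose limit $g$ must fix $x$ --- contradicting $K \cap G_x = \varnothing$. You instead give a direct covering argument: separate $x$ from $gx$ by disjoint open sets via Hausdorffness, use joint continuity of the action to make the separation uniform over a neighbourhood $W_g$ of $g$, and then pass to a finite subcover of $K$ and intersect the corresponding neighbourhoods of $x$. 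Both arguments use the same underlying ingredients (joint continuity and the Hausdorff property of $X$, the latter entering the paper's proof implicitly through uniqueness of limits), and neither is more general than the other. Your version is more explicit and constructive, exhibiting $\alpha$ as a concrete finite intersection and flagging exactly where joint continuity is needed; the paper's is more compressed but leans on nets and subconvergence. Either is a perfectly acceptable proof of the lemma.
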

\begin{proof}
Suppose it is not the case. Then for each open neighbourhood  $\alpha$ of $x$, there exists $g_\alpha \in K$ and $x_\alpha \in \alpha$ with $g_\alpha(x_\alpha) \in \alpha$. The compactness of $K$ ensures that  $(g_\alpha)$ subconverges to an element $g \in K$ with $g(x) = x$, contradicting the hypothesis that $K \cap G_x = \varnothing$. 
\end{proof}

\subsection{URS and Fell's continuity theorem}\label{sec:Fell}
We recall Fell's continuity theorem:

\begin{thm}[Fell 1964]\label{thm:Fell}
The quasi-regular representation $\lambda_{G/H}$, viewed as map from the Chabauty space of closed subgroups $H<G$ of a given locally compact group $G$ to the Fell space of equivalence classes of unitary $G$-representations, is continuous. 
\end{thm}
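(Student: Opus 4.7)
The plan is to reduce the theorem to the vague continuity of positive definite measures via the Blattner identification in Theorem~\ref{thm:Blattner}, using Lemma~\ref{lem:weak-limit-posdef} as the final step. Concretely, for any closed subgroup $H \leq G$ let $\alpha_H$ denote a left Haar measure on $H$, viewed as a Radon measure on $G$, and set $d \mu_H = \sqrt{\Delta_G/\Delta_H}\, d\alpha_H$. By \Cref{thm:Blattner}, $\pi_{\mu_H} \cong \lambda_{G/H}$. Hence it is enough to show that whenever a net $H_\alpha$ of closed subgroups converges to $H$ in the Chabauty topology, one may choose Haar normalizations for the $H_\alpha$ so that $\mu_{H_\alpha} \to \mu_H$ vaguely in the sense of Lemma~\ref{lem:VagueConv}; \Cref{lem:weak-limit-posdef} then finishes the proof.

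First I would normalize the Haar measures. Fix an auxiliary function $\phi \in C_c(G)$ with $\phi \geq 0$ and $\int_H \phi\, d\alpha_H = 1$; after passing to a cofinal subnet (which is harmless for Fell convergence), $\phi$ is not eventually null on $H_\alpha$, so one can rescale $\alpha_{H_\alpha}$ so that $\int_{H_\alpha} \phi\, d\alpha_{H_\alpha} = 1$. With this normalization, a standard fact about Chabauty convergence (see e.g.\ Bourbaki, \emph{Int.}, Ch.~VII, \S2) guarantees $\alpha_{H_\alpha} \to \alpha_H$ in the vague topology on $G$: given $f \in C_c(G)$, one controls $\int f\, d\alpha_{H_\alpha}$ by approximating $H_\alpha \cap \operatorname{supp}(f)$ by $H \cap \operatorname{supp}(f)$ in the Hausdorff sense, using local cross-sections of $G \to G/H$ and equicontinuity on the compact set $\operatorname{supp}(f)$.

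Second I would upgrade this to convergence of the weighted measures $\mu_{H_\alpha}$. For this one needs $\Delta_{H_\alpha} \to \Delta_H$ in the sense that if $h_\alpha \in H_\alpha$ and $h_\alpha \to h \in H$, then $\Delta_{H_\alpha}(h_\alpha) \to \Delta_H(h)$; this follows from the vague convergence of the $\alpha_{H_\alpha}$ by testing the defining relation $\int_{H_\alpha} f(x h_\alpha^{-1})\, d\alpha_{H_\alpha}(x) = \Delta_{H_\alpha}(h_\alpha) \int_{H_\alpha} f\, d\alpha_{H_\alpha}$ against $f \in C_c(G)$. Since the factor $\sqrt{\Delta_G}$ is a fixed continuous function on $G$, combining the two convergences on compact subsets of $G$ gives $\mu_{H_\alpha} \to \mu_H$ vaguely.

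The main technical obstacle is clearly the first step, namely the Chabauty-continuity of Haar measures on subgroups with the indicated normalization; this is where the geometry of $\mathbf{Sub}(G)$ actually enters. Once this classical lemma is available, the role of Blattner's theorem is simply to repackage $\lambda_{G/H}$ as $\pi_{\mu_H}$ and thereby reduce the topological statement about $\widehat G$ to vague convergence of measures, where Lemma~\ref{lem:weak-limit-posdef} applies verbatim.
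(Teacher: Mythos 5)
Your strategy is sound, but note that the paper does not actually prove Theorem~\ref{thm:Fell} this way: its ``proof'' is a citation of Theorem~4.2 of Fell's 1964 paper, and the Blattner-plus-vague-convergence argument you describe is carried out in the paper only for Proposition~\ref{prop:Fell-continuity}, i.e.\ under the extra hypotheses that the subgroups $H_n$ are unimodular and contained in $\Ker(\Delta_G)$ --- precisely so that $\mu_{H_n}=\alpha_{H_n}$ and all modular-function issues disappear. What you propose is the natural completion of that argument to the general case, and the two extra inputs you identify are the right ones: (a) Chabauty-to-vague continuity of the normalized Haar measures $H\mapsto\alpha_H$, and (b) continuity of $(H,h)\mapsto\Delta_H(h)$ on $\{(H,h):h\in H\}$, which, as you say, can be extracted from (a) by testing the right-translation identity against a test function $f$ with $\int_H f\,d\alpha_H\neq 0$. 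Granting (a) --- which is exactly the Bourbaki fact the paper invokes in the proof of Proposition~\ref{prop:Fell-continuity} --- the rest of your argument goes through, so you obtain a self-contained proof where the paper settles for a citation.

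Three soft spots should be repaired before this is airtight. First, the normalization: you should insist that $\phi$ be strictly positive on a neighbourhood of the identity (as in the paper's choice $\alpha_J(J\cap V)=1$). Since $e$ lies in every closed subgroup, this makes $\int_{H_\alpha}\phi\,d\alpha_{H_\alpha}>0$ for \emph{all} $\alpha$ (no subnet needed) and, more importantly, yields the uniform bounds $\sup_\alpha \alpha_{H_\alpha}(K)<\infty$ required by Lemma~\ref{lem:VagueConv}; with a $\phi$ vanishing near $e$ these bounds can fail. Second, your justification of (a) via ``local cross-sections of $G\to G/H$'' is not available for general locally compact groups, where continuous local sections need not exist; the correct route is relative vague compactness from Lemma~\ref{lem:VagueConv} together with the observation that any vague limit point of $(\alpha_{H_\alpha})$ is a nonzero left-$H$-invariant measure carried by $H$, hence a Haar measure pinned down by the normalization --- or simply the Bourbaki citation. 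Third, Lemma~\ref{lem:weak-limit-posdef} and the uniform-on-compacta property of vague convergence are stated for sequences; since $\Sub(G)$ need not be metrizable for a general locally compact $G$, you must check that the nets involved are vaguely bounded (which the normalization provides) before invoking these statements, and likewise upgrade the pointwise convergence $\Delta_{H_\alpha}(h_\alpha)\to\Delta_H(h)$ to uniform convergence on $H_\alpha\cap K$ by a compactness argument before concluding that $\mu_{H_\alpha}\to\mu_H$ vaguely.
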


\begin{proof}
This is contined in Theorem~4.2 of~\cite{Fell1964}.
\end{proof}

The relation with Lemma~\ref{lem:weak-limit-posdef} is that one can embed the Chabauty space into the space of Radon measures on $G$ by assigning to $H<G$ a suitably normalized Haar measure on $H$, viewed as a measure on $G$. When both modular functions $\Delta_G $ and $\Delta_H$ are trivial on $H$, this directly gives a proof of Theorem~\ref{thm:Fell}; we spell out the arugment for completeness:

\begin{prop}\label{prop:Fell-continuity}
Let $G$ be a  lcsc group and $(H_n)$ be a sequence of unimodular closed subgroups, all contained in $\Ker(\Delta_G)$, that converges to $H \leq G$ in the Chabauty topology. Then $\lambda_{G/H_n}$ converges to $\lambda_{G/H}$ in Fell's topology.
\end{prop}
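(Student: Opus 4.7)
My plan is to translate the claim about Fell convergence of $\lambda_{G/H_n}$ into a statement about vague convergence of Radon measures on $G$, and then invoke Lemma~\ref{lem:weak-limit-posdef}. The bridge between the two worlds is Blattner's theorem: since each $H_n$ is unimodular and contained in $\Ker(\Delta_G)$, the factor $\sqrt{\Delta_G/\Delta_{H_n}}$ equals~$1$ on $H_n$, so Theorem~\ref{thm:Blattner} identifies $\lambda_{G/H_n}$ with the representation $\pi_{\mu_n}$ associated with a Haar measure $\mu_n$ of $H_n$, viewed as a positive definite Radon measure on $G$. The same applies to the limit subgroup $H$: the containment $H \subseteq \Ker(\Delta_G)$ is automatic because $\Ker(\Delta_G)$ is a closed subgroup of $G$ and contains all $H_n$, and the unimodularity of $H$ will be a by-product of the convergence of Haar measures below (the vague limit of left- and right-invariant measures is still left- and right-invariant when the groups converge in Chabauty). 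Hence $\pi_\mu \cong \lambda_{G/H}$ for a Haar measure $\mu = \alpha_H$ of $H$.

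The core step is to choose normalizations of $\mu_n$ and $\mu$ so that $\mu_n \to \mu$ vaguely in the space of Radon measures on $G$. Fix a non-negative $\varphi \in C_c(G)$ with $\int_H \varphi \, d\alpha_H > 0$, which exists because $\alpha_H$ is a nonzero Radon measure. Normalize $\mu = \alpha_H$ so that $\int \varphi \, d\mu = 1$, and rescale $\mu_n$ so that $\int \varphi \, d\mu_n = 1$ for $n$ large enough (which makes sense once $\int \varphi \, d\alpha_{H_n} > 0$, a property that follows from the Chabauty convergence since $\mathrm{supp}(\varphi) \cap H$ is nonempty modulo the support condition used to pick $\varphi$). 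With this consistent normalization, a standard argument shows that for any $\psi \in C_c(G)$, one has $\int \psi \, d\mu_n \to \int \psi \, d\mu$: roughly, using the uniform continuity of $\psi$, one approximates $\psi$ by a sum $\sum c_i \chi_{B_i}$ for small balls $B_i$, and the Chabauty convergence $H_n \to H$ translates into convergence of the $\alpha_{H_n}$-measures of those balls to the corresponding $\alpha_H$-measures, once the overall normalization has been fixed.

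Having established $\mu_n \to \mu$ vaguely, Lemma~\ref{lem:weak-limit-posdef} applied to the sequence of positive definite measures $(\mu_n)$ yields $\pi_{\mu_n} \to \pi_\mu$ in Fell's topology, which via the Blattner identification is exactly $\lambda_{G/H_n} \to \lambda_{G/H}$. The one delicate point in this outline is the vague convergence of Haar measures along a Chabauty-convergent sequence of closed subgroups: besides the approximation argument above, one must verify that no mass escapes to infinity and that the limit is indeed a nonzero Haar measure on $H$. The normalization against the fixed test function $\varphi$ with $\int_H \varphi \, d\alpha_H > 0$ ensures both, since the limiting value on $\varphi$ is $1$ and local mass estimates along Chabauty-convergent groups are uniform (via Lemma~\ref{lem:VagueConv}). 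This is the only nontrivial input; all the other pieces follow formally from Theorem~\ref{thm:Blattner} and Lemma~\ref{lem:weak-limit-posdef}.
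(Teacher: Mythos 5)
Your proposal is correct and follows essentially the same route as the paper: Blattner's theorem identifies $\lambda_{G/H_n}$ with $\pi_{\alpha_{H_n}}$ (the modular factor being trivial on $H_n\subseteq\Ker(\Delta_G)$), and Lemma~\ref{lem:weak-limit-posdef} converts vague convergence of the Haar measures into Fell convergence. The only difference is that for the one nontrivial input --- continuity of the normalized Haar-measure assignment $J\mapsto\alpha_J$ on the Chabauty space, and the Chabauty-closedness of the set of unimodular subgroups --- the paper simply cites Bourbaki (Chapter~VIII, \S 3 and \S 6), whereas you sketch a direct argument; your sketch is morally sound but is the part that would need to be fleshed out (or replaced by the citation).
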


We note that the assumptions on the modular functions are satisfied in the setting of Theorem~\ref{thm:weak-containment} since it is concerned with regionally elliptic subgroups $H$. Accordingly, the proof of that theorem will invoke Lemma~\ref{lem:weak-limit-posdef} rather than the full generality of Theorem~\ref{thm:Fell}.

\begin{proof}[Proof of Proposition~\ref{prop:Fell-continuity}]
Let $V$ be a compact identity neighbourhood in $G$. For each closed subgroup $J \leq G$, we fix the left Haar measure $\alpha_J$ on $J$ such that $\alpha_J(J \cap V) = 1$. We view $\alpha_J$ as a measure defined on $G$ and supported on $J$. 
The map $J \mapsto \alpha_J$ defines a homeomorphism of the Chabauty space $\Sub(G)$ onto its image, which is endowed with the vague topology, see~\cite[Chapter~VIII, \S 3 and \S 6]{Bourbaki}. Moreover, the set of unimodular closed subgroups is Chabauty closed by~\cite[Chapter~VIII, \S 3, Theorem~1]{Bourbaki}. It follows that $H$ is unimodular. 

For each $n$ define the measure $\mu_n$ by $d\mu_n = \sqrt{\Delta_G} d\alpha_{H_n}$, and define $\mu$ by $d\mu = \sqrt{\Delta_G} d\alpha_{H}$, where $\Delta_G$ is the modular function of $G$. Since $H_n \leq \Ker(\Delta_G)$ by hypothesis, we have $H \leq  \Ker(\Delta_G)$, and it follows $\mu_n=\alpha_{H_n}$, and $\mu=\alpha_{H}$. Therefore, we deduce from  Theorem~\ref{thm:Blattner} that $\pi_{\alpha_{H_n}}$ (resp. $\pi_{\alpha_H}$) is equivalent to $\lambda_{G/H_n}$ (resp. $\lambda_{G/H}$). By hypothesis, we know that $\alpha_{H_n}$ vaguely converges to $\alpha_H$. Hence, the required conclusion follows from Lemma~\ref{lem:weak-limit-posdef}.
\end{proof}

Let us now consider a locally compact group $G$. A \textbf{uniformly recurrent subgroup} (or \textbf{URS}) is a minimal $G$-invariant closed subset of $\Sub(G)$. 
We recall from~\cite{GlWe} that  every minimal compact $G$-space $X$ yields a URS called the \textbf{stabilizer URS}, defined as the unique URS contained in the closure of the set of stabilizers $\{G_x \mid x \in X\}$. It is denoted by   $\mathcal{ST}_G(X)$. If $\Sub(G)$ is metrizable (which is automatic if $G$ is second countable), then   a classical semi-continuity argument ensures that $X$ contains a dense $G_\delta$-set of points $x$  such that the stabilizer map $x \mapsto G_x$ is continuous (see~\cite[Theorem~VII]{Kuratowski1928}). The set of those continuity points is denoted by $X_0$. For $x \in X_0$, we have $G_x \in \mathcal{ST}_G(X)$. 

By Fell's continuity theorem, if $\mathcal Y$ is a URS of $G$, then for any two $Y_1, Y_2 \in \mathcal Y$, the representations $\lambda_{G/Y_1}, \lambda_{G/Y_2}$ are weakly equivalent. In particular $\mathcal Y$ yields a canonical quotient of the maximal $C^*$-algebra of $G$, defined by 
$$C^*(\mathcal Y) = C^*_{\lambda_{G/Y}}(G),$$ 
where $Y$ is an arbitrary element of $\mathcal Y$. This fact has recently been observed in the case of a discrete group $G$ by T.~Kawabe~\cite{Kawabe} and G.~Elek~\cite{Elek}. 

\begin{rmk}\label{rem:KaKu}
The fact that the set $X_0$ of continuity points of the stabilizer map $X \to \Sub(G)$ is a dense $G_\delta$ holds more generally, for the same reason, if there is a closed metrizable subset $\mathcal M \subseteq \Sub(G)$ such that $G_x \in \mathcal M$ for all $x \in X$. This happens notably in the case where the quotient of $G$ by the kernel $W$ of the $G$-action on $X$ is  second countable: indeed the closed subset $\{H  \in \Sub(G) \mid H \geq W\} \subseteq \Sub(G)$ is homeomorphic to $\Sub(G/W)$, and is thus metrizable. 
\end{rmk}

\section{Koopman and quasi-regular representations}
This section is devoted to the proofs of Theorems~\ref{thm:weak-containment} and~\ref{thm:Ananth} stated in the Introduction. We shall occasionaly use the shorthand $\pi \prec\sigma$ to denote that  $\pi$ is weakly contained in $\sigma$.

\subsection{The Koopman representation weakly contains a quasi-regular representation}

In the setting of Theorem~\ref{thm:weak-containment}, we invoke the existence of a \textbf{compact model}, i.e. a metrizable compact  space $X'$ with a continuous $G$-action, and a quasi-invariant probability measure $\nu'$ on $X'$, such that the $G$-spaces $(X, \nu)$ and $(X', \nu')$ are measurably isomorphic. This well-known fact follows from~\cite[Theorem~3.2]{Varad}, which affords the metrizable compact $G$-space $X'$ and a  $G$-equivariant isomorphism of Borel spaces  $\phi \colon X \to X'_0$, where $X'_0$ is a $G$-invariant Borel subset. The measure $\nu'$ can then be defined by setting $\nu' = \phi_*(\nu)$. Given the existence of compact models, Theorem~\ref{thm:weak-containment} stated in the introduction directly follows from the topological version of that result. 

\begin{thm}\label{thm:weak-cont-topol}
Let $G$ be a second countable tdlc group, $X$ be a locally compact $G$-space, and $\nu$ a $G$-quasi-invariant $\sigma$-finite Radon measure on $X$ such that $L^2(X, \nu)$ is separable. Let $\kappa$ be the Koopman representation of $G$ on $L^2(X, \nu)$. 

If $x \in X$ is a point belonging to the support of $\nu$ and such that the stabilizer $G_x$ is regionally elliptic, then the quasi-regular representation $\lambda_{G/G_x}$ is weakly contained in the Koopman representation $\kappa$.
\end{thm}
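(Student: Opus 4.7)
The plan is to combine Blattner's theorem (Theorem~\ref{thm:Blattner}) with the continuity principle of Lemma~\ref{lem:weak-limit-posdef}. Since every element of the regionally elliptic group $G_x$ lies in a compact subgroup, $\Delta_G$ is trivial on $G_x$ and $G_x$ itself is unimodular, so Blattner's theorem identifies $\lambda_{G/G_x}$ with the GNS representation $\pi_{\alpha_{G_x}}$ of a left Haar measure $\alpha_{G_x}$ on $G_x$, viewed as a Radon measure on $G$. The task thereby reduces to producing a sequence $(\mu_m)$ of positive definite measures on $G$, each arising as the matrix-coefficient measure of a vector in $L^2(X,\nu)$---so that $\pi_{\mu_m}$ is a cyclic subrepresentation of $\kappa$ and hence $\pi_{\mu_m}\prec\kappa$---with $\mu_m\to c\,\alpha_{G_x}$ vaguely for some $c>0$.

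Using regional ellipticity together with $\sigma$-compactness, write $G_x=\bigcup_n K_n$ as an ascending union of compact subgroups and pick $n(m)\to\infty$. For each $m$, choose an open $K_{n(m)}$-invariant neighborhood $V_m$ of $x$ with $V_m\supseteq V_{m+1}$ forming a countable basis of neighborhoods of $\{x\}$; such $V_m$ exist because, given any decreasing basis $(V'_m)$ of open neighborhoods of $x$, the $K_{n(m)}$-average $V_m:=\bigcap_{k\in K_{n(m)}}kV'_m$ is open (by continuity and compactness of the $K_{n(m)}$-action) and $K_{n(m)}$-invariant by construction. The hypothesis $x\in\mathrm{supp}(\nu)$ gives $\nu(V_m)>0$. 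Define $\xi_m:=\nu(V_m)^{-1}\mathbf{1}_{V_m}\in L^2(X,\nu)$---the non-unit normalisation is chosen precisely so that the resulting matrix-coefficient measure has finite nonzero mass in the vague limit---set $\phi_m(g):=\langle\kappa(g)\xi_m,\xi_m\rangle$, and let $\mu_m:=\phi_m\cdot dg$ where $dg$ denotes a left Haar measure on $G$. Then $\mu_m$ is positive definite and $\pi_{\mu_m}$ is unitarily equivalent to the cyclic subrepresentation of $\kappa$ generated by $\xi_m$.

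Vanishing of $\mu_m$ away from $G_x$ is automatic: for any compact $C\subseteq G\setminus G_x$, Lemma~\ref{lem:stabilizer} produces $m_0$ such that $gV_m\cap V_m=\emptyset$ for all $g\in C$ and $m\geq m_0$, whence $\phi_m\equiv 0$ on $C$. On $G_x$, any fixed $g$ eventually satisfies $g\in K_{n(m)}$, so $gV_m=V_m$ and $\phi_m(g)=\nu(V_m)^{-2}\int_{V_m}\sqrt{\tfrac{dg\nu}{d\nu}(y)}\,d\nu(y)$, which is of order $\nu(V_m)^{-1}$; thus $\phi_m$ blows up in a shrinking tube around $G_x$, consistent with the expected delta-type concentration. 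The principal obstacle is upgrading these heuristics to the quantitative vague convergence $\mu_m\to c\,\alpha_{G_x}$: the Radon--Nikodym derivative $dg\nu/d\nu$ is defined only $\nu$-almost everywhere, so the pointwise asymptotics of $\phi_m$ at each $g\in G_x$ must be replaced by an averaged (Lebesgue--differentiation-type) argument exploiting $x\in\mathrm{supp}(\nu)$, and the transverse mass of $\mu_m$ around $G_x$ must integrate to the Haar normalisation of $\alpha_{G_x}$ in the limit, which requires a careful calibration of the rates at which $V_m\downarrow\{x\}$ and $K_{n(m)}\uparrow G_x$. Once this convergence is established, Lemma~\ref{lem:weak-limit-posdef} yields $\pi_{\mu_m}\to\pi_{c\,\alpha_{G_x}}\cong\lambda_{G/G_x}$ in Fell's topology, and the conclusion $\lambda_{G/G_x}\prec\kappa$ follows since weak containment in $\kappa$ is closed under Fell limits.
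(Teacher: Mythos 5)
Your overall strategy --- matrix-coefficient measures $\mu_m=\phi_m\,dg$, Blattner's theorem to identify $\pi_{\alpha_{G_x}}$ with $\lambda_{G/G_x}$, and Lemma~\ref{lem:weak-limit-posdef} to pass to the limit --- is the one the paper uses, but you stop short of the crucial step, and the route you sketch for it would not go through as stated. The gap you yourself flag, namely the vague convergence $\mu_m\to c\,\alpha_{G_x}$, is genuine: with your choices it is not even clear that $\mu_m(\Omega)$ stays bounded for compact $\Omega\subseteq G$, and the pointwise/Lebesgue-differentiation analysis of $dg\nu/d\nu$ that you propose has no obvious implementation in this generality. There are also two concrete defects in the construction. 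First, making the \emph{set} $V_m$ invariant under $K_{n(m)}$ does not make the \emph{vector} $\xi_m$ invariant: $\kappa(k)\mathbf{1}_{V_m}=\sqrt{dk\nu/d\nu}\cdot\mathbf{1}_{V_m}$ carries the Radon--Nikodym cocycle, so $\phi_m$ is not right-$K_{n(m)}$-invariant, and you lose the invariance needed both to bound $\mu_m$ on compacta and to identify the limit measure. Second, the normalization $\nu(V_m)^{-1}$ is calibrated to the $\nu$-mass of $V_m$ in $X$, whereas what must be controlled is the Haar mass of $\mu_m$ near the identity of $G$; there is no reason these two rates should match, which is exactly the ``calibration'' you cannot carry out.

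The paper's proof removes all of these difficulties by three devices. It averages the \emph{vector} over a compact subgroup, taking $\xi_n=\kappa_\nu(\mathbf{1}_{Q_n})\mathbf{1}_{\beta_n}$, so that $\xi_n$ is genuinely $Q_n$-invariant and $\mu_n$ genuinely right-$Q_n$-invariant. It self-normalizes by setting $\mu_n=\mu'_n/\mu'_n(U_0)$ for a fixed identity neighbourhood $U_0$, so $\mu_n(U_0)=1$ by fiat. Most importantly, it never proves convergence to a prescribed multiple of Haar measure: it covers any compact $\Omega$ by $K'_n\cup U_nQ_m$, uses right-$Q_m$-invariance to bound $\mu_n(\Omega)$ by finitely many translates of $\mu_n(U_0)$, extracts a vaguely convergent subsequence via Lemma~\ref{lem:VagueConv}, and then observes that the limit is nonzero (mass $1$ on $U_0$), supported on $G_x$ (since $\mu_n$ vanishes on $K'_n$), and right-$G_x$-invariant, hence \emph{is} a Haar measure on $G_x$ by uniqueness --- no rate calibration required. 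If you replace your pointwise-asymptotics plan by this compactness-plus-uniqueness argument and average the vectors rather than the sets, your outline becomes the paper's proof.
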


\begin{proof}
We choose a decreasing sequence $U_n$ of relatively compact open subsets of $G$ forming a neighbourhood basis at $e$. We further choose an increasing sequence of compact subsets $K_n$ covering $G$. Finally, let $Q_n$ be an increasing sequence of compact subgroups of $G_x$ covering $G_x$. Note that $K_n$ and $Q_n$ ultimately cover any compact subsets of $G$ and $G_x$, respectively.
Define $K'_n = K_n \setminus U_n G_x$; this is a compact set. Therefore, by Lemma~\ref{lem:stabilizer}, there is a neighbourhood $\alpha_n$ of $x$ such that $K'_n \alpha_n \cap \alpha_n = \varnothing$.

For each $n$, we now choose an open neighbourhood $\beta_n$ of $x$ such that $g^{-1}\beta_n \subseteq \alpha_n$ holds for all $g\in Q_n$. This is possible since $Q_n$ is compact and $G_x$ fixes $x$. The characteristic function $\one_{\beta_n}$ is a non-negative element of $L^1(X, \nu)$ which is non-zero since $x$ is in the support of $\nu$. Now the element
$$\xi_n = \kappa_\nu(\one_{Q_n}) \one_\beta = \int_{Q_n}  \kappa_\nu(g) \one_\beta \, dg \, \in \, L^1(X, \nu)$$
is non-negative, non-zero, supported in $\alpha$, and $Q_n$-invariant.

We now define measures $\mu'_n$ and $\mu_n$ on $G$ as follows. Let $d\mu'_n(g) = (\kappa(g)\xi_n, \xi_n) dg$ for some choice of right Haar measure on $G$ and $\mu_n = \mu'_n / \mu'_n(U_0)$. Note that $\mu_n$ is invariant under right translation by $Q_n$, and vanishes on $K'_n$ by the choice of $\xi_n$.

We first claim that for any compact set $\Omega\subset G$, there is $m$ such that for all $n\geq m$ we have
$$\Omega \subseteq K'_n \cup  U_n Q_m.$$
Indeed, take $m$ large enough to have $\Omega \subseteq K_m$ and $G_x \cap U_0^{-1} \Omega \subseteq Q_m$. Then any $\omega\in\Omega \setminus K'_n$ can be written $\omega \in u h$ for some $u\in U_n$ and $h\in G_x$. This $h$  is in $U_n^{-1}\Omega$ and hence in $Q_m$, so that $\omega\in  U_n Q_m$ as claimed.

Next, we claim that the sequence $\mu_n(\Omega)$ remains bounded for any given compact set $\Omega\subset G$.
Fix $m$ as in the first claim and consider any $n\geq m$. Since the sequence $U_m$ is decreasing and a basis of relatively compact neighbourhoods, we can assume $\overline{U_m} \subseteq U_0$ after possibly increasing $m$. Since the first claim implies in particular $\Omega \subseteq K'_n \cup  U_m Q_m$, it suffices to bound $\mu_n(U_m  Q_m)$ independently of $n$. By compactness of $\overline{U_m} Q_m$, there are $q_1, \ldots q_r\in Q_m$ (independent of $n$) such that $U_m  Q_m$ is in the union of the $r$ translates $U_0 q_i$. Since each $\mu_n$ is (right) $Q_m$-invariant, it follows $\mu_n(U_m  Q_m)\leq r\mu_n(U_0)$ for all $n$, and the claim follows.

Upon passing to a subsequence, it follows from Lemma~\ref{lem:VagueConv} that $(\mu_n)$ vaguely converges to a positive definite measure $\mu$. Since $\mu_n(U_0)=1$ for all $n$, we have $\mu(U) = 1$. Since $\mu_n(\Omega \cap K'_n)= 0$ for all $n$, it follows that $\mu$ is supported on $G_x$. Moreover, we observe that $\mu$ is (right) $G_x$-invariant since $G_x = \bigcup_n Q_n$ and since $\mu_n$ is (right) $Q_n$-invariant for all $n$. Hence, by~\cite[Chapter~VIII, \S5.1, Lemma~1]{Bourbaki}, we infer that $\mu$ is a Haar measure on $G_x$. Note   that $G_x$ is unimodular since it is regionally elliptic. Moreover, since every element of $G_x$ is contained in a compact subgroup, we see that $G_x$ is contained in $\Ker(\Delta_G)$, where $\Delta_G$ is the modular function of $G$. Therefore, we have $\sqrt{\Delta_G} d\mu = d\mu$, so that  the representation $\pi_\mu$ is equivalent to the quasi-regular representation $\lambda_{G/G_x}$ by   Theorem~\ref{thm:Blattner}.

Finally, we notice that the unitary representation $\pi_{\mu_n}$ is contained in $\kappa$ for all $n$, since $\pi_{\mu_n}$ is the representation associated with the positive definite function $g \mapsto (\kappa(g)\xi_n, \xi_n)$ via the GNS construction. From Lemma~\ref{lem:weak-limit-posdef}, we deduce $\pi_\mu \prec \kappa$, which completes the proof.
\end{proof}
	

\subsection{The Koopman representation is weakly contained in a quasi-regular representation}

In this section we prove Theorem~\ref{thm:Ananth}. We will need the following general fact which should be of independent interest.

\begin{prop}\label{prop:amen-stab-quas}
Let $G$ be a second countable locally compact group, and $X$ a minimal locally compact $G$-space. Assume $x\in X$ is such that the stabilizer $G_x$ is amenable. Then $\lambda_{G/G_x} \prec \lambda_{G/G_y}$ for every $y\in X$.
\end{prop}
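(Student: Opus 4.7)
The plan is to leverage minimality of $X$ to approximate $x$ by translates of $y$, transfer this approximation to the level of stabilizers via the Chabauty topology, and then combine Fell's continuity theorem with the amenability of $G_x$. First I would exploit density of $Gy$ in $X$ to pick a net $(g_\alpha)$ in $G$ with $g_\alpha y \to x$. Since $\Sub(G)$ is compact (and, $G$ being second countable, even metrizable), after extracting a subnet I may assume that the conjugates $g_\alpha G_y g_\alpha^{-1}$ Chabauty-converge to some closed subgroup $H \leq G$. A standard upper semi-continuity argument then shows that $H \leq G_x$: indeed, if $h_\alpha \in g_\alpha G_y g_\alpha^{-1}$ with $h_\alpha \to h$, joint continuity of the action yields $h \cdot x = \lim h_\alpha \cdot (g_\alpha y) = \lim g_\alpha y = x$, so $h \in G_x$.

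Next, for each $\alpha$, right translation by $g_\alpha$ induces a unitary equivalence $\lambda_{G/(g_\alpha G_y g_\alpha^{-1})} \cong \lambda_{G/G_y}$. Applying Fell's continuity theorem (\Cref{thm:Fell}) to the Chabauty convergence $g_\alpha G_y g_\alpha^{-1} \to H$ therefore produces the weak containment $\lambda_{G/H} \prec \lambda_{G/G_y}$. It thus suffices to establish $\lambda_{G/G_x} \prec \lambda_{G/H}$, which is a matter of passing weak containment upward from a closed subgroup $H$ to the (amenable) closed subgroup $G_x$ containing it.

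For this last step, amenability of $G_x$ is the crucial input. Since $H \leq G_x$ is closed and $G_x$ is amenable, $H$ is co-amenable in $G_x$: a $G_x$-invariant mean on $L^\infty(G_x)$ restricts, through the equivariant pull-back $L^\infty(G_x/H) \hookrightarrow L^\infty(G_x)$, to a $G_x$-invariant mean on $L^\infty(G_x/H)$; by the standard characterization of co-amenability this yields $\mathbf{1}_{G_x} \prec \lambda_{G_x/H}$. Since induction preserves weak containment (another theorem of Fell), applying $\mathrm{Ind}_{G_x}^G$ and invoking induction in stages gives
\[
\lambda_{G/G_x} \;=\; \mathrm{Ind}_{G_x}^G(\mathbf{1}_{G_x}) \;\prec\; \mathrm{Ind}_{G_x}^G(\lambda_{G_x/H}) \;=\; \lambda_{G/H}.
\]
Concatenating the two weak containments closes the argument. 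The step requiring the most care is the joint continuity/Chabauty part of the first paragraph, since the limit subgroup $H$ is only constrained to lie inside $G_x$ and can be strictly smaller; the amenability hypothesis on $G_x$ is precisely what bridges this possible gap.
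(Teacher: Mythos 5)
Your proposal is correct and follows essentially the same route as the paper's proof: minimality gives $g_n y\to x$, Chabauty compactness and upper semi-continuity give a limit subgroup $H\leq G_x$ with $\lambda_{G/H}\prec\lambda_{G/G_y}$ via Fell's continuity theorem, and amenability of $G_x$ gives $\one_{G_x}\prec\lambda_{G_x/H}$ (the paper cites Eymard where you spell out the invariant-mean argument), which is then induced up to $G$.
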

\begin{proof}
Let $y\in X$ be arbitrary.
Since the $G$-action on $X$ is minimal, there is a sequence $(g_n)$ in $G$ such that $(g_ny)$ converges to $x$. By passing to a subsequence, we may assume that $G_{g_ny}$ Chabauty converges to some subgroup $H < G_x$. By Fell's continuity theorem (\Cref{thm:Fell}), the quasi-regular representation $\lambda_{G/G_{g_ny}}$ converges to $\lambda_{G/H}$. Since $\lambda_{G/G_{g_ny}} = \lambda_{G/g_nG_{y}g_n^{-1}}$ is equivalent to $\lambda_{G/G_y}$ for all $n$, we deduce $\lambda_{G/H} \prec\lambda_{G/G_y}$.

Since $G_x$ is amenable, the trivial representation $\one_{G_x}$ of $G_x$  is weakly contained in $\lambda_{G_x/H}$ (see \cite[${\rm n}^o~2, \S~4$]{Eym}). Inducing up to $G$, we deduce $\lambda_{G/G_x} \prec \lambda_{G/H}$. We have seen that the latter is weakly contained in $\lambda_{G/G_y}$. Thus the proof is complete.
\end{proof}

\begin{proof}[Proof of Theorem~\ref{thm:Ananth}]
Denote by $X_1 \subseteq X$ the conull subset consisting of those points $x\in X$ whose stabilizer $G_x$ is amenable. Since the $G$-action on $X$ is minimal by hypothesis, the assertion (i) follows from Proposition~\ref{prop:amen-stab-quas}.

We shall now prove the assertion (ii). 

Fix a regular probability measure $\mu$ on $G$ in the class of the Haar measures. Then for every $x\in X$, the measure $\mu_x:=p^x_*(\mu)$ is quasi-invariant, where $p^x: G\to G/G_x$ is the canonical projection.

We endow the product $G \times X$ with a $G$-action defined by $g\!\cdot\!(h, x) = (hg^{-1}, gx)$. This action preserves the class of the measure $\mu \times \nu$. 

Let now $Y\subset X\times X$ be the orbit equivalence relation for the $G$-action on $X$. We endow $Y$ with the $G$-action defined by $g\!\cdot\!(x, y) = (gx, y)$. 
Consider the Borel map $p \colon G \times X \to Y$ defined by $p(g,x) = (x, gx)$. Observe that $p$ is $G$-equivariant. We define $\nu_Y = p_*(\mu\times\nu)$, so that $\nu_Y$ is quasi-invariant under $G$.
Moreover, we have $\nu_Y = \int_X (\delta_x \times \mu_x)\,d\nu(x)$, where we have identified the $G$-orbit of $x$ with $G/G_x$. This implies that the Koopman representation $\kappa_{\nu_Y}$ is equivalent to $\int_X \lambda_{G/G_x} \,d\nu(x)$. 

Since the action of $G$ on $(X, \nu)$ is amenable in the sense of Zimmer, we have $\nu(X_1)= 1$. In view of the assertion (i), it follows that $\int_X \lambda_{G/G_x} \,d\nu(x)$ is weakly equivalent to $\lambda_{G/G_z}$ for each $z \in X_1$. 
Therefore $\kappa_{\nu_Y}$ is weakly equivalent to $\lambda_{G/G_z}$ for any $z\in X_1$.

Now, consider the projection $q \colon Y\to X$ onto the first coordinate. The map $q$ is Borel measurable and $G$-equivariant. Moreover, it follows from the definitions that  $q_*(\nu_Y) = \mu*\nu$. By Lemma~\ref{lem:conv->equiv-meas} we have $\mu*\nu \sim\nu$. Since the action of $G$ on $(X, \mu*\nu)$ is amenable in the sense of Zimmer (see~\cite[Proposition~3.3.5]{ADR}, and~\cite[Theorem~6.8]{Adams} or~\cite{Kaimanovich}), it follows from~\cite[Proposition~4.3.2 \& Theorem~3.2.1]{AD03} that $\kappa_{\nu_Y}$ weakly contains the Koopman representation of $G$ on $L^2(X, \mu*\nu)$. Therefore, using Lemma~\ref{lem:equiv-meas->equiv-koop},  we conclude $\kappa_{\nu}\prec \lambda_{G/G_z}$ for all $z\in X_1$. 
%
%
\end{proof}


\section{Hyperbolic locally compact groups}

A locally compact group $G$ is called \textbf{hyperbolic} if it has a compact generating set with respect to which the word metric on $G$ is Gromov-hyperbolic. By~\cite[Proposition~2.1]{CCMT}, this is equivalent to requiring that $G$ has a continuous, proper, cocompact, isometric action on a locally compact geodesic metric space $X$ that is Gromov-hyperbolic. This space is automatically proper by the Hopf--Rinow theorem, since cocompactness ensures completeness.

In this section, we assume that $G$ is locally compact hyperbolic and fix a space $X$ as above.

We freely refer to Gromov's typology concerning isometries  and isometric group actions on hyperbolic spaces, as recalled in e.g.~\cite[\S3.1]{CCMT}. We recall that the \textbf{displacement length} of an isometry $g$ is $|g|=\inf\{ d(g x, x) : x\in X\}$ and its \textbf{asymptotic displacement length} is $|g|_\infty=\lim_{n\to \infty} \frac1n d(g^n x, x)$, which does not depend on $x$. We have $|g|_\infty \leq |g|$ and $|g| \leq |g|_\infty + 16\delta$, where $\delta$ is a hyperbolicity constant for $X$. Moreover, $|g|_\infty$ is positive if and only if $g$ is hyperbolic. For all this, see~\cite[Chapter~10 \S6]{CDP}. 

We also use throughout that given $x\in X$ and $\xi\neq \xi'\in \partial X$, there exists a geodesic ray from $x$ to $\xi$ and a geodesic line from $\xi'$ to $\xi$. See e.g.\ Proposition~4 in~\cite[Chap.~7]{Ghys-Harpe} or~\cite[III.H.3.1]{BriHae} for the former and Proposition~6 in~\cite[Chap.~7]{Ghys-Harpe} or~\cite[III.H.3.2]{BriHae} for the latter.

Whenever a constant can be chosen depending only on $X$, we shall call it an \textbf{$X$-constant}; most of the time it even depends only on the hyperbolicity constant of $X$.

\subsection{The structure of relatively amenable subgroups}\label{sec:hyp-rel-amen}

Our goal is to describe the algebraic structure of (relatively) amenable closed subgroups of the hyperbolic locally compact group $G$. Similar results for groups acting properly cocompact on proper CAT(0) spaces can be found in~\cite{CM-fixed}. We first describe the geometric features of the action of a relatively amenable subgroup, and then derive algebraic information. 

\begin{prop}\label{prop:RelAmen-action}
For any closed subgroup $H \leq G$, the following assertions are equivalent. 
\begin{enumerate}[label=(\roman*)]
\item $H$ is amenable. 
\item $H$ is relatively amenable in $G$. 
\item $H$ is compact, or $H$ fixes a point $\xi \in \partial X$, or $H$ stabilizes a pair $\{\xi, \xi'\} \subseteq \partial X$. 
\end{enumerate}
\end{prop}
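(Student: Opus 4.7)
The plan is to establish the cyclic implications (i) $\Rightarrow$ (ii) $\Rightarrow$ (iii) $\Rightarrow$ (i). The direction (i) $\Rightarrow$ (ii) is immediate, since any continuous affine $G$-action on a compact convex set restricts to an $H$-action, and an amenable $H$ admits a fixed point.

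For the main direction (ii) $\Rightarrow$ (iii), I would first use that the $G$-action on $\partial X$ is topologically amenable (\cite[Theorem~6.8]{Adams}, \cite{Kaimanovich}); consequently the space of probability measures on $\partial X$ is an affine convex compact $G$-space, and relative amenability of $H$ produces an $H$-invariant probability measure $\mu$ on $\partial X$. The proof then splits according to the classification of isometry groups of proper hyperbolic spaces. If $H$ contains no hyperbolic element, every element is elliptic or parabolic, and a classical dichotomy (see \cite{Ghys-Harpe}) shows $H$ is either bounded---hence compact, since $H$ is closed and $G$ acts properly on $X$---or has a unique global fixed point on $\partial X$. If $H$ contains a hyperbolic element $\gamma$ with boundary fixed points $\gamma^\pm$, its north--south dynamics force $\mu$ to be supported on $\{\gamma^+,\gamma^-\}$; writing $\mu = a \delta_{\gamma^+} + (1-a) \delta_{\gamma^-}$, the $H$-invariance then forces either $\mu$ to be a Dirac mass (so $H$ fixes a boundary point) or $H$ to setwise stabilize the pair $\{\gamma^+,\gamma^-\}$.

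For (iii) $\Rightarrow$ (i), the three cases are handled separately. A compact $H$ is amenable. If $H$ stabilizes a pair $\{\xi,\xi'\}$, an index-at-most-two subgroup $H'$ fixes both points and acts on a quasi-axis between them by quasi-translations, yielding a continuous homomorphism $H' \to \mathbf{R}$ whose kernel is compact by properness of the $G$-action; hence $H'$, and therefore $H$, is amenable as an extension of an abelian by a compact group. If $H$ fixes a single $\xi \in \partial X$, the Busemann homomorphism $\beta_\xi \colon H \to \mathbf{R}$ has amenable image and its kernel $H^0$ preserves every horoball at $\xi$; the amenability of $H^0$ follows from the structure theory of hyperbolic locally compact groups developed in~\cite{CCMT}, and the extension is then amenable.

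The main obstacle is precisely this last point: the amenability of the parabolic kernel $H^0$ in the one-fixed-point case does not follow from general principles and relies essentially on the proper cocompact setup via~\cite{CCMT}. A secondary subtlety lies in the no-hyperbolic-element subcase of (ii)$\Rightarrow$(iii), where the passage from bounded orbits to compactness requires both the closedness of $H$ in $G$ and the properness of the $G$-action on $X$.
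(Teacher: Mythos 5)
Your proposal is correct, but the key implication (ii)~$\Rightarrow$~(iii) is argued along a genuinely different route than the paper's. The paper runs through Gromov's classification of the action type (bounded, horocyclic, focal, lineal, general type) and, to exclude general type, invokes the existence of a Schottky subgroup $\Lambda\leq H$ (\cite[Lemma~3.3]{CCMT}) whose action on $\partial X$ is minimal and strongly proximal, hence admits no invariant measure -- contradicting relative amenability. You instead extract an $H$-invariant probability measure $\mu$ on $\partial X$ directly from relative amenability (note that no appeal to topological amenability of the boundary action is needed for this: $\mathrm{Prob}(\partial X)$ is a convex compact $G$-space simply because $\partial X$ is compact) and then use north--south dynamics of a single hyperbolic element to pin $\mu$ to at most two atoms; the no-hyperbolic-element case is the bounded/horocyclic dichotomy, exactly as in the paper. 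Your route avoids the Schottky lemma at the cost of the (standard) north--south argument; the paper's is shorter because the classification already performs the case split. For (iii)~$\Rightarrow$~(i) the paper cites Adams' theorem (\cite[Lemma~3.10]{CCMT}, amenability of boundary-point stabilizers in $\mathrm{Isom}(X)$) once for all three cases, whereas you treat the two-point case by hand via the translation homomorphism with compact kernel -- this is precisely the content of the paper's Lemma~\ref{lem:2-ended} -- and correctly identify that the single-fixed-point case cannot be made elementary and must rest on Adams' result. Both proofs are sound; yours is slightly more self-contained in the measure-theoretic step and slightly more laborious in the final one.
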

\begin{proof}
That (i) implies (ii) is true in full generality. 

Assume that $H$ is relatively amenable. If the $H$-action  on $X$ is bounded, then $H$ is compact since the $G$-action is proper and $H$ is closed by hypothesis.  If the $H$-action is horocyclic or focal (resp. lineal), then  $H$ fixes a point $\xi \in \partial X$ (resp. $H$ stabilizes a pair $\{\xi, \xi'\} \subseteq \partial X$). Therefore, in order to show that (iii) holds, it suffices to prove that the $H$-action on $X$ cannot be of general type. Suppose for a contradiction that it is. Then $H$   contains a Schottky subgroup $\Lambda$ (see~\cite[Lemma~3.3]{CCMT}). The action of a Schottky subgroup $\Lambda$ on $\partial X$ is minimal and strongly proximal, so that $\Lambda$, and hence also $H$, does not fix any probability measure on $\partial X$. This implies that $H$ is not relatively amenable in $G$, thereby confirming that (ii) implies (iii). 

That (iii) implies (i) is a consequence of a well-known result of S.~Adams, cited as Lemma~3.10 in~\cite{CCMT}, ensuring that the stabilizer of each point $\xi \in \partial X$ in the full isometry group of $X$ is amenable. 
\end{proof}

Our next goal is to describe the algebraic structure of the stabilizer $G_\xi$ of a boundary point $\xi$, or the stabilizers  $G_{\{\xi, \xi'\}}$ of a boundary pair. To that end, we shall use the \textbf{Busemann character} $\beta_\xi \colon G_\xi \to \mathbf R$ afforded by~\cite[Corollary~3.9]{CCMT}. It is a continuous homomorphism whose kernel is denoted by $G_\xi^0$. Moreover, by~\cite[Lemma~3.8]{CCMT}, the subgroup  $G_\xi^0$ consists of those elements $g \in G_\xi$ that act as elliptic or parabolic isometries on $X$; in other words an element $g \in G_\xi$ satisfies $\beta_\xi(g) \neq 0$ if and only if it is a hyperbolic isometry of $X$.

\begin{lem}\label{lem:2-ended}
Let $\xi \neq \xi' \in \partial X$. Then $G^0_{\xi, \xi'} := G^0_\xi \cap G_{\xi'} = G_\xi \cap G^0_{\xi'}$ is a compact normal subgroup of $G_{\{\xi, \xi'\}}$. Moreover, the quotient group $G_{\{\xi, \xi'\}}/G^0_{\xi, \xi'}$ is trivial or isomorphic to $\mathbf Z$, $\mathbf Z \rtimes \{\pm 1\}$, $\mathbf R$ or $\mathbf R \rtimes \{\pm 1\}$.
\end{lem}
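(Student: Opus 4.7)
My plan is to fix a geodesic line $\ell \subseteq X$ from $\xi'$ to $\xi$ with basepoint $x_0 \in \ell$, set $H := G_{\{\xi,\xi'\}}$, $H_0 := G_\xi \cap G_{\xi'}$, $N := G^0_\xi \cap G_{\xi'}$, and derive everything from one geometric estimate bounding $d(gx_0, x_0)$ in terms of $|\beta_\xi(g)|$ for $g \in H_0$. First I would prove the equality $G^0_\xi \cap G_{\xi'} = G_\xi \cap G^0_{\xi'}$: a parabolic isometry of a Gromov hyperbolic space fixes exactly one boundary point, so every $g \in H_0$ is either elliptic or hyperbolic; by~\cite[Lemma~3.8]{CCMT}, within $H_0$ membership in $G^0_\xi$ (resp.\ $G^0_{\xi'}$) is equivalent to non-hyperbolicity, and both intersections therefore coincide with the set of elliptic elements of $H_0$, namely $N$.

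The geometric estimate is the heart of the proof. Since $g\ell$ is a geodesic from $\xi'$ to $\xi$, it lies within an $X$-constant $C$ of $\ell$ in Hausdorff distance; pick $q \in \ell$ with $d(gx_0, q) \leq C$. Parametrising $\ell$ by arc-length with $\ell(0) = x_0$, the Busemann function $\beta_\xi$ agrees with signed arc-length on $\ell$ and is $1$-Lipschitz globally. Combining these with $\beta_\xi(gx_0) - \beta_\xi(x_0) = \beta_\xi(g)$ yields
\[ d(gx_0, x_0) \leq |\beta_\xi(g)| + 2C. \]
Specialising to $\beta_\xi(g)=0$ bounds the $N$-orbit of $x_0$; together with properness of the $G$-action and closedness of $N$, this gives that $N$ is compact. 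Normality in $H$ is then formal: conjugation inside $H_0$ preserves each of $G_\xi, G_{\xi'}, G^0_\xi, G^0_{\xi'}$, while any element of $H \setminus H_0$ swaps the pair, sending $G^0_\xi \cap G_{\xi'}$ to $G^0_{\xi'} \cap G_\xi$, which equals $N$ by the first step.

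For the quotient, $\beta_\xi|_{H_0}$ descends to a continuous injective homomorphism $\bar\beta_\xi \colon H_0/N \hookrightarrow \mathbf R$; the displayed estimate makes $\bar\beta_\xi$ proper (bounded $\beta_\xi(g_n)$ forces $(g_n x_0)$ bounded, hence a convergent subsequence in $G$), so $\bar\beta_\xi$ is a closed topological embedding and $H_0/N$ is one of $\{0\}$, $\mathbf Z$, $\mathbf R$. If $H = H_0$ we are done; otherwise pick $g \in H \setminus H_0$. The identity $\beta_\xi(ghg^{-1}) = \beta_{g^{-1}\xi}(h) = \beta_{\xi'}(h) = -\beta_\xi(h)$ for $h \in H_0$ (the last equality because $\beta_\xi+\beta_{\xi'}$ is a bounded, hence trivial, character on $H_0$, as read off $\ell$) shows that conjugation by $g$ acts on $H_0/N$ as $-1$. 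Applying this to $h = g^2 \in H_0$ gives $g^2 \equiv -g^2 \pmod N$, so $g^2 \in N$ since $\mathbf Z$ and $\mathbf R$ are torsion-free, and the extension $1 \to H_0/N \to H/N \to \mathbf Z/2 \to 1$ splits as $H_0/N \rtimes \{\pm 1\}$ with the inversion action.

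The main technical obstacle is the geometric estimate above; all the rest is formal. A minor but genuine subtlety is the corner case $H_0/N = \{0\}$ with $H \neq H_0$, which would formally produce $\mathbf Z/2$, outside the listed options; I would rule this out by exploiting that $\xi,\xi'$ lie in the limit set of the cocompact $G$-action, so combining the swap in $H$ with a hyperbolic element of $G$ whose fixed-point pair approximates $\{\xi,\xi'\}$ (available by density of such pairs in $(\partial X)^2$ in the non-elementary case, and handled directly via \Cref{prop:RelAmen-action} in the elementary case) manufactures a hyperbolic element of $H$ fixing the pair $\{\xi,\xi'\}$, forcing $H_0/N$ to be non-trivial.
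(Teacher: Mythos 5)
Your argument is essentially correct and, unlike the paper's proof, is self-contained: the paper disposes of this lemma in two lines, first noting (exactly as you do, via \cite[Lemma~3.8]{CCMT}) that both intersections consist of the non-hyperbolic elements of $G_\xi\cap G_{\xi'}$, and then invoking verbatim ``the same argument as in the proof of \cite[Proposition~5.6]{CCMT}'' for the compactness and the structure of the quotient. Your geometric estimate $d(gx_0,x_0)\leq|\beta_\xi(g)|+\mathrm{const}$, the resulting proper closed embedding of $H_0/N$ into $\mathbf R$, and the inversion action of a swapping element (with $g^2\in N$ giving the splitting) reconstruct that argument correctly. One point of hygiene: the identity $\beta_\xi(gx_0)-\beta_\xi(x_0)=\beta_\xi(g)$ conflates the horofunction on $X$ with the homogenised Busemann character on $G_\xi$; by \cite[Proposition~3.7]{CCMT} these differ by a uniformly bounded amount, so your estimate survives with a larger additive constant, but the two objects should be kept apart.

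The corner case you flag at the end, however, cannot be ruled out, and your proposed patch fails. Take $G$ a cocompact Fuchsian group containing an elliptic involution $\sigma$ (e.g.\ a $(2,3,7)$-triangle group) acting on $X=\mathbf H^2$, and let $\xi$ be a boundary point such that no nontrivial element of $G$ fixes both $\xi$ and $\xi':=\sigma\xi$ (all but countably many $\xi$ work, since each nontrivial element fixes at most two boundary points). Then $G_\xi\cap G_{\xi'}=\{1\}=G^0_{\xi,\xi'}$ while $G_{\{\xi,\xi'\}}=\{1,\sigma\}$, so the quotient is $\mathbf Z/2$ --- not on the stated list --- and there is no hyperbolic element of $G$ preserving the pair $\{\xi,\xi'\}$: one fixing both points would lie in $G_\xi\cap G_{\xi'}=\{1\}$, and a hyperbolic isometry cannot swap two points that are not its fixed points (apply its square). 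So the element you hope to ``manufacture'' does not exist; density of axes in $(\partial X)^2$ only produces pairs \emph{near} $\{\xi,\xi'\}$, not the pair itself. This is an oversight in the statement rather than a defect of your argument: when $G_\xi\cap G_{\xi'}$ is compact but a swap exists, the quotient is $\mathbf Z/2$, and the clean formulation would either include that case or divide by the \emph{maximal} compact normal subgroup of $G_{\{\xi,\xi'\}}$ as in \cite[Proposition~5.6]{CCMT}. The omission is harmless downstream, since the paper only uses the compactness of $G^0_{\xi,\xi'}$ (in Lemma~\ref{lem:compacting-seq}), and your proof of that part, and of everything up to the corner case, is sound.
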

\begin{proof}
Notice that $G^0_\xi \cap G_{\xi'}$ consists of the elements of $G_\xi \cap G_{\xi'}$ that are not hyperbolic. The same description applies to $G_\xi \cap G^0_{\xi'}$. Hence $G^0_\xi \cap G_{\xi'} = G_\xi \cap G^0_{\xi'}$. Denoting that subgroup by $W$, we may complete the proof  by invoking  the same argument as in the proof of~\cite[Proposition~5.6]{CCMT}. 
\end{proof}

Following~\cite[\S4.1]{CM-fixed}, we say that a subgroup $A \leq G$ is \textbf{compactible} with limit $K$ if $K \leq G$ is a compact subgroup such that for any neighbourhood $U$ of $K$ in $G$ and every finite subset $F \subseteq A$, there is $\alpha \in \Aut(G)$ with $\alpha(F) \subseteq U$. If this is the case, we say that a sequence $(g_n)$ in $G$ is  a \textbf{compacting sequence} for $A$ if  for all $a \in A$, the sequence $(g_n a g_n^{-1})$ is bounded and each of its accumulation point belongs to $K$. 

\begin{lem}\label{lem:compacting-seq}
Let $\xi \in \partial X$. There exists a sequence $(g_n)$ in $G$ and a pair $\eta \neq \eta'$ in $\partial X$ such that $G^0_\xi$ is compactible with limit $G^0_{\eta, \eta'}$, and $(g_n)$ is a compacting sequence for $G^0_\xi$. 
\end{lem}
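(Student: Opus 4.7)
The plan is to build the compacting sequence geometrically, using cocompactness of the $G$-action on $X$ to pull an escaping geodesic ray back to a compact set. I would first fix a base point $o\in X$ and a geodesic ray $\rho\colon[0,\infty)\to X$ from $o$ to $\xi$, and choose a compact $K\subseteq X$ with $G\cdot K=X$; for each $n$ I then pick $g_n\in G$ with $g_n\rho(n)\in K$. After extracting a subsequence, $g_n o$ and $g_n\xi$ converge in $\overline X=X\cup\partial X$ to points $\eta$ and $\eta'$; since $d(g_n o,g_n\rho(n))=n\to\infty$ while $g_n\rho(n)$ stays in $K$, both limits lie in $\partial X$. To see $\eta\ne\eta'$, I run a standard Gromov-product estimate at $o$: the point $g_n\rho(n)\in K$ lies on a geodesic from $g_n o$ to $g_n\xi$ with $g_n o$ at distance $n$ on one side, so $(g_n o\mid g_n\xi)_o\le d(o,g_n\rho(n))+O(\delta)=O(1)$, giving $(\eta\mid\eta')_o<\infty$.

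The heart of the argument is to show that $(g_n a g_n^{-1})$ is bounded in $G$ for every $a\in G^0_\xi$ and that all its accumulation points lie in $G^0_{\eta,\eta'}$. For boundedness I estimate
\[
d(o,g_n a g_n^{-1} o)=d(g_n^{-1}o,\,a g_n^{-1}o);
\]
since $g_n^{-1}o$ lies within $\mathrm{diam}(K\cup\{o\})$ of $\rho(n)$, it suffices to bound $d(\rho(n),a\rho(n))$. Elements of $G^0_\xi$ are elliptic or parabolic with $|a|_\infty=0$, so $|a|\le 16\delta$; a standard estimate in $\delta$-hyperbolic geometry then gives that the displacement $t\mapsto d(\rho(t),a\rho(t))$ along a ray to the fixed point $\xi$ stays within $O(\delta)$ of $|a|$, hence is uniformly bounded in $n$. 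By properness of the $G$-action on $X$, the sequence $(g_n a g_n^{-1})$ is therefore relatively compact in $G$.

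To identify any subsequential limit $\bar a$, I first observe that $\bar a$ fixes $\eta'$ by continuity of the $G$-action on $\partial X$ applied to $g_n\xi\to\eta'$. Next, $(g_n a g_n^{-1})(g_n o)=g_n a o$ stays at constant distance $d(ao,o)$ from $g_n o$, so joint continuity of the $G$-action on $\overline X$ yields $\bar a\cdot\eta=\eta$. Applying the same displacement bound to every power $a^k\in G^0_\xi$, which still satisfies $|a^k|_\infty=0$ and hence $|a^k|\le 16\delta$, gives $d(o,\bar a^k o)=O(1)$ uniformly in $k$, so $|\bar a|_\infty=0$ and $\bar a$ is non-hyperbolic; by \Cref{lem:2-ended}, $\bar a\in G^0_{\eta,\eta'}$. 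A routine open-cover argument then upgrades this pointwise statement to the compactibility of $G^0_\xi$ with limit $G^0_{\eta,\eta'}$ via the inner automorphisms $a\mapsto g_n a g_n^{-1}$: for any open neighbourhood $U$ of $G^0_{\eta,\eta'}$ and any finite $F\subset G^0_\xi$, one has $g_n F g_n^{-1}\subset U$ for $n$ large.

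The hard part will be the displacement bound $\sup_t d(\rho(t),a\rho(t))=O(1)$ for non-hyperbolic $a\in G_\xi$: morally clear in $\mathrm{CAT}(-1)$ model situations (two rays to a common point at infinity fall exponentially close, and parabolic action on horospheres tends to its infimum), it must be carried out with careful $\delta$-hyperbolic bookkeeping and the Busemann formalism of~\cite{CCMT} in the generality of an arbitrary proper geodesic hyperbolic $G$-space. Granted that estimate, the remaining steps are routine applications of properness, continuity of the boundary action, and Chabauty-type convergence.
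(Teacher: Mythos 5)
Your argument is essentially the paper's: the authors likewise pull a geodesic ray to $\xi$ back to a compact set by cocompactness, extract boundary limits $\eta\neq\eta'$, show $(g_nag_n^{-1})$ is bounded with all accumulation points in $G_{\eta,\eta'}$, and prove those limits are non-hyperbolic by applying a \emph{uniform} displacement bound along the ray to every power $a^N$. The estimate you correctly single out as the crux is exactly what the paper imports from \cite[Lemma~21]{BMW} (with the caveat that it holds only for all \emph{sufficiently large} $t$, not all $t$ as your phrasing suggests --- which is all your argument actually uses), so there is no real gap.
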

\begin{proof}
Fix a geodesic ray $r \colon \mathbf R_+ \to X$ with endpoint $\xi$. Let $g_n \in G$ be such that the sequence $g_n(r(n))$ is bounded. Upon extracting, we may assume that $g_n r(\psi(n) +t)$ converges for every $t\in\Rb$, where $\psi \colon \mathbf N \to \mathbf N$ is a strictly increasing function. More precisely, note that for any fixed $t$, the point $r(\psi(n) +t)$ is defined when $n$ is large enough and $g_n r(\psi(n) +t)$ remains in a bounded set. Finding a common subsequence for all $t\in\Rb$ can be achieved e.g.\ by applying a diagonal argument for $t\in\Qb$ and then using that $r$ is uniformly continuous. We can further assume that $g_n(\xi)$ converges to a point $\eta \in \partial X$ and that $g_n(r(0))$ converges to $\eta' \in \partial X$. There is now a geodesic line $\ell$ joining $\eta'$ to $\eta$, namely $\ell(t) = \lim_n g_n r(\psi(n) +t)$. In particular we have $\eta \neq \eta'$. It now follows from the definition that for each $a \in G^0_\xi$, the sequence $(g_n a g_n^{-1})$ is bounded (because the sequence $n \mapsto d(a g_n^{-1}(\ell(0)), g_n^{-1}(\ell(0)))$ is bounded), and that each of its accumulation points belongs to $G_{\eta, \eta'}$. 

We claim that each accumulation point of $(g_n a g_n^{-1})$ actually belongs to $G^0_{\eta, \eta'}$. To prove that claim, we shall use the existence of an $X$-constant $K$ such that for each $g \in G^0_\xi$, we have $d(gr(n), r(n)) \leq K$ for all sufficiently large $n$ (see~\cite[Lemma~21]{BMW}). 
Let now $t \in G_{\eta, \eta'}$ and a subsequence $(g_{\phi(n)} a g_{\phi(n)}^{-1})$ converging to $t$ and such that $\big(g_{\phi(n)}(r(n))\big)$ converges to some point $x \in X$. For each integer $N$, the sequence  $(g_{\phi(n)} a^N g_{\phi(n)}^{-1})$ converges to $t^N$. We have 
\begin{align*}
d(t^N(x), x) & = \lim_n d(g_{\phi(n)} a^N g_{\phi(n)}^{-1}(x), x)\\
& = \lim_n  d(a^N g_{\phi(n)}^{-1}(x), g_{\phi(n)}^{-1}(x))\\
& = \lim_n d(a^N r(n), r(n))\\
& \leq K.
\end{align*}
This ensures that the isometry $t$ is elliptic, thereby proving the claim. 

Since   $G^0_{\eta, \eta'}$ is compact by Lemma~\ref{lem:2-ended}, we conclude that $(g_n)$ is indeed a compacting sequence for $G^0_\xi$, and that $G^0_\xi$ is compactible with limit  $G^0_{\eta, \eta'}$. 
\end{proof}

\begin{prop}\label{prop:tdlc-reg-ell}
Assume that $G$ is totally disconnected. Then, for each $\xi \in \partial X$, the group $G_\xi^0$ is regionally elliptic. 
\end{prop}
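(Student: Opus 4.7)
The plan is to leverage Lemma~\ref{lem:compacting-seq}: it provides a pair $\eta \neq \eta' \in \partial X$ and a sequence $(g_n) \subseteq G$ that is a compacting sequence for $G_\xi^0$ with limit $K := G_{\eta,\eta'}^0$, a compact subgroup by Lemma~\ref{lem:2-ended}. The key tdlc input needed is that $K$ sits inside some compact open subgroup $V$ of $G$. To produce $V$, pick any compact open subgroup $V_0$ of $G$ (van Dantzig): its stabilizer under $K$-conjugation is $K \cap N_G(V_0)$, which is open in $K$ (since $V_0 \subseteq N_G(V_0)$ is open in $G$), hence of finite index by compactness of $K$. Therefore $W := \bigcap_{k \in K} k V_0 k^{-1}$ is a finite intersection of compact open subgroups of $G$, yielding a $K$-invariant compact open subgroup of $G$. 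Then $V := KW$ is easily seen to be a compact open subgroup of $G$ containing $K$.

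Next, set $V_n := g_n^{-1} V g_n$, which is again a compact open subgroup of $G$. For each fixed $a \in G_\xi^0$, the sequence $(g_n a g_n^{-1})$ lies in a compact set and has all its accumulation points inside $K \subseteq V$; since $V$ is clopen, this forces $g_n a g_n^{-1} \in V$ for all sufficiently large $n$, equivalently $a \in V_n$ eventually. Now set
$$W_N := G_\xi^0 \cap \bigcap_{n \geq N} V_n.$$
Each $W_N$ is a closed subgroup of $G_\xi^0$ contained in the compact set $V_N \cap G_\xi^0$, hence is a compact subgroup of $G_\xi^0$. The family $(W_N)$ is increasing in $N$, and the preceding remark yields $G_\xi^0 = \bigcup_N W_N$.

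Since $G_\xi^0$ is $\sigma$-compact (being closed in the $\sigma$-compact group $G$), the equivalent characterisation of regionally elliptic groups recalled in the introduction applies: writing $G_\xi^0$ as a countable ascending union of compact subgroups suffices to conclude that it is regionally elliptic. The main obstacle in this argument is that the compacting property controls $g_n a g_n^{-1}$ only pointwise in $a$, with no a priori uniform control as $a$ ranges over a compact subset of $G_\xi^0$; the detour through the ascending family $(W_N)$ is precisely what bypasses this obstruction, by reducing regional ellipticity to a statement requiring only the pointwise version.
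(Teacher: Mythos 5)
Your argument is correct, and its first step is exactly the paper's: invoke Lemma~\ref{lem:compacting-seq} to obtain a compacting sequence $(g_n)$ for $G_\xi^0$ with limit the compact group $K=G^0_{\eta,\eta'}$ of Lemma~\ref{lem:2-ended}. Where you diverge is in what you do with it: the paper then simply cites \cite[Proposition~4.2]{CM-fixed} as a black box, whereas you supply a self-contained proof of that step. Your fill-in is sound at every point I checked: the standard van Dantzig argument producing a compact open $V\supseteq K$ (the normalizer $N_G(V_0)$ is open since it contains $V_0$, so the $K$-orbit of $V_0$ under conjugation is finite, $W=\bigcap_{k\in K}kV_0k^{-1}$ is compact open and $K$-invariant, and $V=KW$ works); the observation that boundedness of $(g_na g_n^{-1})$ together with all accumulation points lying in $K\subseteq V$ forces $g_nag_n^{-1}\in V$ eventually, because $V$ is clopen; the construction of the increasing compact subgroups $W_N=G_\xi^0\cap\bigcap_{n\geq N}V_n$ exhausting $G_\xi^0$; and the appeal to $\sigma$-compactness of $G_\xi^0$ (a closed subgroup of the compactly generated group $G$) to convert the ascending union into regional ellipticity via the characterisation recalled in the introduction. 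Your closing remark correctly identifies the one genuine subtlety --- the compacting property is only pointwise in $a$ --- and the detour through the $W_N$ is precisely the right way around it. The net effect is that your proof makes the proposition independent of the external reference, at the cost of about a page of elementary tdlc bookkeeping; the paper's version buys brevity by outsourcing exactly this bookkeeping.
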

\begin{proof}
In view of Lemma~\ref{lem:compacting-seq}, this is a direct consequence of~\cite[Proposition~4.2]{CM-fixed}.
\end{proof}

We close this subsection by recording the following result from~\cite{CCMT}. It will be used repeatedly to various questions on hyperbolic locally compact groups  to the totally disconnected case.

\begin{prop}\label{prop:non-amen-hyper}
Let $G$ be a non-amenable hyperbolic locally compact group. Then $G$ has a largest compact normal subgroup $W$ and the quotient $G/W$ is either a virtually connected rank one simple Lie group, or a totally disconnected group. 
\end{prop}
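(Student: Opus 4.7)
The plan is to identify the largest compact normal subgroup $W$ as the kernel of the $G$-action on the Gromov boundary, and then to analyze $G/W$ via its identity component, invoking the structure theory of locally compact groups together with rank one rigidity for hyperbolic geometry.

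Fix a proper geodesic hyperbolic space $X$ on which $G$ acts continuously, properly and cocompactly by isometries, as afforded by~\cite[Proposition~2.1]{CCMT}. Since $G$ is non-amenable, its action on $X$ is of general type and contains a Schottky subgroup by~\cite[Lemma~3.3]{CCMT}; in particular the induced $G$-action on $\partial X$ is minimal and $\partial X$ is infinite. Let $W$ be the kernel of the continuous homomorphism $G \to \mathrm{Homeo}(\partial X)$. Any compact normal subgroup $N \trianglelefteq G$ has compact orbits on $\partial X$ which are permuted by $G$; minimality forces these orbits to be singletons, hence $N \leq W$. To show that $W$ is itself compact, I would observe that each $w \in W$ fixes $\partial X$ pointwise, so $w$ cannot be hyperbolic (which would fix only two boundary points) nor parabolic (only one), and is therefore elliptic. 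For any distinct $\xi, \xi' \in \partial X$ and a bi-infinite geodesic $\ell$ joining them, the restriction $w|_\ell$ is an isometry of $\mathbf{R}$ with bounded orbits, hence trivial. Cocompactness provides a constant $R$ such that every point of $X$ lies within distance $R$ of some such $\ell$, whence $W$ has uniformly bounded displacement on $X$; properness and closedness then make $W$ compact, and thus the largest compact normal subgroup of $G$.

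Next, set $\bar G := G/W$; the action on $X$ descends to a continuous, proper and cocompact isometric action of $\bar G$, and by maximality of $W$ the group $\bar G$ has no nontrivial compact normal subgroup. Let $\bar G^0$ denote the identity component. If $\bar G^0 = \{1\}$ then $\bar G$ is totally disconnected, yielding one case of the conclusion. Otherwise, by the Gleason--Yamabe theorem applied to the connected locally compact group $\bar G^0$, the intersection of all compact normal subgroups of $\bar G^0$ whose quotient is a Lie group is itself compact and characteristic in $\bar G^0$, hence normal in $\bar G$, and therefore trivial. The resulting pro-Lie structure, combined with the proper action on the finite-dimensional space $X$, forces $\bar G^0$ to be a connected Lie group with no nontrivial compact normal subgroup. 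Rank one rigidity for connected Lie groups acting properly by isometries on proper hyperbolic spaces then identifies $\bar G^0$ with a connected simple Lie group of rank one with trivial center, and since $\bar G$ normalizes $\bar G^0$ while the outer automorphism group of such a Lie group is finite, $\bar G^0$ has finite index in $\bar G$.

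The main obstacle I anticipate is the rank one rigidity step: one must exclude both higher rank semisimple factors of $\bar G^0$ (which would force embedded flats contradicting hyperbolicity of $X$) and non-compact solvable factors (which, in the absence of a nontrivial compact normal subgroup, would yield a fixed boundary point and conflict with the general-type action of $\bar G$ on $X$). Making this precise requires a careful analysis of the Levi decomposition of $\bar G^0$ alongside the geometry of its proper cocompact action, and this is where the delicate content of~\cite{CCMT} enters.
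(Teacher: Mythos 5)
The paper's own proof is two lines: it observes that a non-amenable hyperbolic locally compact group acts on $X$ with general type (Proposition~\ref{prop:RelAmen-action}) and then cites~\cite[Proposition~5.10]{CCMT}, which is essentially the statement to be proved. You instead attempt to reconstruct that result from scratch, and your skeleton (take $W$ to be the kernel of the boundary action, then apply Gleason--Yamabe to the identity component of $G/W$) is the right one, but several individual steps do not hold as written. First, ``minimality forces these orbits to be singletons'' is not a valid inference: minimality of the $G$-action on $\partial X$ does not by itself collapse the $N$-orbits of a compact normal subgroup (the quotient $\partial X/N$ is just another minimal compact $G$-space). The correct argument is that $N$ fixes the attracting point $\xi^+=\lim_k g^k x_0$ of every hyperbolic $g\in G$, since $n g^k x_0 = g^k(g^{-k}ng^k)x_0$ with $g^{-k}ng^k$ ranging in the compact set $N$, and such points are dense in $\partial X$ for a general-type action, so the closed $G$-invariant fixed-point set of $N$ is everything. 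Second, $w$ need not preserve the geodesic $\ell$, only its pair of endpoints, so ``$w|_\ell$ is an isometry of $\mathbf R$ \dots hence trivial'' is not meaningful as stated; what is true, and what you need, is that an elliptic isometry fixing both ends of $\ell$ displaces points of $\ell$ by at most an $X$-constant, which still yields the uniform displacement bound and hence compactness of $W$.

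The Lie-theoretic half has more serious gaps. The inference from ``the intersection of all compact normal subgroups of $\bar G^0$ with Lie quotient is trivial'' to ``$\bar G^0$ is a Lie group'' is a non sequitur: an infinite product of circles has trivial such intersection and is not Lie. What you need is that a \emph{connected} locally compact group has a unique \emph{maximal} compact normal subgroup modulo which it is a Lie group; that subgroup is characteristic, hence normal in $\bar G$, hence trivial here. Next, $\bar G^0$ acts properly but not a priori cocompactly on $X$, so ``rank one rigidity for connected Lie groups acting properly'' proves nothing on its own (the trivial group acts properly); one must use normality of $\bar G^0$ in $\bar G$ together with the general-type dynamics of $\bar G$ on $\partial X$ to force a nontrivial $\bar G^0$ to act with general type, and only then run the Levi-decomposition analysis --- a step you explicitly defer to~\cite{CCMT}, i.e.\ to the very result the paper cites. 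Finally, finiteness of the outer automorphism group of $\bar G^0$ does not give $[\bar G:\bar G^0]<\infty$: if $\bar G\cong S\times D$ with $D$ infinite discrete, $\mathrm{Out}(S)$ is still finite. You also need the centralizer of $\bar G^0$ in $\bar G$ to be trivial (it acts trivially on $\partial X$, hence maps into the image of $W$). Each gap is repairable, but as written the argument does not close, and its hardest step still rests on the reference the paper invokes directly.
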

\begin{proof}
Since a non-amenable hyperbolic locally compact group is of general type by Proposition~\ref{prop:RelAmen-action}, the result follows from~\cite[Proposition~5.10]{CCMT}. 
\end{proof}

\begin{rmk}
In a $\sigma$-compact locally compact group $G$, every identity neighbourhood contains a compact normal subgroup $N$ such that $G/N$ is second countable, see~\cite{KaKu}. In particular the quotient $G/W$ appearing in Proposition~\ref{prop:non-amen-hyper} is second countable. Since $W$ acts trivially on the Gromov boundary of $G$, it follows from Remark~\ref{rem:KaKu} that the set of continuity points of the stabilizer map $\partial G \to \Sub(G)$ is a dense $G_\delta$, although the group $G$ itself need not be second countable. We shall frequently use this fact without further notice. 
\end{rmk}

\subsection{Ballistic boundary points}
The structure of hyperbolic locally compact groups with a cocompact amenable subgroup has been described in~\cite{CCMT}. If the group is elementary, it is either compact, or $2$-ended, in which case it is described by~\cite[Proposition~5.6]{CCMT}. If the group is non-elementary, it is described by Theorem~A or D in~\cite{CCMT}, depending on whether the group is amenable or not. For a non-amenable locally compact hyperbolic group $G$, the existence of a cocompact amenable subgroup is equivalent to the fact that $G$ acts $2$-transitively on its Gromov boundary (see~\cite[Theorem~8.1]{CCMT}). 

The following theorem establishes an additional powerful criterion, namely it suffices to verify $G_\xi^0 \neq G_\xi$ for suitable points at infinity $\xi$. We shall call $\xi$ a \textbf{ballistic} point when $G_\xi^0 \neq G_\xi$ is satisfied.

\begin{thm}\label{thm:Ballistic}
Let $G$ be a non-amenable hyperbolic locally compact group and $X$ be a proper geodesic metric space on which $G$ acts continuously, properly and cocompactly by isometries.

The following conditions are equivalent.

\begin{enumerate}[label=(\roman*)]
\item\label{it:coco} $G$ has a cocompact amenable subgroup. 

\item\label{it:bd-2-tr} The $G$-action on the Gromov boundary $\partial X$ is $2$-transitive.

\item\label{it:all-ballistic} For all $\xi \in \partial X$, we have $G_\xi^0 \neq G_\xi$. 

\item\label{it:generic-ballistic} For some continuity point $\eta \in \partial X$ of the stabilizer map, we have $G_\eta^0 \neq G_\eta$. 

\item\label{it:bdd-tsl} There is a constant $K$ such that, for every hyperbolic element $\gamma \in G$, there exists a hyperbolic element $\gamma' \in G$ with asymptotic displacement length~$|\gamma'|_\infty \leq K$ such that $\gamma$ and $\gamma'$ share the same pair of fixed points in $\partial X$.

\end{enumerate}
\end{thm}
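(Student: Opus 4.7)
The equivalence \ref{it:coco} $\Leftrightarrow$ \ref{it:bd-2-tr} is taken directly from \cite[Theorem~8.1]{CCMT}, so the work consists in plugging \ref{it:all-ballistic}, \ref{it:generic-ballistic} and \ref{it:bdd-tsl} into this pair. Let me first dispatch the easy implications. Fix a hyperbolic element $\gamma_0 \in G$, which exists because a non-amenable hyperbolic group is of general type by Proposition~\ref{prop:RelAmen-action}. For \ref{it:bd-2-tr} $\Rightarrow$ \ref{it:all-ballistic}, given any $\xi \in \partial X$, choose by 2-transitivity an element $g\in G$ with $g \gamma_0^+ = \xi$, so that $g\gamma_0 g^{-1}$ is a hyperbolic element of $G_\xi$, witnessing $G_\xi^0 \neq G_\xi$. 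For \ref{it:bd-2-tr} $\Rightarrow$ \ref{it:bdd-tsl}, given a hyperbolic $\gamma \in G$ with fixed pair $(\gamma^+, \gamma^-)$, 2-transitivity yields $g$ with $(g\gamma_0^+, g\gamma_0^-) = (\gamma^+, \gamma^-)$, and then $\gamma' := g\gamma_0 g^{-1}$ is conjugate to $\gamma_0$, hence hyperbolic with $|\gamma'|_\infty = |\gamma_0|_\infty$; take $K = |\gamma_0|_\infty$. The implication \ref{it:all-ballistic} $\Rightarrow$ \ref{it:generic-ballistic} is trivial since the set of continuity points of the stabilizer map is a dense $G_\delta$ in $\partial X$, in view of the Remark following Proposition~\ref{prop:non-amen-hyper}.

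The substance lies in \ref{it:generic-ballistic} $\Rightarrow$ \ref{it:coco} and \ref{it:bdd-tsl} $\Rightarrow$ \ref{it:coco}. For \ref{it:bdd-tsl} $\Rightarrow$ \ref{it:coco}, I would reason as follows. Since $|\gamma'|_\infty \leq K$ and $|\gamma'| \leq |\gamma'|_\infty + 16\delta$, every hyperbolic element $\gamma'$ produced by \ref{it:bdd-tsl} has minimal displacement bounded by a universal constant. Using cocompactness of the $G$-action on $X$, every such $\gamma'$ is $G$-conjugate to an element lying in the compact set $\Omega := \{g \in G : d(g x, x) \leq K + 16\delta + \mathrm{diam}(F) \text{ for some } x \in F\}$, where $F \subset X$ is a compact set with $GF = X$. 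The map $\Omega \cap \{\text{hyperbolic}\} \to \partial^{(2)}X$, $g \mapsto (g^+, g^-)$, is continuous, so its image has compact closure $\overline{\Phi}$. By \ref{it:bdd-tsl} combined with density of hyperbolic fixed pairs in $\partial^{(2)}X$, the $G$-saturation $G \cdot \overline{\Phi}$ is a closed $G$-invariant subset which is also dense. Minimality of the $G$-action on $\partial^{(2)}X$ (valid for non-amenable hyperbolic groups) then forces $G \cdot \overline{\Phi} = \partial^{(2)}X$, so $G$ acts cocompactly on $\partial^{(2)}X$. Combined with the non-amenability of $G$ and the convergence-group nature of its action on $\partial X$, this forces a 2-transitive action on $\partial X$, via the CCMT structure machinery (Proposition~\ref{prop:non-amen-hyper} reduces to the totally disconnected case, where the tree-automorphism and rank-one Lie classifications give 2-transitivity from cocompactness on $\partial^{(2)} X$).

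For \ref{it:generic-ballistic} $\Rightarrow$ \ref{it:coco}, the strategy is to bootstrap to \ref{it:all-ballistic} and then onwards. Let $\gamma \in G_\eta$ be hyperbolic with fixed pair $(\eta, \eta^*)$, and pick an arbitrary $\xi \in \partial X$. By minimality of $G \curvearrowright \partial X$, choose $g_n \in G$ with $g_n \xi \to \eta$. Because $\eta$ is a continuity point, $G_{g_n\xi} = g_n G_\xi g_n^{-1}$ Chabauty-converges to $G_\eta$, so there exist $\gamma_n \in g_n G_\xi g_n^{-1}$ with $\gamma_n \to \gamma$ in $G$. The key dynamical lemma is that, since $\gamma$ has north--south dynamics on $\partial X$ with attracting pair $(\eta, \eta^*)$, the perturbations $\gamma_n$ inherit two fixed points near $\eta$ and $\eta^*$ (via a Brouwer-type argument applied to small neighbourhoods preserved by $\gamma_n$ for $n$ large), hence are hyperbolic. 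Conjugating back, $g_n^{-1}\gamma_n g_n \in G_\xi$ is hyperbolic, which proves \ref{it:all-ballistic}. From \ref{it:all-ballistic}, one then leverages that every stabilizer $G_\xi$ contains a hyperbolic element together with the rigidity supplied by the stabilizer URS: all $G_\xi$ for $\xi$ in a generic set share the same algebraic type up to Chabauty limits, which in the non-amenable hyperbolic setting forces 2-transitivity via Proposition~\ref{prop:non-amen-hyper} and the classification in CCMT.

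The main obstacle is the dynamical stability step in \ref{it:generic-ballistic} $\Rightarrow$ \ref{it:all-ballistic}: verifying that Chabauty-limits of hyperbolic elements remain hyperbolic requires care, since in general a sequence $\gamma_n \to \gamma$ in $G$ can consist of parabolic or elliptic elements (the displacement growth $d(\gamma_n^N x_0, x_0)$ need not be uniformly linear in $N$). The resolution is to exploit the explicit north--south contraction of $\gamma$ on $\partial X$: $\gamma$ sends a small closed neighbourhood of $\eta$ strictly inside itself and does the same for $\gamma^{-1}$ near $\eta^*$, and these conditions are preserved under sufficiently $C^0$-close perturbations restricted to a fixed compact subset of $X$, which forces the existence of two distinct fixed points for $\gamma_n$ in $\partial X$. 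The secondary obstacle is the passage from every stabilizer containing a hyperbolic element to 2-transitivity, which I expect to require a URS-type rigidity combined with the structural classification of hyperbolic totally disconnected locally compact groups, thereby funnelling into CCMT's Theorem~D.
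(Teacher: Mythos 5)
Your treatment of the easy implications is fine, and your route from \ref{it:generic-ballistic} to \ref{it:all-ballistic} (push an arbitrary stabilizer onto the continuity point, use Chabauty convergence to approximate the hyperbolic element $\gamma\in G_\eta$, and check that nearby isometries are still hyperbolic) is essentially the paper's Lemmas on openness of the set of hyperbolic isometries and on uniform hyperbolicity of stabilizers. But both of your closing arguments toward \ref{it:coco} break down. The decisive error is the claim that cocompactness of the $G$-action on $\partial^{(2)}X$ forces $2$-transitivity on $\partial X$. This is false: any discrete cocompact non-elementary hyperbolic group (say a surface group $\Gamma$ acting on $\mathbf H^2$) acts cocompactly on the space of distinct boundary pairs --- a compact subset of the unit tangent bundle with full $\Gamma$-saturation pushes forward to a compact subset of $\partial^{(2)}\mathbf H^2$ with full saturation --- yet $\Gamma$ is countable and nowhere near $2$-transitive on the circle. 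Such groups are non-amenable convergence groups, so the extra hypotheses you invoke do not rescue the step; and the ``tree-automorphism and rank-one Lie classifications'' you appeal to are CCMT's Theorem~D, which is only available \emph{after} one knows $G$ has a cocompact amenable subgroup, i.e.\ the appeal is circular. (Separately, the closedness of $G\cdot\overline{\Phi}$ is unjustified: the action on distinct pairs is not proper, so the saturation of a compact set need not be closed.) Your sketch of \ref{it:generic-ballistic}$\Rightarrow$\ref{it:coco} has the same defect: after reaching \ref{it:all-ballistic} you gesture at ``URS rigidity'' and ``the classification in CCMT'' without any mechanism, and again the classification is exactly what is not yet available.

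What the paper actually does at this point is quite different and is the real content of the theorem. First it upgrades \ref{it:bdd-tsl} to the statement that there is a constant $L$ such that \emph{every} distinct pair $(\xi,\xi')\in\partial X\times\partial X$ is the fixed pair of a hyperbolic element of $G$ with $|\cdot|_\infty\le L$ (Proposition~\ref{prop:hyp-two-uniform}); this uses duality of boundary pairs, nearest-point projections onto axes, and a compactness argument to extract a limit of the bounded-length elements supplied by \ref{it:bdd-tsl} along approximating axes. Then, given any $\xi\in\partial X$ and any two points $x_0,x_1\in X$, it draws geodesics toward $\xi$ through each $x_i$, takes the bounded-length hyperbolic elements translating along these geodesics, and uses suitable powers to move $x_0$ within uniformly bounded distance of $x_1$ by an element of $G_\xi$. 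This shows $G_\xi$ acts cocompactly on $X$, and since $G_\xi$ is amenable by Proposition~\ref{prop:RelAmen-action}, assertion \ref{it:coco} follows. The uniform bound on asymptotic translation lengths over \emph{all} pairs is the information your reduction to ``cocompact on $\partial^{(2)}X$'' discards, and it is precisely what separates condition \ref{it:bdd-tsl} from the behaviour of a discrete lattice.
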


As indicated above, the equivalence between~\ref{it:coco} and~\ref{it:bd-2-tr} is taken from~\cite{CCMT}. A special instance of the equivalence between~\ref{it:bd-2-tr} and~\ref{it:all-ballistic} has been observed for specific families of groups of tree automorphisms by C.~Ciobotaru in her PhD thesis~\cite[Proposition~2.2.11]{Cio}. 

\bigskip

Before embarking on the proof, we record the following classical criterion to identify hyperbolic isometries.

\begin{lem}\label{lem:hyp-isom}
Let $g$ be an isometry of a $\delta$-hyperbolic metric space and fix any $a>0$.

If there exists a point $x$ with $d(g^{-1} x, g x) \geq d(x, g x) + 2\delta +a$, then $g$ is hyperbolic with $|g|_\infty\geq a$.
\end{lem}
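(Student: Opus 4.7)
My plan is to reinterpret the hypothesis as a uniform Gromov-product bound along the entire orbit of $x$, and then upgrade this local control to a linear lower bound on displacement via the standard local-to-global principle for discrete quasi-geodesics.

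Set $L := d(x, gx)$ and $y_n := g^n x$ for $n \in \Zb$. Since $g$ is an isometry, $d(y_n, y_{n+1}) = L$ and $d(y_{n-1}, y_{n+1}) = d(g^{-1}x, gx)$ for every $n$. The hypothesis therefore rewrites as the uniform bound
\[
(y_{n-1} \mid y_{n+1})_{y_n} \;=\; \tfrac12\bigl(2L - d(y_{n-1}, y_{n+1})\bigr) \;\leq\; \tfrac{L - 2\delta - a}{2} \;=:\; c,
\]
while $L = 2c + 2\delta + a$ strictly exceeds the critical threshold $2c + 2\delta$ by the margin $a>0$. In particular $(y_n)_{n \in \Zb}$ is a \emph{local} discrete quasi-geodesic.

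The heart of the argument is then the classical local-to-global principle for quasi-geodesics in $\delta$-hyperbolic spaces (see e.g.\ Chapter~10 of~\cite{CDP}, or~\cite[Chapter III.H]{BriHae}): any bi-infinite sequence with constant step length $L$ and consecutive Gromov products bounded by $c$, with $L > 2c + 2\delta$, is in fact a genuine quasi-geodesic, so there exists a constant $C = C(L, c, \delta)$ with
\[
d(y_0, y_n) \;\geq\; n\bigl(L - 2c - 2\delta\bigr) - C \;=\; n a - C \qquad (n \geq 1).
\]
Dividing by $n$ and letting $n \to \infty$ yields $|g|_\infty = \lim_n d(x, g^n x)/n \geq a$, and in particular $|g|_\infty > 0$, so $g$ is a hyperbolic isometry.

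The only non-trivial step is the local-to-global lemma itself, which is the anticipated main obstacle. Its standard proof proceeds by induction on $n$ via Gromov's four-point inequality, combined with nearest-point projections of the $y_k$ onto a chosen geodesic $[y_0, y_n]$: the thin-triangle condition together with the bound $(y_{k-1} \mid y_{k+1})_{y_k} \leq c < L/2 - \delta$ forces the projections to appear in cyclic order and to be separated along the geodesic by at least $L - 2c - 2\delta = a$, from which the displayed bound follows by summation. Given the availability of this statement in~\cite{CDP}, I would cite it rather than reprove it.
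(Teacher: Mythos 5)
Your proof is correct and follows essentially the same route as the paper, which simply cites \cite[Chapter~9, Lemma~2.2]{CDP}: the proof there is exactly the chain/local-to-global argument you describe (the hypothesis is the Gromov-product bound $(g^{-1}x \mid gx)_x \leq c$ with step length exceeding $2c+2\delta$ by $a$, and induction via the four-point inequality yields $d(x,g^nx)\geq na$, in fact with no additive error $C$). Dividing by $n$ gives $|g|_\infty \geq a$ just as in the paper.
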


\begin{proof}
The fact that $g$ is hyperbolic is proved in~\cite[Chapter~9, Lemma~2.2]{CDP}. The proof given there proceeds by establishing $d(g^n x, x) \geq n a$ for all positive integers $n$, which yields also the estimate on $|g|_\infty$.
\end{proof}

There is a sort of converse upon replacing $g$ by a power:

\begin{lem}\label{lem:hyp-converse}
If $g$ is hyperbolic and $a>0$, then for every $x$ there is $n\geq 1$ with $d(g^{-n} x, g^n x) \geq d(x, g^n x) + 2\delta +a$.
\end{lem}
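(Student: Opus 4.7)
The plan is to exploit the fact that a hyperbolic isometry has two distinct fixed points $\xi^\pm$ at infinity, with $g^{\pm n} x \to \xi^\pm$, together with the basic identity relating distances and Gromov products.

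First, recall that $g$ being hyperbolic means it has two distinct fixed points $\xi^+, \xi^- \in \partial X$, with $g^n x \to \xi^+$ and $g^{-n} x \to \xi^-$ for every $x$. Second, from the definition of the Gromov product $(y|z)_x = \tfrac12(d(x,y) + d(x,z) - d(y,z))$, we rewrite
\[
d(g^{-n}x, g^n x) - d(x, g^n x) \;=\; d(g^{-n}x, x) - 2\,(g^{-n}x \mid g^n x)_x.
\]
Since $g$ is an isometry, $d(g^{-n}x, x) = d(x, g^n x)$, so the quantity we must bound below equals
\[
d(x, g^n x) - 2\,(g^{-n}x \mid g^n x)_x.
\]

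Next I would establish that $(g^{-n}x \mid g^n x)_x$ stays bounded in $n$. This is a standard consequence of $\delta$-hyperbolicity: if $y_n \to \eta$ and $z_n \to \eta'$ in the Gromov boundary with $\eta \neq \eta'$, then $\limsup (y_n \mid z_n)_x < \infty$ (indeed, it is controlled by $(\eta \mid \eta')_x + \delta$; see for instance \cite[Chap.~7]{Ghys-Harpe} or \cite[III.H.3]{BriHae}). Apply this with $\eta = \xi^-$ and $\eta' = \xi^+$, which are distinct precisely because $g$ is hyperbolic. Call the resulting bound $C$.

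Finally, since $|g|_\infty > 0$, we have $d(x, g^n x) \to \infty$ as $n \to \infty$. Choose $n$ large enough that $d(x, g^n x) \geq 2C + 2\delta + a$. Then the previous display gives the required inequality. I do not expect any serious obstacle; the only subtle point is the boundedness of the Gromov product, which is a routine fact from hyperbolic geometry once one observes that $\xi^+ \neq \xi^-$ is equivalent to $g$ being hyperbolic.
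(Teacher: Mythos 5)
Your proof is correct, but it takes a genuinely different route from the paper's. The paper argues by contradiction in two lines: if $d(g^{-n}x,g^nx) < d(x,g^nx)+2\delta+a$ for \emph{all} $n$, then since $d(g^{-n}x,g^nx)=d(x,g^{2n}x)$, dividing by $2n$ and letting $n\to\infty$ in the definition of $|g|_\infty$ yields $|g|_\infty\leq\frac12|g|_\infty$, hence $|g|_\infty=0$, contradicting hyperbolicity. This uses nothing beyond the definition of the asymptotic displacement length — no boundary points, no Gromov products. Your argument instead goes through the boundary: you rewrite the defect as $d(x,g^nx)-2(g^{-n}x\mid g^nx)_x$, invoke the standard fact that the Gromov product of two sequences converging to distinct boundary points stays bounded, and use $d(x,g^nx)\to\infty$. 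Both are valid; the trade-off is that the paper's proof is more elementary and self-contained, while yours, being direct rather than by contradiction, actually proves the slightly stronger statement that the inequality holds for all sufficiently large $n$ (and makes the geometric mechanism — the two orbits escaping to opposite ends — visible). The only mild caveat is that you are importing the facts that a hyperbolic isometry has two distinct fixed points at infinity and that Gromov products of sequences tending to distinct boundary points are bounded; these are standard and correctly cited, but they are heavier tools than the lemma requires.
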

\begin{proof}
Fix $g$, $a$ and $x$ and suppose there is no such $n$. Then, plugging the inequality
$$d(x, g^{2n} x) = d(g^{-n} x, g^n x) < d(x, g^n x) + 2\delta +a$$
in the definition of $|g|_\infty$ shows $|g|_\infty\leq \frac12 |g|_\infty$, hence $|g|_\infty =0$ and $g$ is not hyperbolic.
\end{proof}

We also have the following behaviour with respect to limits in $\Isom(X)$.

\begin{lem}\label{lem:hyp-limit}
The set of hyperbolic elements is open in $\Isom(X)$. 

Moreover, given a sequence $(g_n)$ converging to  $g\in \Isom(X)$, we have  $\limsup_n |g_n|\leq |g|$ and $\limsup_n |g_n|_\infty \leq |g|_\infty + 16 \delta$.
\end{lem}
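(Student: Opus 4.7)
The plan is to deduce all three claims from the displacement-length estimates of Lemmas~\ref{lem:hyp-isom} and~\ref{lem:hyp-converse}, together with the general bounds $|h|_\infty \leq |h| \leq |h|_\infty + 16\delta$ recalled at the start of the section and the fact that the topology on $\Isom(X)$ is the compact-open topology, which on isometries agrees with pointwise convergence and makes $\Isom(X)$ a topological group. In particular, $g_k \to g$ forces $g_k^m \to g^m$ pointwise for every $m \in \mathbf Z$.

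First I will dispatch the two $\limsup$ inequalities. For $\limsup_n |g_n| \leq |g|$, given $\epsilon > 0$ I pick $x_0 \in X$ with $d(gx_0, x_0) \leq |g| + \epsilon$; pointwise convergence gives $d(g_n x_0, x_0) \to d(g x_0, x_0)$, and since $|g_n| \leq d(g_n x_0, x_0)$ by definition of the infimum, letting $\epsilon \to 0$ yields the bound. The asymptotic estimate then follows by chaining:
\[
\limsup_n |g_n|_\infty \,\leq\, \limsup_n |g_n| \,\leq\, |g| \,\leq\, |g|_\infty + 16\delta.
\]

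The main content is the openness of the hyperbolic locus. Let $g$ be hyperbolic and fix a basepoint $x$. Applying Lemma~\ref{lem:hyp-converse} with $a = 2$ yields an integer $n \geq 1$ with
\[
d(g^{-n} x, g^n x) \,\geq\, d(x, g^n x) + 2\delta + 2.
\]
Since $g_k^{\pm n} \to g^{\pm n}$ pointwise, both $d(g_k^{-n} x, g_k^n x)$ and $d(x, g_k^n x)$ converge to their $g$-counterparts, so for all sufficiently large $k$ the slack of $1$ is preserved:
\[
d(g_k^{-n} x, g_k^n x) \,\geq\, d(x, g_k^n x) + 2\delta + 1.
\]
Lemma~\ref{lem:hyp-isom} then guarantees that $g_k^n$ is hyperbolic with $|g_k^n|_\infty \geq 1$. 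Using the identity $|h^n|_\infty = n |h|_\infty$, which is immediate from the definition $|h|_\infty = \lim_{m} d(h^m x, x)/m$, one concludes $|g_k|_\infty \geq 1/n > 0$, so $g_k$ itself is hyperbolic.

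The only mild point to mind is to allow strict slack (here the difference between $2\delta + 2$ and $2\delta + 1$) when invoking Lemma~\ref{lem:hyp-converse}, so that pointwise convergence of the two relevant distance functions is enough to transfer the inequality from $g$ to its perturbations $g_k$. Beyond this, the argument is formal and requires no additional input.
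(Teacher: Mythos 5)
Your proof is correct and follows essentially the same route as the paper: the two $\limsup$ estimates are read off from the definition of displacement length and the general bounds between $|\cdot|$ and $|\cdot|_\infty$, and openness is obtained by applying Lemma~\ref{lem:hyp-converse} to $g$ with a strict slack that survives pointwise convergence, then invoking Lemma~\ref{lem:hyp-isom} for a power of the nearby isometries.
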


\begin{proof}
Suppose first that $(g_n)$ converges to a hyperbolic isometry $g$. By Lemma~\ref{lem:hyp-converse}, we can choose $x$ and $n_0$ such that $d(g^{-n_0} (x), g^{n_0}(x)) \geq d(x, g^{n_0}( x)) + 2\delta +2$. It follows that for all $n$ large enough we have $d(g_n^{-n_0}( x), g_n^{n_0}(x)) \geq d(x, g_n^{n_0} (x)) + 2\delta +1$. Now Lemma~\ref{lem:hyp-isom} implies that $g_n^{n_0}$ is hyperbolic, and hence so is $g_n$.

Regardless of the type of isometries, the first estimate follows from the definition of the displacement length and the second from the estimates between $|\cdot|$ and $|\cdot|_\infty$ recalled earlier.
\end{proof}

Within the stabilizer of a point at infinity, we have more continuity because the asymptotic displacement length can be read on the (continuous) Busemann character as follows.

\begin{prop}\label{prop:length-Busemann}
Let $g$ be an isometry fixing $\xi\in\partial X$. Then $|g|_\infty = |\beta_\xi(g) |$.
\end{prop}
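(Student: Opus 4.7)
The plan is to split the argument according to the Gromov type of the isometry $g$, handling the non-hyperbolic case directly and then treating the hyperbolic case via the Busemann function. If $g$ is elliptic or parabolic, then $|g|_\infty = 0$ (this is equivalent to $g$ being non-hyperbolic, as recalled at the start of this section), and the description of $G^0_\xi = \Ker\beta_\xi$ coming from \cite[Lemma~3.8]{CCMT} and recalled at the start of \S\ref{sec:hyp-rel-amen} gives $g \in G^0_\xi$; hence $\beta_\xi(g) = 0$, and both sides of the asserted equality vanish.

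When $g$ is hyperbolic, I would fix a basepoint $x_0 \in X$ and a geodesic ray $r\colon \mathbf{R}_+ \to X$ from $x_0$ to $\xi$, and exploit the associated Busemann function $b_\xi(y) = \lim_{t\to\infty}\bigl(d(y, r(t)) - t\bigr)$. Unpacking the construction of $\beta_\xi$ from \cite[Corollary~3.9]{CCMT}, the character $\beta_\xi(h)$ coincides with $-b_\xi(h^{-1}x_0)$ up to an additive constant coming from the normalisation; combined with the homomorphism property of $\beta_\xi$, this gives
\[
n\, \beta_\xi(g) \;=\; \beta_\xi(g^n) \;=\; -\,b_\xi(g^{-n}x_0) + O(1) \qquad \text{as } n\to\infty.
\]
Since $b_\xi$ is $1$-Lipschitz, we have $|b_\xi(g^{-n}x_0)| \leq d(g^{-n}x_0, x_0)$; dividing the displayed equality by $n$ and letting $n\to\infty$ produces one half of the desired equality, namely $|\beta_\xi(g)| \leq |g|_\infty$.

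For the reverse inequality, I would use that a hyperbolic element $g$ has exactly two fixed points $\xi, \xi'$ in $\partial X$, and---after possibly replacing $g$ by $g^{-1}$, which preserves both $|g|_\infty$ and $|\beta_\xi(g)|$---assume that $\xi'$ is the attracting fixed point of $g$, so that $g^{-n}x_0 \to \xi$. The classical fact that a hyperbolic isometry admits a quasi-axis (see e.g.\ \cite[Chap.~10]{CDP}) then ensures that the backward orbit $(g^{-n}x_0)_{n\geq 0}$ is a quasi-geodesic ray fellow-travelling $r$, so there exists an $X$-constant $C$ with
\[
b_\xi(g^{-n}x_0) \;\leq\; -\,d(g^{-n}x_0, x_0) + C \qquad \text{for all } n \geq 0,
\]
the point being that $b_\xi$ decreases at unit speed along $r$. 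Plugging this into the formula for $\beta_\xi(g^n)$ above, dividing by $n$, and letting $n\to\infty$ yields $|\beta_\xi(g)| \geq |g|_\infty$. The main technical obstacle is the quasi-geodesic assertion used in this last step: while standard in hyperbolic geometry, it requires some careful $\delta$-slim-triangle bookkeeping to verify in full generality.
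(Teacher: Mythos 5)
Your argument is correct and follows essentially the same route as the paper: reduce to the hyperbolic case via \cite[Lemma~3.8]{CCMT}, then compare $\beta_\xi(g^n)$ with $d(g^n x_0, x_0)$ using the horofunction description of $\beta_\xi$ together with the fact that the orbit is a quasi-geodesic fellow-travelling a ray to $\xi$ (the paper packages your two inequalities into the single bounded-difference estimate of Lemma~\ref{lem:orbit-Busemann}, which it then reuses in Corollary~\ref{cor:Z-action-homothety}; your sign convention for $\beta_\xi$ is opposite to the paper's, but this is harmless since only $|\beta_\xi(g)|$ matters). The one point you defer --- that the backward orbit is a quasi-geodesic ray --- is handled in the paper by first replacing $g$ with a fixed power so that Lemma~\ref{lem:hyp-converse} applies, and then invoking stability of quasi-geodesics.
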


Just like the three above lemmas, this proposition holds for any geodesic hyperbolic space $X$; however, we should recall that if one applies it beyond the \emph{proper} case (to which locally compact groups reduce us here), then $\beta_\xi$ is a priori only a continuous homogeneous quasimorphism rather than a homomorphism (cf.~\cite[\S3]{CCMT}). This does not affect the proposition nor its proof, which relies on the following.

\begin{lem}\label{lem:orbit-Busemann}
Let $g$ be an isometry fixing $\xi\in\partial X$ and let $x\in X$. If $g$ is not parabolic, then the difference $n |\beta_\xi(g)| - d(g^n x, x)$ remains bounded over $n \geq 0$.
\end{lem}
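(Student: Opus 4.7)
The plan is to split on the Gromov type of $g$, which by hypothesis is either elliptic or hyperbolic. In the elliptic case the orbit $\{g^n x : n \in \Nbb\}$ is bounded, so $d(g^n x, x)$ is uniformly bounded in $n$; moreover, by the characterization of $G_\xi^0$ recalled just before the statement, namely that an element of $G_\xi$ is hyperbolic iff $\beta_\xi(g) \neq 0$, we have $\beta_\xi(g) = 0$. The quantity in question then reduces to $-d(g^n x, x)$, which is bounded.

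In the hyperbolic case, let $\xi' \neq \xi$ denote the second fixed point of $g$ on $\partial X$, and use the existence of geodesic lines between distinct boundary points (recalled in the preamble of this section) to pick $\ell \colon \Rb \to X$ joining $\xi'$ to $\xi$. Since $g$ fixes both endpoints, $g\ell$ is another geodesic from $\xi'$ to $\xi$, and the stability of geodesics with common endpoints at infinity in a $\delta$-hyperbolic space places $g\ell$ within Hausdorff distance $O(\delta)$ of $\ell$. Setting $y = \ell(0)$, the orbit $(g^n y)_{n \geq 0}$ fellow-travels $\ell$ and, by stability of quasi-geodesics (cf.~\cite[Chap.~10, \S6]{CDP}), satisfies $d(g^n y, y) = n|g|_\infty + O(1)$. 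A triangle inequality with the fixed offset $d(x,y)$ then upgrades this to $d(g^n x, x) = n|g|_\infty + O(1)$.

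It remains to match $|\beta_\xi(g)|$ with $|g|_\infty$. One direction is cheap: since $\beta_\xi$ is a homomorphism (or, in general, a homogeneous quasi-morphism) extending the $1$-Lipschitz Busemann cocycle, one has $n|\beta_\xi(g)| = |\beta_\xi(g^n)| \leq d(g^n x, x) + O(1)$, which on dividing by $n$ and letting $n \to \infty$ gives $|\beta_\xi(g)| \leq |g|_\infty$. For the reverse inequality I would evaluate the Busemann cocycle along $\ell$ itself, where by direct computation $\beta_\xi(\ell(s), \ell(t)) = t - s$. Since $g\ell(0)$ lies within $O(\delta)$ of $\ell(\pm |g|_\infty)$, with the sign depending on whether $\xi$ is attracting or repelling for $g$, iterating and applying the same estimate to $g^n$ yields $n|\beta_\xi(g)| = n|g|_\infty + O(1)$, hence $|\beta_\xi(g)| = |g|_\infty$ after dividing by $n$ and letting $n \to \infty$. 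Combining with the displacement estimate of the previous paragraph gives $n|\beta_\xi(g)| - d(g^n x, x) = O(1)$, as required.

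The main technical obstacle is the bounded-error tracking of the orbit by the geodesic $\ell$, which underpins both the estimate on $d(g^n y, y)$ and the computation of the Busemann character along the axis; it rests on the standard but delicate stability of quasi-geodesics in Gromov hyperbolic spaces, and one has to ensure that all error terms are absorbed into $X$-constants independent of $n$.
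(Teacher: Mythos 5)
Your route is genuinely different from the paper's. The paper works with a geodesic ray $r$ from $x$ to $\xi$, writes $g^nx$ as approximately $r(s_n)$, and compares $d(x,g^nx)=s_n+O(1)$ directly with $\beta_\xi(g^n)=n\beta_\xi(g)$ through the horofunction description of the Busemann character from \cite[Proposition~3.7]{CCMT}; the identity $|\beta_\xi(g)|=|g|_\infty$ is then \emph{deduced} from the lemma (Proposition~\ref{prop:length-Busemann}), not used to prove it. You instead propose to prove $d(g^nx,x)=n|g|_\infty+O(1)$ and $|\beta_\xi(g)|=|g|_\infty$ separately, working along the axis of $g$, and then to combine them. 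Your elliptic case and the cheap direction $|\beta_\xi(g)|\le|g|_\infty$ are fine.

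The gap is the assertion that ``by stability of quasi-geodesics, $d(g^ny,y)=n|g|_\infty+O(1)$''. Stability of quasi-geodesics only yields $\frac{1}{\lambda}\,n-c\le d(g^ny,y)\le \lambda n+c$; removing the multiplicative error is precisely the content of Corollary~\ref{cor:Z-action-homothety}, which the paper derives \emph{from} this lemma, so invoking it here would be circular, and invoking quasi-geodesic stability alone does not suffice. What you have honestly established is that $g^ny$ stays within a uniform $D=O(\delta)$ of $\ell$ (since each $g^n\ell$ is a geodesic with the same endpoints at infinity as $\ell$). To close the gap you must add an argument such as: writing $d(g^ny,\ell(t_n))\le D$, the identities $d(g^{n+m}y,g^my)=d(g^ny,y)$ together with $t_n\to+\infty$ force $(t_n)$ to be nearly additive, $|t_{n+m}-t_n-t_m+t_0|=O(D)$ for large indices (one must rule out back-and-forth oscillation of the projections here), after which a Fekete-type argument gives $t_n-t_0=n\alpha+O(D)$ with $\alpha=\lim_n d(g^ny,y)/n=|g|_\infty$. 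The same ``uniform translation along $\ell$'' statement is also what your computation of the Busemann cocycle along the axis needs, in order to place $g\ell(0)$ near $\ell(\pm|g|_\infty)$ rather than merely near $\ell(\pm d(gy,y))$. With that supplement the argument goes through, but as written the key quantitative step is unproved.
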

\begin{proof}[Proof of Lemma~\ref{lem:orbit-Busemann}]
Since $\beta_\xi(g)$ vanishes if and only if $g$ is non-hyperbolic~\cite[Lemma~3.8]{CCMT}, we can assume $g$ hyperbolic. Upon replacing $g$ with its inverse, we can further assume that $\xi$ is the attracting point of $g$, or equivalently $\beta_\xi(g) >0$ (see again~\cite[Lemma~3.8]{CCMT}). Upon enlarging the desired bound, we suffer no loss of generality when replacing $g$ by a (fixed) power of itself before varying $n$. Therefore, using Lemma~\ref{lem:hyp-converse}, we can assume $d(g^2 x, x) \geq d(g x,x) + 2\delta+1$. Then the orbit $(g^n x)$ is quasigeodesic, see e.g.\ the proof of Lemma~2.2 in~\cite[Chap.~9]{CDP}. By stability of quasigeodesics (Theorem~25(i) in~\cite[Chap.~5]{Ghys-Harpe}), it follows that there is a constant $D$ such that $(g^n x)_{n\geq 0}$ remains at distance at most $D$ from a geodesic ray $r$. Note that $r(+\infty)=\xi$ since $g^n x\to \xi$, and upon increasing $D$ we can assume $r(0)=x$. We choose $s_n\geq 0$ such that $d(r(s_n), g^n x) \leq D$ for all $n$.

According to Proposition~3.7 in~\cite{CCMT}, we can write $\beta_\xi(g) = \lim_n \frac1n h(x, g^n x)$ where the function $h(x,y)$ of $x,y$ is any accumulation point (for the pointwise convergence) of $d(x, r(s)) - d(y, r(s))$ as $s\to\infty$. Note that $d(x, r(s)) = s$, while $\big|d(g^n x, r(s)) - |s-s_n|\big|$ is bounded by $D$. Letting $s\to \infty$, we deduce that  $|h(x, g^n x) - s_n|$ is bounded by $D$. Using one more time $d(r(s_n), g^n x) \leq D$, we conclude
$$\Big| h(x, g^n x)  - d(x, g^n x) \Big| \leq 2 D.$$
It is also shown in Proposition~3.7 of~\cite{CCMT} that the difference $\big|h(x, g^n x) - \beta_\xi(g^n)\big|$ is bounded independently of $n$; now the lemma follows since $\beta_\xi(g^n) = n \beta_\xi(g)$.
\end{proof}

\begin{proof}[Proof of Proposition~\ref{prop:length-Busemann}]
We can assume that $g$ is hyperbolic thanks to~\cite[Lemma~3.8]{CCMT}. The proposition then follows from Lemma~\ref{lem:orbit-Busemann} by dividing by $n$ and letting $n\to\infty$.
\end{proof}

As mentionned in the proof of Lemma~\ref{lem:orbit-Busemann}, it is well-known that the orbits of hyperbolic isometries are quasigeodesic. We can now stregthen this conclusion as follows, removing any multiplicative error factor.

\begin{cor}\label{cor:Z-action-homothety}
Let $g$ by an isometry of a geodesic hyperbolic space $X$ and let $x\in X$.  If $g$ is not parabolic, then the difference
$$ |g|_\infty \cdot |n-m|  - d(g^n x, g^mx) $$
remains bounded over $n, m\in \Zb$.
\end{cor}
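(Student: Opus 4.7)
The plan is to reduce to the known bound of \Cref{lem:orbit-Busemann} by a case split on the isometry type of $g$, and by exploiting that $g$ itself acts on $X$ by isometries. Since $g$ is assumed non-parabolic, Gromov's trichotomy forces $g$ to be either elliptic or hyperbolic. First, for any $n,m \in \Zbb$ the isometry $g^{-m}$ sends $g^n x$ to $g^{n-m} x$ and $g^m x$ to $x$, so $d(g^n x, g^m x) = d(g^{n-m} x, x)$. It therefore suffices to prove that $|g|_\infty \cdot |k| - d(g^k x, x)$ is bounded over $k \in \Zbb$.

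If $g$ is elliptic, then its orbits are bounded, so $d(g^k x, x)$ is bounded uniformly in $k$. Since $|g|_\infty = 0$ in that case, the claim is immediate.

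If $g$ is hyperbolic, let $\xi^+ \neq \xi^- \in \partial X$ be its attracting and repelling fixed points. Applying \Cref{lem:orbit-Busemann} to $g$ with $\xi = \xi^+$ gives a constant $C_+$ such that $\bigl| n\,|\beta_{\xi^+}(g)| - d(g^n x, x)\bigr| \leq C_+$ for all $n \geq 0$, and by \Cref{prop:length-Busemann} we may replace $|\beta_{\xi^+}(g)|$ by $|g|_\infty$. Now $g^{-1}$ is also hyperbolic with attracting fixed point $\xi^-$ and the same asymptotic displacement length, so the same lemma applied to $g^{-1}$ at $\xi^-$ yields a constant $C_-$ with $\bigl| n\,|g|_\infty - d(g^{-n} x, x)\bigr| \leq C_-$ for all $n \geq 0$. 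Together these two estimates give $\bigl| |k|\cdot |g|_\infty - d(g^k x, x)\bigr| \leq \max(C_+, C_-)$ for every $k \in \Zbb$, and substituting $k = n-m$ finishes the proof.

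There is essentially no obstacle here beyond a careful bookkeeping of signs: the corollary is a direct packaging of \Cref{lem:orbit-Busemann} and \Cref{prop:length-Busemann}, extended from the half-line $n \geq 0$ to all of $\Zbb$ by applying the lemma to $g^{-1}$, and then to pairs $(n,m)$ by equivariance under $g$.
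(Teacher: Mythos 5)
Your proof is correct and follows essentially the same route as the paper: reduce to $d(g^k x,x)$ by equivariance, dispose of the elliptic case trivially, and combine Lemma~\ref{lem:orbit-Busemann} with Proposition~\ref{prop:length-Busemann}. The only cosmetic difference is that you handle negative exponents by applying the lemma to $g^{-1}$ at $\xi^-$, whereas the paper simply notes that the symmetry $d(g^k x,x)=d(g^{-k}x,x)$ reduces everything to $n\geq 0$.
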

\begin{proof}
Since $g$ is isometric, it suffices to show the statement for $m=0$ and $n\geq 0$. We can again assume that $g$ is hyperbolic; in particular it fixes some $\xi\in\partial X$. Now the statement follows by combining Lemma~\ref{lem:orbit-Busemann} with Proposition~\ref{prop:length-Busemann}.
\end{proof}

\begin{lem}\label{lem:hyp-uniform}
Let $G$ and $X$ be as in Theorem~\ref{thm:Ballistic} and assume that~\ref{it:generic-ballistic} holds. 
There is a constant $K$ such that for each $\xi \in \partial X$, there exists a hyperbolic isometry $t \in G_\xi$ with $|t|_\infty \leq K$. 
\end{lem}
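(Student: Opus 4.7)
The plan is to transfer the hypothesis at the single continuity point $\eta$ to an arbitrary boundary point $\xi$ by means of minimality of the boundary action and Fell/Chabauty continuity.

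First, I would record the two ingredients needed from the non-amenability of $G$: the action on $X$ is of general type by Proposition~\ref{prop:RelAmen-action}, so it contains a Schottky subgroup whose minimal strongly proximal action on $\partial X$ forces the $G$-action on $\partial X$ itself to be minimal. Separately, the hypothesis~\ref{it:generic-ballistic} supplies a continuity point $\eta \in \partial X$ of the stabilizer map and a hyperbolic element $t_0 \in G_\eta$; by Proposition~\ref{prop:length-Busemann} its asymptotic displacement length is $L := |t_0|_\infty = |\beta_\eta(t_0)| > 0$. I would then propose $K := L + 16\delta + 1$ as the uniform constant, where $\delta$ is a hyperbolicity constant for $X$.

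Next, given any $\xi \in \partial X$, I would use minimality to pick $(g_n)$ in $G$ with $g_n^{-1} \xi \to \eta$. Continuity of the stabilizer map at $\eta$ gives the Chabauty convergence $G_{g_n^{-1} \xi} \to G_\eta$, and since $G_{g_n^{-1} \xi} = g_n^{-1} G_\xi g_n$, lower semicontinuity of Chabauty convergence produces $s_n \in G_\xi$ such that $h_n := g_n^{-1} s_n g_n$ converges to $t_0$ in $G$. Lemma~\ref{lem:hyp-limit} then guarantees that $h_n$ is hyperbolic for $n$ large and that $\limsup_n |h_n|_\infty \leq L + 16\delta$. Since conjugation preserves the type and asymptotic displacement length of an isometry, $s_n \in G_\xi$ is itself hyperbolic with $|s_n|_\infty \leq K$ for all sufficiently large $n$, which is the required conclusion.

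The main subtlety I anticipate is that $G$ is not assumed to be second countable, so a priori the Chabauty space $\Sub(G)$ need not be metrizable and the sequential argument in the previous paragraph requires justification. I would handle this using the remark following Proposition~\ref{prop:non-amen-hyper}: the largest compact normal subgroup $W$ acts trivially on $\partial X$, hence is contained in every point stabilizer, and the quotient $G/W$ is second countable. Thus all stabilizers lie in the metrizable closed subspace $\{H \in \Sub(G) : H \geq W\} \cong \Sub(G/W)$, and Chabauty convergence inside this subspace is indeed sequential, which legitimizes the use of $(g_n)$ and the extraction of approximating elements $s_n$.
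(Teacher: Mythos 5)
Your proposal is correct and follows essentially the same route as the paper: fix the hyperbolic element at the continuity point $\eta$, use minimality of the boundary action to push an arbitrary $\xi$ towards $\eta$, invoke Chabauty convergence of the conjugated stabilizers to approximate that element by conjugates of elements of $G_\xi$, and conclude via Lemma~\ref{lem:hyp-limit} and conjugation-invariance of $|\cdot|_\infty$. The only additions are your explicit treatment of metrizability of the relevant part of $\Sub(G)$ (which the paper handles once and for all in the remark after Proposition~\ref{prop:non-amen-hyper}) and a slightly loose aside about the Schottky subgroup acting minimally on all of $\partial X$ rather than on its limit set, neither of which affects the argument.
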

\begin{proof}
Since $G_\eta^0 \neq G_\eta$, there is $\gamma \in G_\eta$ hyperbolic. We can take any $K > |\gamma|_\infty + 16 \delta$. Let indeed $\xi \in \partial X$ be arbitrary. By hypothesis, the group $G$ is non-amenable, hence the $G$-action on $X$ is of general type. This implies that  the $G$-action on $\partial X$ is minimal. Therefore, there exists $(g_n)$ in $G$ such that the sequence  $(g_n \xi)$ converges to $\eta$. By the choice of $\eta$, we know that the sequence $(g_n G_\xi g_n^{-1})$ Chabauty converges to $G_\eta$. Therefore there exist $t_n \in G_\xi$ such that the sequence $(g_n t_n g_n^{-1})$ converges to $\gamma$. For all $n$ large enough, it follows from Lemma~\ref{lem:hyp-limit} that $g_n t_n g_n^{-1}$ is hyperbolic, with asymptotic displacement length at most $K$. Since the asymptotic displacement length is invariant under conjugation, we deduce that $t_n$ is hyperbolic with $|t_n|_\infty \leq K$.
\end{proof}

We shall use the notion of \textbf{duality} due to Chen and Eberlein (compare~\cite[III.1]{Ballmann}). We say that a pair $(\xi, \xi')$ in $\partial X \times \partial X$ is \textbf{$G$-dual} if there exist a sequence $(\gamma_n)$ in $G$ such that $\gamma_n(x_0) \to \xi$ and $\gamma_n^{-1}(x_0) \to \xi'$ for some (hence any) $x_0 \in X$. Since $G$ is supposed to be non-amenable, its action on $X$ is of general type by Proposition~\ref{prop:RelAmen-action}. 

\begin{lem}\label{lem:dual}
If the $G$-action on $X$ is of general type, then every pair $(\xi, \xi') \in \partial X \times \partial X$ is $G$-dual. 
\end{lem}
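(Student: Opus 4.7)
My plan is to establish $G$-duality first for pairs of fixed points of hyperbolic elements (which is immediate from North--South dynamics) and then to promote it to arbitrary pairs by combining a closedness property of the set of $G$-dual pairs with a density statement for fixed-point pairs. Fix a basepoint $x_0 \in X$. If $h \in G$ is hyperbolic with attracting fixed point $h^+$ and repelling fixed point $h^-$, then the sequence $\gamma_n := h^n$ immediately witnesses the $G$-duality of $(h^+, h^-)$, since $h^n(x_0) \to h^+$ and $h^{-n}(x_0) \to h^-$. Writing $\mathcal{D} \subseteq \partial X \times \partial X$ for the set of $G$-dual pairs, a standard diagonal extraction shows that $\mathcal{D}$ is closed: given $(\xi_n, \xi_n') \in \mathcal{D}$ converging to $(\xi, \xi')$, one picks $\gamma_n \in G$ for which $\gamma_n(x_0)$ and $\gamma_n^{-1}(x_0)$ are sufficiently close to $\xi_n$ and $\xi_n'$ in the Gromov compactification, producing a sequence with $\gamma_n(x_0) \to \xi$ and $\gamma_n^{-1}(x_0) \to \xi'$.

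It therefore suffices to show that $\mathcal{F} := \{(h^+, h^-) : h \in G \text{ hyperbolic}\}$ is dense in $\partial X \times \partial X$. The general-type hypothesis provides, by \cite[Lemma~3.3]{CCMT}, a Schottky subgroup $\langle a, b \rangle \leq G$ whose generators are hyperbolic with pairwise disjoint fixed-point pairs. As already used in the proof of \Cref{prop:RelAmen-action}, the general-type condition also forces the $G$-action on $\partial X$ to be minimal. Given a target $(\xi, \xi') \in \partial X \times \partial X$ and open neighbourhoods $U$ of $\xi$ and $V$ of $\xi'$, minimality yields $g_1, g_2 \in G$ with $g_1(a^+) \in U$ and $g_2(b^-) \in V$; the conjugates $\alpha := g_1 a g_1^{-1}$ and $\beta := g_2 b g_2^{-1}$ are then hyperbolic with $\alpha^+ \in U$ and $\beta^- \in V$, and a generic choice of $g_1, g_2$ ensures that their fixed-point sets remain disjoint. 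A standard ping-pong argument shows that for $N$ large, the product $\alpha^N \beta^N$ is hyperbolic with attracting fixed point close to $\alpha^+$ and repelling fixed point close to $\beta^-$, placing an element of $\mathcal{F}$ inside $U \times V$.

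Combining density, closedness, and the base case yields $\mathcal{D} \supseteq \overline{\mathcal{F}} = \partial X \times \partial X$. The main obstacle is the density step: controlling the fixed points of $\alpha^N \beta^N$ is a standard but delicate North--South contraction computation that relies on the separation of the fixed-point sets of $\alpha$ and $\beta$ and on hyperbolic quasigeodesic stability (\cite[Chap.~9]{CDP}). An alternative that bypasses the explicit ping-pong is to invoke the classical fact that for general-type actions the diagonal $G$-action on $\partial X \times \partial X \setminus \Delta$ is minimal; since conjugation preserves $\mathcal{F}$ via $(g h_0^+, g h_0^-) = ((g h_0 g^{-1})^+, (g h_0 g^{-1})^-)$, the $G$-orbit of any such $(h_0^+, h_0^-)$ is then dense in $\partial X \times \partial X$ at once, completing the argument.
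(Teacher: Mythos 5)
Your primary route is workable but genuinely different from the paper's, and substantially heavier. You prove the lemma by (a) observing that the fixed-point pair of a hyperbolic element is $G$-dual, (b) showing the set $\mathcal D$ of dual pairs is closed, and (c) showing the set of hyperbolic fixed-point pairs is dense in $\partial X \times \partial X$ via conjugation and ping-pong. Steps (a) and (b) are fine, and (c) is a true classical statement, but it is exactly the step you leave as an admitted black box (``standard but delicate''), and even its setup needs care: after conjugating you must genuinely arrange that the four points $\alpha^{\pm}, \beta^{\pm}$ are distinct, and then run the North--South contraction estimate for $\alpha^N\beta^N$. The paper avoids all of this with a one-variable-at-a-time argument: starting from a single dual pair $(\xi,\xi')$ (the axis of one hyperbolic element), it notes that for fixed $\xi'$ the set $\{\xi : (\xi,\xi')\text{ is dual}\}$ is closed, nonempty, and $G$-invariant in the first coordinate (replace $\gamma_n$ by $g\gamma_n$ and use basepoint-independence of the definition), hence equals $\partial X$ by minimality of the $G$-action on $\partial X$ alone; then it repeats the argument in the second coordinate. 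This only uses minimality on single boundary points, which is already established for general-type actions, and requires no dynamics on pairs. Your approach buys a stronger intermediate statement (density of axes in $\partial^2 X$), but at the cost of the hardest step being left unproved.

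Your proposed ``alternative that bypasses the ping-pong'' contains a genuine error: the diagonal $G$-action on $\partial X \times \partial X \setminus \Delta$ is \emph{not} minimal for general-type actions; it is only topologically transitive. Already for the free group $F_2 = \langle a,b\rangle$ acting on its boundary, the orbit closure of the pair $(a^{\infty}, a^{-\infty})$ does not contain a pair $(\xi,\xi')$ whose representatives begin with distinct letters: any translate $(ga^{\infty}, ga^{-\infty})$ has both entries sharing the long common prefix obtained from the reduced form of $g$, so both entries lie in the same small cylinder. Hence you cannot conclude density of a single orbit of fixed-point pairs this way, and the fallback does not repair the gap in the density step.
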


\begin{proof}
There exists at least \emph{some} $G$-dual pair $(\xi, \xi')$, for instance the attracting and repelling points of some hyperbolic isometry. For this given $\xi' \in \partial X$, the non-empty collection of those $\xi \in \partial X$ such that $(\xi, \xi')$ is $G$-dual is closed and $G$-invariant. Since the $G$-action on $\partial X$ is minimal, we deduce that the pair $(\xi, \xi')$ is $G$-dual for all $\xi \in \partial X$. By symmetry, it follows that every pair $(\xi, \xi')$ is $G$-dual. 
\end{proof}

We now establish two strengthenings of Lemma~\ref{lem:hyp-uniform}.

\begin{lem}\label{lem:hyp-bi-uniform}
Let $G$ and $X$ be as in Theorem~\ref{thm:Ballistic}. If the assertion~\ref{it:generic-ballistic} from the latter statement holds, then~\ref{it:bdd-tsl} holds as well. 
\end{lem}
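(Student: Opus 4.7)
The plan is to combine \Cref{lem:hyp-uniform} with the north-south dynamics of $\gamma$ itself, extracting the desired hyperbolic element of $G_{\{\xi^+, \xi^-\}}$ as a limit in $G$ of suitable conjugates of a bounded hyperbolic stabilizer of $\xi^-$.

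By \Cref{lem:hyp-uniform}, there is a uniform constant $K_0$ such that every $\xi \in \partial X$ admits a hyperbolic element of $G_\xi$ with asymptotic displacement length at most $K_0$. Fix $\gamma \in G$ hyperbolic with fixed pair $\{\xi^+, \xi^-\}$, assuming without loss of generality that $\xi^+$ is the attracting fixed point. Pick $t \in G_{\xi^-}$ hyperbolic with $|t|_\infty \leq K_0$ and let $\zeta$ denote its other fixed point. If $\zeta = \xi^+$ then $t$ is already the required $\gamma'$, so assume $\zeta \notin \{\xi^+, \xi^-\}$. Replacing $t$ by the smallest integer power $t^m$ satisfying $m|t|_\infty > 16\delta$, we may ensure $|t|_\infty > 16\delta$ while keeping $|t|_\infty$ bounded by a uniform constant $K_1 \leq K_0 + 16\delta$.

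Define $h_n := \gamma^n t \gamma^{-n}$. Each $h_n$ lies in $G_{\xi^-}$, is hyperbolic with $|h_n|_\infty = |t|_\infty$, and has fixed pair $\{\xi^-, \gamma^n \zeta\}$. Since $\zeta \neq \xi^-$, the north-south dynamics of $\gamma$ force $\gamma^n \zeta \to \xi^+$. To bound displacements at a fixed base point, take $x_0$ on the axis of $\gamma$: then $\gamma^{-n} x_0$ advances along this axis toward $\xi^-$, and the standard fellow-traveling of geodesic rays converging to a common boundary point in a $\delta$-hyperbolic space produces a constant $C$ depending only on $\delta$ such that $\gamma^{-n} x_0$ lies within $C$ of the axis of $t$ for all sufficiently large $n$. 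Consequently
\[
d(x_0, h_n x_0) \;=\; d(\gamma^{-n} x_0,\, t \gamma^{-n} x_0) \;\leq\; |t|_\infty + 2C
\]
for all large $n$.

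By properness of the $G$-action on $X$, a subsequence of $(h_n)$ converges in $G$ to some element $h$. Joint continuity of the action on $X \cup \partial X$ yields $h\xi^- = \xi^-$ trivially and $h\xi^+ = \lim \gamma^n(t\xi^+) = \xi^+$ (using $t\xi^+ \neq \xi^-$, since $t$ permutes $\partial X \setminus \{\xi^-\}$), so $h \in G_{\{\xi^+, \xi^-\}}$. \Cref{lem:hyp-limit} gives $|h|_\infty \geq |t|_\infty - 16\delta > 0$, making $h$ hyperbolic with the same fixed pair as $\gamma$, and $|h|_\infty \leq d(x_0, hx_0) \leq K_1 + 2C$ provides the required uniform bound $K$. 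The principal technical point is the relative compactness of $(h_n)$ in $G$, which reduces to the bounded-displacement estimate at $x_0$; this rests on the fellow-traveling property of rays to a common boundary point, combined with the choice of $x_0$ on the axis of $\gamma$.
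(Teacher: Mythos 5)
Your proof follows the same core strategy as the paper's: pick a uniformly short hyperbolic element $t$ in the stabilizer of the repelling point $\xi^-$ of $\gamma$ (via Lemma~\ref{lem:hyp-uniform}), conjugate by $\gamma^n$, and extract a limit in $G$, which then fixes both $\xi^-$ and $\xi^+$. Where you diverge is in certifying that the limit is hyperbolic with uniformly bounded asymptotic displacement length. The paper does this in one stroke with the Busemann character: $\beta_{\xi^-}$ is a continuous homomorphism on $G_{\xi^-}$, so $\beta_{\xi^-}(\gamma^n t\gamma^{-n}) = \beta_{\xi^-}(t)$ passes to the limit, and Proposition~\ref{prop:length-Busemann} gives $|\gamma'|_\infty = |t|_\infty$ exactly --- no power-raising, no $16\delta$ loss, and no need for a quantitative displacement estimate at a base point. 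Your route via Lemma~\ref{lem:hyp-limit} and an explicit bound on $d(x_0, h_nx_0)$ is viable, and it has the merit of actually justifying why $(\gamma^n t\gamma^{-n})$ is bounded in $G$ (a point the paper merely asserts with ``Notice that''), but it costs you the preliminary replacement of $t$ by a power and a slightly worse constant.

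Two points in your displacement estimate need more care. First, a hyperbolic isometry of a general Gromov-hyperbolic space need not have an invariant geodesic axis; ``the axis of $\gamma$'' should be read as a geodesic line $\ell$ from $\xi^+$ to $\xi^-$ (which exists since $X$ is proper), and you should note that $\gamma^{-n}\ell$ is another such line, hence within a universal Hausdorff distance of $\ell$, so that $\gamma^{-n}x_0$ stays uniformly close to $\ell$ while converging to $\xi^-$; only then does the fellow-traveling of asymptotic rays place it within an $X$-constant of a geodesic between the fixed points of $t$. Second, and more substantively, the bound $d(\gamma^{-n}x_0, t\gamma^{-n}x_0) \le |t|_\infty + 2C$ implicitly assumes that $t$ translates points of ``its axis'' by exactly $|t|_\infty$; in general one only gets $|t|_\infty$ plus an additive $X$-constant for points on a geodesic between the fixed points of $t$, and even that requires an argument --- it is essentially the second claim in the paper's proof of Proposition~\ref{prop:hyp-two-uniform}, using that nearest-point projection is non-expanding up to an $X$-constant, or alternatively Corollary~\ref{cor:Z-action-homothety}. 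Since the extra additive constant is universal, your final bound $K$ remains uniform and the argument does go through, but as written this step is asserted rather than proved.
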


\begin{proof}
Since $G_\eta^0 \neq G_\eta$, there is $\gamma \in G_\eta$ hyperbolic. 
Let $\xi \in \partial X$ be the repelling fixed point of $\gamma$. By Lemma~\ref{lem:hyp-uniform}, the group $G_\xi$ contains a hyperbolic element $t$ with $|t|_\infty\leq K$. Notice that the sequence $(\gamma^n t \gamma^{-n})$ is bounded in $G$. Upon extracting, it converges to an element $\gamma' \in G_\xi$. Since the Busemann character $\beta_\xi \colon G_\xi \to \mathbf R$ is continuous and  $\beta_\xi(\gamma^n t \gamma^{-n}) = \beta_\xi(t)$, we have $\beta_\xi(\gamma') = \beta_\xi(t)$. Thus Proposition~\ref{prop:length-Busemann} implies $|\gamma'|_\infty = |t|_\infty$, which shows both that $\gamma'$ is hyperbolic (since this length is positive) and that $|\gamma'|_\infty \leq K$.

Finally, if $\xi'$ denotes the attracting fixed point of $\gamma$ and $\eta$ the fixed point of $t$ different from $\xi$, we have $\lim_n\gamma^n\eta = \xi'$, so that $\gamma'$ fixes $\xi'$. 
\end{proof}

\begin{prop}\label{prop:hyp-two-uniform}
Let $G$ and $X$ be as in Theorem~\ref{thm:Ballistic} and suppose that  the assertion~\ref{it:bdd-tsl}  from the latter statement holds. 

There is a constant $L$ such that, for any pair $(\xi, \xi')$ in $\partial X \times \partial X$ with $\xi \neq \xi'$, there is a hyperbolic isometry $\gamma \in G$ with $|\gamma|_\infty\leq L$ fixing $\xi$ and $\xi'$. 
\end{prop}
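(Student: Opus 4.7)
The plan is to combine three ingredients: (a) the density of (attracting, repelling) fixed-point pairs of hyperbolic elements of $G$ in $\partial X \times \partial X \setminus \Delta$, which follows from the existence of a Schottky subgroup (\cite[Lemma~3.3]{CCMT}), since $G$ is of general type by~\Cref{prop:RelAmen-action}; (b) hypothesis~\ref{it:bdd-tsl}, which lets us replace any hyperbolic element by one with asymptotic displacement length at most $K$ and the same fixed pair; and (c) a compactness argument using properness and cocompactness of the $G$-action on $X$.

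Fix $(\xi, \xi') \in \partial X \times \partial X$ with $\xi \neq \xi'$. By density, choose hyperbolic elements $\tilde\gamma_n \in G$ whose fixed pairs converge to $(\xi, \xi')$. After enlarging $K$ if necessary so that $K > 32\delta$, hypothesis~\ref{it:bdd-tsl} allows me to replace each $\tilde\gamma_n$ by a hyperbolic element with the same fixed pair and asymptotic length $\leq K$. Replacing each such element by a suitable positive power $\gamma_n$---an operation which preserves the fixed pair---I arrange that $|\gamma_n|_\infty \in [K/2, K]$ for all $n$.

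By cocompactness of the $G$-action, after pre-composing by a suitable $h \in G$ (and conjugating the eventual answer back at the end), I may assume that the bi-infinite geodesic $\ell$ from $\xi'$ to $\xi$ passes within a uniformly bounded distance of a basepoint $x_0 \in X$. The axes of the $\gamma_n$, being geodesic lines whose endpoints converge to those of $\ell$, converge to $\ell$ on compact subsets in a proper hyperbolic space; so I may choose points $x_n$ on these axes with $d(x_n, x_0) \leq D$ for some uniform constant $D$. Combined with $d(\gamma_n x_n, x_n) = |\gamma_n| \leq K + 16\delta$, this bounds $d(\gamma_n x_0, x_0)$ uniformly, so properness of the $G$-action on $X$ produces a subsequence $\gamma_n \to \gamma$ in $G$ with $|\gamma| \leq L := 2D + K + 16\delta$. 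Joint continuity of the $G$-action on $X \cup \partial X$ then yields $\gamma \xi = \xi$ and $\gamma \xi' = \xi'$.

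The main obstacle is to ensure that the limit $\gamma$ is hyperbolic, since a priori its asymptotic displacement length could collapse to zero. This is exactly the reason for the lower bound $|\gamma_n|_\infty \geq K/2$ built in by the power trick: applying \Cref{lem:hyp-limit} gives $K/2 \leq \limsup_n |\gamma_n|_\infty \leq |\gamma|_\infty + 16\delta$, whence $|\gamma|_\infty \geq K/2 - 16\delta > 0$. Hence $\gamma$ is hyperbolic, fixes both $\xi$ and $\xi'$, and satisfies $|\gamma|_\infty \leq |\gamma| \leq L$, with $L$ depending only on $X$ and $K$. This proves the proposition.
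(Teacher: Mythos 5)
Your overall strategy is sound and genuinely different from the paper's. The paper does not approximate $(\xi,\xi')$ by fixed-point pairs of hyperbolic elements; instead it uses $G$-duality (\Cref{lem:dual}) to produce a sequence $(\gamma_n)$ with $\gamma_n x_0\to\xi$ and $\gamma_n^{-1}x_0\to\xi'$, shows via Gromov products and \Cref{lem:hyp-isom} that these $\gamma_n$ are themselves hyperbolic with $|\gamma_n|_\infty\to\infty$ and with axes passing uniformly close to $x_0$ (this is the technical heart of the argument, using \Cref{lem:proj-on-geod}), and only then invokes hypothesis~(v) to replace each $\gamma_n$ by a short $\gamma'_n$ with the same fixed pair. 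Your route, starting from density of fixed-point pairs and normalizing $|\gamma_n|_\infty\in[K/2,K]$ by taking powers, is arguably cleaner in one respect: the semicontinuity estimate $\limsup_n|\gamma_n|_\infty\le|\gamma|_\infty+16\delta$ from \Cref{lem:hyp-limit} gives an explicit reason why the limit is hyperbolic, a point the paper's proof asserts without comment.

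Two steps need repair. First, the density of attracting/repelling pairs of hyperbolic elements in $\partial X\times\partial X$ minus the diagonal does not follow from the mere existence of a Schottky subgroup; you also need minimality of the boundary action together with duality and the convergence dynamics of hyperbolic isometries --- essentially \Cref{lem:dual} plus the opening computation of the paper's proof, which shows that a dual sequence eventually consists of hyperbolic elements whose fixed points converge to $(\xi,\xi')$. The fact is true and standard, but as written you are quoting it rather than deriving it, and it is precisely the content the paper extracts from duality. Second, the identity $d(\gamma_n x_n,x_n)=|\gamma_n|$ is false: in a general $\delta$-hyperbolic space a hyperbolic isometry need not realize its displacement length on a geodesic joining its two fixed points (nor anywhere at all). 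What is true, and what you actually need, is that for $x_n$ on such a geodesic one has $d(\gamma_n x_n,x_n)\le|\gamma_n|_\infty+C$ for an $X$-constant $C$; this is exactly the paper's ``second claim'' and requires an argument (coarse equivariance and coarse non-expansion of the nearest-point projection onto the geodesic). With that inequality in place of your equality, your bound becomes $L=2D+K+C$ and the rest of your argument, including the continuity step identifying the fixed points of the limit, goes through.
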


The following general lemma will be needed in the proof and again later.

\begin{lem}\label{lem:proj-on-geod}
Let $X$ be a geodesic $\delta$-hyperbolic space; there exist constants $H, J$ depending only on $\delta$ with the following property.

Let $\ell\subseteq X$ be a geodesic line (the image of a geodesic $\Zb\to X$) and choose a map $P\colon X\to \ell$ such that $d(P(x), x)$ minimizes the distance from $x$ to $\ell$ for any $x\in X$.

If $x,y\in X$ satisfy $d(P(x), P(y)) \geq J$, then any choices of geodesic segments
$$[x, P(x)] \cup [P(x), P(y)]\cup [P(y), y]$$
remain within distance less than $H$ from any geodesic segment $[x,y]$.
\end{lem}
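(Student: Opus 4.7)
The plan is to show that the concatenated path $\gamma := [x, P(x)] \cup [P(x), P(y)] \cup [P(y), y]$ is a $(1,C)$-quasi-geodesic from $x$ to $y$, where $C$ depends only on $\delta$, provided $J$ is chosen large enough in terms of $\delta$. Once this is in hand, stability of quasi-geodesics in a $\delta$-hyperbolic space (as invoked in the paper via Theorem~25(i) of~\cite[Chap.~5]{Ghys-Harpe}) produces a constant $H=H(\delta)$ such that $\gamma$ stays within Hausdorff distance strictly less than $H$ of any geodesic segment $[x,y]$, which is the desired conclusion.

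The core analytic input is the \textbf{coarsely contracting} behaviour of nearest-point projections to a geodesic in a hyperbolic space. Specifically, I would first establish the standard lemma that for any $z \in [x, P(x)]$, every nearest-point projection of $z$ onto $\ell$ lies within some constant $A = A(\delta)$ of $P(x)$, and analogously for points on $[P(y),y]$. This is proved by a thin-triangle argument: if some $q \in \ell$ were much closer to $z$ than $P(x)$ is, then the concatenation $[x,z]\cup[z,q]$ would undercut the definition of $P(x)$ as a nearest point on $\ell$ to $x$, a contradiction up to an error controlled by $\delta$.

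With this coarse projection estimate, I would verify the quasi-geodesic inequality $d(\gamma(s),\gamma(t)) \geq |s-t|-C$ by cases on which of the three pieces contain $\gamma(s)$ and $\gamma(t)$. The same-piece cases are trivial. For pairs on distinct pieces, applying the projection estimate to both endpoints of the sub-path between them shows that a nearest-point projection of the sub-path's endpoints to $\ell$ covers essentially the entire stretch of $\ell$ between $P(x)$ and $P(y)$ that the path traverses, so the triangle inequality combined with the fact that $P$ is $1$-Lipschitz up to additive error forces the distance $d(\gamma(s),\gamma(t))$ to be at least $|s-t|$ minus a $\delta$-controlled error. Choosing $J$ larger than this additive error ensures the middle segment is long enough for this inequality to govern \emph{all} cross-piece pairs, giving a uniform constant $C=C(\delta)$.

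The main obstacle is the case where $\gamma(s) \in [x, P(x)]$ and $\gamma(t) \in [P(y), y]$ with both points close to $P(x)$ and $P(y)$ respectively: a priori, a geodesic from $\gamma(s)$ to $\gamma(t)$ might shortcut through a tubular neighbourhood of $\ell$ and appear to bypass the middle segment. Ruling this out is exactly where the assumption $d(P(x), P(y)) \geq J$ is used, since once $J$ exceeds the projection-contraction constant $A(\delta)$ by a definite $\delta$-dependent amount, any such shortcut is forced to traverse most of $[P(x), P(y)]$, and the desired lower bound on $d(\gamma(s),\gamma(t))$ follows.
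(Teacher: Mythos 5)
Your proposal is correct in outline but takes a genuinely different route from the paper's. Both proofs reduce the lemma to quasi-geodesic stability, but the paper never verifies that the \emph{full} three-piece concatenation is a global quasi-geodesic: it only checks, via a single tripod comparison at $P(x)$ (resp.\ $P(y)$), that each of the two overlapping unions $[x,P(x)]\cup[P(x),P(y)]$ and $[P(x),P(y)]\cup[P(y),y]$ is a global $(1,C)$-quasi-geodesic, observes that the hypothesis $d(P(x),P(y))\geq J$ then makes the whole path a $(1,C,J)$-\emph{local} quasi-geodesic (any two parameters less than $J$ apart lie in one of the two overlapping unions), and concludes by the local-to-global stability theorem (Theorem~21 in \cite[Chap.~5]{Ghys-Harpe}). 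You instead prove the global $(1,C)$-quasi-geodesic property directly, which forces you to handle the extra case of one point on each outer segment; this is exactly the case the paper's local-to-global trick sidesteps. Your route is perfectly viable, but be aware that in that cross-piece case the tools you name in the verification step --- the triangle inequality plus the coarse $1$-Lipschitz property of $P$ --- are not enough: they bound $d(\gamma(s),\gamma(t))$ below only by $d(P(x),P(y))$ minus a constant and lose the two ``vertical'' contributions $d(\gamma(s),P(x))$ and $d(\gamma(t),P(y))$. What you actually need is the stronger contraction inequality $d(P(z),P(w))\leq \max\{C,\,C+d(z,w)-d(z,P(z))-d(w,P(w))\}$ (e.g.\ \cite[Chap.~10, Prop.~2.1]{CDP}, which the paper quotes elsewhere), combined with your projection estimate placing $P(z)$ near $P(x)$ and $P(w)$ near $P(y)$; with that substitution, and $J$ chosen above the contraction threshold, your case analysis closes and the ordinary stability theorem for (global) quasi-geodesics yields $H$. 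In summary: the paper trades the hardest estimate for the stronger local-to-global input, while your argument needs the projection-contraction lemma but only the basic stability theorem --- both legitimate, with comparable total cost.
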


\begin{proof}[Proof of Lemma~\ref{lem:proj-on-geod}]
We recall that a $(\lambda, C, J)$-\textbf{local quasigeodesic} segment is a path satisfying the $(\lambda, C)$-quasigeodesic conditions for parameters less than $J$ apart~\cite[Chap.~5 \S1]{Ghys-Harpe}. According to Theorem~21 in~\cite[Chap.~5]{Ghys-Harpe}, there are constants $H$ and $J$ depending only on $\delta$, $\lambda$ and $C$ such that any $(\lambda, C, J)$-local quasigeodesic remains at distance less than $H$ from any geodesic segment between its endpoints. Thus we can obtain the constants $H,J$ of the lemma by applying this result to $\lambda=1$ and $C$ depending only on $\delta$.

By hyperbolicity, the minimizing property of $P(x)$ implies that any $z\in [P(x), P(y)]$ satisfies $d(x,z) \geq d(x, P(x))+ d(P(x), z) - C$ for some $C$ depending only on $\delta$; indeed, this follows by comparing $[x, P(x)] \cup [P(x), P(y)]$ with a tripod in a tree, as can be done by Theorem~12 in~\cite[Chap.~2]{Ghys-Harpe}. We deduce that $[x, P(x)] \cup [P(x), P(y)]$ is a $(1,C)$-quasigeodesic. The same holds for $[P(x), P(y)]\cup [P(y), y]$. Thus, as long as $d(P(x), P(y)) \geq J$, the concatenation $[x, P(x)] \cup [P(x), P(y)]\cup [P(y), y]$ is a $(1, C, J)$-local quasigeodesic as required.
\end{proof}

\begin{proof}[Proof of Proposition~\ref{prop:hyp-two-uniform}]
We choose some point $x_0\in X$. Since $G$ is non-amenable, its action on $X$ is of general type by Proposition~\ref{prop:RelAmen-action}. By Lemma~\ref{lem:dual}, the pair $(\xi, \xi')$ is $G$-dual, so that there exists $(\gamma_n)$ in $G$ with $\gamma_n x_0  \to \xi$ and $\gamma_n^{-1} x_0  \to \xi'$. 

By the definition of convergence to $\xi$ and $\xi'$ in terms of Gromov products, every geodesic segment joining $\gamma_n^{-1} x_0 $ to $\gamma_n x_0 $ passes through a ball around $x_0$ of radius $R$ independent of $n$. Thus, $d(\gamma_n^{-1} x_0, \gamma_n x_0)$ is at least $2 d(\gamma_n x_0, x_0) - 2R$.

By Lemma~\ref{lem:hyp-isom}, this implies in particular that  $\gamma_n$ is hyperbolic for all sufficiently large $n$ with $|\gamma_n|_\infty\to\infty$ since $d(\gamma_n x_0, x_0)$ goes to infinity. We denote by $\xi_n^+$ and $\xi_n^-$ its attracting (resp. repelling) fixed point; that is, $\lim_{k\to \pm\infty} \gamma_n^{ k} x = \xi_n^\pm$ for all $x$, and $\xi_n^+ \neq\xi_n^-$.

Consider the union $A_n$ of all geodesic lines from $\xi_n^-$ to $\xi_n^+$; note that $\gamma_n$ preserves $A_n$. Any two such lines remain at distance less than an $X$-constant $D$ from each another; for instance, this follows by applying the tree approximation theorem (Theorem~12(ii) in~\cite[Chap.~2]{Ghys-Harpe}) to the union of the two geodesics.

We claim that the distance $d(x_0, A_n)$ is bounded independently of $n$.

Indeed, pick one of the lines $\ell$ defining $A_n$ and let $P$, $H$ and $J$ be as in Lemma~\ref{lem:proj-on-geod}. Hyperbolicity implies that all choices for $P$ are within some $D'$ from each other, where $D'$ depends only on the hyperbolicity constant. Since $|\gamma_n^2|_\infty= 2 |\gamma_n|_\infty \to\infty$ and hence also $|\gamma_n^2|\to\infty$, we can assume that $\gamma_n^2$ moves every point by more than $J+D+D'$. 
Note that the point $\gamma_n^2 P(\gamma_n^{-1} x_0)$ of $\gamma_n^2 \ell$ minimizes the distance from any point $z\in \gamma_n^2 \ell$ to $\gamma_n  x_0$ because $d(\gamma_n^2 P(\gamma_n^{-1} x_0), \gamma_n  x_0 ) = d( P(\gamma_n^{-1} x_0), \gamma_n^{-1}  x_0 ) \leq d(\gamma_n^{-2} z,  \gamma_n^{-1}  x_0 ) $ since $\gamma_n^{-2} z \in\ell$. Therefore, since $\ell$ and $\gamma_n^2 \ell$ remain within distance $D$, we see that $d(\gamma_n^2 P(\gamma_n^{-1} x_0), P(\gamma_n x_0)) \leq D+D'$ and hence $d(P(\gamma_n^{-1} x_0), P(\gamma_n x_0)) \geq J$. Thus Lemma~\ref{lem:proj-on-geod} applied to
$$S=[\gamma_n^{-1} x_0, P(\gamma_n^{-1} x_0) ] \cup [ P(\gamma_n^{-1} x_0), P(\gamma_n x_0)]\cup [P(\gamma_n x_0), \gamma_n x_0]$$
shows that $x_0$ is at distance at most $H+R$ of some $p\in S$. To reach our claim, it remains only to justify that $p$ does not belong to $[\gamma_n^{-1} x_0, P(\gamma_n^{-1} x_0) ] $ nor to $[P(\gamma_n x_0), \gamma_n x_0]$, and it suffices by symmetry to show the latter. The image of $[P(\gamma_n x_0), \gamma_n x_0]$ under $P$ has diameter bounded by some $X$-constant $E$; this follows e.g.\ from Proposition~2.1 in of~\cite[\S10]{CDP}). Therefore, using that $P$ is non-expanding up to another additive $X$-constant $E'$ (Corollary~2.2 loc.\ cit.), $p\in [P(\gamma_n x_0), \gamma_n x_0]$ would imply $d(P(x_0), P(\gamma_n x_0)) \leq E+E+H+R'$. Finally, comparing $\ell$ and $\gamma_n\ell$, we see that $d( P(\gamma_n x_0) , \gamma_n P(x_0))$ is bounded by an $X$-constant and hence we contradict
$$d(P(x_0),  \gamma_n P(x_0)) \geq  |\gamma_n| \geq |\gamma_n|_\infty\to\infty,$$
thus establishing the claim.

Since the assertion~\ref{it:bdd-tsl} holds by hypothesis, there exists  a hyperbolic element $\gamma'_n \in G$ with $|\gamma'_n|_\infty\leq K$ such that $\gamma'_n$ and $\gamma_n$ have the same attracting (resp. repelling) fixed point.

We claim that for any choice $b_n\in A_n$, the distance $d( \gamma'_n b_n, b_n)$ is bounded by $K+C$, where $C$ is an $X$-constant. Since $A_n$ is $\gamma'_n$-invariant and $D$-close to a geodesic, it suffices to prove this for \emph{some} choice $b_n\in A_n$, upon increasing $C$. Let $\delta$ be the hyperbolicity constant of $X$. There is $y_n\in X$ such that $d(\gamma' _n y_n, y_n) \leq |\gamma'_n| + \delta$, which implies $d(\gamma' _n y_n, y_n) \leq   |\gamma'_n|_\infty+ 17\delta  \leq K+ 17\delta$. Define $b_n=P(y_n)\in\ell\subseteq A_n$, with $\ell$ and $P$ as in the first claim. Noting once again that $\gamma'_n b_n$ is within an $X$-constant of $P(\gamma' y_n)$ and that $P$ is non-expanding up to an $X$-constant, we conclude that $d(\gamma'_n b_n, b_n)$ is bounded by $d(\gamma' _n y, y)$ plus an $X$-constant; the claim follows.

Since the first claim allows us to choose all $b_n$ within a bounded set, the second claim implies that the sequence $(\gamma'_n)$ is bounded in $G$, and subconverges to a hyperbolic isometry $\gamma$ with $|\gamma| \leq L=K+C$, and hence also $|\gamma|_\infty\leq L$. It remains to show that $\gamma$ fixes $\xi$ and $\xi'$.

Since $\gamma'_n$ fixes  $\xi^\pm_n$, it suffices to show that $\xi^-_n$ converges to $\xi'$ and $\xi^+_n$ to $\xi$; it is enough to show the latter. In terms of Gromov products, we need to show $\langle \xi^+_n, \xi \rangle_{x_0}\to\infty$. The hyperbolicity constant $\delta$ of $X$ satisfies
$$\langle \xi^+_n, \xi \rangle_{x_0} + \delta \geq \min\left( \langle \xi^+_n, \gamma_n x_0   \rangle_{x_0} ,  \langle \gamma_n x_0, \xi \rangle_{x_0}\right)$$
(see e.g.~\cite[2\S1]{CDP}). Thus, since $\gamma_n x_0 \to \xi$, it suffices to show $\langle \xi^+_n, \gamma_n x_0   \rangle_{x_0} \to\infty$. This follows by representing $\xi^+_n$ by a ray within $A_n$ because $d(\gamma_n x_0, A_n) = d(x_0, A_n)$ and this distance is bounded independently of $n$ by the first claim, while $d(x_0, \gamma_n x_0)$ goes to infinity.
\end{proof}

The following ingredient is again general.

\begin{lem}\label{lem:GO}
There is an $X$-constant $R$ such that for every $\xi\in \partial X$ and every $x\in X$ there is a geodesic $\sigma\colon \Rb \to X$ with $\sigma(+\infty)=\xi$ and passing within distance $R$ of $x$.
\end{lem}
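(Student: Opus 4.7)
The plan is to find, for each $\xi\in\partial X$, a point $\xi'\in\partial X\setminus\{\xi\}$ whose Gromov product $\langle\xi,\xi'\rangle_x$ is bounded by an $X$-constant. Then the geodesic line from $\xi'$ to $\xi$ (whose existence is recalled at the start of this section) passes within an $X$-constant of $x$ by the standard hyperbolic estimate
\[
d(x,\sigma)\leq \langle\xi,\xi'\rangle_x + O(\delta),
\]
and orienting $\sigma$ so that $\sigma(+\infty)=\xi$ yields the desired geodesic.

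First, since $G$ is non-amenable, Proposition~\ref{prop:RelAmen-action} implies that its action on $X$ is of general type, whence $|\partial X|\geq 2$. As $X$ is proper, $\partial X$ is a compact metrizable space and any visual metric $d_{x_0}$ based at a fixed $x_0\in X$ has positive diameter. The continuous function $\eta\mapsto \max_{\eta'\in\partial X} d_{x_0}(\eta,\eta')$ attains a strictly positive minimum on the compact set $\partial X$, for otherwise $\partial X$ would be a singleton. Via the standard correspondence between visual distance and Gromov product, this yields a constant $C_0$ depending only on $X$ and $x_0$ such that every $\eta\in\partial X$ admits some $\eta'\in\partial X$ with $\langle\eta,\eta'\rangle_{x_0}\leq C_0$.

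Next, cocompactness of $G\acts X$ gives a constant $D$ with $d(y,G\cdot x_0)\leq D$ for every $y\in X$. Given $\xi\in\partial X$ and $x\in X$, pick $g\in G$ with $d(gx,x_0)\leq D$, apply the previous step to $g\xi$ to find $\eta'\in\partial X$ with $\langle g\xi,\eta'\rangle_{x_0}\leq C_0$, and set $\xi':=g^{-1}\eta'$. The $G$-invariance of the Gromov product combined with the basepoint-change inequality $|\langle\xi,\xi'\rangle_y-\langle\xi,\xi'\rangle_x|\leq d(x,y)+O(\delta)$ yields
\[
\langle\xi,\xi'\rangle_x \leq \langle g\xi,\eta'\rangle_{x_0} + D + O(\delta) \leq C_0 + D + O(\delta).
\]
Choosing $\sigma$ a geodesic line with $\sigma(-\infty)=\xi'$ and $\sigma(+\infty)=\xi$, the hyperbolic estimate above delivers $d(x,\sigma)\leq R$ for the $X$-constant $R:=C_0+D+O(\delta)$.

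The main delicate point is the first step, namely the uniform lower bound on visual distances from any boundary point to the rest of $\partial X$; this crucially uses compactness of $\partial X$ together with $|\partial X|\geq 2$ to ensure that the continuous function on $\partial X$ cannot vanish anywhere. The remaining reductions by cocompactness, $G$-invariance of the Gromov product, and the two standard hyperbolic inequalities are routine.
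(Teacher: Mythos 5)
Your proof is correct, but it takes a genuinely different route from the paper's. The paper argues ``internally'': it takes an arbitrary bi-infinite geodesic $\lambda$, uses cocompactness to translate it so that $\lambda(0)$ lies within the codiameter $D$ of $x$, concatenates $\lambda$ with a geodesic ray from $\lambda(0)$ to $\xi$, and invokes the tree approximation theorem to see that one of the two resulting concatenations is a $(1,C)$-quasigeodesic, hence (by stability of local quasigeodesics) $H$-close to a genuine geodesic asymptotic to $\xi$; this yields $R=D+H$ without ever naming a second boundary point. You instead exhibit an explicit second endpoint $\xi'$ with $\langle\xi,\xi'\rangle_x$ bounded by an $X$-constant and quote the standard comparison between the Gromov product of two boundary points and the distance from the basepoint to a geodesic joining them. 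Both routes rest on the same inputs (cocompactness plus standard $\delta$-hyperbolic estimates), and your constant, like the paper's, depends on the $G$-action through the codiameter, which is consistent with the paper's usage of ``$X$-constant''. Two minor remarks: the compactness-of-$\partial X$ argument for the uniform bound $C_0$ (and with it the visual metric) can be avoided entirely by fixing two distinct points $\eta_1,\eta_2\in\partial X$ once and for all and noting that the inequality $\langle \eta_1,\eta_2\rangle_{x_0}\geq \min\{\langle\eta,\eta_1\rangle_{x_0},\langle\eta,\eta_2\rangle_{x_0}\}-\delta$ forces one of the two products to be at most $\langle\eta_1,\eta_2\rangle_{x_0}+\delta$ for every $\eta$; and it is worth stating explicitly that $\langle g\xi,\eta'\rangle_{x_0}<\infty$ guarantees $\eta'\neq g\xi$, hence $\xi'\neq\xi$, so that the geodesic line from $\xi'$ to $\xi$ recalled at the start of the section indeed exists.
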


\begin{proof}
Let $\lambda\colon\Rb\to X$ be some geodesic. By cocompactness, we can arrange that $\lambda(0)$ is at distance less than $D$ from $x$, where $D$ is the codiameter of $X$. Let further $\tau\colon\Rb_+\to X$ be geodesic ray pointing to $\xi$ with $\tau(0)=\lambda(0)$. We obtain two maps $\sigma_{\pm} \colon\Rb\to X$ by letting $\sigma_{\pm} (t)=\tau(t)$ for $t\geq 0$ and $\sigma_{\pm} (t)=\lambda(\pm t)$ for $t < 0$. If $X$ were a metric tree, one of $\sigma_-$ or $\sigma_+$ would already be geodesic. For general $X$, the tree approximation theorem (Theorem~12(ii) in~\cite[Chap.~2]{Ghys-Harpe}) gives an $X$-constant $C$ such that one of $\sigma_-$ or $\sigma_+$ is a $(1,C)$-quasi-geodesic. By Theorem~25(ii) in~\cite[Chap.~5]{Ghys-Harpe}, there is therefore a constant $H$ depending only on $X$ such that the corresponding $\sigma_{\pm}$ is at distance less than $H$ of a true geodesic $\sigma$. The statement follows with $R=D+H$.
\end{proof}

We turn to the proof of Theorem~\ref{thm:Ballistic}. 

\begin{proof}[Proof of Theorem~\ref{thm:Ballistic}]
The equivalence between~\ref{it:coco} and~\ref{it:bd-2-tr} follows from~\cite[Theorem~8.1]{CCMT}. The latter result also implies that~\ref{it:bd-2-tr} implies~\ref{it:all-ballistic}, whereas the implication from~\ref{it:all-ballistic} to~\ref{it:generic-ballistic} is obvious.  
We next observe that~\ref{it:generic-ballistic} implies~\ref{it:bdd-tsl} by Lemma~\ref{lem:hyp-bi-uniform}. It remains to prove that~\ref{it:bdd-tsl} implies~\ref{it:coco}.

Assuming~\ref{it:bdd-tsl}, we shall prove that the stabilizer $G_\xi$ acts cocompactly on $X$ for every $\xi \in \partial X$. This indeed implies that~\ref{it:coco} holds since $G_\xi$ is amenable by Proposition~\ref{prop:RelAmen-action}.

Let thus $x_0, x_1\in X$ be two arbitrary points.

Lemma~\ref{lem:GO} provides two geodesic lines $\sigma_0, \sigma_1$ with $\sigma_i$ passing within $R$ of $x_i$ and with $\sigma_i(+\infty)=\xi$. This common endpoint implies that there are points $y_i$ of $\sigma_i$ with $d(y_0, y_1) \leq R'$ for some $X$-constant $R'$; this follows e.g.\ from the tree approximation theorem (Theorem~12(ii) in~\cite[Chap.~2]{Ghys-Harpe}).

Proposition~\ref{prop:hyp-two-uniform} implies the existence of a hyperbolic isometry $\gamma_0$ of asymptotic displacement length $\leq L$, having the endpoints of $\sigma_0$ as its pairs of fixed points in $\partial X$. The orbit $\{\gamma_0^n x_0\}$ remains at distance at most $R+C$ of $\sigma_0$ for some $X$-constant $C$; choose $z_n$ on $\sigma_0$ with $d(\gamma_0^n x_0, z_n) \leq R+C$. As already observed in the proof of the second claim of that proposition, $\gamma_0$ will move any point of $\sigma_0$ by at most $L+C'$ for some $X$-constant $C'$. Since $d(\gamma_0 z_n, z_n+1)$ is bounded by an $X$-constant by the choice of $z_n$, it follows that the sequence $z_n$ has gaps bounded by $L+C''$ for some $X$-constant $C''$. In conclusion, there is $n_0$ with $d(\gamma_0^{n_0} x_0, y_0) \leq d(\gamma_0^{n_0} x_0, z_{n_0}) + d( z_{n_0}, y_0) \leq R+L+C+C''$.

We find likewise $\gamma_1$ and $n_1$ with $d(\gamma_1^{n_1} x_1, y_1) \leq R+L+C+C''$. In conclusion, $\gamma_1^{-n_1} \gamma_0^{n_0}$ is an element of $G_\xi$ which maps $x_0$ to within $R' +2( R+L+C+C'')$ of $x_1$. This witnesses that $G_\xi$ acts cocompactly on $X$ and concludes the proof of Theorem~\ref{thm:Ballistic}.
\end{proof}

Combining Propositions~\ref{prop:tdlc-reg-ell} and Theorem~\ref{thm:Ballistic}, we now deduce the following. 

\begin{cor}\label{cor:reg-ell-URS}
Let $G$ be a non-amenable hyperbolic locally compact group. Let $\mathcal F$ be  the stabilizer URS for the $G$-action on its Gromov boundary.

If $G$ does not have any cocompact amenable subgroup, then every $H \in \mathcal F$ is regionally elliptic. 
\end{cor}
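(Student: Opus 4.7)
The plan is to reduce to the totally disconnected case and then combine \Cref{thm:Ballistic} with \Cref{prop:tdlc-reg-ell} through a Chabauty limit argument. First, I would invoke \Cref{prop:non-amen-hyper} to obtain the maximal compact normal subgroup $W$ of $G$, so that $G/W$ is either virtually a connected rank one simple Lie group or totally disconnected. The first alternative admits an Iwasawa decomposition and thus a cocompact amenable subgroup; pulling back through $G \to G/W$ (and using that extensions by the compact group $W$ preserve amenability) would produce a cocompact amenable subgroup of $G$, contradicting the hypothesis. Hence $G/W$ is tdlc. Since $G \to G/W$ is a quasi-isometry, the $G$-action on $\partial G$ factors through $G/W$, and the stabilizer URS $\mathcal F$ corresponds bijectively to the stabilizer URS of $G/W$ via $H \mapsto H/W$; because $W$ is compact, $H$ is regionally elliptic in $G$ if and only if $H/W$ is regionally elliptic in $G/W$. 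So we may assume $G$ is already tdlc.

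Next, I would handle the easy case of stabilizers of continuity points. By hypothesis and \Cref{thm:Ballistic} (failure of~\ref{it:generic-ballistic}), every continuity point $\eta \in \partial X$ of the stabilizer map satisfies $G_\eta^0 = G_\eta$, and \Cref{prop:tdlc-reg-ell} directly gives that this subgroup is regionally elliptic. For a general $H \in \mathcal F$, minimality of the URS together with density of continuity points lets me write $H$ as a Chabauty limit $G_{\eta_n} \to H$ with each $\eta_n$ a continuity point; using compactness of $\partial X$ I may assume further that $\eta_n \to \xi$ for some $\xi \in \partial X$ (which need not be a continuity point). I then claim $H \leq G_\xi^0$: for any $h \in H$, pick $h_n \in G_{\eta_n}$ with $h_n \to h$; continuity of the $G$-action on $\partial X$ gives $h(\xi) = \lim_n h_n(\eta_n) = \lim_n \eta_n = \xi$, and since $h_n \in G_{\eta_n} = G_{\eta_n}^0$ is non-hyperbolic, \Cref{lem:hyp-limit} (openness of the hyperbolic locus in $\Isom(X)$) forces $h$ to be non-hyperbolic as well, so $h \in G_\xi^0$. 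The inclusion $H \leq G_\xi^0$ combined with \Cref{prop:tdlc-reg-ell} and the fact that closed subgroups of regionally elliptic groups are regionally elliptic (intersect compact subgroups with $H$) then completes the argument.

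The main obstacle is the claim $H \leq G_\xi^0$ in the third step: the naive limit argument only yields $H \leq G_\xi$, and the refinement to $G_\xi^0$ is crucial because \Cref{prop:tdlc-reg-ell} gives regional ellipticity only for the kernel of the Busemann character, not for the full stabilizer. The resolution hinges on the fact that \Cref{thm:Ballistic} provides the equality $G_{\eta_n}^0 = G_{\eta_n}$ along the entire approximating sequence of continuity points, together with the openness of the hyperbolic locus in $\Isom(X)$, which together propagate non-hyperbolicity to the limit $\xi$ without requiring $\xi$ itself to be a continuity point.
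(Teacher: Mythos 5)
Your proposal is correct and follows essentially the same route as the paper: reduce to the totally disconnected case via \Cref{prop:non-amen-hyper} (the Lie case being excluded because parabolics are cocompact amenable), use \Cref{thm:Ballistic} to get $G_\eta^0=G_\eta$ at continuity points, realize $H\in\mathcal F$ as a Chabauty limit of stabilizers of continuity points, and propagate non-hyperbolicity to the limit via \Cref{lem:hyp-limit} to conclude $H\leq G_\xi^0$, which is regionally elliptic by \Cref{prop:tdlc-reg-ell}. The only cosmetic differences are that you approximate by a sequence of continuity points $\eta_n$ rather than by conjugates $g_nG_\eta g_n^{-1}$ of a single one (these are the same thing) and argue directly rather than by contradiction.
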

\begin{proof}
By Theorem~\ref{thm:Ballistic}, for every continuity point $\eta \in \partial X$ of the stabilizer map, we have  $G_\eta^0 = G_\eta$.

The corollary is unaffected by replacing $G$ with its quotient by a compact normal subgroup, and such a quotient cannot be a virtually connected simple Lie group of rank one, since in that case parabolic subgroups are cocompact and amenable. Therefore, by  Proposition~\ref{prop:non-amen-hyper}, we can assume that $G$ is totally disconnected. 

Let now  $H \in \mathcal F$. Then $H \leq G_\xi$ for some $\xi \in \partial X$, and there is a sequence  $(g_n)$  in $G$ such that $(g_n G_\eta g_n^{-1})$ Chabauty converges to $H$. In view of Proposition~\ref{prop:tdlc-reg-ell}, it suffices to show $H \leq G^0_\xi$. Suppose for a contradition that this is not the case; then $X$ contains a hyperbolic isometry $t$. By Chabauty convergence, we can choose $t_n \in G_\eta$ such that $(g_n t_n g_n^{-1})$ converges to $t$. We then deduce from Lemma~\ref{lem:hyp-limit} that $t_n$ is hyperbolic for all large $n$. This is impossible since  $G_\eta = G_\eta^0$.
\end{proof}

\subsection{Roughly similar word metrics}

The goal of this section is to complete the proof of Theorem~\ref{thm:coctamen}.  In view of Proposition~\ref{prop:non-amen-hyper}, we shall  focus on totally disconnected groups.

Let $(X, d_X)$ and $(Y, d_Y)$ be metric spaces. Recall that a map $\varphi \colon X \to Y$ is  a \textbf{quasi-isometric embedding} if there exist constants $L >0$ and $C\geq 0$ such that
$$\frac 1 L d_X(x, x') - C \leq d_Y(\varphi(x), \varphi(x')) \leq L d_X(x, x') +C.$$
We say that $\varphi$ is a \textbf{roughly homothetic embedding} if there exist constants $L >0$ and $C\geq 0$ such that
$$ L d_X(x, x') - C \leq d_Y(\varphi(x), \varphi(x')) \leq L d_X(x, x') +C.$$
Thus Corollary~\ref{cor:Z-action-homothety} states precisely that every orbit of a hyperbolic isometry of a hyperbolic geodesic metric space is a rough embedding of $\Zb$ into that space, where $\mathbf Z$ is endowed with the Euclidean metric.

A quasi-isometric (resp. roughly homothetic) embedding  $\varphi \colon X  \to Y$ is called a \textbf{quasi-isometry} (resp. a \textbf{rough similarity}) if $Y$ is contained in a bounded neighbourhood of $\varphi(X)$.  

It is easy to see that any two word metrics on $\mathbf Z$ given by finite generating sets are roughly similar (this can also be considered as a trivial case of Corollary~\ref{cor:Z-action-homothety}). This contrasts with the free group $F_2$, which has numerous pairs of finite generating sets giving rise to word metrics that are quasi-isometric, but not roughly similar.

Let now $G$ be a compactly generated tdlc group and $U \leq G$ be a compact open subgroup. Given a compact generating set $\Sigma \subset G$, the graph with vertex set $G/U$ and edge set defined by $gU \sim hU \Leftrightarrow h^{-1} g \in U\Sigma U \cup U \Sigma^{-1} U$, is connected, locally finite, and preserved by the natural $G$-action. It is called the \textbf{Cayley--Abels graph} associated with the pair $(U, \Sigma)$. 

\begin{prop}\label{prop:RoughSimilarity}
For a non-amenable hyperbolic tdlc group $G$, the following assertions are equivalent. 
\begin{enumerate}[label=(\roman*)]
\item $G$ has a cocompact amenable subgroup. 

\item Considering any Cayley--Abels graph $\mathcal G$ for $G$ and endowing $G$ with the word metric associated to any compact generating set, each orbit map $G\to V\mathcal G$ is rough similarity.

\item For some compact open subgroup $U \leq G$, the identity on $G/U$ is a rough similary between the Cayley--Abels graphs associated with $(U, \Sigma_1)$ and $(U, \Sigma_2)$, for any pair of compact generating sets $\Sigma_1, \Sigma_2$. 
\end{enumerate}
\end{prop}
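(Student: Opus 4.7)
The plan is to establish the cycle (ii) $\Rightarrow$ (iii) $\Rightarrow$ (i) $\Rightarrow$ (ii), using Theorem~\ref{thm:Ballistic} as the black box linking (i) to its several equivalent characterisations already proved in the excerpt.

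The implication (ii) $\Rightarrow$ (iii) is essentially formal. Fix $U$ as in (iii), compact generating sets $\Sigma_1, \Sigma_2$, and endow $G$ with the word metric $d_{\Sigma_1}$. By (ii) each orbit map $G \to V\mathcal{G}_i$ ($i = 1, 2$) is a rough similarity with some ratio $L_i > 0$. Since $U$ is compact and $d_{\Sigma_1}$ is proper, the $d_{\Sigma_1}$-diameter of cosets $gU$ is uniformly bounded, so the value $d_{\Sigma_1}(g,h)$ depends only on the cosets $gU, hU$ up to a bounded additive error. Eliminating $d_{\Sigma_1}(g,h)$ between the two rough similarity estimates yields $d_{\mathcal{G}_2}(gU, hU) = (L_2/L_1)\, d_{\mathcal{G}_1}(gU, hU) + O(1)$, which is (iii).

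For (i) $\Rightarrow$ (ii), one extracts the similarity ratio from Busemann characters. Under (i), Theorem~\ref{thm:Ballistic} grants both 2-transitivity of $G$ on $\partial G$ and uniform boundedness of translation lengths on any proper cocompact isometric hyperbolic $G$-space. Both $X_1 := (G, d_{\Sigma'})$ and $X_2 := V\mathcal{G}$ are such spaces, with Gromov boundary canonically identified with $\partial G$. For each $\xi \in \partial G$, the Busemann characters $\beta_\xi^{X_1}, \beta_\xi^{X_2} \colon G_\xi \to \mathbf{R}$ share the same kernel $G_\xi^0$ (the non-hyperbolic elements), hence are proportional: $\beta_\xi^{X_1} = r_\xi\, \beta_\xi^{X_2}$ with $r_\xi > 0$. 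Naturality of Busemann characters under conjugation combined with 2-transitivity then forces $r_\xi = r$ to be independent of $\xi$. Proposition~\ref{prop:length-Busemann} gives $|g|_\infty^{X_1} = r\, |g|_\infty^{X_2}$ for every hyperbolic $g \in G$, and Corollary~\ref{cor:Z-action-homothety} converts this into the orbit-wise relation $d_{X_1}(g^n v, g^m v) = r\, d_{X_2}(g^n v, g^m v) + O(1)$ along orbits of hyperbolic elements. To propagate globally, invoke Proposition~\ref{prop:hyp-two-uniform}: every boundary pair admits a hyperbolic stabilizer of uniformly bounded translation length, whose axes cover both spaces up to bounded distance, so any two points can be joined by a uniformly bounded chain of such axis segments, yielding a rough similarity with global ratio $r$.

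The main obstacle is (iii) $\Rightarrow$ (i). Assume (iii) and argue by contradiction, supposing Theorem~\ref{thm:Ballistic}(v) fails: pick hyperbolic elements $\tau_N \in G$ with $|\tau_N|_\infty^{\mathcal{G}_1} \to \infty$, where $\mathcal{G}_1$ is the Cayley--Abels graph of $(U, \Sigma_1)$ with $U$ as in (iii). Since (v) fails, Theorem~\ref{thm:Ballistic}(ii) also fails, so $G$ has at least two $G$-orbits of boundary pairs; after passing to a subsequence we may fix a hyperbolic $\gamma_* \in G$ whose fixed-point pair lies in a $G$-orbit distinct from those of all the $\tau_N$. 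Set $\Sigma_2^{(N)} := \Sigma_1 \cup \{\tau_N\}$ with Cayley--Abels graph $\mathcal{G}_2^{(N)}$; the containment $\tau_N \in \Sigma_2^{(N)}$ gives $|\tau_N|_\infty^{\mathcal{G}_2^{(N)}} \leq 1$, so the rough similarity ratio $L_N$ afforded by (iii) satisfies $L_N \leq 1/|\tau_N|_\infty^{\mathcal{G}_1} \to 0$. The crucial estimate is $|\gamma_*^m|^{\mathcal{G}_2^{(N)}} \geq c\,m - O(1)$ for a constant $c > 0$ independent of $N$: a projection argument on the hyperbolic graph $\mathcal{G}_1$ applied to the $\gamma_*$-axis (in the spirit of Lemma~\ref{lem:proj-on-geod}), together with the observation that no conjugation identity $\gamma_*^m \equiv g \tau_N^{k} g^{-1}$ modulo $G^0$ is available (since the fixed-point pairs of $\gamma_*$ and $\tau_N$ lie in distinct $G$-orbits), shows that each $\tau_N$-edge of $\mathcal{G}_2^{(N)}$ contributes only uniformly bounded progress along the $\gamma_*$-axis. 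Applying (iii) to $\gamma_*$ then forces $L_N \geq c/|\gamma_*|_\infty^{\mathcal{G}_1} > 0$, contradicting $L_N \to 0$. This projection estimate is the delicate step; it would fail (correctly so) in the 2-transitive case, where the conjugation identity \emph{is} available and accounts exactly for the scaling $L_N \to 0$ predicted by (iii).
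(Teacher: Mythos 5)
Your implications (ii)~$\Rightarrow$~(iii) and (i)~$\Rightarrow$~(ii) are fine in outline (the paper does (i)~$\Rightarrow$~(ii) via the Cartan-like decomposition $G=K\{a^n\}K$ from~\cite[Theorem~8.1]{CCMT} and Corollary~\ref{cor:Z-action-homothety}, rather than via proportionality of Busemann characters, but your route is workable modulo the propagation step you only sketch). The problem is the key direction (iii)~$\Rightarrow$~(i), where your ``crucial estimate'' $|\gamma_*^m|^{\mathcal G_2^{(N)}}\geq cm-O(1)$ with $c$ independent of $N$ is a genuine gap. Your justification is that, since the fixed-point pairs of $\gamma_*$ and of $\tau_N$ lie in distinct $G$-orbits, each $\tau_N$-edge can only make uniformly bounded progress along the $\gamma_*$-axis. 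But lying in distinct orbits of boundary pairs does not bound the overlap of the corresponding axes: already in a tree, a translate $g(\mathrm{Axis}(\tau_N))$ can fellow-travel $\mathrm{Axis}(\gamma_*)$ over an arbitrarily long segment while having a different (and non-$G$-equivalent) pair of endpoints, and a single $\tau_N$-edge based on such a translate then projects to a segment of length comparable to $|\tau_N|_\infty\to\infty$. Worse, the paper's own proof shows that this is not a removable technicality: its central claim is that, \emph{under hypothesis~(iii)}, for every geodesic line $\tau$ and every $L$ there is $g_L\in G$ such that the projection $P_\tau(g_L(A))$ of a translated axis has diameter $>L$. So the bounded-projection behaviour you need as a fact is exactly what the paper proves cannot happen when (iii) holds; in the paper it appears only as the hypothesis of an internal proof by contradiction (via the expressway construction), and the resulting \emph{unboundedness} is then used constructively, together with Corollary~\ref{cor:Z-action-homothety} and Lemma~\ref{lem:proj-on-geod}, to produce an element $h$ carrying the axis of $a$ to the axis of $b$ and thereby verify condition~(v) of Theorem~\ref{thm:Ballistic}. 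Your argument omits this constructive step entirely and rests on an estimate that is false as justified.

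A secondary issue: your negation of Theorem~\ref{thm:Ballistic}(v) is not quite right (the negation produces, for each $K$, a hyperbolic element whose fixed-point pair is shared by no hyperbolic element of translation length $\leq K$; merely having $|\tau_N|_\infty\to\infty$ is satisfied by $a^N$ for any hyperbolic $a$ and carries no information). And the existence of a hyperbolic $\gamma_*$ whose fixed-point pair lies in a $G$-orbit distinct from those of \emph{all} the $\tau_N$ is not justified by ``passing to a subsequence'': failure of $2$-transitivity gives at least two orbits of boundary pairs, but there may be infinitely many, and the pairs $\mathrm{Fix}(\tau_N)$ could a priori meet every orbit containing axes of hyperbolic elements.
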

\begin{proof}
(i) $\Rightarrow$ (ii). Let $x \in  V\mathcal G$. Since the orbit map $g \mapsto gx$ is $G$-equivariant, it suffices to show that there  exist constants $A, C >0$ such that
$$A |g| - C \leq d_{\mathcal G}(x, gx) \leq A|g| + C \kern3mm (\forall\, g\in G)$$
where $|g|$ is the length of $g$ in the chosen word metric. We invoke~\cite[Theorem~8.1]{CCMT}, which ensures that up to a compact kernel $G$ is a tree automorphism group acting $2$-transitively on the boundary of that tree. In particular, there is a compact subset $K\subseteq G$ and an element $a\in G$ acting as a hyperbolic isometry of that tree such that the Cartan-like decomposition $G=K \{a^n : n \in \Nb\} K$ holds. Given $g\in G$, we denote by $n_g$ some choice of integer with $g\in K a^{n_g} K$.

We recall that the Cayley graph associated to the generating set defining $|\cdot|$, although  unlike $\mathcal G$ it is far from locally finite, is a hyperbolic geodesic metric space (compare~\cite[\S2]{CCMT}). Moreover, $a$ is a hyperbolic isometry of this space because its displacement length does not vanish. Since Corollary~\ref{cor:Z-action-homothety} is valid in that generality, the difference $|a^n| - n A_0$ remains bounded independently of $n$, where $A_0$ is the asymptotic displacement length of $a$ in that Cayley graph.

Since the word length is bounded over $K$, there is a constant $C_0$ with
$$|a^{n_g}| - C_0 \leq |g|  \leq |a^{n_g}| + C_0$$
for all $g$, and hence
$$n_g A_0 - C_1 \leq |g|  \leq n_g A_0 + C_1$$
for some $C_1$. Similarly, since $K$ has bounded orbits in ${\mathcal G}$, let $C_2$ be a bound for $d_{\mathcal G}(k x, x)$ over $k\in K$. Then, writing $g=k_1 a^{n_g} k_2$, the triangle inequality gives
$$d(x, a^{n_g}x) - 2C_2 \leq d(x, gx) \leq d(x, a^{n_g} x) + 2C_2$$
because $d(x, gx) = d(k_1^{-1}x, a^{n_g} k_2 x)$. We apply again Corollary~\ref{cor:Z-action-homothety} but to the action of $a$ on ${\mathcal G}$ and deduce that the difference $d(x, a^{n_g} x) - n_g A_1$ remains bounded, where $A_1$ is the asymptotic displacement length of $a$ in ${\mathcal G}$. Putting everything together, we obtain the required conclusion with $A=A_1/A_0$.

\medskip \noindent (ii) $\Rightarrow$ (iii). Obvious. 

\medskip \noindent (iii) $\Rightarrow$ (i). Fix a compact generating set $\Sigma$ and let $\mathcal G_0$ denote the Cayley--Abels graph for $G$ with respect to $(U, \Sigma)$. The distance function on $V\mathcal G_0 = G/U$ given by that graph structure is denoted by $d_0$.

For each geodesic line $\tau \colon \mathbf Z \to V\mathcal G_0$, we choose a map $P_\tau \colon V\mathcal G_0 \to \tau(\mathbf Z)$ such that for all $v \in V\mathcal G_0 = G/U$, we have $d_0(v, P_\tau(v)) = \min \{d_0(v, \tau(n)) \mid n \in \mathbf Z\}$. Thus $P_\tau$ is a nearest-point-projection on the geodesic line $\tau$. By~\cite[Proposition~10.2.1]{CDP}, there exists an $X$-constant $C$ such that 
$$
d_0(P_\tau(x), P_\tau(y)) \leq  
\max \{C, C+ d_0(x, y)-d_0(x, P_\tau(x))-d_0(y, P_\tau(y))\}
$$
for all vertices $x, y$.

Let $a \in G$ be a hyperbolic element. Denote by $\xi, \xi' \in \partial  G$ its fixed points at infinity. Fix a geodesic line $\sigma \colon \mathbf Z \to V \mathcal G_0$ with endpoints $(\xi, \xi')$. We also consider the union $A$ of all geodesic lines from $\xi$ to $\xi'$. As in the proof of Proposition~\ref{prop:hyp-two-uniform}, any two such lines are at distance less than some $X$-constant $D$ from one another. 

\medskip
We claim that for  any geodesic line  $\tau \colon \mathbf Z \to V \mathcal G_0$  and any $L>0$, there exists $g_L \in G$ such that the diameter of $P_\tau(g_L(A))$ is larger than $L$. 

\medskip

Suppose for a contradiction that the claim fails for some geodesic line $\tau$. Hence there exists $L_0 >0$ such that, for any $g \in G$, the diameter of $P_\tau(g(A))$   is at most $L_0$. Upon enlarging $L_0$, we may assume that $L_0 \geq C +1$.

Let now  $L>L_0$ be an even integer and choose an element $h_L \in G$ be mapping $\sigma(0)$ to $\sigma(L)$. 
Set $\Sigma_L = \Sigma \cup \{h_L\}$, let  $\mathcal G_L$ be the Cayley--Abels graph for $G$ with respect to $(U, \Sigma_L)$ and $d_L$ be the associated distance function on $G/U$. By construction we have $d_L(x, y) \leq d_0(x, y)$ for all $x, y \in G/U$ since every edge of $\mathcal G_0$ is also an edge of $\mathcal G_L$.  Moreover, every edge of $\mathcal G_L$ that is not in $\mathcal G_0$ is of the form $\{g\sigma(0), g\sigma(L)\}$ for some $g \in G$. 
Following the terminology introduced in~\cite{BF09}, every $G$-translate of the geodesic segment $\sigma|_{[0, L]}$ in $\mathcal G_0$ is called an \textbf{expressway}.  

By the hypothesis (iii), there exist constants $A_L, C_L$ such that
$$A_L d_L(x, y) -C_L \leq d_0(x, y) \leq A_L d_L(x, y) + C_L.$$
Apply that inequality with $x = \sigma(0)$ and $y = a^n(x)$, divide by $n$ and let $n$ tend to infinity. We deduce that $A_L = |a|_{\infty,0}/|a|_{\infty, L}$, where $|a|_{\infty,0}$ and $|a|_{\infty, L}$ denote the asymptotic displacement length of $a$ with respect to the metrics $d_0$ and $d_L$. Observe that any point of the $\langle a \rangle$-orbit of $\sigma(0)$ remains $D$-close to $\sigma(\mathbf Z)$ in the metric $d_0$. Since $d_L \leq d_0$, we deduce from the definition of the metric $d_L$ that  the asymptotic displacement length $|a|_{\infty, L}$ tends to $0$ as $L$ tends to infinity. We may therefore assume, upon taking $L$  large enough,  that $A_L >  2 L_0$.

Let now  
$N > C_L(1+\frac{A_L }{L_0})$ be an integer and consider a $\mathcal G_L$-geodesic segment  
$$(v_0 = \tau(0), v_1, \dots, v_M = \tau(N))$$
joining $\tau(0)$ to $\tau(N)$. We have $M \leq N$ since $d_L(x, y) \leq d_0(x, y)$ for all $x, y$. 
For each pair of vertices $v_i, v_{i+1}$, either $\{v_i, v_{i+1}\}$ is an edge of $\mathcal G_0$, or there exists an expressway with endpoints $\{v_i, v_{i+1}\}$. 
By concatenating those edges and expressways, we obtain a continuous path $[v_0, v_1] \cup [v_1, v_2] \cup \dots$ in the graph $\mathcal G_0$. By the definition of $L_0$, we know that for each $i$ such that $[v_i, v_{i+1}]$ is an expressway, we have $d(P_\tau(v_i), P_\tau(v_{i+1})) \leq L_0$. If $[v_i, v_{i+1}]$ is an edge, then by the defining property of the constant $C$,  we have $d(P_\tau(v_i), P_\tau(v_{i+1})) \leq 1+ C \leq L_0$. 

Since $(v_0 = P_\tau(v_0), P_\tau(v_1), \dots, P_\tau(v_M) = v_M)$ defines a path from $v_0$ to $v_M$, we deduce from the triangle inequality that 
$$N = d_0(\tau(0), \tau(N)) = d_0(v_0, v_M) \leq \sum_{i=1}^M d_0(P_\tau(v_{i-1}), P_\tau(v_i)) \leq L_0 M.$$
On the other hand, by the hypothesis (iii), we have 
$$A_L M -C_L = A_Ld_L(v_0, v_M)  - C_L \leq d_0(\tau(0), \tau(N)) = N.$$ 
Since  $A_L >  2 L_0$, we obtain
$$2L_0 M -C_L < L_0 M,$$
so that $ M < \frac{C_L}{L_0}$. The hypothesis (iii) also implies that 
$$ N = d_0(\tau(0), \tau(N)) \leq A_Ld_L(v_0, v_M)  + C_L = A_L M +C_L,$$ 
so that 
$$N < \frac{A_L C_L}{L_0} +C_L.$$
This contradicts the choice of $N$, thereby establishing the claim. 

\medskip
Let now $b \in G$ be a hyperbolic element with attracting and repelling fixed points $\eta_+, \eta_- \in \partial G$. Fix a 
geodesic line  $\tau \colon \mathbf Z \to V \mathcal G_0$ 
 with  $\eta_+ = \tau(\infty)$ and $\eta_- = \tau(-\infty)$. 
For each even integer $L>0$, the claim ensures the existence of an element  $g_L \in G$ be such that $P_\tau(g_L(A))$ has  diameter strictly greater than~$2C+ 2D+2L$. Let $x, y \in A$ with $d_0(P_\tau(g_L(x)), P_\tau(g_L(y)))>2C+ 2D+2L$. Pick $x', y' \in \sigma(\mathbf Z)$ with $d_0(x, x') \leq D$ and $d_0(y, y') \leq D$. In particular we have $d_0(P_\tau(g_L(x')), P_\tau(g_L(y')))> 2L$. We now apply Lemma~\ref{lem:proj-on-geod} to obtain constants $H,J$ and henceforth consider only $L$ large enough to have $L\geq 2\max\{C, H, J\}$. Thus, the piecewise geodesic segment 
$$[g_L(x'), P_\tau(g_L(x'))] \cup [P_\tau(g_L(x')), P_\tau(g_L(y'))] \cup [P_\tau(g_L(y')), g_L(y')]$$ 
lies in an $H$-neighbourhood of the subsegment of the geodesic line $g_L(\sigma)$ joining $g_L(x')$ to $g_L(y')$. Let $n_L$ be an integer such that $d_0(P_\tau(g_L(x')), g_L(\sigma(n_L))) \leq H$, and $n'_L \geq 0$ be an integer such that $d_0(P_\tau(g_L(y')), g_L(\sigma(n_L+n'_L))) \leq H$. By the definition of $n_L$ and $n'_L$, we  have $d_0(g_L(\sigma(n_L)), P_\tau(g_L(\sigma(n_L))))\leq H$ and $d_0(g_L(\sigma(n_L+n'_L)), P_\tau(g_L(\sigma(n_L+n'_L))))\leq H$. Therefore,  
\begin{align*}
	d_0(g_L(\sigma(n_L)), g_L(\sigma(n_L+n'_L))) & \geq  d_0(P_\tau(g_L(x')), P_\tau(g_L(y')))-2H\\
	& > 2L-2H\\
	& > L.
\end{align*}
This ensures that $n'_L > L$. In particular, the Hausdorff distance between the geodesic segments $g_L \circ \sigma|_{[n_L, n_L + L]}$ and $[P_\tau(g_L(x')), P_\tau(g_L(y'))]$ is bounded by an $X$-constant. 
Therefore it lies 
in a bounded neighbourhood of $\tau(\mathbf Z)$, where the bound is independent of $L$.

Using Corollary~\ref{cor:Z-action-homothety}, we may find  suitable integers $s, t$ such that the element $h_L = b^s g_L a^t$ maps the pair $\{\sigma(-L/2), \sigma(L/2)\}$ at uniformly bounded distance from $\{\tau(-L/2), \tau(L/2)\}$. In particular $d_0(h_L(\sigma(0)), \tau(0))$ is bounded independently of $L$. Therefore  the sequence $(h_L)$ is bounded in $G$, and thus subconverges to an element $h \in G$. By construction $h$ maps the pair $\{\xi, \xi'\}$ to the pair $\{\eta_+, \eta_-\}$.  

Notice that $hah^{-1}$ is a hyperbolic element with the same asymptotic displacement length as $a$, and with the same pair of fixed points in $\partial X$ as $b$. Fixing $a$ and letting $b$ vary over the collection of all hyperbolic isometries in $G$, we deduce that the assertion~\ref{it:bdd-tsl} from Theorem~\ref{thm:Ballistic} is satisfied, which therefore yields the required conclusion. 
\end{proof}

\begin{proof}[Proof of Theorem~\ref{thm:coctamen}]
The equivalences between the assertions~\ref{it:coco}--\ref{it:bdd-tsl} are established by Theorem~\ref{thm:Ballistic}. Their equivalence with~\ref{it:metric} follows from Proposition~\ref{prop:RoughSimilarity} in case $G$ is totally disconnected. 

In general, since $G$ is non-amenable it is of general type, we see from~\cite[Proposition~5.10]{CCMT} that, after dividing out a compact normal subgroup, it is either a virtually connected rank one simple Lie group, or a totally disconnected group. Since every connected Lie group has a cocompact solvable subgroup, it suffices to show that a virtually connected rank one simple Lie group satisfies~\ref{it:metric}. This follows from the $KAK$-decomposition as in the first part of the proof of Proposition~\ref{prop:RoughSimilarity}; we omit the details.
\end{proof}

\section{Boundary representations of hyperbolic groups and the type~I property}

The goal of this section is to complete the proofs of the remaining results from the introduction. 

\subsection{Quasi-regular representations defined by cocompact subgroups}
 In the special case where $P$ is unimodular, the following result can be extracted from the proof of~\cite[Theorem~9.2.2]{DE}. 

\begin{prop}\label{prop:CCR-coco}
Let $G$ be a tdlc group and $P$ be a closed cocompact subgroup. Then every element   of $C^*_{\lambda_{G/P}}(G)$ is a compact operator. In particular, it splits as a direct sum of irreducible CCR representations. 
\end{prop}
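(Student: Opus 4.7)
The plan is to show that for every $f\in C_c(G)$, the operator $\lambda_{G/P}(f)$ is Hilbert--Schmidt on $L^2(G/P)$. Since $C_c(G)$ is dense in $L^1(G)$, whose image is by construction dense in $C^*(G)$, and since $\lambda_{G/P}\colon C^*(G)\to C^*_{\lambda_{G/P}}(G)$ is a continuous surjection, the set $\{\lambda_{G/P}(f) : f\in C_c(G)\}$ is norm-dense in $C^*_{\lambda_{G/P}}(G)$. The compact operators form a norm-closed ideal in $\mathcal{B}(L^2(G/P))$, so the Hilbert--Schmidt estimate immediately gives $C^*_{\lambda_{G/P}}(G)\subseteq \mathcal{K}(L^2(G/P))$, proving the first assertion.

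To realize $\lambda_{G/P}(f)$ as a Hilbert--Schmidt operator I would use the concrete model for $\mathrm{Ind}_P^G(\mathbf{1})$ provided by a continuous $\rho$-function $\rho\colon G\to \mathbf{R}_{>0}$ with $\rho(gp)=\Delta_P(p)\Delta_G(p)^{-1}\rho(g)$, yielding a $G$-quasi-invariant Radon measure $\mu$ on $G/P$. A direct calculation via Weil's formula then expresses $\lambda_{G/P}(f)$ as the integral operator on $L^2(G/P,\mu)$ with kernel
\[
K_f(xP,yP) \;=\; \int_P f(xpy^{-1})\,\Psi(x,y,p)\,d\alpha_P(p),
\]
where $\alpha_P$ is a left Haar measure on $P$ and $\Psi$ is a continuous function incorporating the modular and $\rho$-corrections. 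Since $f\in C_c(G)$, the integrand vanishes off a compact subset of $P$, so the function $K_f$ is continuous on $G/P\times G/P$. The cocompactness of $P$ ensures that the latter space is compact Hausdorff; hence $K_f \in L^2(G/P\times G/P,\mu\otimes\mu)$, and $\lambda_{G/P}(f)$ is Hilbert--Schmidt.

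The ``in particular'' statement will then follow from a classical fact: any non-degenerate $*$-representation of a $C^*$-algebra whose image consists of compact operators decomposes as an orthogonal direct sum of irreducible subrepresentations, and each such subrepresentation still takes values in compact operators and is therefore CCR by definition (compare~\cite[\S 4.1 and \S 4.7]{Dixmier}). Applying this to the identity representation of $C^*_{\lambda_{G/P}}(G)$ on $L^2(G/P)$ yields the desired decomposition of $\lambda_{G/P}$.

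The main technical obstacle is the bookkeeping of the modular functions $\Delta_G$ and $\Delta_P$ in the non-unimodular case, which must be arranged so that the kernel $K_f$ descends to a well-defined continuous function on $G/P\times G/P$ and so that the corresponding integral operator coincides with $\lambda_{G/P}(f)$. This is however purely formal and is handled once and for all by the standard induced-representation formalism recalled, for instance, in \Cref{thm:Blattner}.
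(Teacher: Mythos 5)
Your proof is correct, but it takes a genuinely different route from the paper's. The paper exploits the tdlc hypothesis directly: it fixes a compact open subgroup $U$, arranges the quasi-invariant measure $\nu$ on $G/P$ to be $U$-invariant, observes that for each open $V\leq U$ the space of $V$-invariant vectors in $L^2(G/P,\nu)$ has finite dimension $|V\backslash G/P|$ (cocompactness of $P$ plus openness of $V$), and notes that the resulting finite-rank projections $p_V=\mu_G(V)^{-1}\lambda_{G/P}(\mathbf 1_V)$ form an approximate unit of $C^*_{\lambda_{G/P}}(G)$ consisting of compact operators. Your argument instead shows that $\lambda_{G/P}(f)$ is Hilbert--Schmidt for every $f\in C_c(G)$ via a continuous integral kernel on the compact space $G/P\times G/P$; this is essentially the Deitmar--Echterhoff-style computation that the paper itself mentions (for unimodular $P$) just before the proposition, pushed through in the non-unimodular case. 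The trade-off: your route never uses that $G$ is totally disconnected, so it actually proves the stronger statement that the conclusion holds for an arbitrary locally compact group with a closed cocompact subgroup; the paper's route buys finite-rank (not merely Hilbert--Schmidt) approximants and an approximate unit of projections, and entirely sidesteps the $\rho$-function and modular bookkeeping that your kernel $K_f$ requires in order to descend to a well-defined continuous function on $G/P\times G/P$. That bookkeeping is the only place where you wave at details rather than carry them out, but it does check out: with the standard normalization $\rho(gp)=\Delta_P(p)\Delta_G(p)^{-1}\rho(g)$ and Weil's formula, the modular factors cancel exactly so that the integrand is invariant under $(x,y,p)\mapsto(xq,y,q^{-1}p)$ and $(x,y,p)\mapsto(x,yq,pq)$ for $q\in P$, and compactness of $\operatorname{supp}f$ makes the $p$-integral converge and depend continuously on $(xP,yP)$. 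The density argument and the final appeal to the decomposition theorem for representations by compact operators are both sound.
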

\begin{proof} 
Let $U$ be a compact open subgroup of $G$ and $\nu$ be a quasi-invariant probability measure on $G/P$. By Lemmas~\ref{lem:conv->equiv-meas} and~\ref{lem:equiv-meas->equiv-koop}, we may replace $\nu$ by $\mu *\nu$, where $\mu$ is a $U$-invariant probability measure on $G$, so as to ensure that  $\nu$ is $U$-invariant. For any open subgroup $V \leq U$, the dimension of the space of $V$-invariant vectors in $L^2(G/P, \nu)$ equals the cardinality $|V\backslash G/P|$ of the set of double cosets modulo $(V, P)$. Since $P$ is cocompact and $V$ is open, that number is finite. It follows that the orthogonal projection $p_V$ on the space of $V$-invariant vectors, is a compact operator. Observe moreover that $p_V = \frac 1 {\mu_G(V)} \lambda_{G/P}(\mathbf 1_V)$, where $\mu_G$ denotes a left Haar measure on $G$ and $\mathbf 1_V$ is the characteristic function of $V$. Hence the operators $(p_V)$, where $V$ runs over the compact open subgroups of $U$, form an approximate unit in $C^*_{\lambda_{G/P}}(G)$, consisting entirely of  compact operators. This implies that every element of $C^*_{\lambda_{G/P}}(G)$ is a compact operator. 
The required conclusions then follow from~\cite[Theorem~1.4.4]{Arveson}.
\end{proof}

In~\cite[Theorem A]{Raum-cocpt} Raum proved that a locally compact second countable unimodular $C^*$-simple group $G$ does not admit any cocompact amenable closed subgroups. Using the above proposition we give a simple proof of this fact for general locally compact groups $G$, dropping the assumptions of second countability and unimodularity.

We need the following fact which is certainly known to the experts. We include a proof for the sake of completeness.

Recall that a $C^*$-algebra is called {\bf elementary} if it is isomorphic to the $C^*$-algebra of compact operators on some Hilbert space.

\begin{lem}\label{lem:elem-C*->triv}
If $G$ is a non-trivial locally compact group, then the $C^*$-algebra $C^*_r(G)$ is non-elementary.
\end{lem}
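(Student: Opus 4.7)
The plan is to assume $C^*_r(G) \cong \mathcal{K}(\mathcal{H})$ for some Hilbert space $\mathcal{H}$ and derive a contradiction, splitting into the discrete and non-discrete cases.

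In the discrete case, $C^*_r(G)$ is unital (with unit $\delta_e$), so $\mathcal{K}(\mathcal{H})$ is unital, forcing $\mathcal{H}$ to be finite-dimensional; write $C^*_r(G) \cong M_n(\mathbb{C})$. The canonical tracial state $\tau(a) = \langle a\delta_e,\delta_e\rangle$ must coincide with the normalized trace on $M_n$, and the identity $\tau(\lambda(g^{-1}h)) = \delta_{g,h}$ shows that $\{\lambda(g)\}_{g \in G}$ is orthonormal in $M_n$ with respect to the Hilbert--Schmidt inner product. Since $\dim M_n = n^2$, this forces $|G| \leq n^2$, so $G$ is finite. For finite $G$ we have $C^*_r(G) = \mathbb{C}[G] \cong \bigoplus_{\pi \in \hat{G}} M_{d_\pi}(\mathbb{C})$, which is a single matrix algebra iff $|\hat{G}| = 1$, iff $G$ is trivial.

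In the non-discrete case, $C^*_r(G)$ is non-unital, so $\mathcal{H}$ is infinite-dimensional. Since every non-degenerate representation of $\mathcal{K}(\mathcal{H})$ is a multiple of the standard one, the faithful non-degenerate representation of $\mathcal{K}(\mathcal{H}) = C^*_r(G)$ on $L^2(G)$ via $\lambda_G$ yields a unitary identification $L^2(G) \cong \mathcal{H} \otimes K$ under which $C^*_r(G)$ acts as $\mathcal{K}(\mathcal{H}) \otimes 1$. Taking the weak closure, $L(G) \cong B(\mathcal{H}) \otimes 1$ is a type~I$_\infty$ factor. For any unit vector $\eta \in K$, the subspace $\mathcal{H} \otimes \mathbb{C}\eta$ is $L(G)$-invariant (hence $\lambda_G$-invariant), and $\lambda_G$ restricts on it as an irreducible unitary representation $A \colon G \to U(\mathcal{H})$, irreducible because $A(G)$ generates $L(G) = B(\mathcal{H})$. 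Hence the regular representation $\lambda_G$ splits as an orthogonal direct sum of $\dim K$ copies of the single irreducible representation $A$. But by the classical converse of the Peter--Weyl theorem, the regular representation of a locally compact group decomposes as a direct sum of irreducible subrepresentations precisely when $G$ is compact; and for compact $G$ every irreducible unitary representation is finite-dimensional, contradicting $\dim \mathcal{H} = \infty$.

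The key step, and the main technical obstacle, is the non-discrete case. The crux is passing from the abstract identification $C^*_r(G) \cong \mathcal{K}(\mathcal{H})$ to the concrete structural statement that $\lambda_G$ is a direct sum of copies of a single irreducible representation, using the multiplicity-free decomposition $L^2(G) \cong \mathcal{H} \otimes K$. This rigid decomposition is then ruled out by the classical characterization of compact groups through discrete decomposability of the regular representation, combined with the fact that irreducible unitary representations of compact groups are finite-dimensional.
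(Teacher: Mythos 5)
Your discrete case is correct and is in fact a nice elementary argument (different from the paper's, which works uniformly via Runde's theorem). The non-discrete case, however, rests on a false statement. The ``classical converse of the Peter--Weyl theorem'' you invoke --- that the regular representation of a locally compact group decomposes as a direct sum of irreducible subrepresentations only when $G$ is compact --- fails for general (in particular non-unimodular) groups. The affine group $\mathbf{R} \rtimes \mathbf{R}_{>0}$ is a classical counterexample: its regular representation is an infinite multiple of $\pi_+ \oplus \pi_-$ for two infinite-dimensional irreducibles, so $\lambda_G$ is a direct sum of irreducibles although $G$ is neither compact nor discrete. Worse for your argument, the $p$-adic affine group $\mathbf{Q}_p \rtimes \mathbf{Q}_p^{\times}$ realizes \emph{exactly} the structure you derive before the final step: since $\mathbf{Q}_p^{\times}$ acts simply transitively on $\widehat{\mathbf{Q}_p}\setminus\{0\}$, one has $L(G) \cong B(\mathscr H)$, a type $\mathrm{I}_\infty$ factor, and $\lambda_G$ is an infinite multiple of a single infinite-dimensional irreducible representation. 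So the conclusion ``$\lambda_G$ splits as $\dim K$ copies of one irreducible $A$ with $\dim \mathscr H = \infty$'' is not self-contradictory, and your final step collapses. (That group does not contradict the lemma itself, because there $C^*_r(G)$ is a non-simple extension of $C_0(\widehat{\mathbf{Q}_p^{\times}})$ by $\mathcal K$ rather than all of $\mathcal K(\mathscr H)$; but your contradiction uses only the von Neumann-algebraic structure, which that group shares.)

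To close the gap you must exploit that $C^*_r(G)$ equals all of $\mathcal K(\mathscr H)$, not merely that it is an irreducible weakly dense subalgebra of $B(\mathscr H)$. The paper does this by quoting a result of Runde: if $C^*_r(G)$ is elementary, then $L^1(G)$ has a unique $C^*$-norm, so $C^*(G) = C^*_r(G)$; the character of $C^*(G)$ coming from the trivial representation then contradicts the simplicity of $\mathcal K(\mathscr H)$ unless $\dim \mathscr H = 1$, whence $G$ is trivial. Either import that argument, or find some other way to use the full hypothesis (for instance, that $\mathcal K(\mathscr H)$ is simple and CCR) rather than only the factoriality of $L(G)$.
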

\begin{proof}
Suppose $C^*_r(G)$ is elementary. Then it follows from~\cite[Proposition 4.3]{Runde} that $L^1(G)$ has a unique $C^*$-norm. In particular, we have $C^*(G)=C^*_r(G)$, which implies $C^*_r(G)$ admits a character. Since the $C^*$-algebra of compact operators is simple, it follows $G$ is trivial.
\end{proof}

\begin{thm}\label{thm:non-C*-simple}
Let $G$ be a non-trivial locally compact group containing a cocompact amenable closed subgroup. Then $C^*_r(G)$ is not simple.
\end{thm}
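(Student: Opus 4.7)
The plan is to argue by contradiction. Assume that $C^*_r(G)$ is simple, and let $P\le G$ be a cocompact amenable closed subgroup.

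The first step produces a proper quotient of $C^*_r(G)$ associated to $P$. Since $P$ is amenable, Hulanicki's theorem yields $\mathbf{1}_P\prec \lambda_P$, and by continuity of induction of unitary representations, $\lambda_{G/P}=\mathrm{Ind}_P^G(\mathbf{1}_P)\prec \mathrm{Ind}_P^G(\lambda_P)=\lambda_G$. This gives a surjective $*$-homomorphism $q\colon C^*_r(G)\twoheadrightarrow C^*_{\lambda_{G/P}}(G)$. Both algebras are non-zero, so the assumed simplicity of $C^*_r(G)$ forces $q$ to be a $*$-isomorphism.

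The second step applies Proposition~\ref{prop:CCR-coco}: since $P$ is cocompact, every element of $C^*_{\lambda_{G/P}}(G)$ is a compact operator on $L^2(G/P)$. Combined with the previous step, $C^*_r(G)$ is isomorphic to a simple $C^*$-subalgebra of $K(L^2(G/P))$ containing non-zero elements, hence elementary by a standard fact (a simple $C^*$-algebra of compact operators is isomorphic to $K(H)$ for some Hilbert space $H$). This contradicts Lemma~\ref{lem:elem-C*->triv}, which asserts that $C^*_r(G)$ is non-elementary whenever $G$ is non-trivial, completing the proof.

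The main obstacle concerns the scope of the argument: Proposition~\ref{prop:CCR-coco} is stated for tdlc groups, while the theorem allows arbitrary locally compact $G$. To cover the general case, one should reduce to the tdlc setting, for instance by passing to the quotient $G/G^0$ by the identity component---this quotient is tdlc and inherits a cocompact amenable closed subgroup from the image of $P$. Transferring the simplicity hypothesis from $G$ to $G/G^0$ requires additional care, particularly when $G^0$ is non-amenable; in that case a complementary argument (for instance handling the amenable case separately via the trivial character) and appeal to the structure theory of connected locally compact groups with cocompact amenable subgroups would be needed.
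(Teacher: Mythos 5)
Your argument for the totally disconnected case coincides with the paper's: amenability of $P$ gives $\lambda_{G/P}\prec\lambda_G$, simplicity upgrades this to weak equivalence (equivalently, the canonical surjection $C^*_r(G)\twoheadrightarrow C^*_{\lambda_{G/P}}(G)$ is an isomorphism), Proposition~\ref{prop:CCR-coco} then shows $C^*_r(G)$ consists of compact operators, hence is elementary by simplicity, contradicting Lemma~\ref{lem:elem-C*->triv}. Up to that point the proposal is correct.

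The gap is exactly where you flag it: the reduction to the tdlc setting is not carried out, and the route you sketch does not work. There is in general no surjection $C^*_r(G)\twoheadrightarrow C^*_r(G/G^0)$ unless $G^0$ is amenable (weak containment of $\lambda_{G/G^0}$ in $\lambda_G$ is equivalent to amenability of $G^0$), so simplicity of $C^*_r(G)$ cannot be transferred to the quotient in the non-amenable case; and when $G^0$ is non-trivial and amenable, one would instead conclude that $C^*_r(G)\cong C^*_r(G/G^0)$, which still requires an argument to rule out. The paper closes this with a single external input: by Raum's theorem (Theorem~6.1 of the cited reference), a locally compact group with simple reduced $C^*$-algebra is automatically totally disconnected. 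That observation is the missing idea; once it is in place, your argument is complete and is the paper's proof. Without it, the general locally compact case remains open in your write-up.
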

\begin{proof}
Assume $G$ contains a cocompact amenable closed subgroup $P$, and $C^*_r(G)$ is simple. By~\cite[Theorem 6.1]{Raum-C*-simple}, $G$ is totally disconnected. Since $P$ is amenable, $\lambda_{G}$ weakly contains $\lambda_{G/P}$, therefore they are weakly equivalent by $C^*$-simplicity. Then Proposition~\ref{prop:CCR-coco} implies $C^*_r(G)$ is CCR, and since it is also simple, it follows it is elementary. Hence, $G$ is trivial by Lemma~\ref{lem:elem-C*->triv}.
\end{proof}

\subsection{Weak equivalence of boundary representations}

\begin{proof}[Proof of Theorem~\ref{thm:weak-equiv-bd-rep}]
Let us first observe that the required assertion holds in the special case where $G$ has a cocompact amenable subgroup $P$. Then, by Theorem~\ref{thm:Ballistic}, the $G$-action on $\partial G$ is transitive, so that there is a unique  $G$-invariant measure class on $\partial G$, see~\cite[Theorem~B.1.4]{BHV} and~\cite[Lemma~8.1]{CCMT}. Hence, by Lemma~\ref{lem:equiv-meas->equiv-koop}, any two boundary representations are unitarily equivalent. In particular, they are weakly equivalent. 

We next invoke  Proposition~\ref{prop:non-amen-hyper} and denote by $W$ the compact normal subgroup afforded in that way. Since $W$ acts trivially on the Gromov boundary $\partial G$, and is thus contained in the kernel of every boundary unitary representation, there is no loss of generality in assuming that $W$ is trivial. Hence, either $G$ is a virtually connected rank one simple Lie group, or $G$ is totally disconnected. 

In the former case, the group $G$ has a cocompact amenable subgroup, and the required conclusion follows by the first paragraph. 
We assume henceforth that $G$ is totally disconnected and that $G$ does not have any cocompact amenable subgroup.  Therefore, Corollary~\ref{cor:reg-ell-URS} ensures that some point $\xi \in \partial G$ has a regionally elliptic stabilizer. Since the $G$-action on $\partial G$ is topologically amenable (see~\cite{Adams},~\cite{Kaimanovich}), all points have an amenable stabilizer, and the $G$-action on $(\partial G, \nu)$ is Zimmer-amenable with respect to quasi-invariant probability measure $\nu$ (see Proposition~3.3.5 in~\cite{ADR}). Therefore the hypotheses of Corollary~\ref{cor:amenable-actions} are fulfilled. The required conclusion follows. 
\end{proof}

\subsection{Type~I hyperbolic groups}

It remains to prove Theorems~\ref{thm:main} and~\ref{thm:GCR}. As announced in the introduction, we shall rely on Garncarek's work~\cite{Garncarek}, which concerns the boundary representations of discrete hyperbolic groups associated with Patterson--Sullivan measures. 

For our purposes, it is sufficient to consider those measures in the setting of hyperbolic locally finite graphs with a vertex transitive automorphism group. Given such a graph $X$, the Patterson--Sullivan construction (developed by M.~Coornaert~\cite{Coornaert} in the general hyperbolic setting) yields a canonical measure class on the Gromov boundary $\partial X$, which is invariant under the full automorphism group $\Aut(X)$. The probability measures in that class can be defined by Hausdorff measures associated with visual metrics or, alternatively, as weak*-limits of normalized counting measures on balls around a fixed vertex of the graph $X$. The Koopman representation defined by any probability measure in the Patterson--Sullivan class is called the \textbf{PS-representation} associated with $X$. We denote it by $\kappa_X$. It is well defined up to equivalence (see Lemma~\ref{lem:equiv-meas->equiv-koop}). Its domain is  the full automorphism group $\Aut(X)$.

In order to clarify how Garncarek's work comes into play, we introduce the following two conditions on a hyperbolic tdlc groups $G$ and compact open subgroup $U \leq G$:
\begin{description}
\item[(G1)] For any Cayley--Abels graph $X$ on $G/U$, the PS-representation  $\kappa_X$ is an irreducible representation of $G$. 

\item[(G2)] For any two Cayley--Abels graphs $X_1, X_2$ on $G/U$, if the PS-representations  $\kappa_{X_1}, \kappa_{X_2}$ are unitarily equivalent, then the identity on $G/U$, viewed as a map from $VX_1$ to $VX_2$, is a rough similarity. 
\end{description}
	
The main results of~\cite{Garncarek} state notably that every non-elementary \emph{discrete} hyperbolic group satisfies (G1) and (G2). Applying this to a uniform lattice in the non-discrete case, we obtain the following.

\begin{thm}[Garncarek] \label{thm:Garncarek}
Let $G$ be a non-amenable hyperbolic tdlc group. If $G$ has a cocompact lattice, then for every compact open subgroup $U \leq G$, the conditions (G1) and (G2) are satisfied.
\end{thm}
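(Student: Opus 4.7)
The plan is to transfer both conditions (G1) and (G2) from the uniform lattice $\Gamma \leq G$ to $G$ itself by a restriction argument. Since $G$ is non-amenable hyperbolic, it is of general type and hence non-elementary by Proposition~\ref{prop:RelAmen-action}; as $\Gamma$ is cocompact in $G$ it is quasi-isometric to $G$, and is thus itself a non-elementary discrete hyperbolic group, to which Garncarek's theorem applies.

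Fix a compact open subgroup $U \leq G$ and a Cayley--Abels graph $X$ on $G/U$. Since $U$ is compact open and $\Gamma$ is discrete cocompact in $G$, the lattice $\Gamma$ acts properly cocompactly on $X$ by graph automorphisms (with finite vertex stabilizers of the form $\gamma U \gamma^{-1} \cap \Gamma$). By Coornaert's theorem, the Patterson--Sullivan measure class on $\partial X$ arising from \emph{any} proper cocompact isometric action on the locally finite hyperbolic graph $X$ is Ahlfors regular of dimension equal to the volume entropy of $X$ (with respect to any visual metric on $\partial X$), and is thus intrinsic to $X$. In particular, the PS-measure classes for the $G$-action and for the $\Gamma$-action coincide on $\partial X$; combined with Lemma~\ref{lem:equiv-meas->equiv-koop}, this implies that the restriction $\kappa_X|_\Gamma$ is unitarily equivalent to the PS-representation of $\Gamma$ associated with the orbit metric $d^X_\Gamma(\gamma_1,\gamma_2) := d_X(\gamma_1 eU, \gamma_2 eU)$, which is a left-invariant hyperbolic metric on $\Gamma$ quasi-isometric to every word metric by the \v{S}varc--Milnor lemma, and to which Garncarek's arguments apply.

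For (G1), apply Garncarek's irreducibility theorem to $\Gamma$ with the metric $d^X_\Gamma$: the restriction $\kappa_X|_\Gamma$ is irreducible, hence $\kappa_X$ is a fortiori irreducible as a $G$-representation. For (G2), suppose $\kappa_{X_1} \cong \kappa_{X_2}$ as $G$-representations. Restricting to $\Gamma$ and invoking Garncarek's rigidity theorem, we deduce that $d^{X_1}_\Gamma$ and $d^{X_2}_\Gamma$ are roughly similar on $\Gamma$: there are constants $\lambda \geq 1$ and $C \geq 0$ with $|\lambda d^{X_1}_\Gamma(\gamma,\gamma') - d^{X_2}_\Gamma(\gamma,\gamma')| \leq C$ for all $\gamma,\gamma' \in \Gamma$.

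To extend this rough similarity from $\Gamma \cdot eU$ to all of $G/U$, pick a compact set $K \subset G$ with $G = \Gamma K$. Then for every $v = g \cdot eU \in G/U$, writing $g = \gamma_v k$ with $\gamma_v \in \Gamma$ and $k \in K$, we have $d_{X_i}(v, \gamma_v eU) = d_{X_i}(k \cdot eU, eU)$, which is bounded by a constant $D$ depending only on $K$, $X_1$ and $X_2$ (and not on $v$, nor on $i \in \{1,2\}$). Two applications of the triangle inequality give $|d_{X_i}(v,w) - d^{X_i}_\Gamma(\gamma_v, \gamma_w)| \leq 2D$, and combining with the rough similarity on $\Gamma$ yields $|\lambda d_{X_1}(v,w) - d_{X_2}(v,w)| \leq C + 2D(\lambda+1)$ for all $v,w \in G/U$, which is exactly the rough similarity asserted in (G2). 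The main subtlety is the second step, namely the identification of the $G$- and $\Gamma$-Patterson--Sullivan measure classes on $\partial X$, which rests on Coornaert's canonicity theorem for cocompactly covered proper hyperbolic graphs and on the fact that Garncarek's framework is robust enough to accommodate the orbit metric $d^X_\Gamma$ in place of a genuine word metric.
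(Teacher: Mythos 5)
Your proof is correct and follows essentially the same route as the paper: restrict $\kappa_X$ to the uniform lattice $\Gamma$, apply Garncarek's irreducibility and rigidity theorems to the orbit pseudo-metric, and then propagate the rough similarity from the $\Gamma$-orbit to all of $G/U$ using cocompactness and the triangle inequality. The only difference is cosmetic: you spell out via Coornaert's Ahlfors-regularity why the graph's Patterson--Sullivan class agrees with the one Garncarek attaches to $(\Gamma, d^X_\Gamma)$ (a point the paper leaves implicit), and you extend the rough similarity by the decomposition $g=\gamma_v k$ rather than by a nearest-point projection onto the orbit, which amounts to the same estimate.
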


\begin{proof}
Let $U \leq G$ be any compact open subgroup. Let also $\Gamma \leq G$  be a cocompact lattice. For any Cayley--Abels graph $X$ on $G/U$, the restriction of the PS-representation $\kappa_X$ to $\Gamma$ is irreducible by~\cite[Theorem~6.2]{Garncarek}. In particular, it is irreducible as a representation of $G$, so (G1) holds. 

Given two Cayley--Abels graphs $X_1, X_2$ on $G/U$, if the PS-representations  $\kappa_{X_1}, \kappa_{X_2}$ are unitarily equivalent, then their restrictions to $\Gamma$ are equivalent. It then follows from (the proof of)~\cite[Theorem~7.4]{Garncarek} that the identity on $\Gamma$ defines a rough similarity of $(\Gamma, d_1)$ to $(\Gamma, d_2)$, where $d_i$ is the pseudo-metric on $\Gamma$ induced by the orbit map $\Gamma \to VX_i = G/U : \gamma \mapsto \gamma U$. Let $V'_i$ be the image of that map. 

Since $\Gamma$ is cocompact in $G$, we have $G = \Gamma K$ for some compact subset $K$ of $G$. Since $K$ is covered by finitely many left $U$-cosets, it follows that $\Gamma$ acts with finitely many orbits on $VX_i$. In particular, by choosing for each $v \in VX_i$ a vertex $p_i(v) \in V'_i$ that is at minimal distance from $v$, we obtain a map $p_i \colon VX_i \to V'_i$ such that the distance from $v$ to $p_i(v)$ is uniformly bounded. 

By the very definition, the orbit map $\Gamma \to VX_i$ defines a rough similarity of $(\Gamma, d_i)$ to $V'_i$. It follows that the identity on $\Gamma U/U$, viewed as a map $\varphi' \colon  V'_1  \to V'_2$, is a rough similarity.  By the triangle inequality, it follows that the map $\varphi \colon VX_1 \to VX_2 : v \mapsto \varphi'(p_1(v))$ is a rough similarity. Let $\psi\colon VX_1 \to VX_2$ be the map defined by the identity on $G/U$. By construction the maps $\varphi$ and $\psi$ coincide on $V'_1$. Since $\psi$ is a quasi-isometry, it follows that the distance from $\psi(v)$ to $\psi(p_1(v))$ is uniformly bounded. Therefore, the maps $\varphi$ and $\psi$ are within bounded distance of each other. Since $\varphi$ is a rough similarity, it follows that $\psi$ has also this property. Thus (G2) holds.
\end{proof}

To complete the proofs of the  results announced in the introduction, it remains to establish the following.

\begin{thm}\label{thm:main-tech}
Let $G$ be a non-amenable hyperbolic tdlc group and $U \leq G$ be a compact open subgroup satisfying (G1) and (G2). If for some boundary representation $\kappa$ of $G$, the C*-algebra $C^*_\kappa(G)$ contains a non-zero CCR two-sided ideal, then  $G$ has a cocompact amenable subgroup. 	
\end{thm}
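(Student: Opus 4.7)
The plan is to exploit the hypotheses (G1), (G2) and the non-zero CCR ideal to show that for any pair of Cayley--Abels graphs on $G/U$, the identity map on $G/U$ is a rough similarity between the associated vertex-set metrics. Once this is established, condition~(iii) of Proposition~\ref{prop:RoughSimilarity} will be satisfied and the conclusion will follow.

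First I would fix two compact generating sets $\Sigma_1, \Sigma_2$ of $G$ and let $X_1, X_2$ be the associated Cayley--Abels graphs on the common vertex set $G/U$. The corresponding PS-representations $\kappa_{X_1}, \kappa_{X_2}$ are boundary representations of $G$, and since $G$ is non-amenable, Theorem~\ref{thm:weak-equiv-bd-rep} gives that both are weakly equivalent to $\kappa$, hence to each other. In particular, $\ker(\kappa_{X_1}) = \ker(\kappa_{X_2}) = \ker(\kappa)$ in $C^*(G)$, so each $\kappa_{X_i}$ descends to a faithful representation of $C^*_\kappa(G)$, which is irreducible by (G1).

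The crux is to upgrade this weak equivalence to a \emph{unitary} equivalence of $\kappa_{X_1}$ and $\kappa_{X_2}$. Let $J \subseteq C^*_\kappa(G)$ be the non-zero CCR two-sided ideal furnished by the hypothesis. Faithfulness of $\kappa_{X_i}$ on $C^*_\kappa(G)$ implies that $\kappa_{X_i}|_J$ is non-zero; by the standard dichotomy for restrictions of irreducible representations to ideals, $\kappa_{X_i}|_J$ is then a faithful, irreducible and non-degenerate representation of $J$ on $\mathcal{H}_{\kappa_{X_i}}$. Because $J$ is CCR and $\kappa_{X_i}|_J$ is irreducible, the image $\kappa_{X_i}(J)$ is an irreducibly acting $C^*$-subalgebra of the compact operators, hence equals $\mathcal{K}(\mathcal{H}_{\kappa_{X_i}})$. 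Combined with faithfulness, this exhibits an isomorphism $J \cong \mathcal{K}(\mathcal{H}_{\kappa_{X_i}})$, so $J$ is simple and admits, up to unitary equivalence, a unique irreducible representation. Consequently $\kappa_{X_1}|_J$ and $\kappa_{X_2}|_J$ are unitarily equivalent, and any unitary intertwining non-degenerate restrictions to $J$ of two representations of $C^*_\kappa(G)$ automatically intertwines the ambient representations; hence $\kappa_{X_1} \cong \kappa_{X_2}$ as unitary representations of $G$.

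Applying (G2), the identity on $G/U$ is a rough similarity from $VX_1$ to $VX_2$. Since $\Sigma_1$ and $\Sigma_2$ were arbitrary, condition~(iii) of Proposition~\ref{prop:RoughSimilarity} holds, and $G$ therefore admits a cocompact amenable subgroup. The main conceptual obstacle lies in the passage from weak equivalence to unitary equivalence: the role of the CCR ideal is precisely to serve as a type~I witness that promotes weak equivalence of the irreducible PS-representations (supplied by Theorem~\ref{thm:weak-equiv-bd-rep}) to unitary equivalence, which is then the exact input required by Garncarek's rigidity condition (G2).
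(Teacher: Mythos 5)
Your proof is correct and follows essentially the same route as the paper: weak equivalence of all boundary representations (Theorem~\ref{thm:weak-equiv-bd-rep}), irreducibility from (G1), and the CCR ideal to promote weak to unitary equivalence of the PS-representations, then (G2) and Proposition~\ref{prop:RoughSimilarity}. The only difference is that where the paper cites Dixmier's Corollary~4.1.10 (irreducible representations with equal kernels and a non-zero compact operator in the image are unitarily equivalent), you prove that fact by hand via the simplicity of $\mathcal K(\mathcal H)$ and the unique-extension property of non-degenerate representations of ideals.
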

\begin{proof}	
Set $A = C^*_\kappa(G)$ and let $I$ be a non-zero CCR two-sided ideal in $A$.  
By Theorem~\ref{thm:weak-equiv-bd-rep}, any two boundary representations of $G$ are weakly equivalent. Thus, it follows that for every boundary representation $\pi$ of $G$, we have $C^*_\pi(G) \cong A$.

Let now $X$ be any Cayley--Abels graph on $G/U$. By (G1), the representation  $\kappa_X$ is irreducible. Since  $C^*_{\kappa_X}(G) \cong A$, we deduce that $C^*_{\kappa_X}(G)$ contains a non-zero CCR two-sided ideal, which acts  irreducibly on $\mathcal H_{\kappa_X}$ by~\cite[Theorem~1.3.4]{Arveson}. Since that ideal is CCR,  it entirely consists of compact operators. 
Since this is valied for any Cayley--Abels graph $X$, we deduce from~\cite[Corollary~4.1.10]{Dixmier} that, for any two Cayley--Abels graphs $X_1, X_2$ on $G/U$, the representations $\kappa_{X_1}$ and $\kappa_{X_2}$ are unitary equivalent.
Since the pair $(G, U)$ satisfies (G2), we infer that the identity on $G/U$, viewed as a map from $VX_1$ to $VX_2$, is a rough similarity. Proposition~\ref{prop:RoughSimilarity} now ensures that $G$ has a cocompact amenable subgroup.
\end{proof}

\begin{proof}[Proof of Theorem~\ref{thm:GCR}]
The implications~\ref{it:all-compact-op} $\Rightarrow$~\ref{it:CCR} $\Rightarrow$~\ref{it:GCR} $\Rightarrow$~\ref{it:CCR-ideal} are obvious. If~\ref{it:amen-coco} holds, we see as in the proof of Theorem~\ref{thm:weak-equiv-bd-rep} that any two boundary representations are unitarily equivalent (and not only weakly equivalent). Moreover, by~\cite[Theorem~8.1]{CCMT}, we may assume that $G$ is either a virtually connected rank one simple Lie group, or a closed subgroup of the automorphism group of a non-elementary locally finite tree, acting without inversion, with exactly $2$ orbits of vertices, and acting $2$-transitively on the set of ends. In the former case, the assertion~\ref{it:CCR} follows from known results on unitary representations theory of simple Lie groups. Indeed $\kappa$ is the representation unitarily induced from the trivial representation of a parabolic subgroup and therefore it is admissible, see Proposition~8.4 in~\cite{Knapp_rep}. For the fact that an admissible representation maps every element of the group C*-algebra to a compact operator,   we refer e.g.\ to the proof of (i) and (ii) in Lemma~15.5.1 in~\cite{Dixmier} or to the proof of Proposition~6.E.11 in~\cite{BekkaHarpe}. In the totally disconnected case, the assertion~\ref{it:all-compact-op} follows from Proposition~\ref{prop:CCR-coco}.

Assume finally that~\ref{it:CCR-ideal} holds; we shall show~\ref{it:amen-coco}. We may assume that $G$ is non-amenable. By Proposition~\ref{prop:non-amen-hyper} we may further assume that $G$ is totally disconnected, since every connected Lie group has a cocompact solvable subgroup. The required conclusion then follows from Theorems~\ref{thm:Garncarek} and~\ref{thm:main-tech}.
\end{proof}

\begin{proof}[Proof of Theorem~\ref{thm:main}]
If $G$ is type~I, then since $G$ is $\sigma$-compact, every unitary representation of $G$ is GCR (see~\cite[Theorem~8.F.3]{BekkaHarpe}). If $G$ is amenable, there is nothing to prove; otherwise, the theorem follows from the implication~\ref{it:GCR}~$\Rightarrow$~\ref{it:amen-coco} in Theorem~\ref{thm:GCR}.
\end{proof}

\begin{rmk}\label{rem:Garn}
We strongly believe that Theorems~\ref{thm:main} and~\ref{thm:GCR} hold without the requirement that $G$ contains a uniform lattice. This would follow from Theorem~\ref{thm:main-tech} by the same reasoning as  above, provided one shows that  for every non-amenable hyperbolic tdlc group $G$, there is a compact open subgroup $U$ such that (G1) and (G2) are satisfied. Whether this non-discrete generalization of Garncarek's results~\cite{Garncarek} holds is a highly interesting question in its own. This question can be envisaged in the context of a wide-ranging conjecture of Bader and Muchnik~\cite{BadMuch}. That conjecture predicts that the   Koopman representation associated with the Poisson boundary of  the random walk defined by a spread-out probability measure $\mu$ on an arbitrary locally compact is irreducible. (Recent results by Bj\"orklund--Hartman--Oppelmayer~\cite{BjoHarOpp} suggest that the measure $\mu$ should be assumed to be symmetric  in the Bader--Muchnik conjecture.)
\end{rmk}

\begin{proof}[Proof of Corollary~\ref{cor:hyper-coco-amen}]
If $G$ is compact or $2$-ended, then~\ref{it:hyper-coco-amen:elem} holds by~\cite[Proposition~5.6]{CCMT}. We assume henceforth that this is not the case, so $G$ is non-elementary. The existence of a lattice implies that $G$ is unimodular, hence $G$ is non-amenable by~\cite[Theorem~7.3]{CCMT}. It then follows from~\cite[Theorem~D]{CCMT} that~\ref{it:hyper-coco-amen:Lie} or~\ref{it:hyper-coco-amen:tree} holds. 
\end{proof}
	
\begin{proof}[Proof of Corollary~\ref{cor:HR}]
If $G$ were compactly generated, then it would act cocompactly on $T$ by \cite[Lemma~2.4]{CDM}, hence it would be Gromov hyperbolic. If moreover $G$ were unimodular, then it would contain a uniform lattice by the main result of~\cite{BaKu}. Hence, in that situation, we may invoke Corollary~\ref{cor:hyper-coco-amen}. Although the tree given by Corollary~\ref{cor:hyper-coco-amen} need not be $T$ itself, its boundary is canonically identified with $\partial T$ as a $G$-space by cocompactness of the $G$-action on $T$. The required conclusion follows.

We now present two different approaches to finish the proof. The first one consists in invoking \cite[Theorem~A]{HoudayerRaum}, which ensures that, after discarding vertices of degree~$2$ of $T$, the action of every vertex stabilizer in $G$ is $2$-transitive on the neighbouring edges. It is easy to deduce that $G$ acts edge-transitively on $T$, that $G$ is compactly generated, and that $G$ is unimodular (because it is generated by compact subgroups). The hypotheses required by the argument of the first paragraph above are fulfilled, and we are done. 

We may alternatively avoid invoking the full strength of \cite[Theorem~A]{HoudayerRaum}, as follows.  
Let $G^+$ be the open subgroup of index at most~$2$ preserving the canonical bipartition of $T$. Then $G^+$ acts without inversion. If the quotient graph $G^+ \backslash T$ is not a tree, then $G$ cannot be a type~I group by~\cite[Proposition~4.1 and 4.2]{HoudayerRaum}. Since $G$, hence also $G^+$, is of type~I (see \cite[Proposition~2.4]{Kallman73}), we infer that   $G^+$ is the fundamental group of a tree of profinite groups. In particular it is generated by compact subgroups, hence it is unimodular. Let us write $G^+$ as an ascending union of compactly generated open subgroups $G^+ = \bigcup_n H_n$. Without loss of generality we assume that $H_0$ is not compact. For each $n$ let $T_n \subset T$ be a minimal $H_n$-invariant subtree of $T$. By \cite[Lemma~2.4]{CDM}, the group $H_n$ acts cocompactly on $T_n$, so that $H_n$ is hyperbolic. As an open subgroup of $G^+$, the group $H_n$ is unimodular. Applying Corollary~\ref{cor:hyper-coco-amen} to $H_n$, we invoke that  the $H_n$-action is $2$-transitive on the ends of $T_n$. Let now $v \in V(T_0)$ be a vertex and set $K = G^+_v$. For $n$ large enough we have $K \leq H_n$. Since $T_0$ is contained in $T_n$, we have $H_n = K\langle a_n \rangle K$ for a suitable hyperbolic element $a_n \in H_n$. It follows that $K$ is a maximal subgroup of $H_n$. Similarly $K$ is maximal in $H_m$ for all $m \geq n$. Since $K \leq H_n \leq H_{m}$, we infer that $H_n = H_m$ for all $m \geq n$. Hence $G^+ = H_n$, which implies that $G^+$ is compactly generated. This confirms again that the hypotheses required by the argument of the first paragraph above are fulfilled.
\end{proof}

\begin{rmk}
We note that in proving Theorem~\ref{thm:main} we only used that boundary representations of $G$ are GCR. This is formally weaker than type~I property, which is equivalent to all representations of $G$ being GCR. In fact, for every almost invariant measure $\nu$ on $\partial G$, the $G$-space $(\partial G, \nu)$ is amenable by~\cite[Theorem~6.8]{Adams}, so that  every boundary representation $\kappa_\nu$ of $G$ is weakly contained in the regular representation $\lambda_G$ of $G$ by~\cite[Corollary 3.2.2]{AD03}. Hence,  it follows that if $\lambda_G$ is GCR, then $G$ has a cocompact amenable subgroup.

In non-discrete setting, the GCR property for the group and its regular representation are not equivalent, see~\cite{Mackey61}. Examples of locally compact groups whose regular representation is GCR include all linear algebraic groups over a non-archimedean local field $k$ of characteristic zero~\cite{GooKal79}. 
\end{rmk}

We finish this section by mentioning an example of an amenable hyperbolic group that is not type~I. 

\begin{prop}\label{prop:non-type-I}
Let $F$ be a non-abelian finite simple group. Then the semi-restricted wreath product
$$G = (\bigoplus_{\mathbf Z_{< 0}} F) \oplus (\prod_{\mathbf Z_{\geq 0}} F) \rtimes \mathbf Z$$
is an amenable hyperbolic locally compact group that is not type~I.
\end{prop}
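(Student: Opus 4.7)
The plan has three parts, with the non-type-I statement being the substance. Local compactness and amenability are immediate: $N := (\bigoplus_{i<0}F) \oplus (\prod_{i\geq 0}F)$ is the direct product of the discrete locally finite group $D := \bigoplus_{i<0}F$ and the profinite group $K := \prod_{i\geq 0}F$, hence a locally compact amenable group whose compact open subgroup $K$ we shall exploit; since the shift $\sigma$ is a continuous automorphism of $N$, the semidirect product $G = N \rtimes_\sigma \mathbf Z$ is again a locally compact amenable group.

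For hyperbolicity I would exhibit a continuous, proper, cocompact isometric action of $G$ on the $(|F|+1)$-regular tree $T$. Parametrise the vertices of $T$ as pairs $(m,(a_i)_{i<m})$ with $m \in \mathbf Z$, $a_i \in F$, and $a_i = e$ for $i$ sufficiently negative; declare $(m,(a_i))$ to be the parent of its $|F|$ children $(m+1,(a_i)_{i<m+1})$ obtained by extending the sequence with any $a_m \in F$. Define the action of $g = (n,k) \in N \rtimes \mathbf Z$ by
\[
g \cdot (m,(a_i)_{i<m}) \;:=\; (m+k,(n_i \cdot a_{i-k})_{i<m+k}),
\]
which is well-defined (both $n$ and $a$ are eventually trivial on the left) and vertex-transitive with the stabiliser of the base vertex $(0,(e))$ equal to the compact open subgroup $K$; thus the action is continuous, proper, and cocompact, and the Milnor--Schwarz lemma gives that $G$ is quasi-isometric to the hyperbolic tree $T$, hence hyperbolic.

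The non-type-I claim is the substantive step. Since $G$ is amenable, $C^*(G) = C^*_r(G)$, and $G$ is type~I iff the group von Neumann algebra $L(G)$ is type~I as a von Neumann algebra, so it suffices to rule this out. My plan is to argue by contradiction using a Takesaki duality computation. Under the crossed-product identification $L(G) \cong L(N) \rtimes_\sigma \mathbf Z$, Takesaki duality for actions of abelian groups yields
\[
L(G) \rtimes_{\hat\sigma} \widehat{\mathbf Z} \;\cong\; L(N) \,\bar\otimes\, B(\ell^2(\mathbf Z)).
\]
If $L(G)$ were type~I, then crossing by the compact group $\widehat{\mathbf Z} = \mathbf T$ would preserve type~I (a standard fact for compact-group actions on type~I von Neumann algebras), so the right-hand side would be type~I. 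Cornering by a rank-one projection in $B(\ell^2(\mathbf Z))$ would then show that $L(N)$ itself is type~I; decomposing $L(N) = L(D) \,\bar\otimes\, L(K)$ and cornering further by a rank-one projection in the type~I algebra $L(K) = \bigoplus_{\pi \in \widehat K} B(H_\pi)$ (each $H_\pi$ finite-dimensional since $K$ is compact) would force $L(D)$ to be type~I. But the discrete group $D = \bigoplus_{i<0} F$ is not virtually abelian---every finite-index subgroup of $D$ contains a cofinite subsum $\bigoplus_{i \notin S} F$, which is non-abelian---so Thoma's theorem implies $D$ is not type~I, and for a discrete amenable group this is equivalent to $L(D)$ not being type~I. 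The contradiction shows $G$ is not type~I. The main technical ingredient is the Takesaki duality identification, which transfers failure of type-I-ness from $L(D)$ up through $L(N)$ to $L(G)$; the remaining von Neumann algebraic manipulations are routine.
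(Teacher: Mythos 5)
Your proposal is correct, but for the substantive non-type-I step it takes a genuinely different and much heavier route than the paper. The paper's argument stays entirely at the level of permanence properties of the type~I condition: $N=(\bigoplus_{i<0}F)\oplus(\prod_{i\geq 0}F)$ is an \emph{open} subgroup of $G$, the compact group $K=\prod_{i\geq 0}F$ is an open \emph{normal} subgroup of $N$ with discrete quotient $N/K\cong D=\bigoplus_{i<0}F$, and since type~I passes to open subgroups (\cite[Proposition~2.4]{Kallman73}) and to quotients, $G$ being type~I would force the non-virtually-abelian discrete group $D$ to be type~I, contradicting Thoma's theorem. Your route instead descends to the group von Neumann algebra, writes $L(G)\cong L(N)\rtimes_\sigma\mathbf Z$, and uses Takai--Takesaki duality together with preservation of type~I under crossed products by compact groups and under corners to transfer the failure of type~I from $L(D)$ up to $L(G)$. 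This does work, but it consumes several nontrivial operator-algebraic inputs (duality, the compact-crossed-product fact, and Kaniuth's theorem that a discrete group has a type~I regular representation if and only if it is virtually abelian -- this is the result you actually need for $D$, and amenability of $D$ is not the relevant hypothesis) where the paper needs only Thoma plus two elementary hereditary properties. One genuine imprecision to flag: your opening claim that ``$G$ is type~I iff $L(G)$ is type~I'' is false for non-discrete groups -- the paper itself points out, citing Mackey, that a non-type-I group can have a GCR regular representation; fortunately you only use the valid direction, namely that $L(G)$ failing to be type~I forces $G$ not to be type~I, so your strategy survives. Finally, your explicit construction of the $(|F|+1)$-regular tree on which $G$ acts properly, cocompactly and vertex-transitively with vertex stabilizer $K$ is correct and gives a self-contained proof of hyperbolicity and amenability where the paper simply cites \cite[Theorem~A]{CCMT}; that part is a pleasant, more elementary alternative.
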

\begin{proof}
That $G$ is amenable and hyperbolic follows from~\cite[Theorem~A]{CCMT} (the amenability is obvious since $G$ is \{regionally elliptic\}-by-abelian). The regionally elliptic subgroup $G^0 = (\bigoplus_{\mathbf Z_{< 0}} F) \oplus (\prod_{\mathbf Z_{\geq 0}} F) $ has the compact group $K = \prod_{\mathbf Z_{\geq 0}} F$ as an open normal subgroup. The quotient $G^0/K$ is the discrete group $\bigoplus_{\mathbf Z_{< 0}} F$. The latter is not virtually abelian since $F$ is non-abelian simple. Hence it is not type~I by Thoma's theorem~\cite{Thoma}. Therefore $G^0$ is not type~I, hence $G$ is not type~I either, since the type~I property is inherited by open subgroups, see~\cite[Proposition~2.4]{Kallman73}. 
\end{proof}

\section{Conjectures and relation to $C^*$-simplicity}\label{sec:C*-simple}

In this section, we wish to relate Conjecture~\ref{conj:main} with another conjecture, pertaining to $C^*$-simplicity. Important recent results from~\cite{KK} ensure that if a discrete group $G$ has a (topologically) free action on its Furstenberg boundary, then its reduced $C^*$-algebra is simple. It is currently unknown whether the same result holds for non-discrete locally compact groups:

\begin{conj}\label{conj:C*-simple}
Let $G$ be a second countable locally compact group. If some point of the Furstenberg boundary of $G$ has a trivial stabilizer, then $C^*_r(G)$ is simple. 
\end{conj}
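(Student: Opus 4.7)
The plan is to adapt the Kalantar--Kennedy strategy that established this result for discrete $G$, building on the locally compact version of the Furstenberg boundary machinery. First I would invoke the construction of the Furstenberg boundary $\partial_F G$ for a second countable locally compact $G$ (due in this generality to Kawabe and to Kennedy--Raum--Scarparo): it is a minimal strongly proximal compact $G$-space that serves as the $G$-injective envelope of the trivial $G$-space. The hypothesis gives a point $x_0 \in \partial_F G$ with $G_{x_0} = \{e\}$, and the first task is to upgrade this pointwise statement to a generic one, namely that the set $\{x \in \partial_F G : G_x = \{e\}\}$ is a dense $G_\delta$. To see this, fix a countable cover of $G \setminus \{e\}$ by compact sets $K_n$, and consider $F_n := \{x \in \partial_F G : G_x \cap K_n \neq \emptyset\}$. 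Using \Cref{lem:stabilizer} together with the minimality of the $G$-action on $\partial_F G$, one shows that each $F_n$ is closed with empty interior (otherwise translates of a neighbourhood of $x_0$ would witness non-trivial stabilizers there, contradicting the defining property of $x_0$), so the conclusion follows from the Baire category theorem.

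Second, one needs the embedding $C^*_r(G) \hookrightarrow C(\partial_F G) \rtimes_r G$, which was established in the locally compact setting by Bearden--Kalantar and which uses that $\partial_F G$ is a $G$-boundary in Furstenberg's sense: the unique $G$-equivariant state on $C(\partial_F G)$ extends to a faithful conditional expectation compatible with the canonical one on $C^*_r(G)$. Given this embedding, simplicity of $C^*_r(G)$ reduces to simplicity of the reduced crossed product $C(\partial_F G) \rtimes_r G$. The latter would follow from two ingredients: minimality of the $G$-action on $\partial_F G$ ensures that $C(\partial_F G)$ has no non-trivial $G$-invariant ideal, while topological freeness -- the density of free points established in the previous paragraph -- should imply that every non-zero ideal in the crossed product has non-zero intersection with $C(\partial_F G)$ via the canonical conditional expectation.

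The main obstacle is precisely this last implication. In the discrete setting the classical Archbold--Spielberg and Kawamura--Tomiyama conditional expectation arguments yield that topological freeness plus minimality gives simplicity of the reduced crossed product, essentially because the finite-sum averaging used to produce an approximate identity for the ideal can be localized away from $e$. In the non-discrete case one must replace discrete averages by integration against Haar measure, and this introduces genuine difficulties: the averaging operator need no longer approximate the canonical conditional expectation in a pointwise manner on singular elements, and controlling the support in $G$ of elements of a putative ideal requires either exactness of the $G$-action on $\partial_F G$ or a Powers-type averaging property for $G$. A routine extension of the discrete argument is therefore unlikely to succeed; what is needed is either a proof that topologically free minimal actions of second countable locally compact groups on extremally disconnected spaces are automatically exact, or a boundary-theoretic refinement specific to $\partial_F G$ that bypasses the general crossed-product simplicity question. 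This is the step where I expect the real mathematical work to lie, and it is the reason \Cref{conj:C*-simple} remains open.
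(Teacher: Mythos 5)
The statement you were given is \Cref{conj:C*-simple}, which the paper presents explicitly as an \emph{open conjecture}: the authors write that it is ``currently unknown whether the same result holds for non-discrete locally compact groups,'' and they supply no proof of it anywhere in the text (Section~\ref{sec:C*-simple} only relates it to the stronger \Cref{conj:boundaryrep-C*-simple} and derives consequences \emph{assuming} its conclusion, as in \Cref{prop:conj=>conj}). So there is no argument in the paper to compare yours against, and your proposal --- as you yourself say in its final paragraph --- is a programme rather than a proof.

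On the substance, your first two steps are sound: the Baire-category upgrade from one free point to a dense $G_\delta$ of free points works exactly as you sketch (each $F_n$ is closed by compactness of $K_n$ and joint continuity, and minimality forces its interior to be empty, since any translate of an interior point back to $x_0$ would put $x_0$ itself in some $F_m$), and the canonical embedding $C^*_r(G)\hookrightarrow C(\partial_F G)\rtimes_r G$ is valid for any locally compact group. The genuine gap is the one you identify, but it is worth stating more sharply why it is not merely a matter of adapting an averaging argument: for non-discrete $G$ there is no canonical conditional expectation from $C(\partial_F G)\rtimes_r G$ onto $C(\partial_F G)$, and likewise no canonical tracial state on $C^*_r(G)$ (the Plancherel weight is unbounded). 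The Archbold--Spielberg and Kawamura--Tomiyama arguments detect ideals by compressing them onto the coefficient algebra via that expectation; in the non-discrete setting the object one would average towards simply does not exist, so ``topological freeness plus minimality implies simplicity of the reduced crossed product'' is itself unproven in this generality. Your sentence asserting that the unique $G$-equivariant state ``extends to a faithful conditional expectation compatible with the canonical one on $C^*_r(G)$'' therefore presupposes structure that is unavailable. Any successful attack must either construct a substitute for the conditional expectation (operator-valued weights, a Powers-type averaging adapted to $\partial_F G$) or bypass crossed products entirely; correctly locating this obstruction, as you do, is consistent with the statement remaining a conjecture, but it does not close it.
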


A few comments relevant to Conjecture~\ref{conj:C*-simple} may be found in the introduction of~\cite{CaLBMB}.
	
Known results in the discrete case make it natural to strengthen that conjecture as follows. Let $\mathcal A$ be the stabilizer URS associated with the $G$-action on its Furstenberg boundary. It is not difficult to see that $\mathcal A$  is the unique largest URS of $G$ consisting of relatively amenable subgroups. Moreover, that URS is reduced to the trivial subgroup if and only if some point of the Furstenberg boundary of $G$ has a trivial stabilizer. This is in turn equivalent to the condition that the Chabauty closure of the conjugacy class of each  relatively amenable closed subgroup of $G$ contains the trivial subgroup. 

\begin{conj}\label{conj:boundaryrep-C*-simple}
Let $G$ be a second countable  locally compact group and $\partial_F G$ be its Furstenberg boundary. For any continuity point $z \in \partial_F G$ of the stabilizer map $\partial_F G \to \Sub(G)$, the $C^*$-algebra $C^*_{\lambda_{G/G_z}}(G)$ is simple. 
\end{conj}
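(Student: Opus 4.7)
The plan is to adapt the Furstenberg boundary techniques of Kalantar--Kennedy, developed for discrete groups, to the general locally compact setting, replacing the regular representation $\lambda_G$ by the quasi-regular representation $\lambda_{G/G_z}$. By Fell's continuity theorem (Theorem~\ref{thm:Fell}), this representation depends, up to weak equivalence, only on the URS $\mathcal{A} = \mathcal{ST}_G(\partial_F G)$, so that the C*-algebra $C^*(\mathcal{A}) := C^*_{\lambda_{G/G_z}}(G)$ is a canonical object intrinsically associated with the Furstenberg boundary, and the conjecture becomes the assertion of its simplicity.

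The first key step is to construct a unital, $G$-equivariant completely positive embedding of $C(\partial_F G)$ into the multiplier algebra of $C^*(\mathcal{A})$, or equivalently into a suitable $G$-invariant operator subsystem of the bicommutant $\lambda_{G/G_z}(G)''$. For a continuity point $z$ of the stabilizer map, point evaluation at $z$ of functions on $\partial_F G$ is a $G_z$-invariant state, so one can try to lift it via the quasi-regular representation and integration against Haar measure on compact subgroups of $G_z$, producing operator-valued Poisson-type boundary maps landing in $\lambda_{G/G_z}(G)''$. The second key step is then to derive from this embedding a boundary intersection property: any non-zero closed two-sided ideal in $C^*(\mathcal{A})$, being invariant under the inner action of $G$ coming from $\lambda_{G/G_z}$, must restrict to a $G$-invariant ideal of $C(\partial_F G)$; by minimality of the $G$-action on $\partial_F G$, such an ideal is either zero or the whole algebra, and strong proximality would rule out the trivial alternative, forcing simplicity.

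The main obstacle is the construction of the equivariant embedding in the absence of discreteness, together with the corresponding Powers-type averaging argument needed to establish the intersection property. In the discrete setting, one averages finitely many group translates and exploits strong proximality to compress matrix coefficients; for general locally compact $G$, the natural substitute is integration against Haar measures on relatively amenable subgroups drawn from $\mathcal{A}$, and one must verify that the resulting averages converge to elements of the appropriate von Neumann algebra, using the continuity hypothesis on $z$ to secure the requisite compactness. A further subtlety is that, unlike in the discrete case, there is a priori no weak equivalence between $\lambda_{G/G_z}$ and $\lambda_G$, so the injective envelope technology must be re-engineered to operate directly on the quasi-regular representation rather than on the regular one. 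Overcoming these difficulties appears to require genuinely new input beyond the discrete Kalantar--Kennedy framework and constitutes, in our view, the crux of the conjecture.
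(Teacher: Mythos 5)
This statement is a \emph{conjecture} in the paper, not a theorem: the authors explicitly leave it open in the non-discrete case and record only that it is known for discrete groups (by results of Kawabe and of Kalantar--Scarparo). Your submission is accordingly not a proof but a strategy outline, and you concede as much in your final paragraph, where you write that overcoming the listed difficulties ``appears to require genuinely new input'' and ``constitutes the crux of the conjecture.'' A proposal that identifies its own central steps as unresolved cannot be accepted as a proof, however reasonable the plan may be.

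Beyond the global incompleteness, two specific points in your sketch gloss over exactly the hard part. First, the existence of a unital $G$-equivariant completely positive embedding of $C(\partial_F G)$ into the multiplier algebra of $C^*_{\lambda_{G/G_z}}(G)$ is asserted via ``one can try to lift'' the point evaluation at $z$; no such lift is constructed, and in the locally compact setting there is no known analogue of the Kalantar--Kennedy identification of $C(\partial_F G)$ inside an injective envelope attached to $\lambda_{G/G_z}$ rather than $\lambda_G$. Second, your intersection-property step claims that a non-zero closed two-sided ideal $I$ of $C^*(\mathcal A)$ ``must restrict to a $G$-invariant ideal of $C(\partial_F G)$'': but $I \cap C(\partial_F G)$ being non-zero is precisely the content of the intersection property, which in the discrete case is established by a Powers-type averaging argument using strong proximality; you do not supply the locally compact replacement for this averaging, and asserting that minimality and strong proximality ``would rule out the trivial alternative'' presupposes the non-triviality of the intersection rather than proving it. In short, the argument is circular at the one point where new mathematics is required, and the conjecture remains open.
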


In other words,  using the notation of  Section~\ref{sec:Fell}, this conjecture predicts that $C^*(\mathcal A)$ is simple, where $\mathcal A$ is the stabilizer URS for the $G$-action on its Furstenberg boundary.

Conjecture~\ref{conj:boundaryrep-C*-simple} is known to hold if $G$ is discrete, see~\cite[Corollary~8.5]{Kawabe} and~\cite[Theorem~5.8]{KalSca20}. Moreover, that  conjecture is formally stronger than Conjecture~\ref{conj:C*-simple}: indeed, the latter is the special case of the former when $G_z$ is trivial (equivalently $\mathcal A = \{\langle e \rangle \}$). In the setting of hyperbolic groups, Conjecture~\ref{conj:boundaryrep-C*-simple} would imply that for any boundary representation $\pi$ of a non-amenable hyperbolic group $G$, the $C^*$-algebra $C^*_\pi(G)$ is simple, while Theorem~\ref{thm:weak-equiv-bd-rep} ensures that this algebra does not depend on the choice of $\pi$. 

The following result highlights a link between Conjecture~\ref{conj:boundaryrep-C*-simple} and Conjecture~\ref{conj:main}. 

\begin{prop}\label{prop:conj=>conj}
Let $G$ be a second countable tdlc  group satisfying the conclusion of Conjecture~\ref{conj:boundaryrep-C*-simple}. If $G$ is a type~I group, then it has a cocompact amenable subgroup. 
\end{prop}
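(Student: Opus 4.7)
The plan is to fix a continuity point $z \in \partial_F G$ of the stabilizer map $\partial_F G \to \Sub(G)$, set $H := G_z$, and prove that $H$ is a cocompact amenable subgroup of $G$. The amenability of $H$ comes essentially for free: $H$ belongs to the stabilizer URS $\mathcal A$ of $\partial_F G$, which, as noted in the introduction, is the largest URS consisting of relatively amenable closed subgroups, and for second countable tdlc groups this class coincides with the amenable ones. The substantive task is therefore to show that $H$ is cocompact.

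Set $A := C^*_{\lambda_{G/H}}(G)$. Since $G$ is second countable and type~I, every unitary representation of $G$ is GCR (see \cite[Theorem~8.F.3]{BekkaHarpe}), so $A$ is GCR. The hypothesis, which is precisely the conclusion of Conjecture~\ref{conj:boundaryrep-C*-simple}, says that $A$ is simple. A simple GCR $C^*$-algebra is automatically CCR: the GCR axiom supplies a non-zero CCR two-sided ideal, which by simplicity must coincide with $A$ itself. A simple CCR $C^*$-algebra is elementary: a faithful irreducible representation embeds $A$ as an irreducible $C^*$-subalgebra of $\mathcal K(\mathcal H)$ for some Hilbert space $\mathcal H$, and the classical rigidity of compact operators forces $A \cong \mathcal K(\mathcal H)$. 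In particular every element of $A$ acts as a compact operator.

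The next step exploits the tdlc structure of $G$. Pick a compact open subgroup $V \leq G$ and a $G$-quasi-invariant Radon probability measure $\nu$ on $G/H$. By Lemmas~\ref{lem:conv->equiv-meas} and~\ref{lem:equiv-meas->equiv-koop}, replacing $\nu$ with the equivalent measure $\mu_V * \nu$, where $\mu_V$ denotes the normalized Haar measure on $V$, does not alter the unitary equivalence class of $\lambda_{G/H}$ but arranges that $\nu$ is $V$-invariant. With this choice, the operator $P_V := \lambda_{G/H}(\mathbf 1_V)/\mu_G(V)$ is the orthogonal projection onto the space of $V$-invariant vectors in $L^2(G/H, \nu)$. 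Since $P_V \in A$, it must be compact, hence of finite rank. Because $\nu$ is $V$-invariant and $V$ is compact, the space of invariants decomposes along the $V$-orbits on $G/H$, each contributing one dimension, so its dimension equals $|V \backslash G / H|$. Finiteness of this double-coset space provides finitely many $g_1, \ldots, g_n \in G$ with $G = V g_1 H \cup \cdots \cup V g_n H$, which is precisely the cocompactness of $H$.

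The only genuinely delicate ingredient is the upgrade from relative amenability of $H$ to full amenability, for which the second-countable tdlc setting is essential; the $C^*$-algebraic passage from ``simple GCR'' to ``elementary'' is classical, and the final counting of $V$-invariants via double cosets is the same smoothing trick already exploited in Proposition~\ref{prop:CCR-coco}.
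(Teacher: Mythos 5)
There is a genuine gap at the step ``Since $P_V \in A$, it must be compact, hence of finite rank.'' What your $C^*$-algebraic reduction actually shows is that $A = C^*_{\lambda_{G/H}}(G)$ is \emph{abstractly} isomorphic to $\mathcal K(\mathcal H)$ for some Hilbert space $\mathcal H$ (simple $+$ GCR $\Rightarrow$ CCR $\Rightarrow$ elementary is fine). But being elementary says nothing about how $A$ sits inside $B\bigl(L^2(G/H,\nu)\bigr)$: every nondegenerate representation of $\mathcal K(\mathcal H)$ is a multiple of the identity representation, so $L^2(G/H,\nu)\cong \mathcal H\otimes \ell^2(I)$ with $A$ acting as $\mathcal K(\mathcal H)\otimes 1$, and if the multiplicity $|I|$ is infinite then \emph{no} nonzero element of $A$ is a compact operator on $L^2(G/H,\nu)$. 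In that case $P_V$ corresponds to $p\otimes 1$ with $p$ a finite-rank projection, its rank on $L^2(G/H,\nu)$ is $\operatorname{rank}(p)\cdot|I|=\infty$, and the double-coset count $|V\backslash G/H|=\operatorname{rank}(P_V)$ gives you nothing. You have supplied no argument ruling out infinite multiplicity of $\lambda_{G/H}$ over its unique irreducible constituent, and I do not see one. This is precisely why the paper's proof does not conclude cocompactness by a rank count: it only extracts the representation-independent statement that $\kappa\bigl(C_c(U\backslash G/U)\bigr)$ is \emph{finite-dimensional as an algebra} (via a faithful irreducible representation $\pi$ with $\pi(\mathbf 1_U)$ of finite rank, using Lemma~\ref{lem:double-coset}), and then feeds this into the genuinely dynamical Proposition~\ref{prop:transitive-boundary}, which uses strong proximality of the Furstenberg boundary to deduce transitivity, and finally Proposition~\ref{prop:coco-amen}. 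The dynamical step is not a detour; it is what bridges the gap between an abstract finite-dimensionality statement and an actual finiteness/cocompactness conclusion.

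A secondary problem is your opening claim that the amenability of $H=G_z$ ``comes essentially for free'' because relatively amenable closed subgroups of second countable tdlc groups are amenable. That equivalence is not known in this generality; it is the open relative amenability problem, and the paper never uses it. Instead, amenability of the cocompact subgroup is deduced \emph{a posteriori}: a cocompact relatively amenable closed subgroup is amenable (see the proof of Proposition~\ref{prop:coco-amen}, citing~\cite{Monod_Gelfand}). So the order of your argument matters: you must establish cocompactness first and only then upgrade relative amenability to amenability.
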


The proof will be presented at the end of this section. The argument relies on the following two propositions of independent interest. 

\begin{prop}\label{prop:coco-amen}
For every  locally compact group $G$, the following assertions are equivalent. 
\begin{enumerate}[label=(\roman*)]
\item $G$ has a cocompact amenable subgroup. 

\item $G$ acts transitively on its Furstenberg boundary.
\end{enumerate}
\end{prop}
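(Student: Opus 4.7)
The plan is to prove the two implications separately, using minimality, strong proximality, and the universal property of $\partial_F G$.

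For (i)$\Rightarrow$(ii), I would start with a cocompact amenable closed subgroup $P \leq G$. Amenability of $P$ provides a $P$-fixed probability measure $\mu$ on the compact $G$-space $\partial_F G$, so the assignment $gP \mapsto g\mu$ descends to a continuous $G$-equivariant map $G/P \to \operatorname{Prob}(\partial_F G)$. Cocompactness of $P$ makes its image $G\cdot\mu$ compact, hence closed. Strong proximality of the $G$-action on $\partial_F G$ forces $\overline{G\cdot\mu}=G\cdot\mu$ to contain a Dirac mass, so $\mu$ itself is of the form $\delta_y$ for some point $y \in \partial_F G$ fixed by $P$. The orbit map $gP \mapsto gy$ then gives a continuous $G$-equivariant map $G/P \to \partial_F G$ whose image is compact and $G$-invariant; by minimality of $\partial_F G$ this image must be all of $\partial_F G$, so the action is transitive.

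For (ii)$\Rightarrow$(i), fix $x \in \partial_F G$ and set $P = G_x$. The orbit map $G/P \to \partial_F G$ is a continuous $G$-equivariant bijection, and standard properties of homogeneous spaces of locally compact groups ensure that it is in fact a homeomorphism; in particular $P$ is a closed cocompact subgroup. It remains to show that $P$ is amenable. For this, I would invoke the universal property of the Furstenberg boundary: for any compact $G$-space $Y$ there exists a continuous $G$-equivariant map $\phi \colon \partial_F G \to \operatorname{Prob}(Y)$, and then $\phi(x)$ is a $P$-invariant probability measure on $Y$. This shows that $P$ is relatively amenable in $G$. One then upgrades to amenability using the fact, known in the (second countable) locally compact setting, that relative amenability and amenability coincide for closed subgroups.

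The main obstacle is the final step of the second implication: converting relative amenability of the stabilizer $P$ into ordinary amenability. The construction coming from the universal property of $\partial_F G$ delivers relative amenability essentially for free, but the identification with amenability is a nontrivial structural result about closed subgroups of locally compact groups, available under mild countability assumptions that are met in all intended applications of the proposition elsewhere in the paper. The remaining steps are routine consequences of the defining properties of $\partial_F G$, together with the familiar homogeneous-space arguments for locally compact groups.
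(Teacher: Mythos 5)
Your proof of (i)$\Rightarrow$(ii) is correct and essentially the paper's argument: the paper also produces a compact orbit in $\operatorname{Prob}(\partial_F G)$ and concludes via the converse to the Krein--Milman theorem that this orbit contains all Dirac masses, whereas you invoke strong proximality directly to find a Dirac mass in the (closed) orbit; these are two phrasings of the same mechanism and both are fine.

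The direction (ii)$\Rightarrow$(i) contains a genuine gap at the step you yourself identify as the main obstacle. The assertion that relative amenability and amenability coincide for closed subgroups of (second countable) locally compact groups is \emph{not} a known theorem under ``mild countability assumptions'': whether every locally compact group belongs to the class $\mathscr X$ of groups in which relatively amenable closed subgroups are amenable is an open problem (raised by Caprace and Monod in their paper on relative amenability), and second countability is not among the known sufficient conditions. Moreover the proposition is stated for arbitrary locally compact groups, with no countability hypothesis, so even a countable version would not suffice. The repair is immediate and is exactly what the paper does: at this point your subgroup $P=G_x$ is \emph{cocompact}, and for cocompact closed subgroups the implication ``relatively amenable $\Rightarrow$ amenable'' is an unconditional theorem, namely the Proposition in~\cite{Monod_Gelfand}. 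You should cite that statement (or reproduce its induction argument, which uses cocompactness essentially) rather than the general equivalence. With that substitution your argument matches the paper's.
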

\begin{proof}
Suppose that the topological group $G$ has a homogeneous Furstenberg boundary $\partial_F G$ and write $\partial_F G \cong G/P$ for a subgroup $P<G$ which is necessarily cocompact. The universal property of $\partial_F G$ shows that $P$ is relatively amenable in $G$, but by cocompactness it is actually amenable. This latter point is proved e.g.\ in the Proposition in~\cite{Monod_Gelfand}.

Conversely, suppose that $G$ contains a cocompact amenable subgroup $P$. Since $P$ must fix a point in the space $\Delta G$ of probability measures on $\partial_F G$, it follows that $G$ has a compact orbit in $\Delta G$. By the converse to Krein--Milman's theorem~\cite[V.8.5]{Dunford-Schwartz_I}, this orbit contains all extremal points of $\Delta G$ and hence $\partial_F G$ is a single orbit (see~\cite{Glasner} for details).
\end{proof}

\begin{prop}\label{prop:transitive-boundary}
Let $G$ be a $\sigma$-compact locally compact group, let $Z$ be a $G$-boundary, let $\nu$ be a $G$-quasi-invariant Radon probability measure on $Z$ such that $L^2(Z, \nu)$ is separable. Let $\kappa$ denote the associated Koopman representation, and let $U \leq G$ be a compact open subgroup that fixes the measure $\nu$. If the image of the convolution algebra $C_c(U\backslash G/U)$ under $\kappa$ is finite-dimensional, then the $G$-action on $Z$ is transitive. 
\end{prop}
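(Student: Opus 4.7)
The strategy is to deduce from the finite-dimensionality of $A := \kappa(C_c(U \backslash G/U))$ that $\nu$ is concentrated on only finitely many $U$-orbits of $Z$, and then exploit minimality of the boundary action to promote this to transitivity.

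First I would set up the structural picture. Let $p_U := \kappa(\mathbf{1}_U)/\mu_G(U)$ denote the orthogonal projection of $L^2(Z,\nu)$ onto $H := L^2(Z,\nu)^U$. Since $\nu$ is $U$-invariant we have $\mathbf{1}_Z \in H$, so $H \neq 0$. A direct computation shows that $A = p_U \, \kappa(C_c(G)) \, p_U$ as operators on $L^2(Z,\nu)$: elements of $A$ vanish on $H^\perp$ and take values in $H$. Via the quotient map $\pi \colon Z \to U \backslash Z$ and the pushforward $\bar\nu := \pi_* \nu$, one has a canonical isometric identification $H \cong L^2(U \backslash Z, \bar\nu)$; under this identification, $\dim H < \infty$ if and only if $\bar\nu$ has finite support, which is equivalent to $\nu$ being concentrated on a finite union of $U$-orbits.

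The core technical step is to show that $\dim H < \infty$. For any $v \in H$, the closed $A$-submodule $\overline{A v} \subseteq H$ has dimension at most $\dim A$. Taking $v = \mathbf{1}_Z$, a short computation yields $\overline{A \mathbf{1}_Z} = p_U H_0$, where $H_0 := \overline{\kappa(G)\mathbf{1}_Z}$ is the cyclic $\kappa$-subspace generated by $\mathbf{1}_Z$. Finite-dimensionality of $H$ would then follow from the assertion that $H_0 = L^2(Z,\nu)$, i.e.\ that $\mathbf{1}_Z$ is a cyclic vector for the Koopman representation, because then $H = p_U L^2(Z,\nu) = p_U H_0 = \overline{A \mathbf{1}_Z}$. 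This is where the boundary hypothesis should enter: strong proximality and minimality ensure that $\overline{G \nu}$ in the weak-$*$ topology is the full simplex of probability measures on $Z$, and one would combine this with ergodicity of $\nu$ (a consequence of minimality) to rule out any proper closed $\kappa(G)$-invariant subspace orthogonal to $\mathbf{1}_Z$.

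Once $\dim H < \infty$ is established, the dynamical conclusion is routine. The measure $\bar\nu$ is then supported on finitely many points, so $\nu$ is carried by a finite family of compact $U$-orbits $Uz_1, \ldots, Uz_n$. Since $\mathrm{supp}\,\nu$ is a closed nonempty $G$-invariant subset of $Z$, minimality forces $\mathrm{supp}\,\nu = Z$, and after deduplication one obtains $Z = \bigsqcup_{i=1}^n Uz_i$. Each $Uz_i$ is therefore clopen in $Z$ (closed by compactness, open as the complement of a finite union of closed sets). Minimality provides a dense orbit $G z_1$, and density together with clopenness forces $Gz_1 \cap Uz_i \neq \emptyset$ for each $i$; hence every $z_i$ lies in $Gz_1$ and $Z = \bigcup_i Uz_i \subseteq Gz_1$, which is exactly transitivity.

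The main obstacle is the cyclicity of $\mathbf{1}_Z$ for $\kappa$ on $L^2(Z,\nu)$. A direct orthogonality argument is hampered by the fact that the matrix coefficient $\langle \kappa(g) \mathbf{1}_Z, \mathbf{1}_Z \rangle = \int \sqrt{dg\nu/d\nu} \, d\nu$ tends to $0$ along any net $(g_i)$ with $g_i \nu \to \delta_z$, so that $\kappa(g_i) \mathbf{1}_Z \rightharpoonup 0$ weakly in $L^2$: the boundary dynamics provide no positive-definite witness for cyclicity. Making this rigorous will likely require a finer analysis of the commutant $\kappa(G)'$ inside $B(L^2(Z,\nu))$---for instance using a Mackey-type description of Koopman commutants together with the ergodicity/strong proximality structure---or else an alternative route to finite-dimensionality of $H$, such as bounding the number of $A$-cyclic summands in the decomposition of $L^2(Z,\nu)$ directly from the boundary property.
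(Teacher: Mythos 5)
Your reduction of the statement to the cyclicity of $\mathbf 1_Z$ for $\kappa$ is where the argument breaks down, and you say so yourself: the "main obstacle" you flag at the end is not a technical loose end but the entire content of the proof in your approach. Nothing in the hypotheses gives you that $\mathbf 1_Z$ generates $L^2(Z,\nu)$ under $\kappa(G)$. The proposition is stated for an \emph{arbitrary} quasi-invariant Radon probability measure $\nu$ on an arbitrary $G$-boundary of an arbitrary $\sigma$-compact locally compact group, precisely the regime in which no irreducibility (or cyclicity) of Koopman representations is known --- indeed, elsewhere in this paper it is emphasized that boundary representations need not be irreducible a priori, and establishing irreducibility even for Patterson--Sullivan measures is the hard content of Garncarek's work, available here only for discrete groups. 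Your own observation that $\kappa(g_i)\mathbf 1_Z\rightharpoonup 0$ along nets with $g_i\nu\to\delta_z$ shows that strong proximality actively works \emph{against} any matrix-coefficient witness of cyclicity; and for measures of the form $\nu=\tfrac12(\nu_1+\nu_2)$ with $\nu_1\perp\nu_2$ but $\kappa_{\nu_1}\cong\kappa_{\nu_2}$ (which Garncarek's equivalence criterion permits), cyclicity of $\mathbf 1_Z$ becomes a delicate question about how the intertwiner moves the constant function. So the proposal, as written, does not prove the proposition.

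The framing around that gap is sound (the identification $\kappa(C_c(U\backslash G/U))=p_U\,\kappa(C_c(G))\,p_U$, the identification of $L^2(Z,\nu)^U$ with $L^2(U\backslash Z,\bar\nu)$, and the endgame deducing transitivity from "$\nu$ concentrated on finitely many clopen $U$-orbits" via minimality are all correct), but the paper takes a route that avoids cyclicity altogether. It argues by contradiction with two points $x,y$ in distinct $G$-orbits: finite-dimensionality supplies finitely many double cosets $Ug_0U,\dots,Ug_kU$ whose images span $\kappa(C_c(U\backslash G/U))$, compactness of $U$ yields neighbourhoods $\alpha\ni x$, $\beta\ni y$ with $\beta\cap Ug_iU\alpha=\varnothing$ for all $i$, hence $\langle\kappa(\mathbf 1_{UtU})\mathbf 1_\alpha,\mathbf 1_\beta\rangle=0$ for \emph{every} $t\in G$; strong proximality then produces $t$ with $\nu(t\alpha')>1-\nu(\beta')$, and a computation with $p_U\kappa(t)p_U$ (Lemma~\ref{lem:double-coset}) forces $\nu(t\alpha'\cap\beta')=0$, a contradiction. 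That argument uses only test vectors $\mathbf 1_\alpha,\mathbf 1_\beta$ and never needs to control the whole $U$-fixed subspace, which is exactly what lets it work without any irreducibility or cyclicity input. If you want to salvage your route, you would need either a proof of cyclicity of $\mathbf 1_Z$ in this generality (which would be a significant result in itself) or a different mechanism for bounding $\dim L^2(Z,\nu)^U$.
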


We need the following elementary fact.  

\begin{lem}\label{lem:double-coset}
Let $G$ be a locally compact group with left Haar measure $\mu$, let $U \leq G$ be a compact open subgroup and  $\pi$ be a unitary representation of $G$. For each $g \in G$, the characteristic function $\mathbf 1_{UgU}$ belongs to $C_c(G)$ and we have
$$\pi(\one_{UgU}) = \mu(UgU) p_U \pi(g) p_U,$$
where $p_U $ is the orthogonal projection on the subspace of $U$-invariant vectors in $\mathcal H_\pi$. 
\end{lem}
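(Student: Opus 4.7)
My plan is a direct Haar-measure computation, with no serious obstacle. First, I would note that since $U$ is compact and open in $G$, the double coset $UgU$ decomposes as a finite disjoint union $\bigsqcup_{i=1}^{k} u_i g U$, where $u_1,\dots,u_k$ are coset representatives of $U/H$ for $H := U \cap gUg^{-1}$. In particular $UgU$ is compact and open, so $\one_{UgU} \in C_c(G)$, and by left invariance of $\mu$ we have $\mu(UgU) = k\,\mu(U) = \mu(U)^2/\mu(H)$.

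The key auxiliary identity I would establish next is the classical averaging formula $\pi(\one_U) = \mu(U)\,p_U$. This comes from the fact that $\int_U \pi(u)\xi\,d\mu(u)$ is $U$-invariant for any $\xi$ (by left invariance of $\mu|_U$), and equals $\mu(U)\xi$ whenever $\xi$ is itself $U$-invariant. Consequently,
$$\pi(\one_U)\,\pi(g)\,\pi(\one_U) = \mu(U)^2\, p_U \pi(g) p_U.$$

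Then I would evaluate the same product a second way, directly from the definition of $\pi(f)$ as an integral operator. Using Fubini and the substitution $v = u_1 g u_2$ (whose Jacobian is trivial by left invariance of $\mu$ in the variable $u_2$) gives
$$\pi(\one_U)\,\pi(g)\,\pi(\one_U) \;=\; \int_U \int_{u_1 gU} \pi(v)\,d\mu(v)\,d\mu(u_1).$$
The level set $\{u_1 \in U : u_1 gU = u_i gU\}$ is precisely the coset $u_i H$, of measure $\mu(H)$, so partitioning the outer integral according to which $u_i gU$ is hit produces $\mu(H)\,\pi(\one_{UgU})$. Comparing the two expressions and using $\mu(UgU) = \mu(U)^2/\mu(H)$ yields the claimed formula $\pi(\one_{UgU}) = \mu(UgU)\, p_U \pi(g) p_U$. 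The only subtlety is careful bookkeeping with the $U/H$-cosets; there is no genuine obstacle.
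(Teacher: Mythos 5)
Your proof is correct and follows essentially the same route as the paper's: both rest on the averaging identity $\pi(\one_U)=\mu(U)\,p_U$ together with the decomposition of $UgU$ into $[U:U\cap gUg^{-1}]$ disjoint left cosets $u_i gU$. The paper merely packages the identical computation as a pointwise verification of the convolution identity $\one_{UgU}=\mu(UgU)\cdot\one_U*\one_{gU}$ after normalizing $\mu(U)=1$, whereas you expand the operator product $\pi(\one_U)\pi(g)\pi(\one_U)$ as a double integral and partition the outer variable by cosets of $U\cap gUg^{-1}$; the bookkeeping is equivalent.
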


\begin{proof}
The identity is unaffected by renormalizing $\mu$, so we can assume $\mu(U)=1$. The integral expression $p_U =  \pi(\one_{U})$ for $p_U$ is well-known and easy to see. It follows that the right hand side of the identity to establish is $\mu(UgU) \pi(\one_U * \one_{g U})$ and it suffices to check the convolution equation
$
\one_{UgU} = \mu(UgU) \cdot \one_U * \one_{g U}.
$
By definition, $(\one_U * \one_{g U})(z) = \int_U \one_{g U} (y^{-1} z) \, d\mu(y) = \mu(U\cap z U g^{-1})$. Since $U$ is compact open, it is partitioned into $m\geq 1$ cosets $u_i (U \cap g U g^{-1})$ of measure $1/m$, with $u_i\in U$. Writing $U = \bigsqcup_{i=1}^m U \cap u_i g U g^{-1}$, it follows that $\mu(U\cap z U g^{-1})$ is~$1/m$ or~$0$ according to whether $z\in UgU$ or not. This implies
\[
\one_{UgU} = m \cdot \one_U * \one_{g U}.\eqno{(*)}
\]
It remains to check $m=\mu(UgU)$, which can be derived from integrating both sides of~$(*)$. 
\end{proof}

\begin{proof}[Proof of Proposition~\ref{prop:transitive-boundary}]
Suppose that $\kappa(C_c(U\backslash G/U))$ is finite-dimensional. Then there exist elements $g_0, g_1, \dots, g_k \in G$ such that $\kappa(C_c(U\backslash G/U))$ is spanned by the images of the characteristic functions $\mathbf 1_{Ug_0 U}$, \dots, $\mathbf 1_{Ug_k U}$. Suppose for a contradiction that there exist points $x, y \in Z$ in distinct $G$-orbits. Since $U$ is compact, it follows that there exists a compact neighbourhood $\alpha$ of $x$ such that $y \not \in \bigcup_{i=0}^k Ug_i U \alpha$. Since the set $\bigcup_{i=0}^k Ug_i U \alpha$ is closed, there exists also a neighbourhood $\beta$ of $y$ with $\beta \cap (\bigcup_{i=0}^k Ug_i U \alpha) = \varnothing$. In particular $\beta \cap  Ug_i U \alpha = \varnothing$ for all $i$, so that 
$$\langle \kappa(\mathbf 1_{Ug_i U})\mathbf 1_\alpha, \mathbf 1_\beta \rangle = 0$$
for all $i$, in the space $L^2(Z, \nu)$. 

Choose neighbourhoods  $\alpha' \subsetneq \alpha$ of $x$ and $\beta' \subsetneq \beta$ of $y$ such that  $U' := \{g \in U \mid g(\alpha') \subseteq \alpha, \  g(\beta') \subseteq \beta\}$ is an identity neighbourhood in $G$. Upon replacing $U'$ by $U' \cap (U')^{-1}$ we shall assume that $U'$ is symmetric. 

Since $Z$ is a $G$-boundary, there exists $t \in G$ such that $\nu(t\alpha') > 1-\nu(\beta')$. By the definition of $g_0, \dots, g_k$, there exist $\lambda_0, \dots, \lambda_k \in \mathbf C$ such that $\kappa(\mathbf 1_{U t U}) = \sum_{i=0}^k \lambda_i \kappa(\mathbf 1_{Ug_i U})$. By the above, it follows that 
$\langle \kappa(\mathbf 1_{U t U})\mathbf 1_\alpha, \mathbf 1_\beta \rangle = 0.$
Using Lemma~\ref{lem:double-coset}, we deduce that 
\begin{align*}
0 & = \langle p_U \kappa(t) p_U\mathbf 1_\alpha, \mathbf 1_\beta \rangle\\
& = \langle \kappa(t) p_U\mathbf 1_\alpha, p_U \mathbf 1_\beta \rangle\\
& = \int_Z (p_U \mathbf 1_\alpha)(t^{-1}z) (p_U \mathbf 1_\beta)(z) \sqrt{\frac{dt\nu}{d\nu}(z)} d\nu(z).
\end{align*}
Therefore the non-negative map $z \mapsto p_U \mathbf 1_\alpha(t^{-1} z) p_U \mathbf 1_\beta(z) \sqrt{\frac{dt\nu}{d\nu}(z)} $ vanishes $\nu$-almost everywhere. 
Since $t\nu$ is in the same measure class as $\nu$, the Radon-Nikodym derivative $d t\nu/d\nu$ is non-zero $\nu$-a.e. Therefore, the map  $z \mapsto p_U \mathbf 1_\alpha(t^{-1} z) p_U \mathbf 1_\beta(z)$ also vanishes $\nu$-a.e.
Moreover, by Lemma~\ref{lem:double-coset} we have 
\begin{align*}
p_U \mathbf 1_\beta(z) & = \frac 1 {\mu(U)} \int_U \mathbf 1_\beta(u^{-1} z) d\mu(u) \\
& \geq  \frac 1 {\mu(U)} \int_{U'} \mathbf 1_\beta(u^{-1} z) d\mu(u)\\
& \geq  \frac 1 {\mu(U)} \int_{U'} \mathbf 1_{\beta'}( z) d\mu(u)\\
& = \frac {\mu(U')} {\mu(U)}\mathbf 1_{\beta'}( z).
\end{align*}
Similarly, we have $p_U \mathbf 1_\alpha(z) \geq  \frac {\mu(U')} {\mu(U)}\mathbf 1_{\alpha'}( z)$.
 It follows that 
\begin{align*}
0 & =\int_Z (p_U \mathbf 1_\alpha)(t^{-1}z) (p_U \mathbf 1_\beta)(z) d\nu(z)\\
& \geq  \frac {\mu(U')^2} {\mu(U)^2} \int_Z  \mathbf 1_{\alpha'}(t^{-1}z) \mathbf 1_{\beta'}(z) d\nu(z)\\
& =  \frac {\mu(U')^2} {\mu(U)^2} \nu(t \alpha' \cap \beta').
\end{align*}
Therefore $\nu(t \alpha' \cap \beta') = 0$ so that $\nu(t \alpha') \leq 1 - \nu(\beta')$. This contradicts the definition of $t$. 
\end{proof}

\begin{proof}[Proof of Proposition~\ref{prop:conj=>conj}]
Let $G$ be a second countable tdlc group of type~I and $Z$ be its  Furstenberg boundary. Let $z \in Z$ be a continuity point of the stabilizer map $Z \to \Sub(G)$, so that $G_z$ belongs to the URS  $\mathcal A$. Let $\nu_0$ be a $G$-quasi-invariant Radon probability measure on $G/G_z$, and $\nu$ denote the push-forward of $\nu_0$ along the orbit map $G/G_z \to Z$. Then $L^2(Z, \nu)$ is separable since $L^2(G/G_z, \nu_0)$ is, and by construction the Koopman representation $\kappa$ on $L^2(Z, \nu)$ is unitarily equivalent to the quasi-regular representation $\lambda_{G/G_z}$. Let $U \leq G$ be a compact open subgroup. By Lemma~\ref{lem:conv->equiv-meas}, there exists a $U$-invariant probability measure $\nu'$ in the measure-class of $\nu$. In view of Lemma~\ref{lem:equiv-meas->equiv-koop}, we may replace $\nu$ by $\nu'$ and assume henceforth that  $\nu$  is $U$-invariant. 

By construction, we have $C^*_\kappa(G) \cong C^*_{\lambda_{G/G_z}}(G)  = C^*(\mathcal A)$. It follows by hypothesis that $C^*_\kappa(G)$ is simple. Since moreover $G$ is a type~I group by assumption, we infer that $\kappa$ is CCR. Let $\pi$ be an irreducible representation of $C^*_\kappa(G)$ (considered also as a unitary representation of $G$).
Then since the operator $\pi(\mathbf 1_U)$ is compact, $\pi\big(C_c(U \backslash G / U)\big)$ is finite-dimensional. Therefore $\kappa\big(C_c(U \backslash G / U)\big)$ is finite-dimensional since $C^*_\pi(G) = C^*_\kappa(G)$.
This ensures that all the hypotheses of Proposition~\ref{prop:transitive-boundary} are satisfied. Hence $G$ acts transitively on $Z$, so that $G$ has a cocompact amenable subgroup by Proposition~\ref{prop:coco-amen}. 
\end{proof}

\begin{rmk}
Proposition~\ref{prop:conj=>conj} does not require any hypothesis of irreducibility of boundary representations. Hence, for a hyperbolic group $G$ satisfying the conclusion of Conjecture~\ref{conj:boundaryrep-C*-simple}, it provides a conceptually different approach to the existence of a cocompact amenable subgroup, in comparison with the proof Theorem~\ref{thm:main}. 
\end{rmk}

We conclude this discussion by showing that, in order to establish Conjecture~\ref{conj:main}, it suffices to prove it in the special case of unimodular tdlc groups. We first record the following observation. 

\begin{prop}\label{prop:open-normal}
Let $Q_0\lhd Q$ be an open normal subgroup of a locally compact group $Q$ and assume that $Q/Q_0$ is amenable. If $Q_0$ admits a cocompact amenable subgroup, then so does $Q$.
\end{prop}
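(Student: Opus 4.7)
The plan is to deduce the statement from Proposition~\ref{prop:coco-amen}, which characterises the existence of a cocompact amenable subgroup of a locally compact group $G$ as the transitivity of the $G$-action on its Furstenberg boundary $\partial_F G$. The hypothesis is thus equivalent to the assertion that $Q_0$ acts transitively on $\partial_F Q_0$, and what I need to prove is that $Q$ acts transitively on $\partial_F Q$; another application of Proposition~\ref{prop:coco-amen} will then deliver the desired cocompact amenable subgroup of $Q$.

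The key intermediate goal is to show that $\partial_F Q$, regarded as a $Q_0$-space, is itself a $Q_0$-boundary, i.e.\ minimal and strongly proximal under the $Q_0$-action. Once this is in hand, the universal property of the $Q_0$-Furstenberg boundary yields a $Q_0$-equivariant continuous surjection $\partial_F Q_0 \twoheadrightarrow \partial_F Q$; transitivity of $Q_0$ on the source transports to transitivity on the target, and $Q$ acts transitively on $\partial_F Q$ a fortiori. Both $Q_0$-minimality and $Q_0$-strong proximality rely decisively on the co-amenability of $Q_0$ in $Q$, which is exactly the amenability hypothesis on the quotient $Q/Q_0$ (together with the fact that $Q/Q_0$ is discrete, since $Q_0$ is open).

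I expect the main obstacle to be the proof of $Q_0$-minimality. The plan is: assume for contradiction that $F \subsetneq \partial_F Q$ is a proper $Q_0$-minimal closed subset. Since $Q_0$ is normal, $Q$ permutes the collection of $Q_0$-minimal subsets, and distinct such subsets are disjoint. Using $Q$-strong proximality applied to well-chosen probability measures, one shows that every point of $\partial_F Q$ lies in some $Q_0$-minimal subset, so that $\partial_F Q$ decomposes as a disjoint union of $Q_0$-minimal subsets indexed by a $(Q/Q_0)$-set $\mathcal{F}$. The amenability of $Q/Q_0$ then produces a $Q$-invariant mean on $\mathcal{F}$, which pulls back to a $Q$-invariant probability measure on $\partial_F Q$. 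Since the $Q$-action is strongly proximal and minimal, this forces $\partial_F Q$ to be a single point, in which case $Q$ is amenable and the proposition is trivially true; otherwise $\mathcal{F}$ is a singleton and $F = \partial_F Q$, contradicting properness. The $Q_0$-strong proximality step is handled by a similar Følner-averaging argument over $Q/Q_0$, which lifts a Dirac-approximating $Q$-net (furnished by $Q$-strong proximality of $\partial_F Q$) to an approximating net contained in $Q_0$.
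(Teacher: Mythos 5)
Your overall architecture---reduce to transitivity on Furstenberg boundaries via Proposition~\ref{prop:coco-amen}, then produce a $Q_0$-equivariant surjection $\partial_F Q_0 \twoheadrightarrow \partial_F Q$---would work \emph{if} you could establish your key lemma that $\partial_F Q$ is a $Q_0$-boundary. That lemma is true (it is a known fact for closed normal co-amenable subgroups), but your proposed proof of it has genuine gaps. First, the claim that every point of $\partial_F Q$ lies in some $Q_0$-minimal subset is unjustified: the union of the $Q_0$-minimal sets is $Q$-invariant and hence dense by $Q$-minimality, but density is not equality, and I do not see how ``$Q$-strong proximality applied to well-chosen probability measures'' produces a minimal set through a prescribed point. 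Second, and more seriously, a $Q$-invariant mean on the index set $\mathcal F$ does not ``pull back'' to a $Q$-invariant probability measure on $\partial_F Q$: to integrate against the mean you must first choose a probability measure $\nu_F$ on each piece $F$, and for the resulting state on $C(\partial_F Q)$ to be $Q$-invariant this choice must be $Q$-equivariant, so each $\nu_F$ must be invariant under $\mathrm{Stab}_Q(F)\supseteq Q_0$. Such measures typically do not exist; indeed their existence would already yield a $Q_0$-invariant measure on $\partial_F Q$, and the set of $Q_0$-invariant measures is a compact convex $Q$-set on which $Q$ acts through the amenable group $Q/Q_0$, so it would contain a $Q$-fixed measure and force $\partial_F Q$ to be a point. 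The strong proximality step is likewise only gestured at: writing $g_n=h_ns_n$ with $h_n\in Q_0$ and $s_n$ coset representatives leaves the measures $s_n\nu$ uncontrolled, and F\o{}lner averaging over $Q/Q_0$ does not obviously repair this.

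The paper sidesteps all of this by going in the opposite direction and never considering $\partial_F Q$. It writes $\partial_F Q_0=Q_0/P_0$ (transitivity from Proposition~\ref{prop:coco-amen}), extends the $Q_0$-action on $\partial_F Q_0$ to a $Q$-action by homeomorphisms using the uniqueness of the Furstenberg boundary under the conjugation automorphisms (Glasner, Proposition~II.4.3); this extension is continuous because $Q_0$ is open and transitive because $Q_0$ is already transitive. The stabilizer $P$ of a point is then cocompact, satisfies $P\cap Q_0=P_0$, and is amenable as an extension of $P_0$ by a subgroup of the discrete amenable group $Q/Q_0$. Extending an action upward along an open normal subgroup is the cheap direction; restricting the boundary property downward to a co-amenable normal subgroup, which your route requires, is a genuine theorem that your sketch does not yet prove. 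To keep your route you should either cite that theorem or prove it correctly, e.g.\ via the characterization of boundaries through irreducibility of the affine flow $P(\partial_F Q)$.
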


\begin{proof}
According to (the proof of) \Cref{prop:coco-amen}, the Furstenberg boundary of $Q_0$ is of the form $Q_0/P_0$ for a cocompact amenable subgroup $P_0<Q_0$. Moreover, the action of $Q_0$ on its Furstenberg boundary can be extended to a  $Q$-action by homeomorphisms, see Proposition~II.4.3 and page~32 in~\cite{Glasner}. This $Q$-action is continuous because $Q_0$ is open in $Q$. It is moreover transitive since already $Q_0$ is transitive. Therefore, $Q$ admits a cocompact subgroup $P<Q$ with $P\cap Q_0 = P_0$. This subgroup is amenable since it is an extension of $P_0$ by a subgroup of the amenable discrete group $Q/Q_0$.
\end{proof}

\begin{prop}\label{prop:tdlc-reduction}
If every (second countable)  unimodular tdlc group satisfies the conclusion of \Cref{conj:main}, then every  (second countable) locally compact group does as well. 
\end{prop}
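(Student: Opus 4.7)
My plan is to perform two reductions in sequence: first from tdlc to unimodular tdlc (by quotienting by the kernel of the modular function), then from general lcsc to tdlc (by quotienting by the identity component). I begin with the former, which is cleaner. Let $G$ be a tdlc lcsc group of type~I and fix any compact open subgroup $U\leq G$. For every $g\in G$, the modular function satisfies
\[
\Delta_G(g) \;=\; \frac{\mu(gUg^{-1})}{\mu(U)} \;=\; \frac{[gUg^{-1}:U\cap gUg^{-1}]}{[U:U\cap gUg^{-1}]} \;\in\; \Qb_{>0},
\]
where $\mu$ is any left Haar measure. Hence $G/\ker(\Delta_G)$ is a countable Hausdorff lcsc group, and is therefore discrete by the Baire category theorem. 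The unimodular kernel $G^\flat:=\ker(\Delta_G)$ is consequently an \emph{open} normal subgroup of $G$; as an open subgroup of $G$, it is tdlc, lcsc, unimodular (open subgroups inherit the ambient modular function), and still of type~I (by~\cite[Proposition~2.4]{Kallman73}). The hypothesis then supplies a cocompact amenable subgroup of $G^\flat$, and since the quotient $G/G^\flat$ is countable discrete abelian (hence amenable), \Cref{prop:open-normal} propagates this conclusion to $G$.

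For the second reduction, let $G$ be a lcsc group of type~I. Its identity component $G^0$ is closed normal in $G$, and the quotient $G/G^0$ is tdlc lcsc and still of type~I (type~I is inherited along lcsc quotients by closed normal subgroups). The previous paragraph then furnishes a cocompact amenable subgroup $\bar P\leq G/G^0$; set $P:=\pi^{-1}(\bar P)$, a cocompact subgroup of $G$ fitting in an extension $1\to G^0\to P\to \bar P\to 1$ with $\bar P$ amenable. Two further ingredients complete the argument. First, every connected lcsc group has a cocompact amenable subgroup: by Yamabe's theorem, any connected lcsc group has a compact normal subgroup with connected-Lie quotient, and the Iwasawa decomposition of connected Lie groups provides the required subgroup in the quotient, which we pull back. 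Second, for a closed normal subgroup $N\lhd H$ of a locally compact group $H$ with $H/N$ amenable, Furstenberg's theorem asserts that the restriction of $\partial_F H$ to $N$ is an $N$-boundary, so by universality of $\partial_F N$ there is a continuous $N$-equivariant surjection $\partial_F N\twoheadrightarrow \partial_F H$. Applying this with $N=G^0\lhd P=H$, and combining with the first ingredient together with \Cref{prop:coco-amen} (giving that $G^0$ acts transitively on $\partial_F G^0$), we conclude that $P$ acts transitively on $\partial_F P$. A second application of \Cref{prop:coco-amen} yields a cocompact amenable subgroup of $P$, and hence of $G$ by cocompactness of $P$ in $G$.

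The principal technical obstacle is the final extension step of the second reduction. \Cref{prop:open-normal} in the paper is formulated only for \emph{open} normal subgroups, whereas $G^0$ is open in $P$ only when $\bar P$ happens to be discrete. We bypass this by invoking Furstenberg's classical identification of boundaries under amenable-quotient extensions; verifying this identification in the full lcsc setting (in particular, establishing the required continuity of the induced $P$-action on $\partial_F G^0$ via conjugation) requires some care but is essentially standard, cf.~the construction already alluded to in the proof of \Cref{prop:open-normal}. An alternative would be to approximate $P$ by the preimages of compact open subgroups of $\bar P$ (obtained by van~Dantzig), each of which is almost connected and thus handled directly by the first ingredient; but reassembling these pieces back to $P$ through the resulting countable discrete coset space appears to require comparable technical effort.
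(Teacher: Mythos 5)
Your first reduction (tdlc $\Rightarrow$ unimodular tdlc) is correct and is essentially the same move the paper makes: the kernel of the modular function is an open normal subgroup with abelian, hence amenable, quotient, and \Cref{prop:open-normal} transfers the conclusion. Your openness argument (rational values of $\Delta_G$ plus Baire) is a valid alternative to the paper's (the kernel contains every compact subgroup, and compact open subgroups exist by van Dantzig). The divergence, and the problem, is in your second reduction. The paper does \emph{not} reduce along the identity component. It first quotients by the amenable radical $R$ and invokes the structure theorem of \cite{Burger-Monod3}, which produces a finite-index open subgroup of $G/R$ of the form $S\times Q$ with $S$ a connected semisimple Lie group and $Q$ totally disconnected. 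The whole point of this detour is that the connected part then sits as a \emph{direct factor}, so the only normal-subgroup extension that ever has to be crossed is the open one $Q_0\lhd Q$; the passage back to $G$ only involves a finite-index subgroup and an extension by the amenable group $R$, both harmless. Your route instead produces the extension $1\to G^0\to P\to \bar P\to 1$ with $G^0$ closed but not open, which is exactly the configuration the paper's argument is engineered to avoid.

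The gap is the claim you lean on to cross that extension: that for a closed, non-open normal subgroup $N\lhd H$ with $H/N$ amenable, the restriction of $\partial_F H$ to $N$ is an $N$-boundary (minimal and strongly proximal). This is classical for discrete groups, but in the locally compact setting it is a genuine theorem requiring proof or a precise citation — the delicate point being $N$-minimality of $\partial_F H$ — and neither is supplied; ``essentially standard'' is doing all the work at the one step where the paper itself found it necessary to change strategy. Moreover, your description of the obstacle reveals a misidentification of where the difficulty lies: you worry about ``the required continuity of the induced $P$-action on $\partial_F G^0$ via conjugation,'' but that continuity issue belongs to the \emph{extension} direction of Furstenberg's identification (extending the $N$-action on $\partial_F N$ to $H$), which your argument as structured does not use — and which genuinely \emph{fails} to be routine for non-open subgroups; this is precisely why \Cref{prop:open-normal} is stated only for open normal subgroups, with the proof noting that continuity of the extended action holds ``because $Q_0$ is open in $Q$.'' What your argument actually needs is the restriction direction, where continuity is automatic but minimality and strong proximality of the $N$-action on $\partial_F H$ must still be established. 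Until that statement is proved (or located in \cite{Glasner} in the required topological-group generality), the second reduction does not go through; the safer course is to follow the paper and use the amenable radical together with the splitting of \cite{Burger-Monod3} so that no closed-non-open normal subgroup ever has to be crossed.
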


\begin{proof}
Let $G$ be an arbitrary (second countable) locally compact group of type~I. Let $R\lhd G$ be its {amenable radical}, that is, the largest normal (topologically) amenable subgroup, which is automatically closed and unique. It follows from structure theory and Lie theory that $G/R$ admits a finite index open subgroup $H<G/R$ which splits as a direct product $H=S\times Q$, where $S$ is a connected semi-simple Lie group and $Q$ is totally disconnected. This is proved as Theorem~3.3.3 in~\cite{Burger-Monod3} (see also~\cite[11.3.4]{Monod}). Finally, let $Q_0$ be the kernel of the modular function of $Q$.

Recall first that $Q_0$ is unimodular: it is actually the maximal unimodular closed normal subgroup of $Q$, see~\cite[VII \S 2 No 7]{NBourbakiINT78}. Moreover $Q_0$ is second countable if $G$ is so. Observe next that $Q_0$ is open in $Q$ because it contains every compact subgroup of $Q$ and there are open compact subgroups by van Dantzig's theorem. Finally, we claim that $Q_0$ is of type~I. Indeed, the type~I condition passes to quotients by definition, and is also passes to open subgroups, see e.g.\ Proposition~2.4 in~\cite{Kallman73} (this reference assumes second countability but it is not used in~2.4). Applying each of these hereditary properties twice, we can pass from $G$ to $Q_0$.

By hypothesis, the group $Q_0$ satisfies the conclusion of \Cref{conj:main}, hence it contains  a  cocompact amenable subgroup $P_0$. In view of Proposition~\ref{prop:open-normal}, we now know that $Q$ admits a cocompact amenable subgroup $P_Q< Q$. On the other hand, $S$ also admits a cocompact amenable subgroup $P_S< S$, namely a minimal parabolic subgroup. The amenable group $P_Q \times P_S$ is still cocompact in $G/R$ since $S\times Q$ has finite index in the latter. It follows that the pre-image $P<G$ of $P_Q \times P_S$ in $G$ is cocompact in $G$. On the other hand, $P$ is amenable since it is an extension of $R$ by $P_Q \times P_S$.
\end{proof}

\begin{rmk}\label{rem:tdlc-reduction}
Combining Propositions~\ref{prop:conj=>conj} and~\ref{prop:tdlc-reduction}, we see that if Conjecture~\ref{conj:boundaryrep-C*-simple} holds for all second countable unimodular tdlc groups, then Conjecture~\ref{conj:main} is true. 
\end{rmk}

\subsection*{Acknowledgements}
The first named author is grateful to Adrien Le Boudec for stimulating discussions around Conjecture~\ref{conj:C*-simple}. We thank Uri Bader, Bachir Bekka, Pierre de la Harpe, Cyril Houdayer, Basile Morando and Sven Raum for their comments on a preliminary version of this paper.


\bibliographystyle{amsalpha}
\bibliography{biblio}

\end{document}